\documentclass[a4paper, 11pt]{article}
\title{On finiteness properties of the unramified Iwasawa module of a $\Z_p$-extension with restricted ramification}
\author{Takuya Yanagisawa}
\date{}

\usepackage{amsmath, amssymb}
\usepackage{amsthm}
\usepackage{mathtools}
\usepackage{stmaryrd}
\usepackage{float}
\usepackage{url}
\usepackage[labelsep=none]{caption}
\usepackage{enumitem}

\usepackage[top=3cm,bottom=3cm,left=2cm,right=2cm]{geometry}

\theoremstyle{plain}
\newtheorem{df}{Definition}[section]
\newtheorem{thm}[df]{Theorem}
\newtheorem{prop}[df]{Proposition}
\newtheorem{cor}[df]{Corollary}
\newtheorem{lem}[df]{Lemma}
\newtheorem{condition}[df]{Condition}
\newtheorem{conjecture}[df]{Conjecture}
\newtheorem{prob}[df]{Problem}
\newtheorem*{convention}{Convention}

\theoremstyle{definition}
\newtheorem{rem}[df]{Remark}
\newtheorem{example}[df]{Example}

\usepackage{tikz}
\usetikzlibrary{arrows.meta}

\newcommand{\Z}{\mathbb{Z}}
\newcommand{\Q}{\mathbb{Q}}

\newcommand{\pe}{\mathfrak{p}}
\newcommand{\Pe}{\mathfrak{P}}
\newcommand{\qu}{\mathfrak{q}}
\newcommand{\Qu}{\mathfrak{Q}}

\DeclareMathOperator{\Ker}{Ker}
\DeclareMathOperator{\Gal}{Gal}

\DeclareMathOperator{\rank}{rank}
\DeclareMathOperator{\Homo}{H}
\DeclareMathOperator{\Tor}{Tor}
\DeclareMathOperator{\Cl}{Cl}

\DeclareMathOperator{\Image}{Im}

\newcommand{\ab}{\mathrm{ab}}
\newcommand{\cy}{\mathrm{c}}
\newcommand{\an}{\mathrm{a}}
\newcommand{\fin}{\mathrm{fin}}

\newcommand{\tgen}[1]{\overline{\langle #1 \rangle}}
\newcommand{\br}[1]{\lbrack #1 \rbrack}
\newcommand{\bbr}[1]{\llbracket #1 \rrbracket}

\usepackage[backend=biber, style=numeric]{biblatex}
\begin{document}
\maketitle

\begin{abstract}
In the present article, we consider $\Z_p$-extensions defined by restrictions on ramifications at $p$-adic primes. We study the finiteness, the existence of non-trivial finite submodules, and the triviality of the unramified Iwasawa modules of these $\Z_p$-extensions. In addition, we give a new sufficient condition for Greenberg's Generalized Conjecture with some non-trivial concrete examples. 
\end{abstract}

\section{Introduction}
In classical Iwasawa theory, the Galois module structure of certain arithmetic objects are deeply studied. One of the most typical examples is the unramified Iwasawa module $X(K)$ of a multiple $\Z_p$-extension $K/k$, which has a natural $\Gal(K/k)$-module structure. The knowledge of the Galois module structure of $X(K)$ provides us rich arithmetic information of the intermediate fields of $K/k$. In the present article, we study some problems on whether $X(K)$ has a simple structure, in the case where $K/k$ is characterized by a restriction on ramifications. 

Here we introduce some notation. Let $k/\Q$ be an algebraic extension and $p$ a prime number. We let $S_p(k)$ denote the set of all $p$-adic primes of $k$. For a subset $\Sigma$ of $S_p(k)$, we let $M_\Sigma(k)/k$ denote the maximal abelian pro-$p$-extension that is unramified at all primes not contained in $\Sigma$, and $X_\Sigma(k)\coloneqq\Gal(M_\Sigma(k)/k)$ its Galois group. Let $K/k$ be an algebraic extension and put $S_\Sigma(K)\coloneqq\{\Pe\in S_p(K)\mid\Pe\cap k\in\Sigma\}$. We use the notation $M_\Sigma(K)\coloneqq M_{S_\Sigma(K)}(K)$ and $X_\Sigma(K)\coloneqq X_{S_\Sigma(K)}(K)$ for simplicity. For $\pe\in S_p(k)$, we also use the notation $S_\pe(K)\coloneqq S_{\{\pe\}}(K)$, $M_\pe(K)\coloneqq M_{\{\pe\}}(K)$, and $X_\pe(K)\coloneqq X_{\{\pe\}}(K)$. In the case where $\Sigma=\emptyset$, we put $L(K)\coloneqq M_\emptyset(K)$ and $X(K)\coloneqq X_\emptyset(K)$. When $K/k$ is a Galois extension, we regard $X_\Sigma(K)$ as a module over the completed group algebra $\Z_p\bbr{\Gal(K/k)}$. In the case where $K/k$ is a $\Z_p^d$-extension for some $d\geq 1$, the $\Z_p\bbr{\Gal(K/k)}$-module $X_\Sigma(K)$ is called the $\Sigma$-ramified Iwasawa module of $K/k$. In particular, if $\Sigma=\emptyset$, the module $X(K)$ is called the unramified Iwasawa module of $K/k$.

\subsection{Greenberg's Conjecture}
For a finite extension $k/\Q$ and a prime number $p$, we let $k^\cy_\infty/k$ denote the cyclotomic $\Z_p$-extension over $k$. In the present article, we consider certain variants of the following conjecture. 
\begin{conjecture}[Greenberg's Conjecture \cite{Greenberg1976}]\label{conjecture Greenberg}
Let $k$ be a totally real number field and $p$ a prime number. Then $X(k^\cy_\infty)$ is finite. 
\end{conjecture}
Conjecture~\ref{conjecture Greenberg} has been investigated through many examples by numerous authors, but only few theoretical results are known so far.

Instead of the cyclotomic $\Z_p$-extension over a totally real number field, we consider a $\Z_p$-extension defined by a restriction on ramifications. Let $k/\Q$ be a finite extension, $p$ a prime number, and $\Sigma$ a set of $p$-adic primes of $k$. A $\Z_p$-extension $N_\infty/k$ is called a $\Sigma$-ramified $\Z_p$-extension if $N_\infty$ is contained in $M_\Sigma(k)$. If the $\Z_p$-rank of $X_\Sigma(k)$ is $1$, then we let $N_\infty/k$ denote the unique $\Sigma$-ramified $\Z_p$-extension unless otherwise stated. 
Concerning Conjecture~\ref{conjecture Greenberg}, the following problem has been considered (for example, see \cite[Problem~1.1]{KM2024}). 
\begin{prob}[an analogue of Greenberg's Conjecture]\label{problem Greenberg}
Let $k/\Q$ be a finite extension, $p$ a prime number, and $\Sigma$ a set of $p$-adic primes of $k$. Suppose that there exists a unique $\Sigma$-ramified $\Z_p$-extension $N_\infty/k$. Then determine whether $X(N_\infty)$ is finite or not. 
\end{prob}
If $k$ is a totally real number field and $p$-adic Leopoldt's Conjecture is true for $k$, then the cyclotomic $\Z_p$-extension $k^\cy_\infty/k$ is the unique $\Z_p$-extension over $k$ and, in particular, it is the unique $S_p(k)$-ramified $\Z_p$-extension. Thus Conjecture~\ref{conjecture Greenberg} is included in Problem~\ref{problem Greenberg}. 

Although negative examples of Problem~\ref{problem Greenberg} are known (see \cite[Remark~1.4]{KM2024} and its reference), it is still an interesting problem. In particular, in the case where $k$ is a CM-field and $\Sigma$ is a $p$-adic CM-type (see \S\ref{subsection CM-fields} for the definition), it seems that no negative example of Problem~\ref{problem Greenberg} is known so far (see \cite[Conjecture~1.2]{PS2024}). 

Here we note some advantages of considering such type of $\Z_p$-extensions. 
\begin{itemize}
\item We will observe that many known results on the cyclotomic $\Z_p$-extensions of totally real number fields can be extended to $\Sigma$-ramified setting. So we can enlarge the applicability of earlier works on totally real number fields. 

\item While only few theoretical results on Conjecture~\ref{conjecture Greenberg} are known, one may find out the essence of Conjecture~\ref{conjecture Greenberg} (the reason why $k$ has to be totally real) by considering Problem~\ref{problem Greenberg}. 

\item The study of $\Sigma$-ramified $\Z_p$-extensions can be applied to that of Greenberg's Generalized Conjecture (Conjecture~\ref{conjecture GGC}), as some authors have been done and we will do in Theorem~\ref{thm main GGC}. Greenberg's Generalized Conjecture is a conjecture on the maximal multiple $\Z_p$-extension $\widetilde{k}/k$, which is a canonical object in the sense of the independence from the choice of $\Sigma$. Moreover, we can derive information of other $\Z_p$-extensions, including the cyclotomic $\Z_p$-extension, from the conjecture (see \S\ref{subsection applications}). So the study of $\Sigma$-ramified $\Z_p$-extensions goes beyond analogies with the cyclotomic $\Z_p$-extension. 
\end{itemize}

With regard to Problem~\ref{problem Greenberg}, we provide a generalization of Greenberg's theorem concerning Conjecture~\ref{conjecture Greenberg} to $\Sigma$-ramified setting. 

Here we set up some notation. Let $p$ be a prime number. For any algebraic extension $F/\Q$, we let $\Cl(F)$ denote the ideal class group of $F$ and put $A(F)\coloneqq\Z_p\otimes_\Z \Cl(F)$. Here we note that $\Cl(F)$ can be regarded as the injective limit $\varinjlim_{i}\Cl(F_i)$, where $(F_i)_i$ is a family of intermediate finite extensions of $F/\Q$ such that $\bigcup_i F_i=F$ and the limit is taken with respect to the lifting maps. Then we also have $A(F)=\varinjlim_{i}A(F_i)$. Let $k/\Q$ be a finite extension and $k_\infty/k$ a $\Z_p$-extension. For each $n\geq 0$, we let $k_n$ denote the unique intermediate field of $k_\infty/k$ such that $[k_n:k]=p^n$. For $0\leq n\leq m\leq\infty$, let $i_{n, m}\colon A(k_n)\to A(k_m)$ be the lifting map. 

With the above notation, we show the following. 
\begin{thm}\label{thm main GC}
Let $k/\Q$ be a finite extension, $p$ a prime number, and $\Sigma$ a set of $p$-adic primes of $k$. Let $N_\infty/k$ be a $\Sigma$-ramified $\Z_p$-extension that is ramified at every ramified prime. Suppose also that one of the following two conditions holds. 
\begin{enumerate}[label=(\Alph*)]
\item $\Sigma$ consists of a single prime. 

\item $U_\Sigma^{(1)}/(\Z_p\otimes_\Z E(k))$ is a free $\Z_p$-module of rank $1$ (see \S\ref{section Leopoldt} for the notation). 
\end{enumerate}
Then the following are equivalent. 
\begin{itemize}
\item[(i)] $X(N_\infty)$ is finite. 

\item[(ii)] The lifting map $i_{0, \infty}\colon A(k)\to A(N_\infty)$ is trivial. 
\end{itemize}
\end{thm}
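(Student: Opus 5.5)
Following Greenberg's original argument for the cyclotomic case, I will compare $X(N_\infty)$ with the kernel of the lifting maps by means of genus theory and the Galois coinvariants of $X(N_\infty)$. Put $\Gamma=\Gal(N_\infty/k)$ with topological generator $\gamma$, and $\Lambda=\Z_p\bbr{\Gamma}$. Recall Greenberg's criterion: if $X(N_\infty)$ is finite, then for $n$ large every class of $A(k_n)$ dies in $A(N_\infty)$, and conversely, if $X(N_\infty)^\Gamma$-type invariants behave well, capitulation forces finiteness. Concretely, I will establish two facts.
\begin{enumerate}
\item[(a)] $X(N_\infty)$ is finite if and only if $\#A(k_n)$ is bounded.
\item[(b)] In our situation, $\Ker(i_{n,\infty})\cong X(N_\infty)^{\Gamma_n}$ up to a finite error controlled by the units, i.e. a ``Sinnott/Iwasawa type'' isomorphism $\Ker(A(k_n)\to A(N_\infty))\cong H^1(\Gamma_n, E(N_\infty))$, roughly.
\end{enumerate}

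\textbf{(i)$\Rightarrow$(ii).} First I show that the hypothesis that $N_\infty/k$ is totally ramified at every ramified prime (after possibly shifting base, which is exactly the ``ramified at every ramified prime'' hypothesis) gives surjectivity of the norm maps $A(k_m)\to A(k_n)$, and hence $A(k_n)\cong X(N_\infty)/\nu_{n}Y$ for the standard submodule $Y=\omega_0 X(N_\infty)$-ish, where the relevant submodule is generated by the inertia generators. If $X(N_\infty)$ is finite, then $X(N_\infty)$ is annihilated by $\omega_m=\gamma^{p^m}-1$ for some $m$. Then for each class $c\in A(k)$ with $c=N_{m',0}(c')$, one has $i_{0,m'}(c)=\nu_{0,m'}c'$, where $\nu_{0,m'}=\sum\gamma^i$. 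Since $\nu_{0,m'}\in(p^{m'-m},\omega_m)$ and $X(N_\infty)$ is finite, taking $m'$ large kills the image. To transport this to $A(N_\infty)=\varinjlim A(k_n)$, I identify $A(k_n)\to A(N_\infty)$ compatibly with $X(N_\infty)$ via the natural map $A(k_n)\to X(N_\infty)_{\Gamma_n}$ composed with norm-lift relations; this is standard.

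\textbf{(ii)$\Rightarrow$(i).} This is where conditions (A)/(B) enter. Triviality of $i_{0,\infty}$ says every class of $k$ becomes principal upstairs; the kernel of lifting is measured by $H^1(\Gamma_n,E(k_n))$, and I will compute this via the Herbrand quotient and ambiguous class number formula. The point is to show that $X(N_\infty)_\Gamma$, i.e. $A(k)$ modulo the inertia contribution, being killed implies $X(N_\infty)^\Gamma$ is essentially trivial too, hence $X(N_\infty)$ is $\Lambda$-torsion with no factor of $\omega_0$; then iterate over $k_n$ in place of $k$ (the hypotheses (A) or (B) persist up the tower, since in case (B) the $\Sigma$-Leopoldt defect $U_\Sigma^{(1)}/\overline{E}$ being of rank one is preserved under the $\Z_p$-tower by a Leopoldt-type argument from \S\ref{section Leopoldt}). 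Then I conclude that $X(N_\infty)$ has bounded $\Gamma_n$-invariants and coinvariants, hence is finite. The key input from (A) or (B) is that the global units generate, inside the local $\Sigma$-units, a subgroup of corank exactly one, so that $H^1(\Gamma_n,E(k_n))$ is comparable to the number of ramified primes, matching the genus formula and leaving no room for unbounded capitulation.

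The main obstacle is step (ii)$\Rightarrow$(i): proving that unit-index computations under (A) or (B) force $X(N_\infty)^{\Gamma}$ to be finite. I will try first to imitate Greenberg's proof using the exact sequence $0\to E(k)\to E(k_n)^{\Gamma}\to\cdots$ and the fact that in case (A) the single prime makes the genus part cyclic, while in case (B) the rank-one freeness plays the role of Leopoldt's conjecture.
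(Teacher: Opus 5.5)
The gap is in (ii)$\Rightarrow$(i): your sketch never turns capitulation of $A(k)$ into finiteness of $X(N_\infty)$, and the steps you announce do not work.

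Your direction (i)$\Rightarrow$(ii) is fine. It is Greenberg's norm argument and does not even need (A) or (B). By total ramification, $X(N_\infty)\to A(k_{m'})$ and $N_{m',0}\colon A(k_{m'})\to A(k)$ are onto, and $i_{0,m'}\circ N_{m',0}=\nu_{0,m'}$. If $\omega_m$ and $p^e$ kill the finite module $X(N_\infty)$, then $\nu_{0,m'}\equiv p^{m'-m}\nu_{0,m}\pmod{\omega_m}$ kills it for $m'\geq m+e$. So $A(k)$ already dies in $A(k_{m'})$, and no "transport" to $A(N_\infty)$ is needed.

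The steps you announce for (ii)$\Rightarrow$(i) fail as follows.
\begin{itemize}
\item Showing that $X(N_\infty)^\Gamma$ is "essentially trivial" is either vacuous or false. It is vacuous in the sense of finiteness, since $X(N_\infty)^\Gamma$ is automatically finite once $X(N_\infty)_\Gamma$ is. It is false in the literal sense: if $A(k)\neq 0$ and (ii) holds, then $X(N_\infty)$ is finite and $\#X(N_\infty)^\Gamma=\#X(N_\infty)_\Gamma=\#A(k)\neq 0$.
\item Iterating over $k_n$ requires capitulation of $A(k_n)$, which (ii) does not give; the content of the theorem is that capitulation of $A(k)$ alone suffices.
\item You cannot shift the base or push (B) up to $N_n$. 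Both (ii) and (B) are statements about $k$, and freeness of rank one of the unit quotient over $N_n$ is not inherited from level $0$. Nothing in \S\ref{section Leopoldt} gives it.
\item Unit cohomology yields no bound of the kind you want. By the Herbrand quotient, $\#\Homo^1(\Gal(k_n/k),E(k_n))=p^n[E(k):N_{k_n/k}E(k_n)]$, which grows with $n$ and is not "comparable to the number of ramified primes".
\end{itemize}
Also, the hypothesis must be read as \emph{totally} ramified at every ramified prime, as in Theorem~\ref{thm A and B}.

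Two ingredients are missing. Set $X=X(N_\infty)$.

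First, the role of (A)/(B) is to give $X_\Gamma\simeq X(k)\simeq A(k)$.
\begin{itemize}
\item In case (A) this is \cite[Proposition~13.22]{Washington1997}.
\item In case (B), $\rank_{\Z_p}X_\Sigma(k)=1$ gives $X_\Sigma(N_\infty)_\Gamma\simeq\Tor_{\Z_p}X_\Sigma(k)$ (Lemma~\ref{lem finite tors}). Freeness of $U_\Sigma^{(1)}/(\Z_p\otimes_\Z E(k))$ makes $\Tor_{\Z_p}X_\Sigma(k)\to X(k)$ injective, so $X_\Gamma\to X(k)$ is injective. Total ramification makes it onto (Proposition~\ref{prop trivial}).
\end{itemize}

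Second, you need an exact description of the capitulation kernel, not one "up to a finite error". This is Ozaki's lemma, $\Ker(i_{0,\infty})\simeq\Image(X_{\fin}\to X(k))$ (Lemma~\ref{lem capitulation}). The paper concludes as follows. $(X/X_{\fin})^\Gamma$ is finite because $(X/X_{\fin})_\Gamma$ is, so it is a finite submodule of $X/X_{\fin}$ and hence zero. Therefore $(X_{\fin})_\Gamma$ injects into $X_\Gamma\simeq A(k)$ with image $\simeq\Ker(i_{0,\infty})$. Thus (ii) says $X=X_{\fin}+(\gamma-1)X$, and Nakayama gives $X=X_{\fin}$.

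Alternatively, you can finish in Greenberg's style with the same input.
\begin{itemize}
\item $X_\Gamma\simeq A(k)$ forces $Y_0=\omega_0X$ in $A(k_n)\simeq X/\nu_{0,n}Y_0$. Hence $A(k_n)\simeq X/\omega_nX$, with $i_{0,m}$ induced by multiplication by $\nu_{0,m}$.
\item Since $A(k)$ is finite, choose $m$ with $i_{0,m}=0$. Then $\nu_{0,m}X\subseteq\omega_mX=\omega_0\nu_{0,m}X$, so $X=\omega_0X+X[\nu_{0,m}]$.
\item By Nakayama, $X=X[\nu_{0,m}]$. Hence $\omega_mX=0$ and $X\simeq A(k_m)$ is finite.
\end{itemize}
In either version, no iteration up the tower and no unit computation is needed.
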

Case~\ref{A} of this theorem recovers Greenberg's result \cite[Theorem~1]{Greenberg1976} when applied to a totally real number field $k$ and $\Sigma=S_p(k)$. One can prove Case~\ref{A} of Theorem~\ref{thm main GC} by mimicking Greenberg's proof of \cite[Theorem~1]{Greenberg1976}. On the other hand, Fujii \cite{Fujii2020} gave another proof of Greenberg's result. One may also show Case~\ref{A} of Theorem~\ref{thm main GC} by Fujii's method. Our proof of Theorem~\ref{thm main GC} is based on Fujii's method, but we need a slight modification to deal with Case~\ref{B}.

\subsection{A weak form of Greenberg's Conjecture}
Next we consider a weak form of Conjecture~\ref{conjecture Greenberg}. 
\begin{conjecture}[a weak form of Greenberg's Conjecture]\label{conjecture weak form}
Let $k$ be a totally real number field and $p$ a prime number. If $X(k^\cy_\infty)$ is not trivial, then $X(k^\cy_\infty)$ has a non-trivial finite $\Z_p\bbr{\Gal(k^\cy_\infty/k)}$-submodule. 
\end{conjecture}
Conjecture~\ref{conjecture weak form} was proposed by Nguyen Quang Do \cite[(GF)]{Nguyen2006} (the above formulation can be found in \cite[(WGC)]{Nguyen2017}). 

One may also consider a weak form of Problem~\ref{problem Greenberg}. 
\begin{prob}[a weak form of an analogue of Greenberg's Conjecture]\label{problem weak form}
Let $k/\Q$ be a finite extension, $p$ a prime number, and $\Sigma$ a set of $p$-adic primes of $k$. Suppose that there exists a unique $\Sigma$-ramified $\Z_p$-extension $N_\infty/k$. If $X(N_\infty)$ is not trivial, then determine whether $X(N_\infty)$ has a non-trivial finite $\Z_p\bbr{\Gal(N_\infty/k)}$-submodule or not. 
\end{prob}
With regard to Problem~\ref{problem weak form}, we provide generalizations of some theorems concerning Conjecture~\ref{conjecture weak form} to $\Sigma$-ramified setting. 

The following is a variant of Theorem~\ref{thm main GC}. 
\begin{thm}\label{thm WGC 1}
Let $k/\Q$ be a finite extension, $p$ a prime number, and $\Sigma$ a set of $p$-adic primes of $k$. Let $N_\infty/k$ be a $\Sigma$-ramified $\Z_p$-extension that is ramified at every ramified prime. Suppose also that one of the following two conditions holds. 
\begin{enumerate}[label=(\Alph*)]
\item $\Sigma$ consists of a single prime. 

\item $U_\Sigma^{(1)}/(\Z_p\otimes_\Z E(k))$ is a free $\Z_p$-module of rank $1$. 
\end{enumerate}
Then the following are equivalent. 
\begin{itemize}
\item[(i)] $X(N_\infty)$ has no non-trivial finite $\Z_p\bbr{\Gal(N_\infty/k)}$-submodule. 

\item[(ii)] The lifting map $i_{0, \infty}\colon A(k)\to A(N_\infty)$ is injective. 
\end{itemize}
\end{thm}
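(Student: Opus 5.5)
We plan to follow the same Fujii-type strategy as for Theorem~\ref{thm main GC}, identifying the maximal finite submodule of $X(N_\infty)$ with the kernel of the lifting map $i_{0,\infty}$. Put $\Gamma=\Gal(N_\infty/k)$ with topological generator $\gamma$, $\Lambda=\Z_p\bbr{\Gamma}$, and $N_n$ the $n$-th layer. The standard starting point is the classical fact that, for a $\Z_p$-extension in which every ramified prime is totally ramified from some layer on, $\Ker(i_{n,\infty})$ is dual to, or at least controlled by, the finite submodule of $X(N_\infty)$. More precisely, there is an injection
\[
\Ker\bigl(A(N_n)\to A(N_\infty)\bigr)\hookrightarrow \Homo^1(\Gal(N_\infty/N_n),E(N_\infty)\otimes\Z_p),
\]
and on the other side the maximal finite submodule $D$ of $X(N_\infty)$ is related to $\Homo^1$ of the units via Iwasawa's genus-theory type exact sequences. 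We will not use that route directly. Instead we use the description of $X(N_\infty)$ through the $\Sigma$-ramified module $X_\Sigma(N_\infty)$, exactly as in the proof of Theorem~\ref{thm main GC}.

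\textbf{Step 1.} Show that under (A) or (B), $X_\Sigma(N_\infty)$ has no non-trivial finite $\Lambda$-submodule, and that the natural map $X_\Sigma(N_\infty)_\Gamma\to X_\Sigma(k)$ (or its $\Gamma_n$-analogue) has controlled kernel. Here condition (A) or (B) is used, as in Theorem~\ref{thm main GC}, to see that the inertia groups inside $X_\Sigma(N_\infty)$ generate a $\Lambda$-module $I$ with $X(N_\infty)=X_\Sigma(N_\infty)/I$, where $I$ is cyclic (case (A)) or has the same rank behaviour (case (B)). The quotient $U_\Sigma^{(1)}/\overline{E(k)}$ being $\Z_p$ of rank one means that the image of local units is a copy of $\Lambda/(\gamma-1)$-like object.

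\textbf{Step 2.} From the exact sequence $0\to I\to X_\Sigma(N_\infty)\to X(N_\infty)\to0$, take $\Gamma_n$-coinvariants and the snake lemma. This identifies the image of $X(N_\infty)^{\Gamma_n}$-type terms with the kernel of the norm or lifting maps $A(N_n)\to A(N_\infty)$. By class field theory, $X(N_n)\cong A(N_n)$, and the dual of the lifting map corresponds to the map $X(N_\infty)_{\Gamma_n}\to X(N_n)$ composed with transfer. The argument of Theorem~\ref{thm main GC} gives, for $n=0$, an identification of $\Ker(i_{0,\infty})$ with a quotient of the finite part. Conversely, we must show that a nonzero finite submodule $D$ yields a nonzero kernel at level $0$, not merely at some level $n$.

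\textbf{Step 3.} Handle the passage between $n$ and $0$. The finite submodule $D$ is fixed by $\Gamma_n$ for large $n$. Using that $\Gamma$ is pro-$p$, $D^\Gamma\neq0$ whenever $D\neq0$. We relate $D^\Gamma$, i.e.\ the invariants $X(N_\infty)^\Gamma$ intersected with the finite part, to $\Ker(i_{0,\infty})$ via the $\Gamma$-invariant part of the sequence in Step 2, using that $X_\Sigma(N_\infty)$ has no finite submodule.

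The main obstacle is this last identification at level $0$. The plan is to prove $\Ker(i_{0,\infty})\cong \Homo^1(\Gamma,\text{units})$, and to show that this group is dual to the maximal finite submodule's $\Gamma$-invariants. Both sides are nonzero simultaneously by the pro-$p$ fixed point argument, and hypotheses (A) and (B) are used to kill the contribution of $I$.
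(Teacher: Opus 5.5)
There is a genuine gap, and it is exactly the step you call ``the main obstacle'': the proposal never establishes the level-$0$ link between $\Ker(i_{0,\infty})$ and $X(N_\infty)_\fin$, which is the whole content of the theorem.

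The route you sketch for this step does not work as stated.
\begin{itemize}
\item The map $\Ker(i_{0,\infty})\to\Homo^1(\Gamma,\Z_p\otimes_\Z E(N_\infty))$ is in general only injective; its cokernel comes from ambiguous ideals supported on the ramified primes. So the isomorphism you plan to prove is not available.
\item The claimed duality of this $\Homo^1$ with $(X(N_\infty)_\fin)^\Gamma$ is given no argument.
\item Step~1 is not justified under the hypotheses. The absence of finite submodules in $X_\Sigma(N_\infty)$ (Proposition~\ref{lem invariant trivial}) needs $\rank_{\Z_p}X_\Sigma(k)=1$ and $\Sigma$-adic Leopoldt's Conjecture. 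In case (A) neither is assumed, and $X_\Sigma(N_\infty)$ need not even be $\Lambda$-torsion. In case (B), Leopoldt is not among the hypotheses either.
\item Nothing makes the inertia submodule $I$ cyclic. It is a quotient of the local principal units at the prime above $\pe$, which have $\Lambda$-rank $[k_\pe:\Q_p]$.
\end{itemize}
In fact $X_\Sigma(N_\infty)$ is not needed at all.

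The argument needs two inputs, which you do not have.
\begin{itemize}
\item The capitulation lemma (Lemma~\ref{lem capitulation}, Ozaki). For a $\Z_p$-extension totally ramified at every ramified prime, the Artin map gives $\Ker(i_{0,\infty})\simeq\Image(X(N_\infty)_\fin\to X(k))$.
\item An isomorphism $X(N_\infty)_\Gamma\simeq X(k)$, which is precisely where (A) or (B) is used. In case (A), exactly one prime ramifies and it is totally ramified, so this is the classical computation (Washington, Prop.~13.22). In case (B), freeness of $U_\Sigma^{(1)}/(\Z_p\otimes_\Z E(k))$ makes $X_\Sigma(N_\infty)_\Gamma\simeq\Tor_{\Z_p}X_\Sigma(k)$ inject into $X(k)$. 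Hence $X(N_\infty)_\Gamma\hookrightarrow X(k)$, and total ramification gives surjectivity.
\end{itemize}

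Given these, the proof is short. Write $X=X(N_\infty)$.
\begin{itemize}
\item Since $X_\Gamma$ is finite, $(X/X_\fin)_\Gamma$ and hence $(X/X_\fin)^\Gamma$ are finite, so $(X/X_\fin)^\Gamma=0$.
\item The snake lemma for $0\to X_\fin\to X\to X/X_\fin\to 0$ then gives an injection $(X_\fin)_\Gamma\hookrightarrow X_\Gamma\simeq X(k)$. Its image is $\Image(X_\fin\to X(k))\simeq\Ker(i_{0,\infty})$.
\item Thus $(X_\fin)_\Gamma\simeq\Ker(i_{0,\infty})$, and Nakayama's lemma turns $(X_\fin)_\Gamma=0$ into $X_\fin=0$.
\end{itemize}

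Both inputs need total ramification from level $0$, so the hypothesis must be read as in Theorem~\ref{thm A and B}. ``Totally ramified from some layer on,'' as in your first paragraph, would not suffice.
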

More precisely, we show that the size of the kernel of the lifting map corresponds to the size of the maximal finite $\Z_p\bbr{\Gal(N_\infty/k)}$-submodule of $X(N_\infty)$. See Theorem~\ref{thm A and B} and Remark~\ref{remark kernel finite} for the precise statement. 

Case~\ref{A} of Theorem~\ref{thm WGC 1} recovers Fujii's result \cite[Theorem~1]{Fujii2020} when applied to a totally real number field $k$ and $\Sigma=S_p(k)$. Our proof of Theorem~\ref{thm WGC 1} is based on Fujii's method and proceeds in parallel with that of Theorem~\ref{thm main GC}. 

In the following theorems, we assume $\Sigma$-adic Leopoldt's Conjecture instead of usual $p$-adic Leopoldt's Conjecture. While we introduce the concept of $\Sigma$-adic Leopoldt's Conjecture in \S\ref{section Leopoldt}, we note here that $S_p(k)$-adic Leopoldt's Conjecture coincides with usual $p$-adic Leopoldt's Conjecture. 

From now on, we use the following convention. 
\begin{convention}
Let $N_\infty/k$ be a $\Sigma$-ramified $\Z_p$-extension. We say that $N_\infty/k$ is ramified (resp. totally ramified) at $\Sigma$ if $N_\infty/k$ is ramified (resp. totally ramified) at every prime in $\Sigma$. 
\end{convention}
\begin{thm}\label{thm main WGC 2}
Let $k/\Q$ be a finite extension, $p$ an odd prime number, and $\Sigma$ a set of $p$-adic primes of $k$ that are of degree $1$. Suppose that the $\Z_p$-rank of $X_\Sigma(k)$ is $1$ and the unique $\Sigma$-ramified $\Z_p$-extension $N_\infty/k$ is ramified at $\Sigma$. Suppose also that $\Sigma$-adic Leopoldt's Conjecture holds for $k$. Then the following are equivalent. 
\begin{itemize}
\item[(i)] $X(N_\infty)$ has no non-trivial finite $\Z_p\bbr{\Gal(N_\infty/k)}$-submodule. 

\item[(ii)] $L(N_\infty)=M_\Sigma(N_\infty)$ holds. 
\end{itemize}
\end{thm}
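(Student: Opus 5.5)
Following the approach to Theorem~\ref{thm WGC 1}, I would relate both conditions to the structure of $X_\Sigma(N_\infty)$ as a $\Lambda\coloneqq\Z_p\bbr{\Gal(N_\infty/k)}$-module. The first key step is a structural input: for $p$ odd, $\Sigma$ made of degree-$1$ primes, and $\Sigma$-adic Leopoldt's Conjecture assumed for $k$, the module $X_\Sigma(N_\infty)$ has no non-trivial finite $\Lambda$-submodule. This is the $\Sigma$-analogue of the classical fact that $X_{S_p}(k^\cy_\infty)$ has no finite submodules when the weak Leopoldt conjecture holds. I would prove it as follows.
\begin{itemize}
\item Because $\rank_{\Z_p}X_\Sigma(k)=1$ and Leopoldt holds at the base, the $\Sigma$-ramified Galois group $\mathcal{G}_\Sigma$ of the maximal pro-$p$ $\Sigma$-ramified extension has cohomological behaviour controlled by $H^2$. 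One then shows $H^2(\mathcal{G}_\Sigma(N_\infty),\Q_p/\Z_p)=0$, i.e. weak $\Sigma$-Leopoldt along $N_\infty$.
\item The degree-$1$ hypothesis makes each completion $k_\pe=\Q_p$. This controls the local terms: for $p$ odd, $\mu_p\not\subset\Q_p$, so the local $H^2$ terms vanish.
\item The standard Jannsen/Nguyen Quang Do argument then gives that $X_\Sigma(N_\infty)$ has no finite submodule.
\end{itemize}

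Next I would use the exact sequence
\[
\bigoplus_{\Pe\in S_\Sigma(N_\infty)} I_\Pe \to X_\Sigma(N_\infty) \to X(N_\infty) \to 0,
\]
where $I_\Pe$ is the inertia subgroup of $\Gal(M_\Sigma(N_\infty)/N_\infty)$. Since $N_\infty/k$ is ramified at $\Sigma$ and $k_\pe=\Q_p$, the completion $N_{\infty,\Pe}$ is the unique $\Z_p$-extension of $\Q_p$ containing the ramified part. Local class field theory for $p$ odd then gives $I_\Pe\cong\Z_p$ with trivial action, so the image $D$ of $\bigoplus I_\Pe$ is a quotient of $\bigoplus_{\Pe}\Z_p$. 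In other words, $D$ is a finitely generated $\Z_p$-module on which a finite-index subgroup of $\Gamma$ acts trivially, because the primes of $\Sigma$ split into finitely many primes in $N_\infty$. Thus $Y\coloneqq D$ has finite $\Z_p$-rank, and $X(N_\infty)=X_\Sigma(N_\infty)/Y$. Condition (ii) is exactly $Y=0$.

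Both implications would then be handled by comparing the finite submodule of $X(N_\infty)$ with $Y$.
\begin{itemize}
\item (ii)$\Rightarrow$(i): if $Y=0$, then $X(N_\infty)=X_\Sigma(N_\infty)$, which has no non-trivial finite submodule by the first step.
\item (i)$\Rightarrow$(ii): suppose $Y\neq0$. Pick $\gamma_n$ generating $\Gal(N_\infty/k_n)$ with $n$ large, so that $\gamma_n$ acts trivially on $Y$. Then $Y\subset X_\Sigma(N_\infty)[\gamma_n-1]$, which I would identify with $\Z_p$-torsion-free invariants of rank at most $1$. The aim is to produce a non-zero finite submodule of $X_\Sigma/Y$.
\end{itemize}
To produce that submodule I would use the $\Sigma$-Leopoldt conjecture again. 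It says that $X_\Sigma(N_\infty)_{\Gal(N_\infty/k_n)}$ has $\Z_p$-rank $1$, and that rank is accounted for by $\Gal(N_\infty/k_n)$ itself. Consequently the images of the inertia groups are "redundant" in coinvariants: for suitable $x\notin Y$ with $p^m x\in Y$ or $(\gamma_n-1)x\in Y$, the class of $x$ generates a finite submodule of $X(N_\infty)$. Concretely, I would compare the ranks of $\Lambda$-invariants via the snake lemma applied to multiplication by $\gamma_n-1$ on $0\to Y\to X_\Sigma\to X\to0$. Since $X_\Sigma^{\gamma_n=1}$ is small (rank $\le$ what Leopoldt allows) while $Y^{\gamma_n=1}=Y\neq0$, the connecting map forces $X^{\gamma_n=1}$ to contain a non-trivial finite piece.

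The main obstacle is this last rank bookkeeping: showing $X_\Sigma(N_\infty)^{\Gal(N_\infty/k_n)}$ (or the relevant coinvariant kernel) is small enough that a non-zero $Y$ cannot be absorbed. I expect to need both $\Sigma$-Leopoldt and the precise local computation at degree-$1$ primes there.
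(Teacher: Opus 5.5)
Your (ii)$\Rightarrow$(i) is fine. Its input is Proposition~\ref{lem invariant trivial}, which the paper proves more directly than your sketch: it feeds $\Homo^2(G_\Sigma(k),\Q_p/\Z_p)=0$ into Hochschild--Serre to get $X_\Sigma(N_\infty)^\Gamma=0$. No weak Leopoldt along $N_\infty$ is needed, and no cohomological-dimension bound, which is not known in general for $\Sigma\subsetneq S_p(k)$. The genuine gap is in (i)$\Rightarrow$(ii).

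\textbf{Where the argument fails.} Your claim that each inertia group $I_\Pe\subseteq X_\Sigma(N_\infty)$ is $\cong\Z_p$ with trivial action of its decomposition group is false. Local class field theory gives inertia $\cong 1+p\Z_p$ only for abelian extensions of $k_\pe=\Q_p$ itself. Over $N_\infty$, the inertia is a quotient of the norm-limit of principal units along $N_{\infty,\Pe}/\Q_p$. That limit has rank one over the Iwasawa algebra of the decomposition group, so its image can have $\Z_p$-rank larger than one and non-trivial action.

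The claim in fact proves too much. If the primes of $\Sigma$ are non-split in $N_\infty$, it would put $Y$ inside $X_\Sigma(N_\infty)^\Gamma=0$, so (ii) would hold unconditionally. This fails whenever $X(N_\infty)$ is finite and non-zero (e.g.\ suitable real quadratic $k$ with $p$ split and $N_\infty=k^\cy_\infty$). There $Y=0$ would make $X_\Sigma(N_\infty)=X(N_\infty)$ a non-zero finite module, contradicting Proposition~\ref{lem invariant trivial}.

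Your rank count is also off. $X_\Sigma(N_\infty)_\Gamma$ is finite (Lemma~\ref{lem finite tors}), and $\Sigma$-adic Leopoldt is assumed only for $k$, so nothing bounds ranks over $\Gal(N_\infty/N_n)$ by $1$. Most fundamentally, the invariant end $Y^\Gamma\hookrightarrow X_\Sigma(N_\infty)^\Gamma$ of the snake sequence can only constrain $Y$. It can never produce a non-zero element of $X(N_\infty)^\Gamma$: by the snake lemma, $X(N_\infty)^\Gamma\simeq\Ker\bigl(Y_\Gamma\to X_\Sigma(N_\infty)_\Gamma\bigr)$ once $X_\Sigma(N_\infty)^\Gamma=0$.

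\textbf{The missing idea.} What you need is control of the \emph{coinvariant} end. This is where degree $1$ and $p$ odd enter, but over $k$ rather than over $N_\infty$.
The inertia group of $\pe\in\Sigma$ in $M_\Sigma(k)/k$ is a quotient of $1+p\Z_p\cong\Z_p$. It injects into $\Gamma$ because $N_\infty/k$ is ramified at $\pe$. Hence $M_\Sigma(k)/N_\infty$ is unramified, so $M_\Sigma(k)\subseteq L(N_\infty)$ (Lemma~\ref{lem ur}).
Consequently $X_\Sigma(N_\infty)_\Gamma\to X(N_\infty)_\Gamma$ is an isomorphism (Lemma~\ref{lem isom}); equivalently, $Y_\Gamma\to X_\Sigma(N_\infty)_\Gamma$ is the zero map.
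Apply the snake lemma for $\gamma-1$ to $0\to Y\to X_\Sigma(N_\infty)\to X(N_\infty)\to 0$. Together with $X_\Sigma(N_\infty)^\Gamma=0$, this gives $X(N_\infty)^\Gamma\simeq Y_\Gamma$.
If $Y\neq 0$, then $Y_\Gamma\neq 0$ by Nakayama. Moreover $X(N_\infty)^\Gamma$ is finite because $X(N_\infty)_\Gamma$ is (Lemma~\ref{lem invariant}).
So $X(N_\infty)^\Gamma$ itself is the required non-zero finite submodule, and there is no need to pass to $\gamma_n$.
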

This recovers Ozaki's result obtained in the proof of \cite[Theorem~2]{Ozaki1997} when applied to a totally real number field $k$ and $\Sigma=S_p(k)$. Our proof of Theorem~\ref{thm main WGC 2} is a direct generalization of that of Ozaki. 

We show a similar result under different assumptions. 
\begin{thm}\label{thm main WGC 3}
Let $k/\Q$ be a finite extension, $p$ a prime number, and $\Sigma$ a set of $p$-adic primes of $k$. Suppose that the $\Z_p$-rank of $X_\Sigma(k)$ is $1$ and let $N_\infty/k$ be the unique $\Sigma$-ramified $\Z_p$-extension. Suppose also that $\Sigma$-adic Leopoldt's Conjecture holds for $k$ and $U_\Sigma^{(1)}/(\Z_p\otimes_\Z E(k))$ is free as a $\Z_p$-module. Then the following are equivalent. 
\begin{itemize}
\item[(i)] $X(N_\infty)$ has no non-trivial finite $\Z_p\bbr{\Gal(N_\infty/k)}$-submodule. 

\item[(ii)] $L(N_\infty)=M_\Sigma(N_\infty)$ holds. 
\end{itemize}
\end{thm}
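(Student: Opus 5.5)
The plan is to compare the two $\Lambda$-modules $X_\Sigma(N_\infty)$ and $X(N_\infty)$, where $\Lambda=\Z_p\bbr{\Gamma}$ and $\Gamma=\Gal(N_\infty/k)=\tgen{\gamma}$. They sit in the exact sequence
\[
0\to I\to X_\Sigma(N_\infty)\to X(N_\infty)\to 0,\qquad I\coloneqq\Gal(M_\Sigma(N_\infty)/L(N_\infty)),
\]
and $I$ is generated by the inertia subgroups at the primes of $S_\Sigma(N_\infty)$. The hypotheses on $k$ enter only through the $\Gamma$-coinvariants. Since $N_\infty\subset M_\Sigma(k)$, we have $X_\Sigma(N_\infty)_\Gamma\cong\Gal(M_\Sigma(k)/N_\infty)$.

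By $\Sigma$-adic Leopoldt's Conjecture (as formulated in \S\ref{section Leopoldt}), the global units inject into $U_\Sigma^{(1)}$. Class field theory then gives the exact sequence
\[
0\to U_\Sigma^{(1)}/\overline{\Z_p\otimes_\Z E(k)}\to X_\Sigma(k)\to A(k)\to 0 .
\]
Because $\rank_{\Z_p}X_\Sigma(k)=1$ and $U_\Sigma^{(1)}/(\Z_p\otimes_\Z E(k))$ is $\Z_p$-free, this quotient is free of rank $1$. Hence $\Gal(M_\Sigma(k)/N_\infty)$ is exactly the $\Z_p$-torsion of $X_\Sigma(k)$, and it maps injectively into $A(k)$. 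In particular $X_\Sigma(N_\infty)_\Gamma$ is finite, so $X_\Sigma(N_\infty)$ is $\Lambda$-torsion. I will also record where the local units land. The image of $U_\Sigma^{(1)}$ in $X_\Sigma(k)$ is the inertia part, and freeness says this image meets $\Gal(M_\Sigma(k)/N_\infty)$ trivially.

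The first key step is to show that $X_\Sigma(N_\infty)$ has no non-trivial finite $\Lambda$-submodule. For a finitely generated torsion $\Lambda$-module $Y$ with finite $Y_\Gamma$, the maximal finite submodule is non-trivial if and only if $Y^\Gamma\neq0$. Moreover $Y^\Gamma$ and $Y_\Gamma$ are the kernel and cokernel of $\gamma-1$ on $Y$. I would compute $X_\Sigma(N_\infty)^\Gamma$ through the inflation--restriction/Hochschild--Serre sequence for the $\Sigma$-ramified Galois group $G_\Sigma$ over $k$ and $N_\infty$. This turns $X_\Sigma(N_\infty)^\Gamma$ into a term controlled by $\Homo^2$, which vanishes by $\Sigma$-Leopoldt in the form $\Homo^2(G_\Sigma(N_\infty),\Q_p/\Z_p)=0$ (weak Leopoldt along $N_\infty$). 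Freeness of $U_\Sigma^{(1)}/E$ is what kills the remaining contribution of the torsion units. With that in hand, (ii)$\Rightarrow$(i) is immediate: if $L(N_\infty)=M_\Sigma(N_\infty)$ then $X(N_\infty)=X_\Sigma(N_\infty)$.

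For (i)$\Rightarrow$(ii), assume $X(N_\infty)$ has no non-trivial finite submodule. Taking $\Gamma$-invariants/coinvariants of the exact sequence gives
\[
0\to I^\Gamma\to X_\Sigma(N_\infty)^\Gamma\to X(N_\infty)^\Gamma\to I_\Gamma\to X_\Sigma(N_\infty)_\Gamma\to X(N_\infty)_\Gamma\to 0 .
\]
Here $X_\Sigma(N_\infty)^\Gamma=0$ by the first step, and $X(N_\infty)^\Gamma=0$ by (i), because $X(N_\infty)$ is torsion with finite coinvariants. Hence $I_\Gamma$ injects into $X_\Sigma(N_\infty)_\Gamma=\Gal(M_\Sigma(k)/N_\infty)$. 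On the other hand, $I_\Gamma$ is generated by the images of inertia groups. These come from the local units at $\Sigma$ over $k$, whose image in $X_\Sigma(k)$ meets $\Gal(M_\Sigma(k)/N_\infty)$ trivially by the freeness observed above. Therefore $I_\Gamma=0$, and Nakayama's lemma gives $I=0$, that is, $L(N_\infty)=M_\Sigma(N_\infty)$.

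I expect the main obstacle to be the first step together with the identification of the image of $I_\Gamma$ in $\Gal(M_\Sigma(k)/N_\infty)$. Without the degree-$1$ and ramification hypotheses of Theorem~\ref{thm main WGC 2}, one must check carefully that the inertia groups at the (possibly non-totally-ramified, higher-degree) primes above $\Sigma$ land inside the image of $U_\Sigma^{(1)}$. The freeness of $U_\Sigma^{(1)}/(\Z_p\otimes_\Z E(k))$ has to substitute for Ozaki's local computation there.
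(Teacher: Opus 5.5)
Your direction (i)$\Rightarrow$(ii) is sound and matches the paper. The image of $I_\Gamma$ in $\Gal(M_\Sigma(k)/N_\infty)=\Tor_{\Z_p}X_\Sigma(k)$ is trivial; this is Lemma~\ref{lem isom injection} written out. That direction also needs only $X(N_\infty)^\Gamma=0$, not your first step. The worry in your last paragraph is unfounded. Inertia groups in $M_\Sigma(k)/N_\infty$ sit inside those of $M_\Sigma(k)/k$, and these are Artin images of $U_\pe^{(1)}$ whatever the degree or ramification of $\pe$. The gap is in your first step, on which (ii)$\Rightarrow$(i) entirely rests.

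For $1\to G_\Sigma(N_\infty)\to G_\Sigma(k)\to\Gamma\to 1$, the Hochschild--Serre spectral sequence degenerates, since $\Gamma$ has $p$-cohomological dimension $1$. It gives
\[
0\to \Homo^1\bigl(\Gamma,\Homo^1(G_\Sigma(N_\infty),\Q_p/\Z_p)\bigr)\to \Homo^2(G_\Sigma(k),\Q_p/\Z_p)\to \Homo^2(G_\Sigma(N_\infty),\Q_p/\Z_p)^\Gamma\to 0,
\]
and the left-hand term is $(X_\Sigma(N_\infty)^\Gamma)^{\lor}$.

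So what you need is $\Homo^2(G_\Sigma(k),\Q_p/\Z_p)=0$, that is, vanishing over $k$ and not over $N_\infty$. Knowing only $\Homo^2(G_\Sigma(N_\infty),\Q_p/\Z_p)=0$ yields $(X_\Sigma(N_\infty)^\Gamma)^{\lor}\simeq\Homo^2(G_\Sigma(k),\Q_p/\Z_p)$, which proves nothing.

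Moreover, passing from $\Sigma$-adic Leopoldt in its defining form to $\Homo^2(G_\Sigma(k),\Q_p/\Z_p)=0$ is a real theorem when $\Sigma\subsetneq S_p(k)$. Here the defining form is injectivity of $\Z_p\otimes_\Z E(k)\to U_\Sigma^{(1)}$. The paper imports this theorem from Maire (Lemma~\ref{lem Maire equiv}), and your proposal neither states nor justifies it.

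The freeness of $U_\Sigma^{(1)}/(\Z_p\otimes_\Z E(k))$ plays no role in this step; Proposition~\ref{lem invariant trivial} uses only $\Sigma$-adic Leopoldt. Your sentence about freeness ``killing the torsion units'' is therefore not an argument. With Maire's equivalence and the correct identification of the restriction kernel, your first step becomes Proposition~\ref{lem invariant trivial}, and the proof closes.
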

This recovers Fujii's result \cite[Theorem~2]{Fujii2020} when applied to a totally real number field $k$, and our proof is based on Fujii's method (with a slight modification). We note that Theorem~\ref{thm main WGC 3} implies a stronger result compared to \cite[Theorem~2]{Fujii2020}, even when applied to a totally real number field. 
\begin{cor}
Let $k$ be a totally real number field and $p$ a prime number. Suppose that $p$-adic Leopoldt's Conjecture is true for $k$ and $U_{S_p(k)}^{(1)}/(\Z_p\otimes_\Z E(k))$ is free as a $\Z_p$-module. Then the following are equivalent. 
\begin{itemize}
\item[(i)] $X(k^\cy_\infty)$ has no non-trivial finite $\Z_p\bbr{\Gal(k^\cy_\infty/k)}$-submodule. 

\item[(ii)] $L(k^\cy_\infty)=M_p(k^\cy_\infty)$ holds. 
\end{itemize}
\end{cor}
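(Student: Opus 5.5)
The corollary is a direct specialization of Theorem~\ref{thm main WGC 3}, so I will verify its hypotheses and identify the objects. Take $\Sigma=S_p(k)$.

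First, for a totally real field $k$ the $\Z_p$-rank of $X_{S_p(k)}(k)$ is $1+\delta$, where $\delta$ is the Leopoldt defect. Under $p$-adic Leopoldt's Conjecture $\delta=0$, so the rank is $1$. The unique $S_p(k)$-ramified $\Z_p$-extension is then $N_\infty=k^\cy_\infty$. Indeed, $k^\cy_\infty/k$ is unramified outside $p$: the only other primes that could ramify are archimedean, and they cannot ramify in a pro-$p$ extension when $p$ is odd, nor in $k^\cy_\infty\subset k(\mu_{2^\infty})^+$-type towers for $p=2$. So $k^\cy_\infty\subset M_{S_p(k)}(k)$, and uniqueness forces $N_\infty=k^\cy_\infty$. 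The convention on whether infinite primes may ramify should be the one fixed in \S\ref{section Leopoldt}; I would check that it agrees with this.

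Second, as noted before Theorem~\ref{thm main WGC 2}, $S_p(k)$-adic Leopoldt's Conjecture coincides with the usual $p$-adic Leopoldt's Conjecture, so that hypothesis holds. The freeness of $U_{S_p(k)}^{(1)}/(\Z_p\otimes_\Z E(k))$ is assumed verbatim.

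Finally, $M_{S_p(k)}(N_\infty)=M_{S_p(N_\infty)}(k^\cy_\infty)$, which is the field denoted $M_p(k^\cy_\infty)$, since every $p$-adic prime of $k^\cy_\infty$ lies over $S_p(k)$. Theorem~\ref{thm main WGC 3} then gives the equivalence of (i) and (ii).

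The only step needing care is the identification $N_\infty=k^\cy_\infty$, namely that the rank computation and the containment $k^\cy_\infty\subset M_{S_p(k)}(k)$ hold under the paper's conventions, including at $p=2$. This is standard class field theory, and I expect no real obstacle.
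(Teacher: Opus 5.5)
Your proposal is correct and follows the paper's route exactly: the corollary is Theorem~\ref{thm main WGC 3} with $\Sigma=S_p(k)$. It uses that $S_p(k)$-adic Leopoldt's Conjecture is the usual one and that, under it, $k^\cy_\infty$ is the unique $S_p(k)$-ramified $\Z_p$-extension (as remarked after Problem~\ref{problem Greenberg}); your concern at $p=2$ is harmless, since $k^\cy_\infty$ is totally real and so no archimedean place ramifies under either convention.
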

This corollary completely dispenses with the assumptions on the ramification in \cite[Theorem~2]{Fujii2020}. However, in the case where $p=2$, the assumption on the freeness of $U_{S_2(k)}^{(1)}/(\Z_2\otimes_{\Z} E(k))$ is satisfied only when $2$ does not split in $k$.

\subsection{The triviality of the unramified Iwasawa module}
Next we consider the triviality of $X(N_\infty)$. For a $\Z_p$-extension $k_\infty/k$, the triviality of $X(k_\infty)$ is equivalent to the vanishing of all the Iwasawa invariants of $k_\infty/k$. 

The following gives a criterion on the triviality of $X(N_\infty)$. 
\begin{thm}\label{thm triviality}
Let $k/\Q$ be a finite extension, $p$ an odd prime number, and $\Sigma$ a set of $p$-adic primes of $k$ that are of degree $1$. Let $N_\infty/k$ be a $\Sigma$-ramified $\Z_p$-extension. Suppose that $N_\infty/k$ is ramified at $\Sigma$ and totally ramified at some prime in $\Sigma$. Then the following are equivalent. 
\begin{enumerate}[label=(\roman*)]
\item $X(N_\infty)$ is trivial. \label{one}

\item $p$ does not divide the class number of $k$ and $U_\Sigma^{(1)}/(\Z_p\otimes_{\Z}E(k))$ is a free $\Z_p$-module of rank $1$. \label{two}
\end{enumerate}
\end{thm}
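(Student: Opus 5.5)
The plan is to reduce everything to the base field via Nakayama's lemma and then compute $X_\Sigma(k)$ by class field theory. Let $\pe_0\in\Sigma$ be a prime at which $N_\infty/k$ is totally ramified, and put $\Gamma=\Gal(N_\infty/k)$.

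The first step is the standard Iwasawa argument in the totally ramified setting. Since $N_\infty/k$ is totally ramified at $\pe_0$, we have $N_\infty\cap L(k)=k$. Hence $X(N_\infty)_\Gamma$ is a quotient of $\Gal(L(k)/k)\cong A(k)$. More precisely, the inertia groups at the ramified primes $\Pe\in S_\Sigma(N_\infty)$ generate the relevant submodule.

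Consider the case where $\pe_0$ is the only ramified prime, or more generally where there is a single prime of $N_\infty$ over $\pe_0$; this is forced since $\pe_0$ is totally ramified of degree $1$. The classical argument gives $X(N_\infty)_\Gamma$ as the quotient of $\Gal(L(N_\infty)/k)^{\mathrm{ab}}$-type data. Concretely, $X(N_\infty)/\omega X(N_\infty)$, with $\omega=\gamma-1$, is isomorphic to $\Gal(L(k)/k)$ modulo the subgroup generated by the images of the inertia groups $I_\Pe$ for the other ramified $\Pe$, each compared with $I_{\pe_0}$.

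By Nakayama, $X(N_\infty)=0$ if and only if this quotient vanishes. Rather than tracking inertia elements directly, I would phrase this via the $\Sigma$-ramified module: $X(N_\infty)=0$ should be equivalent to $\Gal(M_\Sigma(k)/k)$ being abelian of the form $\Gamma$. In other words, $M_\Sigma(k)=N_\infty$, i.e.\ $X_\Sigma(k)\cong\Z_p$. The strategy is to use Theorem~\ref{thm main GC} style arguments together with the degree $1$ hypothesis.

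The second step is the class field theory exact sequence
\[
0\to U_\Sigma^{(1)}/\overline{\Z_p\otimes_\Z E(k)}\to X_\Sigma(k)\to A(k)\to 0 ,
\]
where $U_\Sigma^{(1)}=\prod_{\pe\in\Sigma}U^{(1)}_{\pe}$ is the product of principal local units. Since each $\pe\in\Sigma$ has degree $1$ and $p$ is odd, each $U^{(1)}_\pe\cong\Z_p$ is torsion free; the paper's $\S$\ref{section Leopoldt} presumably sets up the closure precisely. From this sequence, condition~\ref{two} is equivalent to $X_\Sigma(k)\cong\Z_p$, i.e.\ to $M_\Sigma(k)=N_\infty$.

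It then remains to show that $X(N_\infty)=0$ if and only if $M_\Sigma(k)=N_\infty$.

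For \ref{two}$\Rightarrow$\ref{one}: if $M_\Sigma(k)=N_\infty$, consider $M_\Sigma(N_\infty)\supseteq L(N_\infty)$. The maximal abelian subextension of $M_\Sigma(N_\infty)/k$ is $M_\Sigma(k)=N_\infty$, so $X_\Sigma(N_\infty)_\Gamma=0$. By Nakayama $X_\Sigma(N_\infty)=0$, hence $X(N_\infty)=0$.

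For \ref{one}$\Rightarrow$\ref{two}: suppose $X(N_\infty)=0$, so $L(N_\infty)=N_\infty$ and $A(N_\infty)=0$, which in particular forces the trivial lifting. I would then argue that $A(k)=0$. Total ramification at $\pe_0$ gives that $L(k)N_\infty\subseteq L(N_\infty)=N_\infty$ with $L(k)\cap N_\infty=k$, so $L(k)=k$ and $p\nmid h_k$.

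Next, $X_\Sigma(N_\infty)$ is generated by the inertia groups $I_\Pe$ for $\Pe\in S_\Sigma(N_\infty)$. Each local field $N_{\infty,\Pe}$ contains a $\Z_p$-extension of $k_\pe=\Q_p$. Because $p$ is odd and the degree is $1$, the maximal abelian pro-$p$ extension of $\Q_p$ has group $\Z_p^2$, and the inertia of $N_{\infty,\Pe}$ inside the local abelian extension is procyclic, hence a quotient of $\Z_p$.

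Then take $\Gamma$-coinvariants. I expect $X_\Sigma(k)\to\Gamma$ to have kernel generated by the images of $I_\pe$ for $\pe\in\Sigma$ modulo the diagonal. The main obstacle is showing this kernel is zero using $X(N_\infty)=0$, because one must control the decomposition groups of the primes of $\Sigma$ other than $\pe_0$, which may split finitely in $N_\infty$.

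My first attempt would be Fujii's method, as the paper does for Theorem~\ref{thm main GC}. I would compute $X_\Sigma(N_\infty)_\Gamma$ via the sequence $0\to \bigl(\textstyle\prod_\Pe I_\Pe\bigr)_\Gamma\to X_\Sigma(N_\infty)_\Gamma\to X(N_\infty)_\Gamma$. I would then use the degree $1$ local structure to show $X_\Sigma(N_\infty)_\Gamma$ surjects onto $\Gal(M_\Sigma(k)/N_\infty)$ through a cyclic local piece that dies because $N_\infty$ is ramified at every $\pe\in\Sigma$. This gives $X_\Sigma(k)$ torsion-free of rank $1$, and hence the rank-$1$ freeness in~\ref{two}.
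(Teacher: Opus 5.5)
\textbf{There is a gap: half of (i)$\Rightarrow$(ii) is never proved.}

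Your direction (ii)$\Rightarrow$(i) is correct. From (ii) and the class field theory sequence you get $X_\Sigma(k)\simeq\Z_p$, hence $M_\Sigma(k)=N_\infty$. Lemma~\ref{lem five-term} then gives $X_\Sigma(N_\infty)_\Gamma\simeq\Gal(M_\Sigma(k)/N_\infty)=0$, so $X_\Sigma(N_\infty)=0$ by Nakayama. This is essentially the paper's route via Proposition~\ref{prop trivial}. Your argument that (i) forces $p\nmid h_k$ is also the paper's.

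The gap is the other half of (i)$\Rightarrow$(ii). You never prove $M_\Sigma(k)=N_\infty$, i.e.\ $X_\Sigma(k)\simeq\Z_p$; note that $\rank_{\Z_p}X_\Sigma(k)=1$ is not even assumed in this theorem. You call this ``the main obstacle'' and leave it open, and your last paragraph is a plan rather than an argument. The local input you offer is attached to the wrong field. Over the infinite local field $N_{\infty,\Pe}$, the inertia subgroup of the maximal abelian pro-$p$-extension is not procyclic: it is built from the principal units of the layers $N_{n,\Pe}$, whose $\Z_p$-ranks grow. So no procyclicity of the $I_\Pe\subseteq X_\Sigma(N_\infty)$ follows from it. Separately, your opening claim that $X(N_\infty)_\Gamma$ is a quotient of $A(k)$ is backwards: it surjects onto $X(k)$ and can be strictly larger.

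\textbf{The missing idea is the paper's Lemma~\ref{lem ur}.} It uses procyclicity over $k$, not over $N_\infty$.
\begin{enumerate}
\item Fix $\pe\in\Sigma$ and let $I_\pe$ be the inertia group of $\pe$ in the abelian extension $M_\Sigma(k)/k$. By local class field theory, $I_\pe$ is a quotient of the pro-$p$ part of $U_\pe=\Z_p^\times$, namely $1+p\Z_p\simeq\Z_p$. This is where $p$ odd and degree $1$ are used.
\item The image of $I_\pe$ in $\Gamma$ is the inertia group of $\pe$ in $N_\infty/k$. It is non-trivial because $N_\infty/k$ is ramified at $\pe$, hence it is isomorphic to $\Z_p$.
\item A quotient of $\Z_p$ that maps onto $\Z_p$ maps isomorphically. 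Hence $I_\pe\cap\Gal(M_\Sigma(k)/N_\infty)=0$, i.e.\ $M_\Sigma(k)/N_\infty$ is unramified above $\pe$.
\item $M_\Sigma(k)/N_\infty$ is abelian and already unramified outside $\Sigma$, so it is unramified everywhere. Thus $M_\Sigma(k)\subseteq L(N_\infty)=N_\infty$, and $X_\Sigma(k)=\Gamma\simeq\Z_p$.
\item Together with $A(k)=0$, the exact sequence gives $U_\Sigma^{(1)}/(\Z_p\otimes_\Z E(k))\simeq\Z_p$.
\end{enumerate}
This argument needs no control of decomposition groups, nor of how the other primes of $\Sigma$ split in $N_\infty$. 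The obstacle you anticipated does not actually arise.
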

This recovers a part of Fukuda--Komatsu's result \cite[Theorem~1, Corollary]{FuKo1986} when applied to a real quadratic field and $\Sigma=S_p(k)$ (see Corollary~\ref{cor FK} as well). 
\begin{rem}
It is noteworthy that Hachimori \cite{Hachimori2003} dealt with $\Sigma$-ramified Iwasawa modules of the cyclotomic $\Z_p$-extension over number fields and generalized Fukuda--Komatsu's result in another direction (\cite[Theorem~8.1]{Hachimori2003}). Hachimori's theorem is similar to ours but our theorem does not apply to the setting in \cite{Hachimori2003}. 
\end{rem}
\begin{rem}
Some authors have obtained parts of the results stated above in some special settings as an intermediate step of the proof of Greenberg's Generalized Conjecture (Conjecture~\ref{conjecture GGC}). For example, Itoh \cite{Itoh2011} dealt with imaginary abelian quartic fields, Fujii \cite{Fujii2017} dealt with general CM-fields and Kataoka \cite{Kataoka2017} dealt with complex cubic fields. 
\end{rem}

\subsection{Greenberg's Generalized Conjecture}\label{subsection intro GGC}
Next we consider a generalized form of Greenberg's Conjecture. For a finite extension $k/\Q$ and a prime number $p$, we let $\widetilde{k}/k$ denote the maximal multiple $\Z_p$-extension over $k$, that is, the composite of all $\Z_p$-extensions over $k$. 
\begin{conjecture}[Greenberg's Generalized Conjecture {\cite[Conjecture~(3.5)]{Greenberg1998}}]\label{conjecture GGC}
Let $k/\Q$ be a finite extension and $p$ a prime number. Then $X(\widetilde{k})$ is pseudo-null as a $\Z_p\bbr{\Gal(\widetilde{k}/k)}$-module. 
\end{conjecture}
Here, for a $\Z_p^d$-extension $K/k$ for some $d\geq 1$, a finitely generated $\Z_p\bbr{\Gal(K/k)}$-module $M$ is said to be pseudo-null if the height of the annihilator ideal of $M$ is greater than $1$. In the case where $d=1$, a finitely generated $\Z_p\bbr{\Gal(K/k)}$-module $M$ is pseudo-null if and only if it is finite. So Conjecture~\ref{conjecture GGC} is actually a generalization of Conjecture~\ref{conjecture Greenberg}, under $p$-adic Leopoldt's Conjecture. 

The following theorem of Fujii is one of the most notable results toward Conjecture~\ref{conjecture GGC}. 
\begin{thm}[{\cite[Theorem~2]{Fujii2017}}]\label{thm Fujii GGC}
Let $k$ be a CM-field and $p$ an odd prime number that splits completely in $k$. Suppose that $p$ does not divide the class number of $k$ and all the Iwasawa invariants of the cyclotomic $\Z_p$-extension of $k^+$ are trivial, where $k^+$ is the maximal totally real subfield of $k$. Then Conjecture~\ref{conjecture GGC} is true for $k$ and $p$. 
\end{thm}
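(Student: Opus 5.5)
The plan is to derive Conjecture~\ref{conjecture GGC} for $k$ from the triviality of the unramified Iwasawa module of a single well-chosen $\Z_p$-extension $N_\infty\subset\widetilde{k}$. I would then lift that triviality up the tower $\widetilde{k}/N_\infty$ and control the contributions of the primes above $p$. Put $d=[k^+:\Q]$. Since $p$ splits completely in $k$, every $p$-adic prime of $k^+$ splits in $k/k^+$. I choose a $p$-adic CM-type $\Sigma$ (\S\ref{subsection CM-fields}), containing exactly one prime of $k$ above each prime of $k^+$ above $p$. All primes in $\Sigma$ are of degree $1$, and $p$ is odd, so Theorem~\ref{thm triviality} is available.

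\textbf{Step 1 (reduction to $k^+$).} I would apply Theorem~\ref{thm triviality} to $k^+$ with $\Sigma=S_p(k^+)$ and $N_\infty=k^{+,\cy}_\infty$. Its hypotheses hold: the cyclotomic extension is ramified at every $p$-adic prime, and totally ramified at some $p$-adic prime after shifting the base if necessary. The vanishing of all Iwasawa invariants of $k^{+,\cy}_\infty$ means $X(k^{+,\cy}_\infty)=0$, so the theorem gives two facts: $p\nmid h(k^+)$, and $U^{(1)}_{S_p(k^+)}/(\Z_p\otimes E(k^+))$ is $\Z_p$-free of rank $1$. In particular the $p$-adic Leopoldt conjecture holds for $k^+$, and hence for $k$, since $E(k)/\mu(k)E(k^+)$ is finite of order at most $2$. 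Next I would compare $U^{(1)}_\Sigma(k)$ with $U^{(1)}_{S_p(k^+)}(k^+)$. Because each $\pe\in\Sigma$ has completion $k_\pe=k^+_{\pe\cap k^+}=\Q_p$, the natural map is an isomorphism. Under it the image of $\Z_p\otimes E(k)$ equals that of $\Z_p\otimes E(k^+)$, as $p$ is odd and $\mu(k)$ has trivial $p$-part in $U^{(1)}$ at a degree-$1$ prime for $p>2$ (or at least dies in the torsion-free quotient). Hence $U^{(1)}_\Sigma/(\Z_p\otimes E(k))$ is free of rank $1$. This gives $\rank_{\Z_p}X_\Sigma(k)=1$, so there is a unique $\Sigma$-ramified $\Z_p$-extension $N_\infty$, and it is ramified at $\Sigma$.

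\textbf{Step 2 (triviality of $X(N_\infty)$).} Since $p\nmid h(k)$, the field $k$ has no unramified $p$-extension. Therefore every $\pe\in\Sigma$ is totally ramified in $N_\infty/k$; otherwise a nontrivial unramified subextension would appear. Theorem~\ref{thm triviality} applied to $k,\Sigma,N_\infty$ now gives $X(N_\infty)=0$.

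\textbf{Step 3 (ascent to $\widetilde{k}$).} By Leopoldt, $\widetilde{k}/k$ is a $\Z_p^{d+1}$-extension. Let $H=\Gal(\widetilde{k}/N_\infty)\cong\Z_p^d$. The map $X(\widetilde{k})_H\to X(N_\infty)=0$ has kernel generated by the inertia subgroups in $\Gal(L(\widetilde{k})/N_\infty)$ of primes above $p$ that are unramified in $N_\infty$. Those primes are the conjugates $\bar\pe$, $\pe\in\Sigma$. Their inertia groups in $\widetilde{k}/N_\infty$ are images of the local units at $\bar\pe$, so each has $\Z_p$-rank at most $1$. This makes $X(\widetilde{k})_H$ a quotient of a module of $\Z_p$-rank at most $d$ with controlled structure. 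I would then use Nakayama over $\Lambda=\Z_p\bbr{\Gal(\widetilde{k}/k)}$, writing $X(\widetilde{k})$ as a quotient of $\bigoplus_{\bar\pe}\Lambda/I_{\bar\pe}$ by the relevant decomposition-group augmentation ideals. Each $\bar\pe$ is finitely decomposed and its decomposition group in $\widetilde{k}/k$ has rank $\ge 2$, because $\bar\pe$ ramifies in the cyclotomic direction and in the $\bar\Sigma$-ramified directions. Consequently each summand is supported in codimension $\ge 2$, and this shows pseudo-nullity.

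\textbf{Main obstacle.} I expect Step 3 to be the main obstacle, namely showing that the inertia contributions at $\bar\Sigma$ are genuinely pseudo-null rather than merely of small rank. I would first try to prove that the decomposition group of each $\bar\pe$ in $\widetilde{k}/k$ is open of rank $\ge 2$, using the local computation at $k_{\bar\pe}=\Q_p$. Then I would check that the induced cyclic $\Lambda$-modules have annihilators containing two coprime elements.
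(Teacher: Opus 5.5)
\textbf{Steps 1--2 are essentially right, with one fixable slip.} They reproduce the paper's check (\S\ref{subsection earlier}) that a $p$-adic CM-type satisfies conditions (A)--(D) of Theorem~\ref{thm main GGC}, together with $X^{(1)}=0$. Your total-ramification argument is wrong, though. If $\pe\in\Sigma$ were not totally ramified in $N_\infty/k$, its inertia field would be unramified only at $\pe$, so $p\nmid h(k)$ gives nothing. What you need is $X_{\Sigma\setminus\{\pe\}}(k)=0$, i.e.\ condition (D). This does follow from your comparison with $k^+$, because $M_{S_p(k^+)}(k^+)=k^{+,\cy}_\infty$ is totally ramified at $\pe\cap k^+$. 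For $X(N_\infty)=0$ alone you do not need it: $X_\Sigma(k)\simeq\Z_p$ gives $L(N_\infty)\cap k^\ab\subseteq M_\Sigma(k)=N_\infty$, so $X(N_\infty)_\Gamma=0$.

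\textbf{Step 3 is a genuine gap; it fails in three places.}

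First, $H\simeq\Z_p^d$ has $\Homo_2(H,\Z_p)\simeq\wedge^2H\neq 0$ for $d\geq 2$. The five-term sequence therefore only gives $\wedge^2H\to X(\widetilde{k})_H\to\Gal(L(\widetilde{k})\cap N_\infty^\ab/N_\infty)\to H\to 0$, and the inertia groups control only the third term. Those inertia groups live in $\Gal(L(\widetilde{k})/N_\infty)$ and meet $X(\widetilde{k})$ trivially.

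Second, since $\Gal(\widetilde{k}/k)/H\simeq\Z_p$, getting pseudo-nullity from $X(\widetilde{k})_H$ requires $X(\widetilde{k})_H$ to be finite. A $\Z_p$-rank bound shows at most that $X(\widetilde{k})$ is torsion. Indeed, over $\Z_p\bbr{\Gamma}$ the inertia contributions form a quotient of $\bigoplus_{\overline{\pe}}\Z_p\bbr{\Gamma}/(g_{\overline{\pe}}-1)$, with $g_{\overline{\pe}}$ a Frobenius. That module is finitely generated over $\Z_p$, so torsion, but it is not finite.

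Third, Nakayama lifts generators but not relations. Nothing produces a surjection $\bigoplus_{\overline{\pe}}\Lambda/I_{D_{\overline{\pe}}}\to X(\widetilde{k})$, so the rank of $D_{\overline{\pe}}$ never supplies a second annihilator. Even making $\Gal(L(\widetilde{k})\cap N_\infty^\ab/\widetilde{k})$ finite by a count as in Proposition~\ref{prop rank 2} would require $\overline{\Sigma}$-adic Leopoldt on the non-trivial character parts of the layers $N_n$. That is not known; Lemma~\ref{lem Maire} handles only a single prime.

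\textbf{How the paper avoids this.} It climbs the tower $N_\infty=K^{(1)}\subseteq\cdots\subseteq K^{(d+1)}=\widetilde{k}$, adding one prime $\qu_i\in\overline{\Sigma}$ at each step. Each step is then a $\Z_p$-extension, so coinvariants embed by Lemma~\ref{lem five-term}, Lemma~\ref{lem pn lift} applies, and only one prime's inertia appears at a time. Two arithmetic inputs are missing from your plan.
\begin{itemize}
\item Finite splitting of the $\qu_i$ in the intermediate restricted-ramification extensions. These are conditions (E) and (F), supplied for CM-fields by Lemma~\ref{lem type split}, and they feed Lemma~\ref{lem ramified}.
\item Maire's finiteness of $X_{\qu}(k')$. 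It gives finiteness of $\Gal(L_2/K^{(2)})$ via Proposition~\ref{prop rank 2}. In Theorem~\ref{thm pn step up} it also handles the maximal non-pseudo-null quotient $\Gal(L'_{i+1}/K^{(i)})$. This quotient is fixed by the decomposition generator $g$ of $\qu_i$ and is $\qu_i$-ramified over the decomposition field $E$. It is therefore torsion over $\Z_p\bbr{\Gal(E/F)}$, hence pseudo-null, hence zero.
\end{itemize}
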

This is a generalization of Itoh's result \cite[Theorem~1.1]{Itoh2011} on imaginary abelian quartic fields. We also mention that the case where $k$ is an imaginary quadratic field is a result of Minardi \cite[Proposition~3.A]{Minardi1986}. Here we remark that the assumption on $p$-adic Leopoldt's Conjecture is not needed in Theorem~\ref{thm Fujii GGC} as mentioned in \cite[Remark~3.4~(2)]{Fujii2022}. 

In the present article, we show the following by our observation on $\Sigma$-adic Leopoldt's Conjecture. 
\begin{thm}\label{thm main GGC}
Let $k/\Q$ be a finite extension, $p$ an odd prime number that splits completely in $k$, and $\Sigma$ a set of $p$-adic primes of $k$. Suppose that the following hold. 
\begin{enumerate}[label=(\Alph*)]
\item $p$ does not divide the class number of $k$. \label{GGC A}

\item $\Sigma$-adic Leopoldt's Conjecture holds for $k$. \label{GGC B}

\item The $\Z_p$-rank of $X_\Sigma(k)$ is $1$.  \label{GGC C}

\item $U_{\Sigma\setminus\{\pe\}}^{(1)}/(\Z_p\otimes_\Z E(k))$ is trivial for every $\pe\in\Sigma$. \label{GGC D}

\item Every $\qu\in S_p(k)\setminus\Sigma$ splits finitely in $M_\Sigma(k)/k$. \label{GGC E}

\item $\qu'$ splits finitely in $M_{(\Sigma\setminus\{\pe\})\cup\{\qu\}}(k)/k$, for every $\pe\in\Sigma$ and $\qu, \qu'\in S_p(k)\setminus\Sigma$ with $\qu\neq \qu'$. \label{GGC F}
\end{enumerate}
Then Conjecture~\ref{conjecture GGC} is true for $k$ and $p$. 
\end{thm}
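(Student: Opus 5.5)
We follow the strategy of Fujii's proof of Theorem~\ref{thm Fujii GGC} (and Itoh/Minardi before him). The standard reduction is this: to prove pseudo-nullity of $X(\widetilde{k})$ over $\Z_p\bbr{\Gal(\widetilde{k}/k)}$, it suffices to find a $\Z_p$-extension $N_\infty/k$ inside $\widetilde{k}$ with two properties. First, $X(N_\infty)$ is trivial (or at least finite). Second, every prime of $N_\infty$ that ramifies in $\widetilde{k}/N_\infty$ splits only finitely in $\widetilde{k}/N_\infty$. Under these conditions, a result of Fujii/Minardi type (a Nakayama-type argument in the multi-variable Iwasawa algebra, e.g.\ \cite[Lemma]{Fujii2017} or its descent variant) shows that $X(\widetilde{k})$ is pseudo-null. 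We therefore take $N_\infty$ to be the unique $\Sigma$-ramified $\Z_p$-extension, which exists by \ref{GGC C}, and verify these hypotheses.

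\textbf{Step 1: triviality of $X(N_\infty)$.} We apply Theorem~\ref{thm triviality}. Since $p$ splits completely, every $p$-adic prime has degree $1$. Condition \ref{GGC A} gives $p\nmid h_k$, and we must show that $U_\Sigma^{(1)}/(\Z_p\otimes E(k))$ is free of rank $1$. Here \ref{GGC B} ($\Sigma$-adic Leopoldt) should give that $\Z_p\otimes E(k)$ injects into $U_\Sigma^{(1)}$, so the quotient has $\Z_p$-rank equal to $\rank X_\Sigma(k)=1$, by the class field theory sequence $0\to U_\Sigma^{(1)}/\overline{E}\to X_\Sigma(k)\to A(k)\to 0$. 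The quotient is torsion-free: a torsion element would come from a unit in the closure that is not itself in the closure, and \ref{GGC D} rules this out. Indeed, $U_\Sigma^{(1)}\cong\bigoplus_{\pe\in\Sigma}\Z_p$ (degree $1$, $p$ odd), and triviality of $U_{\Sigma\setminus\{\pe\}}^{(1)}/\overline{E}$ means the projection of $\overline{E}$ away from each $\pe$ is surjective. This should force $U_\Sigma^{(1)}/\overline{E}\cong \Z_p$ via the $\pe$-component. The same fact gives that $X_\Sigma(k)=\Gal(N_\infty/k)\cong\Z_p$, so $N_\infty=M_\Sigma(k)$. It also gives that each $\pe\in\Sigma$ has inertia group equal to the image of $U_\pe^{(1)}\cong\Z_p$, surjecting onto $U_\Sigma^{(1)}/\overline{E}$. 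Hence $N_\infty/k$ is totally ramified at every $\pe\in\Sigma$, and Theorem~\ref{thm triviality} applies to give $X(N_\infty)=0$.

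\textbf{Step 2: splitting conditions.} The primes ramified in $\widetilde{k}/N_\infty$ lie above $S_p(k)$. For $\qu\notin\Sigma$, \ref{GGC E} together with $N_\infty=M_\Sigma(k)$ says that $\qu$ splits into finitely many primes in $N_\infty$. We then need finite splitting in $\widetilde{k}$, or at least the weaker condition used in the criterion: the decomposition groups in $\Gal(\widetilde{k}/N_\infty)$ have finite index. For this we use \ref{GGC F}. Replacing $\Sigma$ by $\Sigma'=(\Sigma\setminus\{\pe\})\cup\{\qu\}$, the analogues of \ref{GGC C}, \ref{GGC D} plus \ref{GGC B} should show that $M_{\Sigma'}(k)$ is again a $\Z_p$-extension in which $\qu$ is totally ramified. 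Condition \ref{GGC F} then gives a $\Z_p$-extension in which $\qu'$ has open decomposition group. Composing these over the various $\pe,\qu$, and using $\rank\Gal(\widetilde{k}/k)=r_2+1+\delta$ where the relevant $\Z_p$-extensions span, we obtain decomposition groups of full rank. For $\pe\in\Sigma$, the decomposition group in $\widetilde{k}/k$ contains the total inertia of $\pe$ in $\widetilde{k}$, of rank governed by $U_\pe^{(1)}$ modulo units. We expect this to be handled by the same $\Sigma'$-swapping argument.

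\textbf{Main obstacle.} We expect the bookkeeping in Step 2 to be the main obstacle, because $\Sigma$-adic Leopoldt only controls the ranks of $X_{\Sigma'}(k)$ for $\Sigma'$ obtained by swapping, and the finite-splitting criterion must be arranged in exactly the form required by Fujii's pseudo-nullity lemma. The first attempt will be to reduce to the intermediate $\Z_p^2$-extensions $N_\infty M_{\Sigma'}(k)$ and apply the two-variable version of the lemma iteratively.
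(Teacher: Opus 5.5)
Your Step~1 is correct and is essentially the paper's Lemmas~\ref{lem free} and \ref{lem tot ram K1} plus Proposition~\ref{prop trivial}. Condition~\ref{GGC D} makes $U_\pe^{(1)}\to U_\Sigma^{(1)}/(\Z_p\otimes_\Z E(k))$ surjective, and rank $1$ forces it to be an isomorphism. Hence $N_\infty=M_\Sigma(k)$, it is totally ramified at $\Sigma$, and $X(N_\infty)=0$.

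\textbf{The gap.} The gap is the ``standard reduction'' on which Step~2 rests. Its hypothesis cannot be met in general, so this is not a bookkeeping issue.
Since $p$ splits completely, $k_\qu=\Q_p$ for every $\qu\in S_p(k)$. By local class field theory, the decomposition group of $\qu$ in any abelian pro-$p$-extension is a quotient of the pro-$p$ completion of $\Q_p^\times$, which is $\simeq\Z_p^2$ for odd $p$. For $\qu\notin\Sigma$, condition~\ref{GGC E} says this group already has open image in $\Gal(N_\infty/k)$. So its intersection with $\Gal(\widetilde{k}/N_\infty)\simeq\Z_p^{r_2(k)}$ has $\Z_p$-rank at most $1$. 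Since $N_\infty=M_\Sigma(k)\neq\widetilde{k}$, some such $\qu$ ramifies in $\widetilde{k}/N_\infty$, and it splits infinitely there as soon as $r_2(k)\geq 2$. Composing the $\Z_p$-extensions supplied by condition~\ref{GGC F} cannot change this.

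Even when $r_2(k)=1$, your criterion is insufficient. Triviality of $X(N_\infty)$ plus finite splitting only shows that $\Gal(L_2/\widetilde{k})$ is a finitely generated $\Z_p$-module. It is the kernel of $\sum_{\Qu_1}I_{\Qu_1}\to\Gal(\widetilde{k}/N_\infty)$, a sum of finitely many copies of $\Z_p$, and its rank need not be $0$. Finiteness requires the rank computation of Proposition~\ref{prop rank 2}, whose essential input is Maire's theorem (Lemma~\ref{lem Maire}) that $X_{\qu_1}(N_n)$ is finite. Your proposal supplies nothing of this kind. Note also that in the paper, finite splitting enters as splitting of $\qu_1$ in the base extension $N_\infty/k$ (this is exactly condition~\ref{GGC E}), not splitting in $\widetilde{k}/N_\infty$.

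\textbf{The paper's route.} The paper climbs the tower $N_\infty=K^{(1)}\subseteq K^{(2)}\subseteq\cdots\subseteq K^{(r_2(k)+1)}=\widetilde{k}$. Here $K^{(i)}$ is the $S_i$-ramified $\Z_p^i$-extension with $S_i=\Sigma\cup\{\qu_1,\ldots,\qu_{i-1}\}$, so ramification is added at one prime at a time. Pseudo-nullity is transferred upward by Perrin-Riou's Lemma~\ref{lem pn lift}, applied to $X^{(i+1)}_{\Gal(K^{(i+1)}/K^{(i)})}\simeq\Gal(L_{i+1}/K^{(i+1)})$.

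For $i\geq 2$, $\qu_i$ genuinely splits infinitely in $K^{(i)}$, and the argument runs as follows. The decomposition field $E$ of $\qu_i$ in $K^{(i)}/k$ is an infinite extension. A generator $g$ of the decomposition group acts trivially on $\Gal(L'_{i+1}/K^{(i)})$, because this group embeds into $\sum_{\Qu_i}I_{\Qu_i}$. Separately, $\Gal(L'_{i+1}/E)$ is shown to be torsion over $\Z_p\bbr{\Gal(E/F)}$, again via Maire's lemma. Together these give pseudo-nullity.

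Conditions~\ref{GGC E} and \ref{GGC F}, together with $(\Sigma\setminus\{\pe\})$-adic Leopoldt's Conjecture, are used only in Lemma~\ref{lem ramified}. There they guarantee that $K^{(i)}/E$ is $\qu_i$-ramified, which is what puts $L'_{i+1}$ inside $M_{\qu_i}(E)$.

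The missing ideas are therefore the inductive tower, the Perrin-Riou descent, and Maire's finiteness theorem.
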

Our proof of Theorem~\ref{thm main GGC} is a generalization of Fujii's proof of Theorem~\ref{thm Fujii GGC}, which is based on Itoh's proof of \cite[Theorem~1.1]{Itoh2011}. We construct a certain tower 
\[
k\subseteq K^{(1)}\subseteq K^{(2)}\subseteq \cdots\subseteq K^{(r_2(k)+1)}=\widetilde{k}
\]
of multiple $\Z_p$-extensions over $k$ and inductively show the pseudo-nullity of the Iwasawa modules of the intermediate extensions. 

Theorem~\ref{thm main GGC} includes Theorem~\ref{thm Fujii GGC} as a special case, as we will explain in \S\ref{subsection earlier}. In addition, we can deal with the following cases. 
\begin{cor}\label{cor GGC abel imag quad}
Let $K$ be an imaginary quadratic field, $k/K$ a finite abelian extension, and $p$ an odd prime number that splits completely in $k$. Let $\Pe$ be a $p$-adic prime of $K$ and put $\Sigma\coloneqq S_\Pe(k)$. Suppose that the following hold. 
\begin{enumerate}[label=(\alph*)]
\item $p$ does not divide the class number of $k$. \label{a}

\item $U_\Sigma^{(1)}/(\Z_p\otimes_{\Z}E(k))$ is free as a $\Z_p$-module. \label{b}

\item $L(K)\cap\widetilde{K}=K$ holds. \label{c}

\item $\qu'$ splits finitely in $M_{(\Sigma\setminus\{\pe\})\cup\{\qu\}}(k)/k$, for every $\pe\in\Sigma$ and $\qu, \qu'\in S_p(k)\setminus\Sigma$ with $\qu\neq \qu'$. \label{d}
\end{enumerate}
Then Conjecture~\ref{conjecture GGC} is true for $k$ and $p$. 
\end{cor}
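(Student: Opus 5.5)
The plan is to deduce the corollary directly from Theorem~\ref{thm main GGC} applied to $k$, $p$ and $\Sigma=S_\Pe(k)$. Conditions~\ref{GGC A} and~\ref{GGC F} there are exactly hypotheses~\ref{a} and~\ref{d}, so the work is to verify \ref{GGC B}--\ref{GGC E}. Put $G\coloneqq\Gal(k/K)$ and $d\coloneqq[k:K]$.

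\emph{Step 1: conditions \ref{GGC B} and \ref{GGC C}.} Since $p$ splits completely in $k$, $G$ acts simply transitively on $\Sigma$, $|\Sigma|=d$, and $U_\Sigma^{(1)}\cong\Z_p[G]$ is free of rank $d$. The field $k$ has $r_2(k)=d$, so $E(k)$ has $\Z$-rank $d-1$. For $\Sigma$-adic Leopoldt's Conjecture I would use the Baker--Brumer theorem on $p$-adic linear independence of algebraic logarithms, in the same way it yields Leopoldt's Conjecture for abelian extensions of $\Q$ or of imaginary quadratic fields. Concretely, one decomposes $\Q_p\otimes E(k)$ into $G$-isotypic pieces via a Minkowski-type unit and shows that each nontrivial character appears in the image in $\Q_p\otimes U_\Sigma^{(1)}$. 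I expect this to be the main obstacle, since it is the only genuinely transcendental input. I would first try to quote the corresponding statement from \S\ref{section Leopoldt} (where $\Sigma$-adic Leopoldt's Conjecture is introduced) if it treats abelian extensions of imaginary quadratic fields. Granting it, the closure of $E(k)$ in $U_\Sigma^{(1)}$ has rank $d-1$. By class field theory, $\rank_{\Z_p}X_\Sigma(k)=d-(d-1)=1$, which is~\ref{GGC C}.

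\emph{Step 2: condition \ref{GGC D}.} By Step~1, $\Q_p\otimes\overline{E(k)}$ is the sum of the nontrivial isotypic components of $\Q_p[G]$. Hence $Q\coloneqq U_\Sigma^{(1)}/(\Z_p\otimes E(k))$, which is free of rank $1$ by~\ref{b}, has trivial $G$-action. Triviality of $U_{\Sigma\setminus\{\pe\}}^{(1)}/(\Z_p\otimes E(k))$ is equivalent to $U_\Sigma^{(1)}=U_\pe^{(1)}+\overline{E(k)}$, that is, to surjectivity of $U_\pe^{(1)}\to Q$. For every $g\in G$ the image of $U_{g\pe}^{(1)}$ in $Q\cong\Z_p$ equals $g$ applied to the image of $U_\pe^{(1)}$, which is the same subgroup because $G$ acts trivially on $Q$. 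Since the $U_{g\pe}^{(1)}$ together generate $U_\Sigma^{(1)}$, the images jointly generate $Q$. All these images coincide, so each one is all of $Q$.

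\emph{Step 3: condition \ref{GGC E}.} Let $K_\Pe/K$ be the $\Pe$-ramified $\Z_p$-extension of $K$. The compositum $kK_\Pe$ is a $\Sigma$-ramified $\Z_p$-extension of $k$, so by Step~1 it equals $N_\infty$, and $M_\Sigma(k)/N_\infty$ is finite. It therefore suffices to show that $\overline{\Pe}$ splits finitely in $K_\Pe/K$. Locally at $\overline{\Pe}$ we have $K_{\overline{\Pe}}=\Q_p$, and the decomposition group of $\overline{\Pe}$ in $\Gal(\widetilde K/K)\cong\Z_p^2$ is the image of $\Gal(\Q_p^{\ab}/\Q_p)$. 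Hypothesis~\ref{c}, which says $L(K)\cap\widetilde K=K$, makes the inertia groups of $\Pe$ and $\overline{\Pe}$ together generate $\Gal(\widetilde K/K)$. I expect this to force the decomposition group of $\overline{\Pe}$ to be open in $\Gal(\widetilde K/K)$, so its image in $\Gal(K_\Pe/K)$ is nontrivial. Then every $\qu\in S_p(k)\setminus\Sigma$ splits finitely in $N_\infty$, hence in $M_\Sigma(k)$. Theorem~\ref{thm main GGC} then gives the conclusion.
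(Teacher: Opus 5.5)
Steps~1 and~2 are correct. Step~1 is what the paper does: it quotes Brumer's theorem, as in \S\ref{subsection abel imag quad}.

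Step~2 is a different route to condition~(D) from the paper's. The paper uses (a) and (b) to get $X_\Sigma(k)\simeq\Z_p$, so that $M_\Sigma(k)=kK_\Pe$. It then uses (c) to see that $K_\Pe/K$ is totally ramified at $\Pe$. Since $\Pe$ splits completely in $k$, it follows that $M_\Sigma(k)/k$ is totally ramified at every $\pe\in\Sigma$, i.e.\ $X_{\Sigma\setminus\{\pe\}}(k)=0$, which gives (D). Your argument runs as follows. The norm $N_{k/K}$ kills $\Q_p\otimes_\Z E(k)$, so by Brumer the image of $\Q_p\otimes_\Z E(k)$ in $\Q_p\otimes U_\Sigma^{(1)}\simeq\Q_p[G]$ is exactly the augmentation part. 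Hence $G$ acts trivially on the free module $Q$, all $G$-translates of the image of $U_\pe^{(1)}$ in $Q$ coincide, and $U_\pe^{(1)}\to Q$ is onto. This needs only (b) and Brumer, not (a) or (c).

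The genuine gap is in Step~3. Hypothesis (c) only says that $\Gal(\widetilde{K}/K)=I_\Pe+I_{\overline{\Pe}}$; it carries no information about the Frobenius at $\overline{\Pe}$. Now $D_{\overline{\Pe}}/I_{\overline{\Pe}}$ is generated by the Frobenius, and $\Gal(\widetilde{K}/K)/I_{\overline{\Pe}}$ contains $\Gal(K_\Pe/K)$ as a quotient with finite kernel. So openness of $D_{\overline{\Pe}}$ is \emph{equivalent} to the Frobenius having infinite order in $\Gal(K_\Pe/K)$, i.e.\ to $\overline{\Pe}$ splitting finitely in $K_\Pe/K$. That is the very claim you need, so ``I expect this to force'' is not a deduction, and no local consideration at $\overline{\Pe}$ can settle it.

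A global input is required; the paper simply cites \cite[Lemma~3.1]{Minardi1986}, which holds unconditionally. A direct argument goes as follows. Write $\overline{\Pe}^{h}=(\alpha)$, where $h$ is the class number of $K$. Since $K_\Pe/K$ is unramified outside $\Pe$ and $\alpha$ is a unit away from $\overline{\Pe}$, reciprocity shows that $\mathrm{Frob}_{\overline{\Pe}}^{\,h}$ equals the image of $\alpha^{-1}\in U_\Pe$ under the local Artin map at $\Pe$. Because $\Z_p\otimes_\Z E(K)=0$ ($p$ is odd and splits in $K$), the map $U_\Pe^{(1)}\to X_\Pe(K)$ is injective with finite cokernel $X(K)$. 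Hence $U_\Pe^{(1)}\simeq\Z_p$ injects into $\Gal(K_\Pe/K)$. The projection of $\alpha$ to $U_\Pe^{(1)}$ is non-trivial, since $\alpha$ is not a root of unity. So the Frobenius has infinite order and $\overline{\Pe}$ splits finitely in $K_\Pe/K$.

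With this replacement Step~3 is complete. Note that hypothesis (c) then plays no role anywhere in your argument, whereas in the paper it is exactly what drives (D).
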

\begin{cor}\label{cor GGC single complex}
Let $k/\Q$ be a finite extension that has a single complex place, $p$ an odd prime number that splits completely in $k$, and $\pe_0$ a $p$-adic prime of $k$. Put $\Sigma\coloneqq S_p(k)\setminus\{\pe_0\}$. Suppose that the following hold. 
\begin{enumerate}[label=(\alph*)]
\item $p$ does not divide the class number of $k$. \label{GGC a}

\item $p$-adic Leopoldt's Conjecture is true for $k$. \label{GGC b}

\item $U_{\Sigma\setminus\{\pe\}}^{(1)}/(\Z_p\otimes_{\Z}E(k))$ is trivial for every $\pe\in\Sigma$. \label{GGC c}

\item $\pe_0$ splits finitely in $M_\Sigma(k)/k$. \label{GGC d} 
\end{enumerate}
Then Conjecture~\ref{conjecture GGC} is true for $k$ and $p$. 
\end{cor}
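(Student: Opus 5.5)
We derive Corollary~\ref{cor GGC single complex} from Theorem~\ref{thm main GGC} with $\Sigma\coloneqq S_p(k)\setminus\{\pe_0\}$, checking hypotheses \ref{GGC A}--\ref{GGC F} one by one. Hypothesis \ref{GGC A} is \ref{GGC a}, and \ref{GGC D} is \ref{GGC c}. Since $S_p(k)\setminus\Sigma=\{\pe_0\}$ is a single prime, the pair condition \ref{GGC F} (which needs $\qu\neq\qu'$ in $S_p(k)\setminus\Sigma$) is vacuous. Hypothesis \ref{GGC E} asks only that $\pe_0$ split finitely in $M_\Sigma(k)/k$, which is \ref{GGC d}. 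So the real work is deducing \ref{GGC B} and \ref{GGC C} from \ref{GGC b} and the single complex place.

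For these, we use the results of \S\ref{section Leopoldt}. Since $p$ splits completely, $U_\Sigma^{(1)}$ is free of $\Z_p$-rank $|\Sigma|=[k:\Q]-1=r_1+2r_2-1$ with $r_2=1$, that is, rank $r_1+1$. The unit rank is $r_1+r_2-1=r_1$. We expect the $\Z_p$-rank of $X_\Sigma(k)$ to equal $\rank_{\Z_p}U_\Sigma^{(1)}-\rank_{\Z_p}\overline{\Z_p\otimes E(k)}$, where the bar means the closure of the image in $U_\Sigma^{(1)}$. We also expect $\Sigma$-adic Leopoldt to mean that $\Z_p\otimes E(k)\to U_\Sigma^{(1)}$ is injective. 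Under $\Sigma$-Leopoldt this gives rank $X_\Sigma(k)=r_1+1-r_1=1$, which is \ref{GGC C}.

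It remains to show that $\Sigma$-adic Leopoldt holds, and I expect this to be the main obstacle. The natural tool is \ref{GGC c}: for $\pe\in\Sigma$, the map $\Z_p\otimes E(k)\to U_{\Sigma\setminus\{\pe\}}^{(1)}$ is surjective. The target is free of rank $r_1$ (it has $|\Sigma|-1=r_1$ factors, each $\Z_p$ since the primes have degree $1$), and the source has rank $r_1$ modulo torsion. Hence the map is injective modulo torsion, and the torsion of $\Z_p\otimes E(k)$ (the $p$-power roots of unity) maps injectively into the local units at any $p$-adic place. Therefore already the projection to $\Sigma\setminus\{\pe\}$ is injective, so $\Z_p\otimes E(k)\to U_\Sigma^{(1)}$ is injective; this is $\Sigma$-adic Leopoldt, i.e.\ \ref{GGC B}. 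This step needs $\Sigma\neq\emptyset$, which holds when $[k:\Q]\geq 2$; a field with a complex place has degree at least $2$.

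So \ref{GGC b} appears unnecessary except possibly in the precise rank formula, depending on the paper's exact definition. If the definition of $\Sigma$-adic Leopoldt's Conjecture in \S\ref{section Leopoldt} is instead phrased via ranks of $X_\Sigma(k)$, I would invoke \ref{GGC b} together with the standard exact sequence $0\to\overline{E}\to U_\Sigma^{(1)}\to X_\Sigma(k)\to X(k)$ to identify the ranks. With \ref{GGC A}--\ref{GGC F} verified, Theorem~\ref{thm main GGC} gives the conclusion.
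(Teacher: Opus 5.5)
Your proof is correct. The verification of \ref{GGC A}, \ref{GGC D}, \ref{GGC E} and \ref{GGC F} is the same as in the paper. The difference is in how you obtain \ref{GGC B} and \ref{GGC C}.

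\textbf{The paper's route.} The paper gets both from Proposition~\ref{prop single cpx}. It uses hypothesis~\ref{GGC b} to know $\Gal(\widetilde{k}/k)\simeq\Z_p^2$. The inertia group of $\pe_0$ in $\widetilde{k}/k$ has rank at most $1$, since $\pe_0$ has degree $1$ (local class field theory). It is nontrivial because of the cyclotomic $\Z_p$-extension. Hence $\rank_{\Z_p}X_\Sigma(k)=1$, and Lemma~\ref{lem two three} then gives $\Sigma$-adic Leopoldt.

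\textbf{Your route.} You work directly from hypothesis~\ref{GGC c}. The surjection $\Z_p\otimes_\Z E(k)\to U^{(1)}_{\Sigma\setminus\{\pe\}}\simeq\Z_p^{r_1}$ starts from a module of $\Z_p$-rank $r_1$, so it is an isomorphism once torsion is handled. This gives $(\Sigma\setminus\{\pe\})$-adic Leopoldt, hence $\Sigma$-adic Leopoldt by Lemma~\ref{lem Leopoldt strong}. The rank count of Lemma~\ref{lem two three} then yields \ref{GGC C}.

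\textbf{Comparison.} Your argument is more elementary: it needs neither $\widetilde{k}$ nor inertia groups. As you suspected, it also shows that hypothesis~\ref{GGC b} is redundant in the corollary. Lemma~\ref{lem Leopoldt strong} upgrades $(\Sigma\setminus\{\pe\})$-adic Leopoldt to $S_p(k)$-adic, i.e.\ $p$-adic, Leopoldt; when $r_1=0$ this is trivial. In the other direction, the paper's route needs neither hypothesis~\ref{GGC c} nor complete splitting of $p$. This is why Proposition~\ref{prop single cpx} is stated for an arbitrary degree-$1$ prime $\pe_0$.

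\textbf{A small tidying.} Since $p$ is odd and $k$ embeds in $k_\pe=\Q_p$, the module $\Z_p\otimes_\Z E(k)$ has no torsion at all. Saying this directly replaces your torsion discussion. It also covers the case $r_1=0$, where $\Sigma\setminus\{\pe\}$ is empty. There your sentence ``already the projection to $\Sigma\setminus\{\pe\}$ is injective'' holds only because the torsion vanishes.
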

Corollary~\ref{cor GGC single complex} partially generalizes Kataoka's result \cite[Theorem~1.3]{Kataoka2017} that deals with complex cubic fields. 

By using the above two corollaries, we give some concrete examples of Conjecture~\ref{conjecture GGC} (see \S\ref{subsection example GGC}). 

There is an application of Theorem~\ref{thm main GGC} to non-abelian Iwasawa theory (see \S\ref{subsubsection non-abelian}) and an application of Corollary~\ref{cor GGC abel imag quad} to Iwasawa invariants of $\Z_p$-extensions (see \S\ref{subsubsection Iwasawa invariants}).

\subsection*{Structure of the present article}
The structure of the present article is as follows. In \S\ref{section notation}, we set up some notation and provide some basic properties which will be frequently used throughout the present article. In \S\ref{section Leopoldt}, we introduce the concept of $\Sigma$-adic Leopoldt's Conjecture. In \S\ref{section Sigma ramified}, we gather basic properties of $\Sigma$-ramified $\Z_p$-extensions. We prove Theorems~\ref{thm main GC} and \ref{thm WGC 1} in \S\ref{section 5} and prove Theorems~\ref{thm main WGC 2}, \ref{thm main WGC 3} and \ref{thm triviality} in \S\ref{section 6}. In \S\ref{section Examples}, we show some examples of $\Sigma$-adic Leopoldt's Conjecture. In \S\ref{section GGC}, we prove Theorem~\ref{thm main GGC} and its corollaries. We also provide some applications (\S\ref{subsection applications}) and concrete examples (\S\ref{subsection example GGC}) of Theorem~\ref{thm main GGC} in \S\ref{section GGC}.

\section{Fundamental notation and properties}\label{section notation}
Here we gather some basic notation and properties that will be frequently used throughout the present article. 

For a given profinite group $\Gamma$ and a profinite (or discrete) $\Gamma$-module $M$, we let $M^\Gamma$ (resp. $M_\Gamma$) denote the $\Gamma$-invariant submodule (resp. $\Gamma$-coinvariant quotient) of $M$. For example, if $\Gamma$ is isomorphic to $\Z_p$ as a pro-$p$ group and $M$ is a pro-$p$-group (or a $p$-primary discrete $\Gamma$-module) for some prime number $p$, then $M$ has a natural $\Z_p\bbr{\Gamma}$-module structure and, $M^\Gamma=M\br{\gamma-1}$ and $M_G=M/(\gamma-1)M$ hold, where $\gamma\in \Gamma$ is any topological generator of $\Gamma$. Here $M\br{\gamma-1}$ denotes the kernel of the map $M\to M$ defined by multiplication by $\gamma-1$. Thus, in this case, these modules fit into an exact sequence
\[
\begin{tikzpicture}
\node (a) at (0,0) {$0$};
\node (b) at (2,0) {$M^\Gamma$}; 
\node (c) at (4,0) {$M$}; 
\node (d) at (6,0) {$M$}; 
\node (e) at (8,0) {$M_\Gamma$}; 
\node (f) at (10,0) {$0$}; 
\draw[->] (a) -- (b); \draw[->] (b) -- (c); \draw[->] (c) -- node[auto]{$\scriptstyle \gamma-1$} (d); \draw[->] (d) -- (e); \draw[->] (e) -- (f); 
\end{tikzpicture}
\]
of $\Z_p\bbr{\Gamma}$-modules. In this situation, for an exact sequence
\[
\begin{tikzpicture}
\node (a) at (0,0) {$0$};
\node (b) at (2,0) {$A$}; 
\node (c) at (4,0) {$B$}; 
\node (d) at (6,0) {$C$}; 
\node (e) at (8,0) {$0$};  
\draw[->] (a) -- (b); \draw[->] (b) -- (c); \draw[->] (c) -- (d); \draw[->] (d) -- (e); 
\end{tikzpicture}
\]
of $\Z_p\bbr{\Gamma}$-modules, we have an exact sequence
\[
\begin{tikzpicture}
\node (a) at (0,0) {$0$};
\node (b) at (2,0) {$A^\Gamma$}; 
\node (c) at (4,0) {$B^\Gamma$}; 
\node (d) at (6,0) {$C^\Gamma$}; 
\node (e) at (8,0) {$A_\Gamma$}; 
\node (f) at (10,0) {$B_\Gamma$}; 
\node (g) at (12,0) {$C_\Gamma$}; 
\node (h) at (14,0) {$0$}; 
\draw[->] (a) -- (b); \draw[->] (b) -- (c); \draw[->] (c) -- (d); \draw[->] (d) -- (e); \draw[->] (e) -- (f); \draw[->] (f) -- (g); \draw[->] (g) -- (h); 
\end{tikzpicture}
\]
of $\Z_p\bbr{\Gamma}$-modules by Snake Lemma. Hence, for a given $p$-primary discrete $\Gamma$-module $M$, there is an isomorphism $\Homo^1(\Gamma, M)\simeq M^\Gamma$ of $\Z_p$-modules, which depends on the choice of $\gamma$,  and $H^i(\Gamma, M)$ is trivial for every $i\geq 2$. 

The following lemma is well-known (for example, it is essentially shown in \cite[Lemma~1]{Ozaki2001}). 
\begin{lem}\label{lem five-term}
Let $p$ be a prime number, $k/\Q$ an algebraic extension, $F/k$ a pro-$p$-extension, and $K/F$ a $\Z_p$-extension such that $K/k$ is a Galois extension. Let $\Sigma$ be a set of $p$-adic primes of $k$. Then there is an exact sequence
\[
\begin{tikzpicture}
\node (a) at (0,0) {$0$}; 
\node (b) at (2.5,0) {$X_\Sigma(K)_{\Gal(K/F)}$}; 
\node (c) at (6.7,0) {$\Gal(M_\Sigma(K)\cap F^\ab/F)$}; 
\node (d) at (10.5,0) {$\Gal(K/F)$}; 
\node (e) at (12.4,0) {$0$};
\draw[->] (a) -- (b); \draw[->] (b) -- (c); \draw[->] (c) -- (d); \draw[->] (d) -- (e); 
\end{tikzpicture}
\]
of $\Z_p\bbr{\Gal(F/k)}$-modules, where $F^\ab/F$ denotes the maximal abelian extension. 
\end{lem}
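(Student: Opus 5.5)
We will deduce the sequence from the inflation--restriction (Hochschild--Serre) exact sequence of the group extension
\[
1\to \Gal(\Omega/K)\to\Gal(\Omega/F)\to\Gal(K/F)\to 1,
\]
where $\Omega$ denotes the maximal pro-$p$-extension of $F$ inside a fixed algebraic closure. Using the discrete module $\Q_p/\Z_p$ with trivial action, the five-term sequence reads
\[
0\to H^1(\Gal(K/F),\Q_p/\Z_p)\to H^1(\Gal(\Omega/F),\Q_p/\Z_p)\to H^1(\Gal(\Omega/K),\Q_p/\Z_p)^{\Gal(K/F)}\to H^2(\Gal(K/F),\Q_p/\Z_p).
\]
Since $\Gal(K/F)\simeq\Z_p$, the last term vanishes by the remark in \S\ref{section notation} that $H^i(\Gamma,M)=0$ for $i\geq2$. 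Pontryagin duality then turns the homomorphism groups into abelianizations and the invariants into coinvariants. This gives an exact sequence
\[
0\to \Gal(K^{\ab,p}/K)_{\Gal(K/F)}\to \Gal(F^{\ab,p}/F)\to\Gal(K/F)\to 0,
\]
where $(\cdot)^{\ab,p}$ denotes the maximal abelian pro-$p$-extension. Equivalently, we may argue directly: put $G=\Gal(K^{\ab,p}/F)$ and $H=\Gal(K^{\ab,p}/K)$, which is abelian. Then $G^{\ab}=\Gal(F^{\ab,p}/F)$. Since $G/H\simeq\Z_p$ is free pro-$p$, the extension splits, so $G\simeq H\rtimes\Z_p$, and then $G^{\ab}$ is an extension of $\Z_p$ by $H/[G,G]=H_{\Gal(K/F)}$. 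Thus injectivity on the left is exactly the splitting.

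Next we impose the ramification condition. Let $H_\Sigma=\Gal(K^{\ab,p}/M_\Sigma(K))$, the closed subgroup generated by the inertia groups at primes of $K$ outside $S_\Sigma(K)$. Because $K/k$ is Galois and $\Sigma$ is defined downstairs in $k$, the field $M_\Sigma(K)$ is Galois over $F$, so $H_\Sigma$ is normal in $G$. Therefore $G/H_\Sigma=\Gal(M_\Sigma(K)/F)$, and we may repeat the splitting argument with $X_\Sigma(K)=H/H_\Sigma$ in place of $H$. This yields
\[
0\to X_\Sigma(K)_{\Gal(K/F)}\to \Gal(M_\Sigma(K)/F)^{\ab}\to\Gal(K/F)\to0.
\]
It remains to identify $\Gal(M_\Sigma(K)/F)^{\ab}$ with $\Gal(M_\Sigma(K)\cap F^{\ab}/F)$. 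This holds because the maximal abelian subextension of $M_\Sigma(K)/F$ is precisely $M_\Sigma(K)\cap F^{\ab}$. We do not need this subextension to be $\Sigma$-ramified over $F$; that is exactly why the statement uses the intersection rather than $M_\Sigma(F)$.

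Finally, the maps must be $\Gal(F/k)$-equivariant, with $\Gal(F/k)$ acting by conjugation through lifts to $\Gal(M_\Sigma(K)/k)$. All three groups are subquotients of $\Gal(M_\Sigma(K)/k)$ that are normal and abelian in the appropriate quotient, so conjugation is well defined. The maps are induced by inclusion and restriction, hence equivariant. Here $\Gal(K/F)$ carries the conjugation action of $\Gal(K/k)$, which need not be trivial. The only genuinely nontrivial point is the left injectivity, and the splitting by freeness of $\Z_p$ handles it. The main obstacle is bookkeeping: checking that $H_\Sigma$ is normal in $G$ and that it lies in $H$. The latter holds since $M_\Sigma(K)\supseteq K$.
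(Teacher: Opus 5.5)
Your proof is correct and is essentially the paper's argument. The paper applies the five-term exact sequence in group homology directly to $1\to X_\Sigma(K)\to\Gal(M_\Sigma(K)/F)\to\Gal(K/F)\to 1$ and kills the obstruction with $\Homo_2(\Gal(K/F),\Z_p)=0$; this is the Pontryagin dual of your $H^2$-vanishing and is equivalent to your splitting argument. Your preliminary pass through $\Omega$ and $K^{\ab,p}$ is unnecessary, since, as you note yourself, the argument has to be redone for $\Gal(M_\Sigma(K)/F)$ anyway, and that is exactly where the paper starts.
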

\begin{proof}
Put $\Gamma\coloneqq\Gal(K/F)$. Then there is an exact sequence
\[
\begin{tikzpicture}
\node (a) at (0,0) {$\Homo_2(\Gamma, \Z_p)$}; 
\node (b) at (3.5,0) {$\Homo_1(X_\Sigma(K), \Z_p)_\Gamma$}; 
\node (c) at (8,0) {$\Homo_1(\Gal(M_\Sigma(K)/F), \Z_p)$}; 
\node (d) at (12.2,0) {$\Homo_1(\Gamma, \Z_p)$}; 
\node (e) at (14.6,0) {$0$};
\draw[->] (a) -- (b); \draw[->] (b) -- (c); \draw[->] (c) -- (d); \draw[->] (d) -- (e); 
\end{tikzpicture}
\]
of $\Z_p\bbr{\Gal(F/k)}$-modules, which is a part of the five-term exact sequence in group homology (see \cite[Corollary~7.2.6]{RZ2010}, for example). Since $\Gamma$ is cyclic as a $\Z_p$-module, we have $\Homo_2(\Gamma, \Z_p)=0$. One can see $\Homo_1(\Gamma, \Z_p)\simeq\Gamma^\ab=\Gamma$ by general group theory (see \cite[Lemma~6.8.6~(b)]{RZ2010}). We also have $\Homo_1(\Gal(M_\Sigma(K)/F), \Z_p)\simeq\Gal(M_\Sigma(K)/F)^\ab=\Gal(M_\Sigma(K)\cap F^\ab/F)$. We complete the proof by combining the above. 
\end{proof}

\section{On $\Sigma$-adic Leopoldt's Conjecture}\label{section Leopoldt}
In this section, we introduce the concept of $\Sigma$-adic Leopoldt's Conjecture. Here we note that such concept is not new and studied, for example, by Maire in \cite{Maire2002}. 

Let $k/\Q$ be a finite extension and $p$ a prime number. Let $E(k)$ be the unit group of $k$. For each $\pe\in S_p(k)$, let $k_\pe$ be the completion of $k$, $U_\pe$ the local unit group, and $U_\pe^{(1)}$ the local principal unit group. Then for any set $\Sigma$ of $p$-adic primes of $k$, if we put $U_\Sigma^{(1)}\coloneqq\prod_{\pe\in \Sigma}U_\pe^{(1)}$, there is an exact sequence
\[
\begin{tikzpicture}
\node (b) at (0,0) {$\Z_p\otimes_\Z E(k)$}; 
\node (c) at (3,0) {$U_\Sigma^{(1)}$}; 
\node (d) at (5.7,0) {$X_{\Sigma}(k)$}; 
\node (e) at (8.4,0) {$X(k)$}; 
\node (f) at (10.2,0) {$0$};
\draw[->] (b) -- (c); \draw[->] (c) -- (d); \draw[->] (d) -- (e); \draw[->] (e) -- (f);
\end{tikzpicture}
\]
of $\Z_p$-modules by class field theory, where the leftmost morphism is obtained by taking $p$-adic completion of the diagonal embedding $E(k)\to\prod_{\pe\in\Sigma}U_\pe$. For the proof, see \cite[Lemma~1]{Fujii2017} for example. If the map $\Z_p\otimes_\Z E(k)\to U_\Sigma^{(1)}$ is injective, we say that $\Sigma$-adic Leopoldt's Conjecture holds for $k$. If $\Sigma=S_\Pe(k)$ holds for some prime $\Pe$ of a certain subfield of $k$, we use the term $\Pe$-adic Leopoldt's Conjecture for simplicity. If we let $U_\Sigma^{(1)}/(\Z_p\otimes_\Z E(k))$ denote the cokernel of the map $\Z_p\otimes_\Z E(k)\to U_\Sigma^{(1)}$ (even if the map is not injective), there is an exact sequence
\[
\begin{tikzpicture}
\node (b) at (0,0) {$0$}; 
\node (c) at (3,0) {$U_\Sigma^{(1)}/(\Z_p\otimes_\Z E(k))$}; 
\node (d) at (6.5,0) {$X_{\Sigma}(k)$}; 
\node (e) at (8.8,0) {$X(k)$}; 
\node (f) at (10.4,0) {$0$};
\draw[->] (b) -- (c); \draw[->] (c) -- (d); \draw[->] (d) -- (e); \draw[->] (e) -- (f);
\end{tikzpicture}
\]
of $\Z_p$-modules. So the $\Z_p$-ranks of $U_\Sigma^{(1)}/(\Z_p\otimes_\Z E(k))$ and $X_{\Sigma}(k)$ coincide. 

Here we remark that $\Sigma$-adic Leopoldt's Conjecture does not hold in general, but is believed to hold for $\Sigma=S_p(k)$, where it is the usual $p$-adic Leopoldt's Conjecture. For example, take $\Sigma$ small enough to satisfy $\sum_{\pe\in\Sigma}[k_\pe:\Q_p]<r_1(k)+r_2(k)-1$, then $\Sigma$-adic Leopoldt's Conjecture fails to hold. Here we let $r_1(k)$ (resp. $r_2(k)$) denote the number of real (resp. complex) places of $k$. On the other hand, if we let $K$ be the field $\Q$ or an imaginary quadratic field and $\Pe$ a $p$-adic prime of $K$, then $\Pe$-adic Leopoldt's Conjecture is known to hold for arbitrary finite abelian extension over $K$, by Brumer's result in \cite{Brumer1967}. We also remark that $\Sigma$-adic Leopoldt's Conjecture is stronger than $p$-adic Leopoldt's Conjecture in the sense of the following lemma. 
\begin{lem}\label{lem Leopoldt strong}
Let $k/\Q$ be a finite extension, $p$ a prime number, and $\Sigma$ a set of $p$-adic primes of $k$. If $\Sigma'$-adic Leopoldt's Conjecture holds for some subset $\Sigma'\subseteq \Sigma$, then $\Sigma$-adic Leopoldt's Conjecture also holds. 
\end{lem}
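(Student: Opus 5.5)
The plan is to factor the map $\Z_p\otimes_\Z E(k)\to U_\Sigma^{(1)}$ through the projection onto the $\Sigma'$-components. Since $U_\Sigma^{(1)}=\prod_{\pe\in\Sigma}U_\pe^{(1)}$ and $\Sigma'\subseteq\Sigma$, there is a natural projection
\[
\pi\colon U_\Sigma^{(1)}\to U_{\Sigma'}^{(1)}=\prod_{\pe\in\Sigma'}U_\pe^{(1)},
\]
which forgets the components at primes in $\Sigma\setminus\Sigma'$.

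The first step is to check that the composite $\Z_p\otimes_\Z E(k)\to U_\Sigma^{(1)}\xrightarrow{\pi}U_{\Sigma'}^{(1)}$ equals the map defining $\Sigma'$-adic Leopoldt's Conjecture. Before $p$-adic completion, the diagonal embedding $E(k)\to\prod_{\pe\in\Sigma}U_\pe$ followed by projection is the diagonal embedding $E(k)\to\prod_{\pe\in\Sigma'}U_\pe$. Passing to $\Z_p\otimes_\Z(-)$ and projecting onto principal units is compatible with taking products over primes, so the two maps agree after completion. Concretely, one passes to a finite-index subgroup of $E(k)$ landing in the principal units, or uses the $\Z_p$-module structure of $U_\pe^{(1)}$ together with the projection $U_\pe\to U_\pe^{(1)}$ on pro-$p$ parts. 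The same compatibility should be checked for whatever convention the paper uses for the map $\Z_p\otimes_\Z E(k)\to U_\Sigma^{(1)}$, since both are defined componentwise.

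The second step is immediate. By hypothesis the composite $\pi\circ\iota_\Sigma=\iota_{\Sigma'}$ is injective, so $\iota_\Sigma$ is injective, which is $\Sigma$-adic Leopoldt's Conjecture for $k$.

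The only point needing care is the naturality in the first step, namely that the completed diagonal maps commute with projection. Since every construction is functorial and componentwise in $\pe$, I expect no real obstacle there.
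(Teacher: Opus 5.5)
Your proposal is correct and follows essentially the same argument as the paper. The paper also uses the commutative triangle formed by the diagonal maps and the natural projection $U_\Sigma^{(1)}\to U_{\Sigma'}^{(1)}$, and concludes that injectivity of the composite forces injectivity of $\Z_p\otimes_\Z E(k)\to U_\Sigma^{(1)}$.
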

\begin{proof}
Consider a commutative diagram
\[
\begin{tikzpicture}
\node (a) at (0,2) {$\Z_p\otimes_\Z E(k)$}; 
\node (b) at (3.8,2) {$U_\Sigma^{(1)}$}; 
\node (d) at (3.8,0) {$U_{\Sigma'}^{(1)}$}; 
\draw[->] (a) --(b); \draw[->] (a) --(d); \draw[->] (b) --(d); 
\end{tikzpicture}
\]
of $\Z_p$-modules, where the vertical map is the natural projection. Then the injectivity of the diagonal map implies the injectivity of the horizontal map. 
\end{proof}
Now the $\Z_p$-rank of $\Z_p\otimes_\Z E(k)$ is $r_1(k)+r_2(k)-1$ and the $\Z_p$-rank of $U_\pe^{(1)}$ is $[k_\pe:\Q_p]$ for each $\pe\in\Sigma$. Hence we may calculate the $\Z_p$-rank of $X_\Sigma(k)$ by considering the above exact sequence and $\Sigma$-adic Leopoldt's Conjecture. In this way, we obtain the following lemma. 
\begin{lem}\label{lem two three}
Let $k/\Q$ be a finite extension, $p$ a prime number, and $\Sigma$ a set of $p$-adic primes of $k$. Consider the following conditions. 
\begin{enumerate}
\item $\Sigma$-adic Leopoldt's Conjecture holds for $k$. 

\item The $\Z_p$-rank of $X_\Sigma(k)$ is $1$. 

\item $\sum_{\pe\in\Sigma}[k_\pe:\Q_p]=r_1(k)+r_2(k)$ holds. 
\end{enumerate}
If two of the above three conditions hold, then the remaining one also holds. 
\end{lem}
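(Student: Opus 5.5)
The argument is a rank count in the exact sequence
\[
\Z_p\otimes_\Z E(k)\to U_\Sigma^{(1)}\to X_\Sigma(k)\to X(k)\to 0
\]
recalled at the start of this section. Since $X(k)$ is finite, the $\Z_p$-rank of $X_\Sigma(k)$ equals that of the cokernel $U_\Sigma^{(1)}/(\Z_p\otimes_\Z E(k))$, as noted above. Write $\delta\ge 0$ for the $\Z_p$-rank of the kernel of $\Z_p\otimes_\Z E(k)\to U_\Sigma^{(1)}$; this kernel is the Leopoldt defect.

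First I will check that condition~1 is equivalent to $\delta=0$. The module $\Z_p\otimes_\Z E(k)$ is finitely generated over $\Z_p$, with torsion part $\Z_p\otimes_\Z \mu(k)$. This torsion is the group of $p$-power roots of unity of $k$, and it embeds into $U_\pe^{(1)}$ for any $\pe$, because roots of unity of $p$-power order lie in the principal units and $k\hookrightarrow k_\pe$ is injective. When $\Sigma\neq\emptyset$, the kernel is therefore torsion-free, so injectivity holds exactly when $\delta=0$. The case $\Sigma=\emptyset$ needs separate treatment. There both sides of condition~3 differ unless $r_1+r_2=0$, which is impossible. Condition~2 fails, since $X_\emptyset(k)=X(k)$ is finite. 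Condition~1 holds exactly when $\Z_p\otimes_\Z E(k)=0$, i.e.\ when $r_1+r_2=1$ and there are no $p$-power roots of unity. So in this case at most one condition holds and the lemma is vacuous. I will treat this as a small side remark.

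Next, using $\operatorname{rank}_{\Z_p}(\Z_p\otimes_\Z E(k))=r_1(k)+r_2(k)-1$ and $\operatorname{rank}_{\Z_p}U_\Sigma^{(1)}=\sum_{\pe\in\Sigma}[k_\pe:\Q_p]$, additivity of rank gives
\[
\operatorname{rank}_{\Z_p}X_\Sigma(k)=\sum_{\pe\in\Sigma}[k_\pe:\Q_p]-(r_1(k)+r_2(k)-1)+\delta.
\]
Condition~2 says the left side is $1$. Condition~3 says the sum equals $r_1+r_2$, so that the right side is $1+\delta$. Condition~1 says $\delta=0$. The single linear relation $\operatorname{rank}X_\Sigma(k)=\bigl(\sum[k_\pe:\Q_p]-r_1-r_2\bigr)+1+\delta$ then shows that any two of these conditions force the third:
\begin{itemize}
\item 1 and 2 give $\sum[k_\pe:\Q_p]=r_1+r_2$;
\item 1 and 3 give rank $1$;
\item 2 and 3 give $\delta=0$.
\end{itemize}

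The only step requiring care is the identification of injectivity with $\delta=0$, i.e.\ the torsion of the unit module injecting into the local principal units. This is immediate from the injectivity of $k\to k_\pe$, since the map on $\Z_p\otimes_\Z\mu(k)$ is just the inclusion of roots of unity. Everything else is routine rank counting.
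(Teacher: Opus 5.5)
Your proposal is correct and is essentially the paper's proof: a $\Z_p$-rank count in the exact sequence $\Z_p\otimes_\Z E(k)\to U_\Sigma^{(1)}\to X_\Sigma(k)\to X(k)\to 0$, combined with the observation that the map is injective on the torsion of $\Z_p\otimes_\Z E(k)$. This makes the $\Sigma$-adic Leopoldt Conjecture, for $\Sigma\neq\emptyset$, equivalent to the kernel having $\Z_p$-rank $0$; your explicit check that the case $\Sigma=\emptyset$ is vacuous fills in the detail behind the paper's remark ``unless $\Sigma$ is empty''.
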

\begin{proof}
This follows by comparing the $\Z_p$-ranks of the modules appearing in the exact sequence stated above. Here we note that $\Sigma$-adic Leopoldt's Conjecture is equivalent to the assertion that the $\Z_p$-rank of the kernel of the map $\Z_p\otimes_\Z E(k)\to U_\Sigma^{(1)}$ is $0$, unless $\Sigma$ is empty, since the map is injective on the $\Z_p$-torsion submodule of $\Z_p\otimes_\Z E(k)$. 
\end{proof}
The following gives another formulation of $\Sigma$-adic Leopoldt's Conjecture. 
\begin{lem}[{\cite[Theorem~37~(1)]{Maire2002}}]\label{lem Maire equiv}
Let $k/\Q$ be a finite extension, $p$ a prime number, and $\Sigma$ a non-empty set of $p$-adic primes of $k$. Then the following are equivalent. 
\begin{enumerate}[label=(\roman*)]
\item $\Sigma$-adic Leopoldt's Conjecture holds for $k$. 

\item $\Homo^2(G_\Sigma(k), \Q_p/\Z_p)$ is trivial, where $G_\Sigma(k)$ is the Galois group of the maximal unramified pro-$p$-extension over $k$. 
\end{enumerate}
\end{lem}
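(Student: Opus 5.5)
Throughout, $G_\Sigma(k)$ is read as the Galois group of the maximal pro-$p$-extension of $k$ unramified outside $\Sigma$; this is what the argument below needs. I would prove the equivalence by computing the $\Z_p$-corank of $\Homo^2(G,\Q_p/\Z_p)$, where $G\coloneqq G_\Sigma(k)$, in terms of a Leopoldt defect. The analogous statement for $S\supseteq S_p\cup S_\infty$ is usually proved with Poitou--Tate, which is not available here since $\Sigma$ may omit $p$-adic primes. So I would use only group cohomology of pro-$p$ groups together with the class field theoretic exact sequence of \S\ref{section Leopoldt}.

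\emph{Reduction to finite layers.} Let $k'$ run over the finite subextensions of the maximal pro-$p$ $\Sigma$-ramified extension $k_\Sigma/k$, with $U=\Gal(k_\Sigma/k')$. Then $\Homo^1(U,\Q_p/\Z_p)$ is the Pontryagin dual of $X_\Sigma(k')$. I would set $\delta_\Sigma(k')$ to be the $\Z_p$-rank of the kernel of $\Z_p\otimes_\Z E(k')\to U^{(1)}_{S_\Sigma(k')}$. The exact sequence of \S\ref{section Leopoldt} and the rank count of Lemma~\ref{lem two three} give
\[
\rank_{\Z_p}X_\Sigma(k')=[k':k]\Bigl(\sum_{\pe\in\Sigma}[k_\pe:\Q_p]-r_1(k)-r_2(k)\Bigr)+1+\delta_\Sigma(k'),
\]
where I use that $k'/k$ is a $p$-extension unramified at infinity, so its real places stay real. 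For a pro-$p$ group with $\Homo^1$ and $\Homo^2$ cofinitely generated, I would define the naive Euler characteristic $\chi(U)=\operatorname{corank}\Homo^0-\operatorname{corank}\Homo^1+\operatorname{corank}\Homo^2$ with $\Q_p/\Z_p$-coefficients. The first claim is that $\operatorname{corank}\Homo^2(U,\Q_p/\Z_p)=\delta_\Sigma(k')$. One direction of this comes from Hochschild--Serre in the limit: $\varinjlim_U\Homo^2(U,\Q_p/\Z_p)=0$ because $\operatorname{cd}_p$ of the trivial group vanishes. Combined with the divisibility of $\Homo^2(G,\Q_p/\Z_p)$, this shows that $\Homo^2(G,\Q_p/\Z_p)$ is measured by the failure of $\Homo^1$-coranks to grow linearly in $[k':k]$.

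\emph{Divisibility and identification.} The sequence $0\to\Z/p\to\Q_p/\Z_p\xrightarrow{p}\Q_p/\Z_p\to 0$ gives the Bockstein sequence $\Homo^2(G,\Q_p/\Z_p)\xrightarrow{p}\Homo^2(G,\Q_p/\Z_p)\to \Homo^3(G,\Z/p)$. So divisibility would follow if $\operatorname{cd}_pG\le 2$. Rather than prove $\operatorname{cd}_pG\le2$, I would compare the maximal divisible part directly with $\delta_\Sigma$. I would take $\Z/p$-dimensions of $\Homo^2(G,\Z/p)$ via Shafarevich's Kummer description: $\dim\Homo^2(G,\Z/p)$ is bounded by the number of relations, which is controlled by the group $V_\Sigma$ of elements of $k^\times$ that are local $p$-th powers at $\Sigma$ and have $p$-divisible valuations. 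Then I would pass to the limit along $\Z/p^n$ to obtain $\Homo^2(G,\Q_p/\Z_p)^\vee\simeq \varprojlim_n(\text{Kummer radical})$. Its $\Z_p$-rank is exactly the rank of the kernel of $\Z_p\otimes_\Z E(k)\to U_\Sigma^{(1)}$, and the class group contributes only finite torsion.

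\emph{Conclusion.} (i)$\Rightarrow$(ii): if the kernel is $0$, the dual is torsion and of rank $0$. The torsion part comes from $\mu_p$-phenomena and $A(k)$, and the claim is that it vanishes after passing to $\Q_p/\Z_p$-coefficients because the transition maps in $n$ kill it. (ii)$\Rightarrow$(i) is immediate, since a nonzero kernel of positive rank yields a copy of $\Q_p/\Z_p$ in $\Homo^2$. The hypothesis $\Sigma\neq\emptyset$ enters, as in Lemma~\ref{lem two three}, to guarantee injectivity on the torsion of $\Z_p\otimes_\Z E(k)$.

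\emph{Main obstacle.} The main obstacle is the middle step. I must show that the Kummer radical computing $\Homo^2(G,\Z/p^n)$ contributes no bounded torsion in the limit, so that $\Homo^2(G,\Q_p/\Z_p)$ is exactly $(\Q_p/\Z_p)^{\delta_\Sigma(k)}$. For this I would try first to adapt the argument of Neukirch--Schmidt--Wingberg for $S\supseteq S_p$, replacing Poitou--Tate by the explicit exact sequence of \S\ref{section Leopoldt}.
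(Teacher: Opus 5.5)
\textbf{Gap.} Your proposal does not prove the lemma. Its whole content is an identity relating $\Homo^2(G_\Sigma(k),\Q_p/\Z_p)$ to the defect $\delta_\Sigma(k)=\rank_{\Z_p}\Ker(\Z_p\otimes_\Z E(k)\to U_\Sigma^{(1)})$, and you never establish that identity. The paper does not prove it either. It imports Maire's Theorem~37~(1), a formula for the $p$-rank of $\Homo_2(G_\Sigma(k),\Z_p)$, and then concludes in one line by Nakayama.

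Your substitutes for that citation do not work:
\begin{itemize}
\item The Euler-characteristic and ``Hochschild--Serre in the limit'' heuristic needs $\chi(U)=[G:U]\chi(G)$. That comes from Tate's global Euler characteristic formula or from $\mathrm{cd}_pG\le 2$, and neither is available for $\Sigma\subsetneq S_p(k)$.
\item The Shafarevich--Koch count only bounds $\dim\Homo^2(G,\Z/p)$ from above. The isomorphism $\Homo^2(G,\Q_p/\Z_p)^\vee\simeq\varprojlim_n(\text{Kummer radical})$ you want is exactly the missing global duality, and it is false in the example below, where the radical has rank $1$ but $\Homo^2=0$.
\item The claim that ``the transition maps kill the torsion'' is given no argument.
\end{itemize}
Even a correct corank formula would only give finiteness of $\Homo^2(G,\Q_p/\Z_p)$ in (i)$\Rightarrow$(ii).

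\textbf{The (ii)$\Rightarrow$(i) direction.} Your first claim, $\operatorname{corank}\Homo^2(U,\Q_p/\Z_p)=\delta_\Sigma(k')$, is false. So is your step ``a kernel of positive rank yields a copy of $\Q_p/\Z_p$ in $\Homo^2$'', and (ii)$\Rightarrow$(i) fails as stated. Take $k=\Q(\zeta_7)^+$, which has class number $1$, and $p=13$, which splits completely in $k$. Let $\theta=\zeta_7+\zeta_7^{-1}$, a unit and a root of $x^3+x^2-2x-1$. Put $\pe=(13,\theta-7)$ and $\Sigma=\{\pe\}$.
\begin{itemize}
\item We have $\theta\equiv-6\pmod{\pe^2}$, hence $\theta^{12}\equiv 144\not\equiv 1\pmod{\pe^2}$.
\item Therefore $\Z_{13}\otimes_\Z E(k)\simeq\Z_{13}^2$ surjects onto $U_\pe^{(1)}\simeq\Z_{13}$.
\item So $X_\Sigma(k)=0$, and $G_\Sigma(k)=1$ under either reading of $G_\Sigma(k)$. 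Thus (ii) holds.
\item But $\delta_\Sigma(k)=1$, so (i) fails.
\end{itemize}
The same happens whenever $\sum_{\pe\in\Sigma}[k_\pe:\Q_p]<r_1(k)+r_2(k)-1$ and $X_\Sigma(k)=0$. So whatever extra hypotheses Maire's theorem carries are genuinely needed. The paper itself only ever uses (i)$\Rightarrow$(ii), in Proposition~\ref{lem invariant trivial}.

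\textbf{How to get (i)$\Rightarrow$(ii).} Your Kummer-theoretic idea suffices here if you work mod $p$ and track $p$-ranks rather than coranks. Assume $p$ is odd and $\Sigma\neq\emptyset$, and combine three facts:
\begin{itemize}
\item Koch's bound $\dim\Homo^2(G,\Z/p)\le\dim\Homo^1(G,\Z/p)-1-\sum_{\pe\in\Sigma}[k_\pe:\Q_p]+r_1(k)+r_2(k)$.
\item $\dim\Homo^1(G,\Z/p)=\rank_{\Z_p}X_\Sigma(k)+t$, where $t=\dim_{\mathbb{F}_p}(\Tor_{\Z_p}X_\Sigma(k))/p$.
\item The Bockstein sequence $0\to\Homo^1(G,\Q_p/\Z_p)/p\to\Homo^2(G,\Z/p)\to\Homo^2(G,\Q_p/\Z_p)[p]\to 0$.
\end{itemize}
With the rank count of \S\ref{section Leopoldt}, these give $\dim_{\mathbb{F}_p}\Homo^2(G,\Q_p/\Z_p)[p]\le\delta_\Sigma(k)$. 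Hence $\Homo^2(G,\Q_p/\Z_p)=0$ once $\delta_\Sigma(k)=0$, without any divisibility statement or limit over $n$.
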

\begin{proof}
Note that the triviality of $\Homo^2(G_\Sigma(k), \Q_p/\Z_p)$ is equivalent to the triviality of its Pontryagin dual $\Homo_2(G_\Sigma(k), \Z_p)$. By \cite[Theorem~37~(1)]{Maire2002}, the $p$-rank of $\Homo_2(G_\Sigma(k), \Z_p)$ is equal to
\[
-\sum_{\pe\in\Sigma}[k_\pe:\Q_p]+r_1(k)+r_2(k)+\rank_{\Z_p}X_\Sigma(k)=\rank_{\Z_p}\Ker(\Z_p\otimes_\Z E(k)\to U_\Sigma^{(1)}). 
\]
Thus the assertion follows. 
\end{proof}

\section{Lemmas on the $\Sigma$-ramified $\Z_p$-extension}\label{section Sigma ramified}
Let $k/\Q$ be a finite extension, $p$ a prime number, and $\Sigma$ a set of $p$-adic primes of $k$. Suppose that the $\Z_p$-rank of $X_\Sigma(k)$ is $1$ and let $N_\infty/k$ be the unique $\Sigma$-ramified $\Z_p$-extension over $k$. Put $\Gamma\coloneqq\Gal(N_\infty/k)$ and $\Lambda\coloneqq\Z_p\bbr{\Gamma}$. In this section, we gather basic properties of the $\Z_p$-extension $N_\infty/k$. 

The following lemmas are well-known in the case where $k$ is totally real and $\Sigma=S_p(k)$. 
\begin{lem}\label{lem finite tors}
$X_\Sigma(N_\infty)_\Gamma$ and $X(N_\infty)_\Gamma$ are finite. Moreover, there is a natural isomorphism $X_\Sigma(N_\infty)_\Gamma\simeq\Tor_{\Z_p}X_\Sigma(k)$ of $\Z_p$-modules. 
\end{lem}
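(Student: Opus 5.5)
We apply Lemma~\ref{lem five-term} with $F=k$ and $K=N_\infty$, which is allowed because $N_\infty/k$ is a $\Z_p$-extension and hence Galois. This yields an exact sequence
\[
0\to X_\Sigma(N_\infty)_\Gamma\to \Gal(M_\Sigma(N_\infty)\cap k^\ab/k)\to \Gamma\to 0 .
\]
The first step is to identify the middle term. The field $M_\Sigma(N_\infty)\cap k^\ab$ is an abelian pro-$p$-extension of $k$ that is unramified outside the primes above $\Sigma$, so it lies in $M_\Sigma(k)$. Conversely, $M_\Sigma(k)N_\infty$ is abelian pro-$p$ over $N_\infty$ and unramified outside $S_\Sigma(N_\infty)$, and it contains $N_\infty$ because $N_\infty\subseteq M_\Sigma(k)$ by definition. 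Hence $M_\Sigma(k)\subseteq M_\Sigma(N_\infty)\cap k^\ab$, and the middle term is $X_\Sigma(k)$.

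The sequence therefore becomes
\[
0\to X_\Sigma(N_\infty)_\Gamma\to X_\Sigma(k)\to \Gamma\to 0,
\]
where the right-hand map is restriction to $N_\infty$. Since $X_\Sigma(k)$ is a finitely generated $\Z_p$-module of rank $1$ and $\Gamma\simeq\Z_p$, the kernel has rank $0$ and is therefore finite; in particular $X_\Sigma(N_\infty)_\Gamma$ is finite. To identify it with $\Tor_{\Z_p}X_\Sigma(k)$, note that the kernel is a finite submodule, so it lies in the torsion. Conversely, the torsion subgroup maps to $0$ in the torsion-free group $\Gamma$, so it lies in the kernel. The map comes from restriction, so the isomorphism is natural.

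Next, the surjection $X_\Sigma(N_\infty)\to X(N_\infty)$, given by restriction from $M_\Sigma(N_\infty)$ to $L(N_\infty)$, is a map of $\Lambda$-modules. Taking $\Gamma$-coinvariants is right exact, so it induces a surjection $X_\Sigma(N_\infty)_\Gamma\to X(N_\infty)_\Gamma$. Hence $X(N_\infty)_\Gamma$ is finite as well.

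The only step that needs real care is the identification of the middle term, specifically checking that $M_\Sigma(k)N_\infty\subseteq M_\Sigma(N_\infty)$. The field $M_\Sigma(k)N_\infty$ is unramified at every prime of $N_\infty$ not lying above $\Sigma$, because $M_\Sigma(k)/k$ is unramified at the primes of $k$ below them. It is also a pro-$p$ abelian extension of $N_\infty$. Everything else is formal.
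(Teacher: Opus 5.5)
Your proof is correct and follows the same route as the paper. You apply Lemma~\ref{lem five-term} with $F=k$ and $K=N_\infty$, identify $\Gal(M_\Sigma(N_\infty)\cap k^\ab/k)$ with $X_\Sigma(k)$, use the rank-$1$ count, take $\Z_p$-torsion using that $\Gamma$ is torsion-free, and pass to $X(N_\infty)_\Gamma$ by right exactness of coinvariants. The only difference is that you verify the identification of the middle term explicitly, whereas the paper simply asserts it; your check is sound, since $M_\Sigma(k)N_\infty=M_\Sigma(k)$ is abelian pro-$p$ over $N_\infty$ and unramified outside $S_\Sigma(N_\infty)$.
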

\begin{proof}
Since $\Gal(M_\Sigma(N_\infty)\cap k^\ab/k)=X_\Sigma(k)$ holds, there is an exact sequence
\[
\begin{tikzpicture}
\node (a) at (0,0) {$0$};
\node (b) at (2.3,0) {$X_\Sigma(N_\infty)_\Gamma$}; 
\node (c) at (5,0) {$X_\Sigma(k)$}; 
\node (d) at (7,0) {$\Gamma$}; 
\node (e) at (8.5,0) {$0$}; 
\draw[->] (a) -- (b); \draw[->] (b) -- (c); \draw[->] (c) -- (d); \draw[->] (d) -- (e); 
\end{tikzpicture}
\]
of $\Z_p$-modules by Lemma~\ref{lem five-term}. By the assumption $\rank_{\Z_p}X_\Sigma(k)=1$, it follows from the exact sequence that $X_\Sigma(N_\infty)_\Gamma$ is finite. Thus $X(N_\infty)_\Gamma$ is also finite. By taking the $\Z_p$-torsion submodules in the above exact sequence, we obtain the desired isomorphism $X_\Sigma(N_\infty)_\Gamma\simeq\Tor_{\Z_p}X_\Sigma(k)$ because $\Tor_{\Z_p}\Gamma$ is trivial. 
\end{proof}
The following can be seen as a $\Sigma$-adic variant of \cite[Proposition~1]{Greenberg1978}. 
\begin{lem}\label{lem fg tors}
$X_\Sigma(N_\infty)$ is finitely generated and torsion as a $\Lambda$-module. 
\end{lem}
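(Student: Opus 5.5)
The argument is the standard one: topological Nakayama's lemma for finite generation, then the finiteness of the $\Gamma$-coinvariants from Lemma~\ref{lem finite tors} for torsionness.

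\emph{Compactness and the $\Lambda$-action.} $X_\Sigma(N_\infty)$ is the Galois group of an abelian pro-$p$-extension of $N_\infty$. The extension $M_\Sigma(N_\infty)/k$ is Galois, since $M_\Sigma(N_\infty)$ is canonically defined from $N_\infty$ and $S_\Sigma(N_\infty)$, and both are stable under $\Gamma$. Hence $\Gamma$ acts on $X_\Sigma(N_\infty)$ by conjugation, and $X_\Sigma(N_\infty)$ is a compact $\Lambda$-module. For a compact $\Lambda$-module $X$ we have the topological Nakayama lemma: if $X/\mathfrak{m}X$ is finite, where $\mathfrak{m}=(p,\gamma-1)$ is the maximal ideal of $\Lambda$, then $X$ is finitely generated over $\Lambda$.

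\emph{Finite generation.} By Lemma~\ref{lem finite tors}, $X_\Sigma(N_\infty)_\Gamma=X_\Sigma(N_\infty)/(\gamma-1)X_\Sigma(N_\infty)$ is finite, and therefore so is its further quotient $X_\Sigma(N_\infty)/\mathfrak{m}X_\Sigma(N_\infty)$. Nakayama's lemma then shows that $X_\Sigma(N_\infty)$ is a finitely generated $\Lambda$-module. Lemma~\ref{lem finite tors} was proved from Lemma~\ref{lem five-term} and the rank-one hypothesis on $X_\Sigma(k)$ alone, so this step only uses results already established.

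\emph{Torsionness.} Fix an isomorphism $\Lambda\simeq\Z_p\bbr{T}$ with $T=\gamma-1$. Suppose $X=X_\Sigma(N_\infty)$ were not torsion. By the structure theorem, $X$ is pseudo-isomorphic to $\Lambda^r\oplus(\text{torsion})$ with $r\geq1$. Then $X/TX$ would have $\Z_p$-rank at least $r$, because $\Lambda/T\Lambda\simeq\Z_p$ and pseudo-isomorphisms change $T$-coinvariants only by finitely generated torsion pieces that do not affect $\Z_p$-rank. This contradicts the finiteness of $X_\Gamma$.

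To avoid the structure theorem, one can argue directly. Since $X/TX$ is finite, $p^aX\subseteq TX$ for some $a$. Choose generators $x_1,\dots,x_n$ of $X$ and write $p^ax_i=T\sum_j c_{ij}x_j$ with $c_{ij}\in\Lambda$. The Cayley--Hamilton (determinant) trick shows that $\det(p^aI-TC)$ annihilates $X$. This determinant is congruent to $p^{an}$ modulo $T$, so it is nonzero in $\Lambda$, and hence $X$ is torsion.

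\emph{Where the work is.} I expect no serious obstacle, since the essential input, the finiteness of $X_\Sigma(N_\infty)_\Gamma$, is already provided by Lemma~\ref{lem finite tors}. The only points needing care are the compactness and $\Lambda$-module structure set out in the first step, which justify the use of Nakayama's lemma.
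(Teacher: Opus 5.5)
Your proposal is correct and follows the paper's proof. The paper likewise gets finite generation from the finiteness of $X_\Sigma(N_\infty)_\Gamma$ (Lemma~\ref{lem finite tors}) via Nakayama's Lemma, and torsionness via Cayley--Hamilton; your determinant $\det(p^aI-TC)\equiv p^{an}\pmod{T}$ simply makes that second step explicit, and the structure-theorem variant is a valid but unnecessary alternative.
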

\begin{proof}
Since $X_\Sigma(N_\infty)_\Gamma$ is finite by Lemma~\ref{lem finite tors}, we see that $X_\Sigma(N_\infty)$ is finitely generated as a $\Lambda$-module by Nakayama's Lemma and it is torsion as a $\Lambda$-module by Cayley--Hamilton Theorem. 
\end{proof}
For a finitely generated $\Lambda$-module $M$, we let $M_\fin$ denote the maximal finite $\Lambda$-submodule of $M$. 
\begin{lem}\label{lem invariant}
$X_\Sigma(N_\infty)^\Gamma=(X_\Sigma(N_\infty)_\fin)^\Gamma$ and $X(N_\infty)^\Gamma=(X(N_\infty)_\fin)^\Gamma$ hold. 
\end{lem}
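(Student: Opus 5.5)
Since $X_\Sigma(N_\infty)_\Gamma$ and $X(N_\infty)_\Gamma$ are finite by Lemma~\ref{lem finite tors}, I will derive both equalities from one general fact about finitely generated $\Lambda$-modules. Let $M$ be a finitely generated $\Lambda$-module with $M_\Gamma$ finite. Then $M^\Gamma=(M_\fin)^\Gamma$. By Lemma~\ref{lem fg tors} (and its trivial consequence for the quotient $X(N_\infty)$), both modules are finitely generated torsion $\Lambda$-modules, so applying the fact to each proves the lemma.

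To prove the fact, fix a topological generator $\gamma$ and put $T=\gamma-1$, so that $\Lambda\simeq\Z_p\bbr{T}$. The inclusion $(M_\fin)^\Gamma\subseteq M^\Gamma$ is clear. For the converse, it suffices to show that $M^\Gamma=M\br{T}$ is finite: it is then a finite $\Lambda$-submodule of $M$, hence lies in $M_\fin$, and so in $(M_\fin)^\Gamma$.

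To show $M\br{T}$ is finite, use the exact sequence $0\to M^\Gamma\to M\xrightarrow{T}M\to M_\Gamma\to 0$ from \S\ref{section notation}. Since $M_\Gamma=M/TM$ is finite, $T$ does not divide the characteristic ideal of $M$. Equivalently, by the structure theorem, $M$ is pseudo-isomorphic to $E=\Lambda^{r}\oplus\bigoplus_i\Lambda/(f_i)$. Here $r=0$, since otherwise $M/TM$ would have positive $\Z_p$-rank; and $T\nmid f_i$ for all $i$. On $E$, multiplication by $T$ is then injective, because each $f_i$ is coprime to $T$ and $\Lambda$ is a UFD. Now take a pseudo-isomorphism $M\to E$ with finite kernel $K$ and finite cokernel. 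An element of $M\br{T}$ maps into $E\br{T}=0$, so $M\br{T}\subseteq K$ is finite.

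The main point needing care is the passage from ``$M/TM$ finite'' to ``$T$ coprime to the characteristic polynomial'' through a pseudo-isomorphism, since finite kernels and cokernels can perturb coinvariants. I will handle this with the snake lemma applied to $0\to K\to M\to E\to C\to 0$ and multiplication by $T$. Since $K$ and $C$ are finite, $E/TE$ is finite as well, and then for $E$ the claim follows directly by factorwise computation: $\Lambda/(f,T)\simeq\Z_p/(f(0))$, which is finite exactly when $f(0)\neq 0$, i.e.\ when $T\nmid f$. Alternatively, one can avoid the structure theorem: $M\br{T}$ is a $\Z_p$-module, and a comparison of $\Z_p$-ranks in the four-term sequence restricted to the finitely generated $\Z_p$-module $M/T^nM$ shows that $M\br{T}$ has rank $0$. $M\br{T}$ is finitely generated over $\Lambda/(T)=\Z_p$, so it is then finite.
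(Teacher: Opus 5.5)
Your argument is correct and is essentially the paper's: both get finiteness of $M^\Gamma=M\br{T}$ from the four-term sequence $0\to M^\Gamma\to M\to M\to M_\Gamma\to 0$ and the finiteness of $M_\Gamma$, then note that a finite $\Lambda$-submodule lies in $M_\fin$. The paper does the middle step in one line via multiplicativity of characteristic ideals ($\mathrm{char}(M^\Gamma)\,\mathrm{char}(M)=\mathrm{char}(M)\,\mathrm{char}(M_\Gamma)$ with $\mathrm{char}(M)\neq 0$), whereas you unpack it through the structure theorem and the injectivity of $T$ on the elementary module. Your optional rank-comparison alternative is incomplete as written: it needs something like Artin--Rees to see that $M\br{T}\cap T^nM=0$ for large $n$, so that $M\br{T}$ embeds in $M/T^nM$. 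Your main argument does not rely on it.
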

\begin{proof}
Let $X$ be one of $X_\Sigma(N_\infty)$ or $X(N_\infty)$. Then $X_\Gamma$ is finite by Lemma~\ref{lem finite tors} and $X$ is a finitely generated torsion module over $\Lambda$ by Lemma~\ref{lem fg tors}. By an exact sequence
\[
\begin{tikzpicture}
\node (a) at (0,0) {$0$};
\node (b) at (2,0) {$X^\Gamma$}; 
\node (c) at (4.5,0) {$X$}; 
\node (d) at (7,0) {$X$}; 
\node (e) at (9.5,0) {$X_\Gamma$}; 
\node (f) at (11.5,0) {$0$}; 
\draw[->] (a) -- (b); \draw[->] (b) -- (c); \draw[->] (c) -- (d); \draw[->] (d) -- (e); \draw[->] (e) -- (f); 
\end{tikzpicture}
\]
of $\Lambda$-modules, we see that the characteristic ideal of $X^\Gamma$ is trivial, so $X^\Gamma$ is finite. Thus we obtain $X^\Gamma=(X_\fin)^\Gamma$. 
\end{proof}
\begin{lem}\label{lem isom injection}
Suppose that $U_\Sigma^{(1)}/(\Z_p\otimes_\Z E(k))$ is free as a $\Z_p$-module. Then the following hold. 
\begin{enumerate}
\item The natural surjection $X_\Sigma(N_\infty)\to X(N_\infty)$ induces an isomorphism $X_\Sigma(N_\infty)_\Gamma\simeq X(N_\infty)_\Gamma$ of $\Z_p$-modules. 

\item The natural morphism $X(N_\infty)\to X(k)$ induces an injection $X(N_\infty)_\Gamma\to X(k)$ of $\Z_p$-modules. 
\end{enumerate}
\end{lem}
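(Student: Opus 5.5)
Write $T\coloneqq U_\Sigma^{(1)}/(\Z_p\otimes_\Z E(k))$. The plan is to compare the exact sequence of Lemma~\ref{lem finite tors} with the class field theory sequence of \S\ref{section Leopoldt}. The input from freeness of $T$ is that $T\to X_\Sigma(k)$ is injective with torsion-free image. Hence $T\cap\Tor_{\Z_p}X_\Sigma(k)=0$, and so $\Tor_{\Z_p}X_\Sigma(k)$ injects into $X_\Sigma(k)/T\simeq X(k)$. Everything else should be diagram bookkeeping.

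For (1), consider the commutative square whose top row is $X_\Sigma(N_\infty)_\Gamma\to X_\Sigma(k)$, whose bottom row is $X(N_\infty)_\Gamma\to X(k)$, and whose vertical maps are induced by restriction $X_\Sigma\to X$. By Lemma~\ref{lem finite tors}, the top map is injective with image $\Tor_{\Z_p}X_\Sigma(k)$. By the remark above, the composite $X_\Sigma(N_\infty)_\Gamma\to X_\Sigma(k)\to X(k)$ is then injective. Commutativity gives that this composite factors through $X_\Sigma(N_\infty)_\Gamma\to X(N_\infty)_\Gamma$, so that map is injective. It is surjective because $X_\Sigma(N_\infty)\to X(N_\infty)$ is surjective and coinvariants are right exact. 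This proves (1).

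For (2), I would use (1): the map $X(N_\infty)_\Gamma\to X(k)$ composed with the isomorphism of (1) is the injective composite just described, so it is injective. What remains is to check that the map $X(N_\infty)\to X(k)$ (restriction to $L(N_\infty)\cap k^\ab$, or equivalently to $L(k)$) factors through coinvariants and is compatible with the top row. Factoring through coinvariants holds because $\Gamma$ acts trivially on $X(k)$. Compatibility holds because all maps are restrictions of Galois automorphisms.

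The main subtle point is this compatibility check. One has to confirm that the map $X_\Sigma(N_\infty)_\Gamma\to X_\Sigma(k)$ from Lemma~\ref{lem five-term} is induced by restriction to $M_\Sigma(N_\infty)\cap k^\ab=M_\Sigma(k)$, so that it is compatible with further restriction to $L(k)$. I would verify this directly from the identification $H_1(\Gal(M_\Sigma(N_\infty)/k),\Z_p)\simeq\Gal(M_\Sigma(N_\infty)\cap k^\ab/k)$ used in the proof of Lemma~\ref{lem five-term}.
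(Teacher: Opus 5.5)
Your proposal is correct and follows essentially the same route as the paper: freeness of $U_\Sigma^{(1)}/(\Z_p\otimes_\Z E(k))$ gives an injection $\Tor_{\Z_p}X_\Sigma(k)\to X(k)$, which combined with Lemma~\ref{lem finite tors} makes $X_\Sigma(N_\infty)_\Gamma\to X(k)$ injective, and the commutative triangle through $X(N_\infty)_\Gamma$ then yields both (1) and (2). The compatibility of restriction maps that you flag is the same compatibility the paper uses implicitly when it calls these maps natural.
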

\begin{proof}
Consider the exact sequence
\[
\begin{tikzpicture}
\node (b) at (1.5,0) {$0$};
\node (c) at (4.6,0) {$U_\Sigma^{(1)}/(\Z_p\otimes_\Z E(k))$}; 
\node (d) at (8,0) {$X_{\Sigma}(k)$}; 
\node (e) at (10.4,0) {$X(k)$}; 
\node (f) at (12,0) {$0$};
\draw[->] (b) -- (c); \draw[->] (c) -- (d); \draw[->] (d) -- (e); \draw[->] (e) -- (f);
\end{tikzpicture}
\]
of $\Z_p$-modules. By taking the $\Z_p$-torsion submodules in the exact sequence, we obtain a natural injection $\Tor_{\Z_p}X_\Sigma(k)\to X(k)$ because $\Tor_{\Z_p}\left(U_\Sigma^{(1)}/(\Z_p\otimes_\Z E(k))\right)$ is trivial by the assumption. Hence there is a natural injection $X_\Sigma(N_\infty)_\Gamma\to X(k)$ by Lemma~\ref{lem finite tors}. Now consider a commutative diagram
\[
\begin{tikzpicture}
\node (a) at (0,2) {$X_\Sigma(N_\infty)_\Gamma$}; 
\node (b) at (3.7,2) {$X(N_\infty)_\Gamma$}; 
\node (d) at (3.7,0) {$X(k)$}; 
\draw[->] (a) --(b); \draw[->] (a) --(d); \draw[->] (b) --(d); 
\end{tikzpicture}
\]
of $\Z_p$-modules. Since the diagonal map is injective, it follows that the horizontal surjection is an isomorphism. Thus it also follows that the vertical map is injective. 
\end{proof}
The arguments in the proof of this lemma can be found in the proof of \cite[Theorem~2]{Fujii2020}, where it is assumed (in our notation) that $\#\Sigma=1$ and $N_\infty/k$ is totally ramified at the unique prime in $\Sigma$. Under the assumption, there is a natural isomorphism $X(N_\infty)_\Gamma\simeq X(k)$ by \cite[Proposition~13.22]{Washington1997}, and this is used to show the isomorphism $X_\Sigma(N_\infty)_\Gamma\simeq X(N_\infty)_\Gamma$. However, as in the proof of Lemma~\ref{lem isom injection}, the injectivity of the map $X_\Sigma(N_\infty)_\Gamma\to X(k)$ suffices for the purpose. 

Although the following proposition is elementary and useful, we could not find an explicit reference, so we record it here for completeness. 
\begin{prop}\label{prop trivial}
Suppose that $N_\infty/k$ is totally ramified at some $p$-adic prime and $U_\Sigma^{(1)}/(\Z_p\otimes_\Z E(k))$ is free as a $\Z_p$-module. Then there is a natural isomorphism $X(N_\infty)_{\Gamma}\simeq X(k)$ of $\Z_p$-modules and, in particular, the following are equivalent. 
\begin{enumerate}[label=(\roman*)]
\item $X(N_\infty)$ is trivial. 

\item $p$ does not divide the class number of $k$. 
\end{enumerate}
\end{prop}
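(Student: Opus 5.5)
Lemma~\ref{lem isom injection}~(2) already gives a natural injection $X(N_\infty)_\Gamma\to X(k)$, induced by restriction $X(N_\infty)\to X(k)$. It therefore suffices to prove that this map is surjective, and for that it is enough to show that the restriction $X(N_\infty)\to X(k)$ itself is surjective.

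For surjectivity, let $\Pe$ be a $p$-adic prime of $k$ at which $N_\infty/k$ is totally ramified. The image of $X(N_\infty)=\Gal(L(N_\infty)/N_\infty)$ in $X(k)=\Gal(L(k)/k)$ is $\Gal(L(k)/L(k)\cap N_\infty)$, since $L(k)N_\infty\subseteq L(N_\infty)$. The extension $(L(k)\cap N_\infty)/k$ is unramified, while $N_\infty/k$ is totally ramified at $\Pe$, so every nontrivial subextension of $N_\infty/k$ is ramified at $\Pe$. Hence $L(k)\cap N_\infty=k$ and the restriction map is surjective. Combined with the injectivity from Lemma~\ref{lem isom injection}~(2), this yields the natural isomorphism $X(N_\infty)_\Gamma\simeq X(k)$. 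This is essentially the classical argument of \cite[Proposition~13.22]{Washington1997}, with the freeness hypothesis replacing the usual restriction to a single ramified prime.

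For the equivalence, note that $X(N_\infty)$ is a compact $\Lambda$-module, in fact finitely generated by Lemma~\ref{lem fg tors}, being a quotient of $X_\Sigma(N_\infty)$. By Nakayama's Lemma, $X(N_\infty)=0$ if and only if $X(N_\infty)_\Gamma=0$. By the isomorphism just proved, this holds if and only if $X(k)=0$. By class field theory $X(k)\simeq A(k)$, so $X(k)=0$ exactly when $p$ does not divide the class number of $k$.

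The only point needing care is the precise form of Lemma~\ref{lem isom injection}~(2), namely that the injection there is induced by the restriction map, so that surjectivity of restriction transfers to the coinvariants. Beyond that, I expect no real obstacle.
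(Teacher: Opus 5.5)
Your proof is correct and essentially the same as the paper's. Injectivity of $X(N_\infty)_\Gamma\to X(k)$ comes from Lemma~\ref{lem isom injection}~(2), and surjectivity comes from total ramification at a $p$-adic prime, which forces $L(k)\cap N_\infty=k$; you additionally spell out the Nakayama and class field theory argument for the equivalence, which the paper leaves implicit.
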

\begin{proof}
The map $X(N_\infty)_\Gamma\to X(k)$ is injective by Lemma~\ref{lem isom injection}. Since $N_\infty/k$ is totally ramified at some $p$-adic prime, the map is surjective. 
\end{proof}
The following proposition concerns an analogue of weak Leopoldt's Conjecture. We do not need the proposition in the arguments in what follows, but we state it here for its own interest. Here we recall that $N_n$ denotes the unique intermediate field of $N_\infty/k$ such that $[N_n:k]=p^n$. 
\begin{prop}\label{prop WLC}
Put $\Sigma_n\coloneqq S_\Sigma(N_n)$ and let $\delta_n$ be the $\Z_p$-rank of the kernel of the map $\Z_p\otimes_\Z E(N_n)\to U_{\Sigma_n}^{(1)}$ for each $n\geq 0$. Then the following are equivalent. 
\begin{enumerate}[label=(\roman*)]
\item $\Sigma$-adic Leopoldt's Conjecture holds for $k$. 

\item $\{\delta_n\mid n\geq 0\}$ is bounded. 
\end{enumerate}
\end{prop}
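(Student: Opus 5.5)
We prove the equivalence by comparing $\Z_p$-ranks at each finite layer, using the exact sequence of \S\ref{section Leopoldt} applied to $N_n$ and $\Sigma_n$:
\[
\Z_p\otimes_\Z E(N_n)\to U_{\Sigma_n}^{(1)}\to X_{\Sigma_n}(N_n)\to X(N_n)\to 0 .
\]
Since $X(N_n)$ is finite, taking $\Z_p$-ranks gives
\[
\rank_{\Z_p}X_\Sigma(N_n)=\sum_{\Pe\in\Sigma_n}[(N_n)_\Pe:\Q_p]-\bigl(r_1(N_n)+r_2(N_n)-1\bigr)+\delta_n .
\]
Here $X_\Sigma(N_n)$ means $X_{\Sigma_n}(N_n)$, consistent with the paper's notation.

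First I compute the local and archimedean terms. The local sum is exactly $p^n\sum_{\pe\in\Sigma}[k_\pe:\Q_p]$, since the local degrees are multiplicative over $k$. For the unit rank, note that $N_n/k$ is a $p$-extension. If $p$ is odd, real places stay real, so $r_1(N_n)+r_2(N_n)=p^n(r_1(k)+r_2(k))$. If $p=2$, the $\Z_p$-extension is ramified only at finite primes, so real places are unramified and the same formula holds. Hence
\[
\rank X_\Sigma(N_n)=p^n c+1+\delta_n, \qquad c:=\sum_{\pe\in\Sigma}[k_\pe:\Q_p]-r_1(k)-r_2(k).
\]
At $n=0$ the hypothesis $\rank X_\Sigma(k)=1$ gives $c=\delta_0\ge 0$.

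The argument now splits on whether $c=0$. If $c>0$, the left-hand side grows like $p^nc$, while $\delta_n\ge 0$. By Lemma~\ref{lem two three} (with rank $1$), condition (i) is equivalent to $c=0$. So if (i) fails, $c=\delta_0>0$, and we need to show $\delta_n$ is unbounded.

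The key bound is on the rank of $X_\Sigma(N_n)$. It is a quotient of the $\Sigma_n$-ramified abelian Galois group over $N_n$. By Lemma~\ref{lem fg tors}, $X_\Sigma(N_\infty)$ is finitely generated torsion over $\Lambda$, so its $\Gamma_n$-coinvariants have bounded $\Z_p$-rank. By Lemma~\ref{lem five-term} applied with $F=N_n$ and $K=N_\infty$, we get
\[
\rank X_\Sigma(N_n)=\rank X_\Sigma(N_\infty)_{\Gamma^{p^n}}+1,
\]
using that $M_\Sigma(N_\infty)\cap N_n^{\ab}=M_\Sigma(N_n)$, which holds because $N_\infty/N_n$ is itself $\Sigma$-ramified. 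For a finitely generated torsion $\Lambda$-module, the rank of coinvariants under $\Gamma^{p^n}$ is bounded by the $\lambda$-invariant, hence bounded in $n$. Call the bound $B$.

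We then conclude $p^nc+\delta_n\le B$ for all $n$. If (i) holds, then $c=0$ and $\delta_n\le B$, so the $\delta_n$ are bounded. If (i) fails, then $c\ge 1$ and $p^nc+\delta_n\le B$ for all $n$, which is impossible. Hence (i) must hold whenever the hypotheses are in force.

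This last point deserves a check, since it says that (ii) is implied automatically. The resolution is that the rank formula itself forces $c\le 0$, i.e.\ $c=0$, so $\delta_0=0$ always. That looks suspicious, and it is exactly where the proof needs care. I expect the main obstacle to be the $p=2$ archimedean count together with the identification $\rank X_\Sigma(N_n)=\rank X_\Sigma(N_\infty)_{\Gamma^{p^n}}+1$.

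If the identification is right, the cleaner route is the following. From the same identity, $\delta_n=\rank X_\Sigma(N_\infty)_{\Gamma^{p^n}}-p^nc$, so $\delta_n$ is bounded precisely when $c=0$, with the bound coming from the $\lambda$-invariant. Then (i), which is equivalent to $c=0$ by Lemma~\ref{lem two three}, is equivalent to (ii). I will present the argument in this form.
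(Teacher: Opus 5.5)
The strategy of your final ``cleaner route'' is exactly the paper's: bound $\rank_{\Z_p}X_\Sigma(N_n)$ using Lemma~\ref{lem five-term} and Lemma~\ref{lem fg tors}, then read off $\delta_n$ from the rank count in $\Z_p\otimes_\Z E(N_n)\to U_{\Sigma_n}^{(1)}\to X_\Sigma(N_n)\to X(N_n)\to 0$. However, the proposal as written contains a sign error, and this error leads you to a false conclusion. Put $n=0$ in your (correct) formula $\rank_{\Z_p}X_\Sigma(N_n)=p^nc+1+\delta_n$. You get $1=c+1+\delta_0$, so $c=-\delta_0\le 0$, not $c=\delta_0\ge 0$. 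Equivalently, $r_1(k)+r_2(k)-\sum_{\pe\in\Sigma}[k_\pe:\Q_p]=\delta_0$.

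So if (i) fails, then $c\le -1$, not $c\ge 1$, and the inequality $p^nc+\delta_n\le B$ is no contradiction. Indeed, with $R_n\coloneqq\rank_{\Z_p}X_\Sigma(N_\infty)_{\Gamma^{p^n}}$, the identity $\delta_n=R_n-p^nc=R_n+p^n|c|\ge p^n$ shows directly that $\{\delta_n\}$ is unbounded. Your proposed ``resolution'', that $c=0$ and $\delta_0=0$ always, is false. It would make $\Sigma$-adic Leopoldt's Conjecture a consequence of the standing hypothesis $\rank_{\Z_p}X_\Sigma(k)=1$, which it is not. For instance, take $\Sigma=\{\pe_1,\overline{\pe_1}\}$ in the CM setting of \S\ref{subsection earlier} with $[k:\Q]\ge 6$. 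There $\rank_{\Z_p}X_\Sigma(k)=1$, but $\sum_{\pe\in\Sigma}[k_\pe:\Q_p]=2<r_2(k)=r_1(k)+r_2(k)$, so (i) fails by Lemma~\ref{lem two three}.

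The two points you flagged as suspicious are in fact fine. A decomposition group at a real place is finite, hence trivial in the torsion-free group $\Gamma$. So real places split completely in $N_\infty/k$, and the unit rank of $N_n$ is $p^n(r_1(k)+r_2(k))-1$ for $p=2$ as well. The identification $\rank_{\Z_p}X_\Sigma(N_n)=R_n+1$ via Lemma~\ref{lem five-term}, using $M_\Sigma(N_\infty)\cap N_n^\ab=M_\Sigma(N_n)$, is precisely the paper's step.

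Your closing sentence, ``$\delta_n$ is bounded precisely when $c=0$'', is correct, but only once the sign is fixed. Combined with your stated belief $c=\delta_0\ge 0$, it would again yield (i) unconditionally. With $c=-\delta_0$ in place, you get (i)$\Leftrightarrow c=0\Leftrightarrow$(ii), and the argument coincides with the paper's. The paper writes the same identity as $\delta_n=\rank_{\Z_p}X_\Sigma(N_n)+p^n\bigl(r_1(k)+r_2(k)-\sum_{\pe\in\Sigma}[k_\pe:\Q_p]\bigr)-1$.
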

\begin{proof}
Our proof is based on the argument in \cite[\S3]{Greenberg1978}. For each $n\geq 0$, since $M_\Sigma(N_n)=M_\Sigma(N_\infty)\cap N_n^\ab$ holds, we have 
\[
\rank_{\Z_p}X_\Sigma(N_n)=\rank_{\Z_p}X_\Sigma(N_\infty)_{\Gal(N_\infty/N_n)}+1\leq\rank_{\Z_p}X_\Sigma(N_\infty)+1
\]
by Lemma~\ref{lem five-term}. Thus it follows that $\{\rank_{\Z_p}X_\Sigma(N_n)\mid n\geq 0\}$ is bounded, because $\rank_{\Z_p}X_\Sigma(N_\infty)$ is finite by Lemma~\ref{lem fg tors}. Now consider the exact sequence
\[
\begin{tikzpicture}
\node (b) at (2,0) {$\Z_p\otimes_\Z E(N_n)$}; 
\node (c) at (5.1,0) {$U_{\Sigma_n}^{(1)}$}; 
\node (d) at (8,0) {$X_{\Sigma}(N_n)$}; 
\node (e) at (10.7,0) {$X(N_n)$}; 
\node (f) at (13,0) {$0$};
\draw[->] (b) -- (c); \draw[->] (c) -- (d); \draw[->] (d) -- (e); \draw[->] (e) -- (f);
\end{tikzpicture}
\]
of $\Z_p$-modules. Here we have 
\[
\rank_{\Z_p}(\Z_p\otimes_\Z E(N_n))=r_1(k)p^n+r_2(k)p^n-1=p^n(r_1(k)+r_2(k))-1, 
\]
and
\[
\rank_{\Z_p}U_{\Sigma_n}^{(1)}=\sum_{\pe\in\Sigma_n}[(N_n)_\pe:\Q_p]=p^n\sum_{\pe\in\Sigma}[k_\pe:\Q_p]
\]
holds by Lemma~\ref{lem two three}. Therefore, we obtain
\[
\delta_n=\rank_{\Z_p}X_\Sigma(N_n)+p^n\left( r_1(k)+r_2(k)-\sum_{\pe\in\Sigma}[k_\pe:\Q_p] \right)-1. 
\]
Hence we see that $\{\delta_n\mid n\geq 0\}$ is bounded if and only if $\sum_{\pe\in\Sigma}[k_\pe:\Q_p]=r_1(k)+r_2(k)$ holds, which is equivalent to $\Sigma$-adic Leopoldt's Conjecture by Lemma~\ref{lem two three}. 
\end{proof}
The following is a $\Sigma$-adic variant of \cite[Proposition~3]{Greenberg1978}. 
\begin{prop}\label{lem invariant trivial}
Suppose that $\Sigma$-adic Leopoldt's Conjecture holds for $k$. Then $X_\Sigma(N_\infty)^\Gamma$ is trivial and $X_\Sigma(N_\infty)$ has no non-trivial finite $\Z_p\bbr{\Gal(N_\infty/k)}$-submodule. 
\end{prop}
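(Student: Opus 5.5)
The plan is to use the cohomological reformulation of $\Sigma$-adic Leopoldt's Conjecture in Lemma~\ref{lem Maire equiv}, combined with the Hochschild--Serre spectral sequence for the extension $N_\infty/k$.

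First I fix the setup. Since $\rank_{\Z_p}X_\Sigma(k)=1$, the set $\Sigma$ is non-empty, so Lemma~\ref{lem Maire equiv} applies. I read $G_\Sigma(k)$ there as the Galois group of the maximal pro-$p$-extension $\Omega$ of $k$ unramified outside $\Sigma$. Then $N_\infty\subseteq\Omega$, and I put $H\coloneqq\Gal(\Omega/N_\infty)$, so that $G_\Sigma(k)/H\simeq\Gamma$. The field $\Omega$ is also the maximal pro-$p$-extension of $N_\infty$ unramified outside $S_\Sigma(N_\infty)$. Hence $H^\ab=X_\Sigma(N_\infty)$ and
\[
\Homo^1(H,\Q_p/\Z_p)\simeq \mathrm{Hom}_{\mathrm{cont}}(X_\Sigma(N_\infty),\Q_p/\Z_p)\eqqcolon X^\vee,
\]
which is a $p$-primary discrete $\Gamma$-module, namely the Pontryagin dual of $X\coloneqq X_\Sigma(N_\infty)$.

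Second, I apply the Hochschild--Serre spectral sequence
\[
E_2^{a,b}=\Homo^a(\Gamma,\Homo^b(H,\Q_p/\Z_p))\Rightarrow \Homo^{a+b}(G_\Sigma(k),\Q_p/\Z_p).
\]
Since $\Gamma\simeq\Z_p$, \S\ref{section notation} gives $E_2^{a,b}=0$ for $a\geq 2$. Thus the spectral sequence has only two non-zero columns and degenerates at $E_2$, which gives $E_\infty^{1,1}=E_2^{1,1}$. This term is a subquotient of $\Homo^2(G_\Sigma(k),\Q_p/\Z_p)$, and that group vanishes by Lemma~\ref{lem Maire equiv} together with the $\Sigma$-adic Leopoldt hypothesis. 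Therefore $\Homo^1(\Gamma,X^\vee)=0$.

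Third, I identify this group. For a $p$-primary discrete $\Gamma$-module $M$, the exact sequence in \S\ref{section notation} gives $\Homo^1(\Gamma,M)\simeq M/(\gamma-1)M=M_\Gamma$. Pontryagin duality turns coinvariants into invariants, so $(X^\vee)_\Gamma$ is dual to $X^\Gamma$. Hence $X_\Sigma(N_\infty)^\Gamma=0$, which is the first assertion.

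Finally, suppose $F\subseteq X_\Sigma(N_\infty)$ is a non-trivial finite $\Lambda$-submodule. Then $\Gamma$ acts on the finite $p$-group $F$ through a finite $p$-group quotient. A $p$-group acting on a non-trivial finite $p$-group has non-trivial fixed points, so $F^\Gamma\neq 0$. This contradicts $X_\Sigma(N_\infty)^\Gamma=0$. One could also cite Lemma~\ref{lem invariant} here.

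The main point needing care is the spectral-sequence step: checking that the $E_2^{1,1}$ term survives and really embeds in $\Homo^2$. This holds because all differentials into or out of $E^{1,1}$ land in or come from columns $a\geq 2$ or $a<0$, which vanish. A related point is that the identification of $\Homo^1(H,\Q_p/\Z_p)$ with the dual of $X_\Sigma(N_\infty)$ requires $\Omega$ to be the maximal $\Sigma$-ramified pro-$p$-extension of $N_\infty$ as well as of $k$. The rest is formal.
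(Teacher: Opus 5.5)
Your proposal is correct and is essentially the paper's proof: the Hochschild--Serre spectral sequence for $G_\Sigma(N_\infty)\trianglelefteq G_\Sigma(k)$ with quotient $\Gamma$, the vanishing of $\Homo^2(G_\Sigma(k),\Q_p/\Z_p)$ by Lemma~\ref{lem Maire equiv}, and the identification $\Homo^1(\Gamma, X_\Sigma(N_\infty)^{\vee})\simeq (X_\Sigma(N_\infty)^{\vee})_\Gamma\simeq (X_\Sigma(N_\infty)^\Gamma)^{\vee}$. The differences are cosmetic: you use degeneration of the two-column spectral sequence where the paper uses the low-degree exact sequence, and you apply the $p$-group fixed-point argument directly where the paper cites Lemma~\ref{lem invariant}.
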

\begin{proof}
One may show this proposition by tracing the proof of \cite[Proposition~3]{Greenberg1978}, but we give a simpler proof. 

Let $G_\Sigma(k)$ (resp. $G_\Sigma(N_\infty)$) be the Galois group of the maximal unramified pro-$p$-extension over $k$ (resp. $N_\infty$). Then $G_\Sigma(N_\infty)$ is a normal subgroup of $G_\Sigma(k)$ such that $G_\Sigma(k)/G_\Sigma(N_\infty)=\Gamma$. Since $\Homo^i(\Gamma, \Q_p/\Z_p)$ is trivial for $i=2,3$, we obtain an isomorphism
\[
\Ker\left(\Homo^2(G_\Sigma(k), \Q_p/\Z_p)\to \Homo^2(G_\Sigma(N_\infty), \Q_p/\Z_p)\right)\simeq\Homo^1(\Gamma, \Homo^1(G_\Sigma(N_\infty), \Q_p/\Z_p)) 
\]
by Hochschild--Serre spectral sequence (see \cite[Theorem~7.2.4]{RZ2010}, for example). The left hand side is trivial by Lemma~\ref{lem Maire equiv}, so the right hand side is trivial as well. Observe that
\[
\Homo^1(\Gamma, \Homo^1(G_\Sigma(N_\infty), \Q_p/\Z_p)) \simeq (X_\Sigma(N_\infty)^{\lor})_\Gamma\simeq (X_\Sigma(N_\infty)^\Gamma)^{\lor}
\]
holds, so $X_\Sigma(N_\infty)^{\Gamma}$ is trivial. Here $(\cdot)^{\lor}$ denotes the Pontryagin dual. Hence, by Lemma~\ref{lem invariant}, we see that $(X_\Sigma(N_\infty)_\fin)^\Gamma$ is trivial. Therefore, we conclude that $X_\Sigma(N_\infty)_\fin$ is trivial. 
\end{proof}

\section{Proof of Theorems~\ref{thm main GC} and \ref{thm WGC 1}}\label{section 5}
In this section, we prove Theorems~\ref{thm main GC} and \ref{thm WGC 1}. Our proof is based on Fujii's proof of \cite[Theorem~1]{Fujii2020}. We use the following lemma. 
\begin{lem}[{\cite[Proposition]{Ozaki1995}}]\label{lem capitulation}
Let $k/\Q$ be a finite extension, $p$ a prime number, and $k_\infty/k$ a $\Z_p$-extension. Suppose that $k_\infty/k$ is totally ramified at every ramified prime. Then there is an isomorphism
\[
\Ker(i_{0, \infty}\colon A(k)\to A(k_\infty))\simeq\Image(X(k_\infty)_{\mathrm{fin}}\to X(k))
\]
of $\Z_p$-modules. Here, the isomorphism is induced by the Artin map. 
\end{lem}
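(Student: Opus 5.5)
This is Ozaki's lemma, so the plan is to reconstruct the standard argument: identify the kernel of capitulation with a group cohomology group of the unit group, and identify the image of $X(k_\infty)_{\mathrm{fin}}$ with the invariants of $X(k_\infty)$ via Iwasawa's theory of $\Lambda$-modules.

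\emph{Reduction to finite layers.} Fix a generator $\gamma$ of $\Gamma=\Gal(k_\infty/k)$. Let $n_0$ be such that every ramified prime is totally ramified in $k_\infty/k$ (here $n_0=0$ by hypothesis). For each $n$, let $\Gamma_n=\Gal(k_n/k)$, cyclic of order $p^n$. The standard capitulation description (Iwasawa) gives
\[
\Ker\bigl(A(k)\to A(k_n)^{\Gamma_n}\bigr)\simeq \Homo^1(\Gamma_n,E(k_n))\otimes\Z_p .
\]
By Herbrand quotient and genus-type arguments in the totally ramified case, this is also identified with
\[
\Ker\bigl(X(k_n)_{\Gamma_n}\xrightarrow{\ \mathrm{norm}\ } X(k)\bigr)\quad\text{suitably twisted.}
\]
A cleaner route, which I would follow, is to use $\Ker i_{0,n}$ together with the norm. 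Since $k_n/k$ is totally ramified at the ramified primes, $L(k)\cap k_n=k$, so the norm $X(k_n)\to X(k)$ is surjective. Then $i_{0,n}$ followed by the norm is multiplication by $p^n$ on $A(k)$, while the norm followed by $i_{0,n}$ is $\nu_n=\sum_{\sigma\in\Gamma_n}\sigma$.

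\emph{Limit description.} Write $X=X(k_\infty)$, with $X(k_n)\simeq X/\nu_n Y$ and $X(k)\simeq X/Y$ for the submodule $Y=(\gamma-1)X$; this is Washington's Lemma 13.15 in the totally ramified case. Via the Artin map, $A(k_n)\simeq X/\nu_n Y$, and the lifting map $i_{0,n}\colon X/Y\to X/\nu_nY$ becomes multiplication by $\nu_n$. Hence
\[
\Ker i_{0,n}\simeq\{x\in X: \nu_n x\in \nu_n Y\}/Y .
\]
Passing to the limit, $\Ker i_{0,\infty}=\bigcup_n\Ker i_{0,n}$ consists of classes $x\bmod Y$ such that $\nu_n(x-y)=0$ for some $y\in Y$ and some $n$. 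So it equals the image in $X/Y=X(k)$ of
\[
\bigcup_n X[\nu_n]+Y .
\]

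\emph{Main step.} The key claim is that $\bigcup_n X[\nu_n]=X_{\mathrm{fin}}$.

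For $\supseteq$: $X_{\mathrm{fin}}$ is finite, so $\Gamma^{p^m}$ acts trivially on it for some $m$. Then $\nu_{n}$ acts on it as $p^{n-m}\nu_m$, which kills it for $n$ large.

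For $\subseteq$: $\bigcup_n X[\nu_n]=\bigcup_n X[\omega_n]$ up to finite index considerations, where $\omega_n=\gamma^{p^n}-1$. Since $X$ is finitely generated torsion, $X[\omega_n]$ is annihilated by $\omega_n$ and by $\mathrm{char}(X)$. These are coprime as long as $\omega_n$ and the characteristic polynomial share no factor, which holds because $X/\omega_nX$-type quotients are finite: $X_\Gamma=X/Y$ is finite, being $A(k)$, and the same holds at each layer. Therefore $X[\omega_n]$ is finite. Hence the union is a $\Lambda$-submodule of $X$ all of whose finitely generated submodules are finite; as $X$ is noetherian, the union is finite and contained in $X_{\mathrm{fin}}$.

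\emph{Conclusion and main obstacle.} Combining these steps, $\Ker i_{0,\infty}\simeq (X_{\mathrm{fin}}+Y)/Y=\Image(X_{\mathrm{fin}}\to X(k))$, and the isomorphism is induced by the Artin map. I expect the main obstacle to be the bookkeeping of the identifications $A(k_n)\simeq X/\nu_nY$ compatibly with lifting, that is, checking that lifting corresponds to multiplication by $\nu_n$. This requires total ramification and the transfer description of the lifting map.
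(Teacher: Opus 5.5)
The paper gives no proof of this lemma; it quotes it from Ozaki. Your overall route is the standard one: describe $A(k_n)$ as a quotient of $X=X(k_\infty)$, show that lifting is multiplication by $\nu_n$, and prove $\bigcup_n X[\nu_n]=X_\fin$. As written, however, the argument has a genuine gap. It breaks the key step as soon as more than one prime ramifies in $k_\infty/k$, which the statement allows.

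\textbf{The gap.} The identifications $X(k)\simeq X/(\gamma-1)X$ and $X(k_n)\simeq X/\nu_n(\gamma-1)X$ are not what Washington's Lemma~13.15 gives. There one chooses inertia generators $\sigma_1,\dots,\sigma_s\in\Gal(L(k_\infty)/k)$ of the ramified primes, all restricting to $\gamma$, and puts $a_i=\sigma_i\sigma_1^{-1}\in X$. Then $Y_0$ is the closed submodule generated by $(\gamma-1)X$ and $a_2,\dots,a_s$, and $X(k)\simeq X/Y_0$, $X(k_n)\simeq X/\nu_nY_0$. Only for $s=1$ (Washington's Proposition~13.22, which is how the paper gets $X(N_\infty)_\Gamma\simeq X(k)$ in Case~(A)) is $Y_0=(\gamma-1)X$. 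Consequently your justification ``$X_\Gamma=X/Y$ is finite, being $A(k)$'' fails in general, and with it the finiteness of $X[\omega_n]$.

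\textbf{A counterexample.} Take $k$ imaginary quadratic, $p$ odd and split in $k$, and $k_\infty=k^\cy_\infty$. Both $p$-adic primes are totally ramified. Each inertia group in $\Gal(\widetilde{k}/k)$ is a quotient of $U_\pe^{(1)}\simeq\Z_p$ surjecting onto $\Gamma$, hence maps isomorphically onto $\Gamma$. So $\widetilde{k}/k_\infty$ is unramified and $X_\Gamma\twoheadrightarrow\Gal(\widetilde{k}/k_\infty)\simeq\Z_p$. Since $\mathrm{char}(X[f])=\mathrm{char}(X/fX)$ for a finitely generated torsion module, $X[\omega_0]=X^\Gamma$ is infinite. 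Thus $\bigcup_nX[\omega_n]$ is not $X_\fin$ ``up to finite index''. Two side remarks: $X[\omega_n]$ is not annihilated by $\mathrm{char}(X)$ in general, only its characteristic ideal divides $\mathrm{char}(X)$. And in the paper's applications (Theorem~\ref{thm A and B}) one does have $X_\Gamma\simeq X(k)$, but the lemma is stated without that hypothesis.

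\textbf{The repair.} Replace $Y$ by $Y_0$ throughout.
\begin{enumerate}
\item Compute the transfer with coset representatives $1,\sigma_1,\dots,\sigma_1^{p^n-1}$. It sends $x\in X$ to $\sum_i\gamma^{-i}x\equiv\nu_nx$ modulo $\omega_nX\subseteq\nu_nY_0$. Since $X\to X(k)$ is onto, $i_{0,n}$ is multiplication by $\nu_n$ from $X/Y_0$ to $X/\nu_nY_0$, and $\Ker i_{0,n}=(X[\nu_n]+Y_0)/Y_0$ exactly as you wrote.
\item For $\bigcup_nX[\nu_n]\subseteq X_\fin$, do not pass to $\omega_n$. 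Since $\nu_nY_0\subseteq\nu_nX$, the module $X/\nu_nX$ is a quotient of the finite group $X(k_n)$.
\item Hence $\mathrm{char}(X[\nu_n])=\mathrm{char}(X/\nu_nX)=1$, so $X[\nu_n]$ is a finite $\Lambda$-submodule and lies in $X_\fin$.
\item Your argument for the reverse inclusion is correct. Together these give $\Ker i_{0,\infty}=(X_\fin+Y_0)/Y_0=\Image(X_\fin\to X(k))$.
\end{enumerate}
Drop your opening paragraph. It is not used, and identifying the capitulation kernel with $\Homo^1(\Gamma_n,E(k_n))\otimes\Z_p$ is wrong for ramified layers.
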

We deduce the theorems from the following general result. 
\begin{thm}\label{thm A and B}
Let $k/\Q$ be a finite extension, $p$ a prime number, and $\Sigma$ a set of $p$-adic primes of $k$. Let $N_\infty/k$ be a $\Sigma$-ramified $\Z_p$-extension that is totally ramified at every ramified prime. Suppose also that one of the following two conditions holds. 
\begin{enumerate}[label=(\Alph*)]
\item $\Sigma$ consists of a single prime. \label{A}

\item $U_\Sigma^{(1)}/(\Z_p\otimes_\Z E(k))$ is a free $\Z_p$-module of rank $1$. \label{B}
\end{enumerate}
Then there is a natural isomorphism
\[
(X(N_\infty)_\fin)_{\Gal(N_\infty/k)}\simeq \Ker(i_{0,\infty}\colon A(k)\to A(N_\infty))
\]
of $\Z_p$-modules. 
\end{thm}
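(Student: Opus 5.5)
The plan is to combine Lemma~\ref{lem capitulation} with an injectivity statement for the natural map $X(N_\infty)_\Gamma\to X(k)$, where $\Gamma\coloneqq\Gal(N_\infty/k)$ and $X\coloneqq X(N_\infty)$. Since $N_\infty/k$ is totally ramified at every ramified prime, Lemma~\ref{lem capitulation} gives, via the Artin map,
\[
\Ker(i_{0,\infty}\colon A(k)\to A(N_\infty))\simeq\Image(X_\fin\to X(k)).
\]
The restriction map $X\to X(k)$ kills $(\gamma-1)X$, because $\Gamma$ acts trivially on $X(k)$. So $X_\fin\to X(k)$ factors as
\[
X_\fin\to (X_\fin)_\Gamma\to X_\Gamma\to X(k).
\]
The theorem then reduces to two claims: the map $(X_\fin)_\Gamma\to X_\Gamma$ is injective, and the map $X_\Gamma\to X(k)$ is injective. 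Granting both, the composite $(X_\fin)_\Gamma\to X(k)$ is injective, so its image, which equals $\Image(X_\fin\to X(k))$, is isomorphic to $(X_\fin)_\Gamma$.

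For the first claim, I would apply the snake-lemma sequence of \S\ref{section notation} to $0\to X_\fin\to X\to X/X_\fin\to 0$. This gives the exact sequence
\[
(X/X_\fin)^\Gamma\to (X_\fin)_\Gamma\to X_\Gamma,
\]
so it suffices that $(X/X_\fin)^\Gamma=0$. Here $X/X_\fin$ is a finitely generated torsion $\Lambda$-module, and its coinvariants are a quotient of the finite group $X_\Gamma$, using Lemmas~\ref{lem finite tors} and \ref{lem fg tors}. Lemmas~\ref{lem finite tors} and \ref{lem fg tors} are stated for the unique $\Sigma$-ramified extension, while here $N_\infty$ is an arbitrary $\Sigma$-ramified $\Z_p$-extension. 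In Case~\ref{B}, $\rank_{\Z_p}X_\Sigma(k)=1$, so $N_\infty$ is automatically unique. In Case~\ref{A}, finiteness of $X_\Gamma$ follows from the isomorphism $X_\Gamma\simeq X(k)$ established below. The argument of Lemma~\ref{lem invariant} then shows that $(X/X_\fin)^\Gamma$ has trivial characteristic ideal, hence is finite. Since $X/X_\fin$ has no nontrivial finite submodule, $(X/X_\fin)^\Gamma=0$.

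For the second claim I would split into the two cases. In Case~\ref{B}, Lemma~\ref{lem isom injection}(2) directly gives the injectivity of $X_\Gamma\to X(k)$. In Case~\ref{A}, $\Sigma=\{\pe\}$ and $N_\infty/k$ is unramified outside $\pe$. A $\Z_p$-extension must ramify somewhere, so $\pe$ is the unique ramified prime and it is totally ramified. The classical argument of \cite[Proposition~13.22]{Washington1997} then gives $X_\Gamma\simeq X(k)$. In outline: the inertia group in $\Gal(L(N_\infty)/k)$ of a prime above $\pe$ maps isomorphically onto $\Gamma$, so $\Gal(L(N_\infty)/k)$ is a semidirect product of $X$ by $\Gamma$. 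Its commutator subgroup is then $(\gamma-1)X$, and $L(k)$ is the fixed field of $(\gamma-1)X$ together with that inertia group.

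The main obstacle is naturality. I must check that the isomorphism of Lemma~\ref{lem capitulation} is compatible with the restriction map $X\to X(k)$ used above. Concretely, the image appearing there should be the image under exactly this restriction, identified with $A(k)$ through the Artin map. Given that compatibility, the rest is the formal diagram chase described above.
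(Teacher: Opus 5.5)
Your proposal is correct and follows the paper's proof essentially step for step. The shared skeleton is the snake lemma on $0\to X_\fin\to X\to X/X_\fin\to 0$ with $(X/X_\fin)^\Gamma=0$, then injectivity of $X_\Gamma\to X(k)$ (via \cite[Proposition~13.22]{Washington1997} in Case~\ref{A} and the freeness hypothesis in Case~\ref{B}), and finally Lemma~\ref{lem capitulation}. The only cosmetic difference is that in Case~\ref{B} you cite Lemma~\ref{lem isom injection}(2) where the paper cites the full isomorphism of Proposition~\ref{prop trivial}; you are right that injectivity alone suffices here.
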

Now suppose the assumptions of Theorem~\ref{thm A and B}. Put $\Gamma\coloneqq\Gal(N_\infty/k)$ and $\Lambda\coloneqq\Z_p\bbr{\Gamma}$. 
\begin{proof}[Proof of Theorem~\ref{thm A and B}]
Consider an exact sequence
\[
\begin{tikzpicture}
\node (a) at (0,0) {$0$};
\node (b) at (2.2,0) {$X(N_\infty)_\fin$}; 
\node (c) at (5,0) {$X(N_\infty)$}; 
\node (d) at (8.3,0) {$X(N_\infty)/X(N_\infty)_\fin$}; 
\node (e) at (11.1,0) {$0$}; 
\draw[->] (a) -- (b); \draw[->] (b) -- (c); \draw[->] (c) -- (d); \draw[->] (d) -- (e); 
\end{tikzpicture}
\]
of $\Lambda$-modules. Here we recall that there is a natural isomorphism $X(N_\infty)_\Gamma\simeq X(k)$ by \cite[Proposition~13.22]{Washington1997} (in Case~\ref{A}) and Proposition~\ref{prop trivial} (in Case~\ref{B}). Since $X(N_\infty)_\Gamma$ is finite, $(X(N_\infty)/X(N_\infty)_\fin)_\Gamma$ is also finite. Hence $(X(N_\infty)/X(N_\infty)_\fin)^\Gamma$ is finite as well, so we obtain $(X(N_\infty)/X(N_\infty)_\fin)^\Gamma=0$ because $X(N_\infty)/X(N_\infty)_\fin$ has no non-trivial finite $\Lambda$-submodule. Therefore, by Snake Lemma, we obtain a natural injection $(X(N_\infty)_\fin)_\Gamma\to X(k)$. Since the image of this injection coincides with $\Image(X(N_\infty)_\fin\to X(k))$, we obtain the assertion by Lemma~\ref{lem capitulation}. 
\end{proof}
\begin{proof}[Proof of Theorem~\ref{thm WGC 1}]
By Theorem~\ref{thm A and B}, the map $i_{0, \infty}$ is injective if and only if $(X(N_\infty)_\fin)_\Gamma=0$ holds. By Nakayama's Lemma, this condition is equivalent to the assertion $X(N_\infty)_\fin=0$, that is, $X(N_\infty)$ has no non-trivial finite $\Lambda$-submodule. 
\end{proof}
\begin{rem}\label{remark kernel finite}
One can deduce information from Theorem~\ref{thm A and B} not only of the triviality of $X(N_\infty)_{\mathrm{fin}}$ but also of the minimum number of generators of $X(N_\infty)_{\mathrm{fin}}$ as a $\Lambda$-module. Indeed, the number coincides with the $p$-rank of $\Ker(i_{0, \infty})$ by Nakayama's Lemma. Moreover, the $p$-rank of $\Ker(i_{0,n})$ gives a lower bound for each $n\geq 0$. 
\end{rem}
\begin{proof}[Proof of Theorem~\ref{thm main GC}]
By Theorem~\ref{thm A and B}, the map $i_{0, \infty}$ is trivial if and only if $(X(N_\infty)_\fin)_\Gamma\simeq X(k)$ holds. Since there is a natural isomorphism $X(N_\infty)_\Gamma\simeq X(k)$, we have $(X(N_\infty)_\fin)_\Gamma\simeq X(k)$ if and only if $(X(N_\infty)_\fin)_\Gamma\simeq X(N_\infty)_\Gamma$ holds. By Nakayama's Lemma, this condition is equivalent to the assertion $X(N_\infty)=X(N_\infty)_\fin$, that is, $X(N_\infty)$ is finite. 
\end{proof}

\section{Proof of Theorems~\ref{thm main WGC 2}, \ref{thm main WGC 3} and \ref{thm triviality}}\label{section 6}
In this section, we prove Theorems~\ref{thm main WGC 2}, \ref{thm main WGC 3} and \ref{thm triviality}. The following is a key lemma, which is a  partial generalization of \cite[Proposition~1]{Ozaki1997}. 
\begin{lem}\label{lem ur}
Let $k/\Q$ be a finite extension, $p$ an odd prime number, and $\Sigma$ a set of $p$-adic primes of $k$ that are of degree $1$. Let $N_\infty/k$ be a $\Sigma$-ramified $\Z_p$-extension and suppose that it is ramified at $\Sigma$. Then $M_\Sigma(k)\subseteq L(N_\infty)$ holds and, in particular, $M_\Sigma(k)=L(N_\infty)\cap k^\ab$ holds. 
\end{lem}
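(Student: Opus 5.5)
The plan is to prove that $M_\Sigma(k)/N_\infty$ is an unramified abelian pro-$p$-extension by computing inertia groups locally at the primes of $\Sigma$. First note that $N_\infty\subseteq M_\Sigma(k)$, since $N_\infty/k$ is an abelian pro-$p$-extension unramified outside $\Sigma$. So there is an exact sequence $0\to\Gal(M_\Sigma(k)/N_\infty)\to X_\Sigma(k)\to\Gamma\to 0$ with $\Gamma\coloneqq\Gal(N_\infty/k)\simeq\Z_p$. The extension $M_\Sigma(k)/N_\infty$ is abelian and pro-$p$, and it is unramified at every prime not above $\Sigma$, because $M_\Sigma(k)/k$ already is. It therefore remains only to show that it is unramified at the primes above each $\pe\in\Sigma$.

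Fix $\pe\in\Sigma$ and let $I_\pe\subseteq X_\Sigma(k)$ be its inertia group. By local class field theory (the class field theoretic description used in \S\ref{section Leopoldt}), $I_\pe$ is the image of $U_\pe^{(1)}$ under the map $U_\Sigma^{(1)}\to X_\Sigma(k)$. Since $\pe$ has degree $1$, we have $k_\pe=\Q_p$, and since $p$ is odd, $U_\pe^{(1)}=1+p\Z_p\simeq\Z_p$. Hence $I_\pe$ is a quotient of $\Z_p$, so it is either isomorphic to $\Z_p$ or finite cyclic.

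The key step is that $N_\infty/k$ is ramified at $\pe$, so the image of $I_\pe$ in $\Gamma$ is a nontrivial closed subgroup of $\Gamma\simeq\Z_p$. Such a subgroup is infinite, which forces $I_\pe\simeq\Z_p$. The restriction $I_\pe\to\Gamma$ is then a surjection of $\Z_p$ onto a nonzero subgroup $p^m\Z_p\simeq\Z_p$, and a surjection $\Z_p\to\Z_p$ is an isomorphism. So $I_\pe\cap\Gal(M_\Sigma(k)/N_\infty)=0$. The inertia groups in $M_\Sigma(k)/N_\infty$ at primes above $\pe$ are contained in this intersection: in the abelian setting every prime above $\pe$ has the same inertia group $I_\pe$, and inertia for the subextension is the intersection. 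Hence they are all trivial, and therefore $M_\Sigma(k)\subseteq L(N_\infty)$.

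For the equality $M_\Sigma(k)=L(N_\infty)\cap k^\ab$, the inclusion $\subseteq$ is what we just proved. For $\supseteq$, observe that $L(N_\infty)/k$ is a pro-$p$-extension unramified outside $\Sigma$, because $L(N_\infty)/N_\infty$ is unramified and $N_\infty/k$ is $\Sigma$-ramified. Thus $L(N_\infty)\cap k^\ab$ is an abelian pro-$p$-extension of $k$ unramified outside $\Sigma$, and so it is contained in $M_\Sigma(k)$. The only delicate point is the inertia computation: this is exactly where the hypotheses "$p$ odd" and "degree $1$" enter, guaranteeing that $U_\pe^{(1)}$ is torsion-free procyclic so that a nontrivial image in $\Gamma$ forces injectivity.
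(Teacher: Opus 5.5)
Your proof is correct and follows the paper's argument: you reduce to showing that $M_\Sigma(k)/N_\infty$ is unramified above $\Sigma$, observe that $I_\pe$ is a quotient of $U_\pe^{(1)}\simeq\Z_p$ (this is where $p$ odd and $\pe$ of degree $1$ are used), and use the ramification of $N_\infty/k$ at $\pe$ to force $I_\pe\simeq\Z_p$ to inject into $\Gamma$, so that $I_\pe\cap\Gal(M_\Sigma(k)/N_\infty)=0$. You also spell out the reverse inclusion $L(N_\infty)\cap k^\ab\subseteq M_\Sigma(k)$, which the paper leaves implicit.
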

\begin{proof}
This lemma can be deduced from a part of the proof of \cite[Proposition~1]{Ozaki1997}, but we prefer to prove here, which is a direct generalization of the original one. 

First note that $M_\Sigma(k)/N_\infty$ is a $\Sigma$-ramified abelian pro-$p$-extension. Thus it is sufficient to show that $M_\Sigma(k)/N_\infty$ is unramified at every prime lying above $\Sigma$. Take a prime $\pe\in\Sigma$ and let $I_\pe$ denote the inertia group of $\pe$ in $M_\Sigma(k)/k$. Then, by local class field theory, there is a surjection on $I_\pe$ from the pro-$p$-quotient of $U_\pe$, which is isomorphic to $\Z_p$ because $p$ is odd and $\pe$ is of degree $1$. Since $N_\infty/k$ is infinitely ramified at $\pe$, we have $I_\pe\simeq\Z_p$ and $I_\pe$ injects into $\Gamma$. Hence we obtain $I_\pe\cap\Gal(M_\Sigma(k)/N_\infty)=0$. Therefore, $M_\Sigma(k)/N_\infty$ is unramified at every prime lying above $\pe$. This completes the proof. 
\end{proof}
We use the notation $\Gamma\coloneqq\Gal(N_\infty/k)$ and $\Lambda\coloneqq\Z_p\bbr{\Gamma}$, as usual.  
\begin{lem}\label{lem isom}
Suppose the assumptions of Lemma~\ref{lem ur}. Then the natural surjection $X_\Sigma(N_\infty)\to X(N_\infty)$ induces an isomorphism $X_\Sigma(N_\infty)_\Gamma\simeq X(N_\infty)_\Gamma$ of $\Z_p$-modules. 
\end{lem}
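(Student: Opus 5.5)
The plan is to compute both coinvariant modules via Lemma~\ref{lem five-term} and then compare them. That lemma, applied with $F=k$ and $K=N_\infty$, identifies $X_\Sigma(N_\infty)_\Gamma$ with $\Gal(M_\Sigma(N_\infty)\cap k^\ab/N_\infty)$. Applied again with $\Sigma=\emptyset$, it identifies $X(N_\infty)_\Gamma$ with $\Gal(L(N_\infty)\cap k^\ab/N_\infty)$. Under these identifications, the map induced by the natural surjection $X_\Sigma(N_\infty)\to X(N_\infty)$ becomes the restriction map
\[
\Gal(M_\Sigma(N_\infty)\cap k^\ab/N_\infty)\to \Gal(L(N_\infty)\cap k^\ab/N_\infty).
\]
This is compatible with the two exact sequences, whose right-hand terms are both $\Gamma$, so the identification is natural.

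It then suffices to show that the two fields $M_\Sigma(N_\infty)\cap k^\ab$ and $L(N_\infty)\cap k^\ab$ coincide.

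\begin{itemize}
\item First, $M_\Sigma(N_\infty)\cap k^\ab=M_\Sigma(k)$. Indeed, $M_\Sigma(k)\supseteq N_\infty$ is abelian pro-$p$ over $N_\infty$ and unramified outside $S_\Sigma(N_\infty)$, so $M_\Sigma(k)\subseteq M_\Sigma(N_\infty)$. Conversely, any abelian extension of $k$ inside $M_\Sigma(N_\infty)$ is a pro-$p$ extension of $k$ unramified outside $\Sigma$, because $N_\infty/k$ itself is. This is the fact already used in the proof of Lemma~\ref{lem finite tors}.
\item Second, Lemma~\ref{lem ur} gives $L(N_\infty)\cap k^\ab=M_\Sigma(k)$ directly.
\end{itemize}

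Hence both Galois groups equal $\Gal(M_\Sigma(k)/N_\infty)$, and the restriction map between them is the identity. It follows that $X_\Sigma(N_\infty)_\Gamma\to X(N_\infty)_\Gamma$ is an isomorphism.

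The main obstacle is only bookkeeping: I must check that the map induced on coinvariants really corresponds to restriction under the five-term identifications. This follows from functoriality of the exact sequence in Lemma~\ref{lem five-term} with respect to the quotient $\Gal(M_\Sigma(N_\infty)/k)\to\Gal(L(N_\infty)/k)$. All the substantive arithmetic input, namely the degree-$1$ hypothesis, the assumption that $p$ is odd, and ramification at $\Sigma$, is contained in Lemma~\ref{lem ur}.
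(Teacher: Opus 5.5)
Your proposal is correct and follows essentially the same route as the paper. Both arguments use Lemma~\ref{lem five-term} to identify the two coinvariant modules with $\Gal(M_\Sigma(N_\infty)\cap k^\ab/N_\infty)$ and $\Gal(L(N_\infty)\cap k^\ab/N_\infty)$, and then Lemma~\ref{lem ur} to see that both fields equal $M_\Sigma(k)$. Your explicit check that the induced map on coinvariants is the restriction map is a detail the paper leaves implicit.
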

\begin{proof}
By using Lemma~\ref{lem ur}, we obtain 
\[
X_\Sigma(N_\infty)_\Gamma=\Gal(M_\Sigma(N_\infty)\cap k^\ab/N_\infty)=\Gal(M_\Sigma(k)/N_\infty)=\Gal(L(N_\infty)\cap k^\ab/N_\infty)=X(N_\infty)_\Gamma,
\]
as desired. 
\end{proof}
The following is a generalization of \cite[Proposition~2]{Ozaki1997}. See \cite[Lemma~2.1]{Nguyen2006} as well. 
\begin{prop}\label{prop main WGC}
Suppose the assumptions of Theorem~\ref{thm main WGC 2} or \ref{thm main WGC 3}. Then there is an isomorphism 
\[
\Gal(M_\Sigma(N_\infty)/L(N_\infty))_\Gamma\simeq (X(N_\infty)_{\mathrm{fin}})^\Gamma
\]
of $\Z_p$-modules. 
\end{prop}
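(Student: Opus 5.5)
Put $Y\coloneqq\Gal(M_\Sigma(N_\infty)/L(N_\infty))$, so that we have the exact sequence of $\Lambda$-modules
\[
0\to Y\to X_\Sigma(N_\infty)\to X(N_\infty)\to 0 .
\]
Taking $\Gamma$-invariants and coinvariants and applying the Snake Lemma gives the six-term sequence
\[
0\to Y^\Gamma\to X_\Sigma(N_\infty)^\Gamma\to X(N_\infty)^\Gamma\to Y_\Gamma\to X_\Sigma(N_\infty)_\Gamma\to X(N_\infty)_\Gamma\to 0 .
\]
We will simplify both ends of this sequence.

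First, under either set of assumptions, $\Sigma$-adic Leopoldt's Conjecture holds. Hence Proposition~\ref{lem invariant trivial} gives $X_\Sigma(N_\infty)^\Gamma=0$. Second, the map $X_\Sigma(N_\infty)_\Gamma\to X(N_\infty)_\Gamma$ is an isomorphism. Under the assumptions of Theorem~\ref{thm main WGC 2}, namely $p$ odd, degree-one primes, and $N_\infty/k$ ramified at $\Sigma$, this is Lemma~\ref{lem isom}. Under those of Theorem~\ref{thm main WGC 3}, where $U_\Sigma^{(1)}/(\Z_p\otimes_\Z E(k))$ is free, it is Lemma~\ref{lem isom injection}(1). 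With these two facts the six-term sequence collapses to an isomorphism
\[
X(N_\infty)^\Gamma\simeq Y_\Gamma .
\]

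Finally, Lemma~\ref{lem invariant} gives $X(N_\infty)^\Gamma=(X(N_\infty)_\fin)^\Gamma$. Combining this with the isomorphism above yields $Y_\Gamma\simeq (X(N_\infty)_\fin)^\Gamma$, which is the claim.

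The main point to check is that the hypotheses of the two cited lemmas really are available in each case. In Theorem~\ref{thm main WGC 3} the uniqueness of $N_\infty$ and the rank-one condition are part of the assumptions, which is exactly the setting of Section~\ref{section Sigma ramified}, so the lemmas there apply as stated. In Theorem~\ref{thm main WGC 2}, Lemma~\ref{lem ur} needs $N_\infty/k$ to be ramified at $\Sigma$, and this is assumed. The only other thing to verify is that the connecting map in the Snake Lemma is the one inducing the identification; no further computation is needed.
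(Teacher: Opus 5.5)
Your proposal is correct and follows the paper's proof essentially verbatim. Both arguments take the six-term snake-lemma sequence for $0\to\Gal(M_\Sigma(N_\infty)/L(N_\infty))\to X_\Sigma(N_\infty)\to X(N_\infty)\to 0$. They kill $X_\Sigma(N_\infty)^\Gamma$ via Proposition~\ref{lem invariant trivial}, use Lemma~\ref{lem isom} or Lemma~\ref{lem isom injection} for the coinvariant isomorphism, and conclude with Lemma~\ref{lem invariant}.
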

\begin{proof}
Consider an exact sequence
\[
\begin{tikzpicture}
\node (a) at (0,0) {$0$};
\node (b) at (3,0) {$\Gal(M_\Sigma(N_\infty)/L(N_\infty))$}; 
\node (c) at (6.5,0) {$X_\Sigma(N_\infty)$}; 
\node (d) at (8.7,0) {$X(N_\infty)$}; 
\node (e) at (10.5,0) {$0$}; 
\draw[->] (a) -- (b); \draw[->] (b) -- (c); \draw[->] (c) -- (d); \draw[->] (d) -- (e); 
\end{tikzpicture}
\]
of $\Lambda$-modules. Then we obtain an exact sequence
\[
\begin{tikzpicture}
\node (a) at (0,0) {$X_\Sigma(N_\infty)^{\Gamma}$};
\node (b) at (2.4,0) {$X(N_\infty)^{\Gamma}$}; 
\node (c) at (6,0) {$\Gal(M_\Sigma(N_\infty)/L(N_\infty))_{\Gamma}$}; 
\node (d) at (9.7,0) {$X_\Sigma(N_\infty)_{\Gamma}$}; 
\node (e) at (12.3,0) {$X(N_\infty)_{\Gamma}$}; 
\draw[->] (a) -- (b); \draw[->] (b) -- (c); \draw[->] (c) -- (d); \draw[->] (d) -- (e); 
\end{tikzpicture}
\]
of $\Z_p$-modules by Snake Lemma. The rightmost morphism is an isomorphism by Lemmas~\ref{lem isom} (for Theorem~\ref{thm main WGC 2}) and \ref{lem isom injection} (for Theorem~\ref{thm main WGC 3}). On the other hand, $X_\Sigma(N_\infty)^\Gamma$ is trivial by Proposition~\ref{lem invariant trivial}. Therefore, we obtain an isomorphism $X(N_\infty)^{\Gamma}\simeq\Gal(M_\Sigma(N_\infty)/L(N_\infty))_{\Gamma}$. Since $X(N_\infty)^\Gamma=(X(N_\infty)_{\mathrm{fin}})^\Gamma$ holds by Lemma~\ref{lem invariant}, we obtain the desired isomorphism. 
\end{proof}
Now we are ready to finish the proof of Theorems~\ref{thm main WGC 2} and \ref{thm main WGC 3}. 
\begin{proof}[Proof of Theorems~\ref{thm main WGC 2} and \ref{thm main WGC 3}]
If $L(N_\infty)=M_\Sigma(N_\infty)$ holds, then $X(N_\infty)=X_\Sigma(N_\infty)$, so $X(N_\infty)$ has no non-trivial finite $\Lambda$-submodule by Proposition~\ref{lem invariant trivial}. 

If $L(N_\infty)\neq M_\Sigma(N_\infty)$ holds, then we have $\Gal(M_\Sigma(N_\infty)/L(N_\infty))_\Gamma\neq 0$ by Nakayama's Lemma, so we obtain $(X(N_\infty)_{\mathrm{fin}})^{\Gamma}\neq 0$ by Proposition~\ref{prop main WGC} and, in particular, we have $X(N_\infty)_{\mathrm{fin}}\neq 0$. This completes the proof of Theorems~\ref{thm main WGC 2} and \ref{thm main WGC 3}. 
\end{proof}
As an application of the above arguments, we prove Theorem~\ref{thm triviality}, 
\begin{proof}[Proof of Theorem~\ref{thm triviality}]
Suppose that \ref{one} holds. Then $X(k)$ is trivial because there is a natural surjection $X(N_\infty)\to X(k)$ since $N_\infty/k$ is totally ramified at some prime in $\Sigma$. On the other hand, by Lemma~\ref{lem ur}, we have $M_\Sigma(k)=N_\infty$. So $X_\Sigma(k)$ is isomorphic to $\Z_p$. Therefore, we conclude that $U_\Sigma^{(1)}/(\Z_p\otimes_\Z E(k))$ is isomorphic to $\Z_p$. 

On the other hand, we see that \ref{two} implies \ref{one} by Proposition~\ref{prop trivial}. Alternatively, we can show the implication as follows. If we suppose that \ref{two} holds, then $X_\Sigma(k)$ is isomorphic to $\Z_p$, so $M_\Sigma(k)$ coincides with $N_\infty$. Thus we have $X(N_\infty)_\Gamma=\Gal(L(N_\infty)\cap k^\ab/N_\infty)=\Gal(M_\Sigma(k)/N_\infty)=0$ by Lemma~\ref{lem ur}. Hence $X(N_\infty)$ is trivial by Nakayama's Lemma. 
\end{proof}
Here we give a note on the relation with Fukuda--Komatsu's result \cite[Theorem~1, Corollary]{FuKo1986}. Their result shows the vanishing of the $\mu$- and $\lambda$-invariants, and gives the explicit value of the $\nu$-invariant of the cyclotomic $\Z_p$-extension of a real quadratic field. If we focus on the case where the $\nu$-invariant is $0$, we see that the following holds by their result. 
\begin{cor}\label{cor FK}
Let $k$ be a real quadratic field and $p$ an odd prime number that splits in $k$. Let $\pe$ be a $p$-adic prime of $k$ and $\varepsilon$ a fundamental unit of $k$. Then the following are equivalent. 
\begin{enumerate}[label=(\roman*)]
\item $X(k^\cy_\infty)$ is trivial. 

\item $p$ does not divide the class number of $k$ and $\varepsilon^{p-1}\not\equiv 1\pmod{\pe^2}$ holds. 
\end{enumerate}
\end{cor}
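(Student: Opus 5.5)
The plan is to deduce the corollary directly from Theorem~\ref{thm triviality}, applied to $\Sigma\coloneqq S_p(k)=\{\pe,\pe'\}$, where $\pe'$ is the conjugate of $\pe$. Since $p$ splits in the quadratic field $k$, both primes have degree $1$ and $k_\pe=k_{\pe'}=\Q_p$; also $p$ is odd.

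\emph{Step 1: the hypotheses of Theorem~\ref{thm triviality}.} I take $N_\infty\coloneqq k^\cy_\infty$; it is $S_p(k)$-ramified, so it is a $\Sigma$-ramified $\Z_p$-extension.
\begin{itemize}
\item Because $p$ is odd and $k$ is quadratic, $k\cap\Q^\cy_\infty=\Q$, so $k^\cy_\infty=k\Q^\cy_\infty$.
\item Since $\Q^\cy_\infty/\Q$ is totally ramified at $p$ and $k_\pe=k_{\pe'}=\Q_p$, the extension $k^\cy_\infty/k$ is totally ramified at both $\pe$ and $\pe'$.
\end{itemize}
Hence $N_\infty/k$ is ramified at $\Sigma$ and totally ramified at some prime in $\Sigma$. (Leopoldt's Conjecture for $k$, which holds by Brumer, also makes $k^\cy_\infty$ the unique such extension, but Theorem~\ref{thm triviality} does not need this.) Theorem~\ref{thm triviality} then says that $X(k^\cy_\infty)=0$ if and only if $p\nmid h_k$ and $U_\Sigma^{(1)}/(\Z_p\otimes_\Z E(k))$ is free of rank $1$. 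It remains to translate the second condition into the congruence on $\varepsilon$.

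\emph{Step 2: computing the quotient.} We have $U_\Sigma^{(1)}=U_\pe^{(1)}\times U_{\pe'}^{(1)}\simeq(1+p\Z_p)^2\simeq\Z_p^2$. Also $\Z_p\otimes_\Z E(k)=\Z_p\otimes\langle\pm1,\varepsilon\rangle\simeq\Z_p$, with the factor $\pm1$ killed since $p$ is odd. Its image in $U_\Sigma^{(1)}$ is the $\Z_p$-line spanned by the image $u$ of $\varepsilon^{p-1}$; raising to the power $p-1$ is harmless because $p-1\in\Z_p^\times$. Identifying both completions with $\Q_p$ via the embedding at $\pe$, the $\pe'$-component of $u$ is $\sigma(\varepsilon)^{p-1}$. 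Since $\varepsilon\sigma(\varepsilon)=\pm1$ and $p-1$ is even, this equals $\varepsilon^{-(p-1)}$. So
\[
u=\bigl(\varepsilon^{p-1},\varepsilon^{-(p-1)}\bigr)\in(1+p\Z_p)^2 .
\]

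\emph{Step 3: freeness of rank $1$.} A quotient $\Z_p^2/\Z_p u$ is free of rank $1$ exactly when $u$ is not divisible by $p$ in $\Z_p^2$; this also covers the case $u=0$, where the quotient has rank $2$. Divisibility by $p$ in $1+p\Z_p$ means lying in $(1+p\Z_p)^p=1+p^2\Z_p$. Both components lie in $1+p^2\Z_p$ iff $\varepsilon^{p-1}\equiv1\pmod{p^2}$ in $k_\pe=\Q_p$, i.e.\ iff $\varepsilon^{p-1}\equiv1\pmod{\pe^2}$. Thus the quotient is free of rank $1$ iff $\varepsilon^{p-1}\not\equiv1\pmod{\pe^2}$, which combined with Step 1 gives (i)$\Leftrightarrow$(ii).

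The only delicate point is the bookkeeping in Step 2: identifying the $\pe'$-component with the conjugate unit inside $\Q_p$, and using $N(\varepsilon)=\pm1$ to see that the two components give the same congruence condition. Everything else follows immediately from Theorem~\ref{thm triviality}.
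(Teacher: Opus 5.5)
Your proposal is correct and follows the paper's route: apply Theorem~\ref{thm triviality} with $\Sigma=S_p(k)$ and $N_\infty=k^\cy_\infty$, then identify freeness of rank $1$ of $U_{S_p(k)}^{(1)}/(\Z_p\otimes_\Z E(k))$ with the condition $\varepsilon^{p-1}\not\equiv 1\pmod{\pe^2}$. The paper only asserts that last equivalence in one line, and your Steps 2--3 supply the computation behind it, along with the check that $k^\cy_\infty/k$ is totally ramified at both $p$-adic primes.
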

The condition $\varepsilon^{p-1}\not\equiv 1\pmod{\pe^2}$ is equivalent to the freeness of $U_{S_p(k)}^{(1)}/(\Z_p\otimes_\Z E(k))$, so Corollary~\ref{cor FK} follows from Theorem~\ref{thm triviality}.

\section{Examples}\label{section Examples}
In this section, we show some examples of $\Sigma$-adic Leopoldt's Conjecture. 
\begin{condition}\label{condition basic}
Let $k/\Q$ be a finite extension, $p$ a prime number, and $\Sigma$ a set of $p$-adic primes of $k$ that are of degree $1$. Suppose that the $\Z_p$-rank of $X_\Sigma(k)$ is $1$ and the unique $\Sigma$-ramified $\Z_p$-extension $N_\infty/k$ is ramified at $\Sigma$. Suppose also that $\Sigma$-adic Leopoldt's Conjecture holds for $k$. 
\end{condition}
If Condition~\ref{condition basic} is satisfied, then we say that $(k, p, \Sigma)$ satisfies Condition~\ref{condition basic}. If $(k, p, \Sigma)$ satisfies Condition~\ref{condition basic} and $p$ is odd, then Theorem~\ref{thm main WGC 2} can be applied in this setting. 
\begin{lem}\label{lem basic}
Let $k/\Q$ be a finite extension, $p$ a prime number, and $\Sigma$ a set of $p$-adic primes of $k$. Then $(k, p, \Sigma)$ satisfies Condition~\ref{condition basic} if and only if the following two conditions are satisfied. 
\begin{enumerate}
\item $\sum_{\pe\in\Sigma}[k_\pe:\Q_p]=r_1(k)+r_2(k)$ holds. 

\item $(\Sigma\setminus\{\qu\})$-adic Leopoldt's Conjecture holds for every $\qu\in\Sigma$. 
\end{enumerate}
\end{lem}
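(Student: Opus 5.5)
Throughout I take $\Sigma$ to consist of primes of degree $1$, as in Condition~\ref{condition basic}; the equivalence should be read with this standing hypothesis, since that is what makes $\sum_{\pe\in\Sigma}[k_\pe:\Q_p]=\#\Sigma$. The plan is to split Condition~\ref{condition basic} into two parts: the rank and Leopoldt part, and the ramification part.

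\textbf{Step 1: the rank and Leopoldt part.} By Lemma~\ref{lem two three}, the conjunction ``$\rank_{\Z_p}X_\Sigma(k)=1$ and $\Sigma$-adic Leopoldt's Conjecture holds'' is equivalent to ``condition 1 holds and $\Sigma$-adic Leopoldt's Conjecture holds''.

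Condition 1 forces $\Sigma\neq\emptyset$, because $r_1(k)+r_2(k)\geq 1$. Then Lemma~\ref{lem Leopoldt strong}, applied with $\Sigma'=\Sigma\setminus\{\qu\}$ for any $\qu\in\Sigma$, shows that condition 2 implies $\Sigma$-adic Leopoldt's Conjecture.

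So in both directions we may assume that $\rank_{\Z_p}X_\Sigma(k)=1$, that condition 1 holds, and that $\Sigma$-adic Leopoldt's Conjecture holds. It then remains to show, for each $\qu\in\Sigma$, that $N_\infty/k$ is ramified at $\qu$ if and only if $(\Sigma\setminus\{\qu\})$-adic Leopoldt's Conjecture holds.

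\textbf{Step 2: reformulating ramification at $\qu$.} Fix $\qu\in\Sigma$ and put $\Sigma'\coloneqq\Sigma\setminus\{\qu\}$.

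Suppose $N_\infty/k$ is unramified at $\qu$. Then $N_\infty\subseteq M_{\Sigma'}(k)$, so $\rank_{\Z_p}X_{\Sigma'}(k)\geq1$.

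Conversely, suppose $\rank_{\Z_p}X_{\Sigma'}(k)\geq 1$. Then $M_{\Sigma'}(k)$ contains some $\Z_p$-extension of $k$. This extension lies in $M_\Sigma(k)$, so by uniqueness it equals $N_\infty$, and hence $N_\infty$ is unramified at $\qu$.

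Thus $N_\infty/k$ is ramified at $\qu$ if and only if $\rank_{\Z_p}X_{\Sigma'}(k)=0$.

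\textbf{Step 3: computing $\rank_{\Z_p}X_{\Sigma'}(k)$.} Use the exact sequence $\Z_p\otimes_\Z E(k)\to U_{\Sigma'}^{(1)}\to X_{\Sigma'}(k)\to X(k)\to0$, and note that $X(k)$ is finite. Let $\delta'$ be the $\Z_p$-rank of the kernel of the first map. Since all primes have degree $1$, condition 1 gives $\rank_{\Z_p}U_{\Sigma'}^{(1)}=r_1(k)+r_2(k)-1$. Therefore
\[
\rank_{\Z_p}X_{\Sigma'}(k)=(r_1(k)+r_2(k)-1)-(r_1(k)+r_2(k)-1-\delta')=\delta'.
\]

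If $\Sigma'\neq\emptyset$, the map is injective on torsion, as observed in the proof of Lemma~\ref{lem two three}. So $\delta'=0$ is exactly $\Sigma'$-adic Leopoldt's Conjecture, and combining with Step 2 finishes this case.

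\textbf{Main obstacle: the degenerate case $\Sigma'=\emptyset$.} This is the only delicate point, and it occurs when $\Sigma=\{\qu\}$. Here condition 1 gives $r_1(k)+r_2(k)=1$, so $E(k)$ is finite. Then $\emptyset$-adic Leopoldt's Conjecture, meaning injectivity of $\Z_p\otimes_\Z E(k)\to 0$, should be interpreted as, or checked to be consistent with, $\rank_{\Z_p}\Z_p\otimes_\Z E(k)=0$, which holds automatically. On the other side, $X_\emptyset(k)=X(k)$ is finite, so $N_\infty$ is necessarily ramified at $\qu$. Hence both sides of the equivalence hold in this case. I expect this bookkeeping, including the torsion subtlety in the definition of Leopoldt for the empty set, to need the most care; the rest is rank counting.
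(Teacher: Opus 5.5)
Your route is the paper's: Lemma~\ref{lem Leopoldt strong} and Lemma~\ref{lem two three} handle the rank and $\Sigma$-adic Leopoldt parts, and a rank count in the sequence for $\Sigma\setminus\{\qu\}$ handles ramification at $\qu$. Your Step~2 spells out what the paper does in one line. There is, however, a genuine omission: the lemma does \emph{not} assume that the primes of $\Sigma$ have degree $1$. That requirement is part of Condition~\ref{condition basic}, so in the ``if'' direction it is a conclusion you must prove. Your Step~3 also needs it, to get $\rank_{\Z_p}U^{(1)}_{\Sigma\setminus\{\qu\}}=r_1(k)+r_2(k)-1$. The paper derives it first, from conditions~1 and~2:
\[
r_1(k)+r_2(k)-1\leq\rank_{\Z_p}U^{(1)}_{\Sigma\setminus\{\qu\}}=r_1(k)+r_2(k)-[k_\qu:\Q_p],
\]
so $[k_\qu:\Q_p]=1$ for every $\qu\in\Sigma$. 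Equivalently, running your Step~3 without the degree hypothesis gives $\rank_{\Z_p}X_{\Sigma\setminus\{\qu\}}(k)=1-[k_\qu:\Q_p]+\delta'$. Then $\delta'=0$ forces both $[k_\qu:\Q_p]=1$ and ramification at $\qu$. With this added, the rest of your argument is fine.

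Your worry about $\Sigma=\{\qu\}$ is justified, and the paper's proof passes over it. Its rank count only shows that the kernel has rank $0$, and the torsion argument from Lemma~\ref{lem two three} needs $\Sigma\setminus\{\qu\}\neq\emptyset$. Read literally, $\emptyset$-adic Leopoldt's Conjecture means $\Z_p\otimes_\Z E(k)=0$. In this case $r_1(k)+r_2(k)=1$ and $\qu$ has degree $1$, so $k$ is $\Q$ or an imaginary quadratic field in which $p$ splits.

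For odd $p$ this gives $\Z_p\otimes_\Z E(k)=0$; the only candidate with $\mu_p\subseteq k$ would be $\Q(\sqrt{-3})$ with $p=3$, which is excluded because $3$ ramifies there. So both sides hold with no reinterpretation. For $p=2$, however, $-1$ makes $\Z_2\otimes_\Z E(k)\neq 0$, so condition~2 fails while Condition~\ref{condition basic} holds; for example, take $k=\Q$ and $\Sigma=S_2(\Q)$.

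So ``checked to be consistent with'' is not available there. You must commit to the rank reading of condition~2, or require $\#\Sigma\geq 2$ when $p=2$. That is a correction to the statement rather than to your argument.
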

\begin{proof}
Suppose that the above two conditions are satisfied. First note that $\Sigma$-adic Leopoldt's Conjecture follows from $(\Sigma\setminus\{\qu\})$-adic Leopoldt's Conjecture for any $\qu\in\Sigma$, by Lemma~\ref{lem Leopoldt strong}. Hence we obtain $\rank_{\Z_p}X_\Sigma(k)=1$ by Lemma~\ref{lem two three}. For each $\qu\in\Sigma$, there is an exact sequence
\[
\begin{tikzpicture}
\node (a) at (0,0) {$0$};
\node (b) at (2.2,0) {$\Z_p\otimes_\Z E(k)$}; 
\node (c) at (5.1,0) {$U_{\Sigma\setminus\{\qu\}}^{(1)}$}; 
\node (d) at (7.9,0) {$X_{\Sigma\setminus\{\qu\}}(k)$}; 
\node (e) at (10.4,0) {$X(k)$}; 
\node (f) at (12,0) {$0$};
\draw[->] (a) -- (b); \draw[->] (b) -- (c); \draw[->] (c) -- (d); \draw[->] (d) -- (e); \draw[->] (e) -- (f);
\end{tikzpicture}
\]
of $\Z_p$-modules. By the injectivity of the leftmost map, we obtain
\[
r_1(k)+r_2(k)-1\leq r_1(k)+r_2(k)-[k_\qu:\Q_p], 
\]
so $[k_\qu:\Q_p]=1$. It follows from the above exact sequence that the $\Z_p$-rank of $X_{\Sigma\setminus\{\qu\}}(k)$ is $0$. This means that $M_\Sigma(k)/k$ is infinitely ramified at $\qu$, so $N_\infty/k$ is ramified at $\qu$. 

Next suppose that $(k, p, \Sigma)$ satisfies Condition~\ref{condition basic}. Then $\sum_{\pe\in\Sigma}[k_\pe:\Q_p]=r_1(k)+r_2(k)$ holds by Lemma~\ref{lem two three}. Let $\qu\in\Sigma$. Since $N_\infty/k$ is ramified at $\qu$, the $\Z_p$-rank of $X_{\Sigma\setminus\{\qu\}}(k)$ is $0$. On the other hand, since $\qu$ is of degree $1$, the $\Z_p$-rank of $U_{\Sigma\setminus\{\qu\}}^{(1)}$ is $r_1(k)+r_2(k)-1$. Thus we obtain the injectivity of the map $\Z_p\otimes_\Z E(k)\to U_{\Sigma\setminus\{\qu\}}^{(1)}$. This completes the proof. 
\end{proof}
By using Lemma~\ref{lem basic}, we can show some natural classes of triples $(k, p, \Sigma)$ that satisfy Condition~\ref{condition basic} and, in particular, we can give many examples that satisfy the assumptions of Theorem~\ref{thm main WGC 2}.

\subsection{CM-fields}\label{subsection CM-fields}
Let $k$ be a CM-field and $p$ a prime number. Suppose that every $p$-adic prime of $k^+$ splits in $k$, where $k^+$ is the maximal totally real subfield of $k$. In this setting, a set $\Sigma$ of $p$-adic primes of $k$ is called a $p$-adic CM-type if it satisfies $\Sigma\cup\overline{\Sigma}=S_p(k)$ and $\Sigma\cap\overline{\Sigma}=\emptyset$. Here we put $\overline{\Sigma}\coloneqq\{\overline{\pe}\in S_p(k)\mid \pe\in\Sigma\}$, where $\overline{\pe}$ is the complex conjugate of $\pe$. We let $\Sigma$ be a $p$-adic CM-type in what follows
\begin{lem}\label{lem p and Sigma}
In the above setting, the following are equivalent. 
\begin{itemize}
\item[(i)] $\Sigma$-adic Leopoldt's Conjecture holds for $k$. 

\item[(ii)] $p$-adic Leopoldt's Conjecture is true for $k$. 
\end{itemize}
\end{lem}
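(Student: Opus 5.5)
The direction (i)$\Rightarrow$(ii) is immediate from Lemma~\ref{lem Leopoldt strong}, applied with $\Sigma'=\Sigma\subseteq S_p(k)$. Indeed, if the map $\Z_p\otimes_\Z E(k)\to U_\Sigma^{(1)}$ is injective, then so is the map into $U_{S_p(k)}^{(1)}$, which is exactly $p$-adic Leopoldt's Conjecture for $k$.

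For (ii)$\Rightarrow$(i) I plan to use the symmetry under complex conjugation $c$, through the following steps.
\begin{enumerate}
\item Reduce to a rank statement. As noted in the proof of Lemma~\ref{lem two three}, since $\Sigma$ is non-empty (every $p$-adic prime of $k^+$ splits, so $S_p(k)\neq\emptyset$), $\Sigma$-adic Leopoldt's Conjecture is equivalent to the vanishing of the $\Z_p$-rank of $\Ker(\Z_p\otimes_\Z E(k)\to U_\Sigma^{(1)})$. The reason is that the map is injective on the $\Z_p$-torsion, which consists of $p$-power roots of unity.
\item Pass to units of $k^+$. Since $k$ is CM, $E(k^+)$ has finite index in $E(k)$ modulo roots of unity (Dirichlet's unit theorem gives equal ranks $r_1(k^+)-1=r_2(k)-1$). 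Hence $\Z_p\otimes_\Z E(k^+)$ has finite index in $\Z_p\otimes_\Z E(k)$ modulo torsion. It therefore suffices to show that the kernel of $\Z_p\otimes_\Z E(k^+)\to U_\Sigma^{(1)}$ has $\Z_p$-rank $0$, and I will in fact show this kernel is trivial modulo torsion.
\item Use the conjugation symmetry. For each $\pe\in\Sigma$, complex conjugation induces a topological isomorphism $c\colon k_\pe\to k_{\overline{\pe}}$ with $c(\iota_\pe(x))=\iota_{\overline{\pe}}(c(x))$ for $x\in k$, where $\iota$ denotes the embedding into the completion. Units of $k^+$ are fixed by $c$, so the $\overline{\pe}$-component of the image of $\varepsilon\in E(k^+)$ equals $c$ applied to its $\pe$-component. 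This extends $\Z_p$-linearly to $\Z_p\otimes_\Z E(k^+)$ by continuity.
\item Conclude. If $x\in\Z_p\otimes_\Z E(k^+)$ maps to $1$ in $U_\Sigma^{(1)}$, then by the previous step it also maps to $1$ in $U_{\overline{\Sigma}}^{(1)}$. Since $\Sigma\cup\overline{\Sigma}=S_p(k)$, it maps to $1$ in $U_{S_p(k)}^{(1)}$. By (ii), $x$ is torsion, so the kernel has rank $0$, and $\Sigma$-adic Leopoldt's Conjecture follows by the first step.
\end{enumerate}

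The only delicate points are bookkeeping. One must make sure that the finite-index passage between $E(k)$ and $E(k^+)$ (the Hasse unit index and the roots of unity) does not affect the rank statement; working with ranks rather than with injectivity directly handles this. One must also check that the identification of local components via $c$ is compatible with the $\Z_p$-completion, which follows from the continuity of $c$ on principal units.
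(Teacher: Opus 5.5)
Your proof is correct, but it takes a different route from the paper's.

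The paper uses the splitting hypothesis to get a bijection $\Sigma\to S_p(k^+)$, $\pe\mapsto\pe\cap k^+$, together with local identifications $U_{\pe}^{(1)}\simeq U_{\pe\cap k^+}^{(1)}$. This gives $U_\Sigma^{(1)}\simeq U_{S_p(k^+)}^{(1)}$. It then reads $\Sigma$-adic Leopoldt's Conjecture for $k$ as $p$-adic Leopoldt's Conjecture for $k^+$, tacitly using that $E(k^+)$ has finite index in $E(k)$ modulo torsion. It finishes with the standard fact that Leopoldt's Conjecture for $k^+$ and for $k$ are equivalent.

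You never leave $k$. Direction (i)$\Rightarrow$(ii) is Lemma~\ref{lem Leopoldt strong}. For (ii)$\Rightarrow$(i), you use that complex conjugation intertwines the $\pe$- and $\overline{\pe}$-components of the image of $\Z_p\otimes_\Z E(k^+)$. Hence on $\Z_p\otimes_\Z E(k^+)$ the kernels into $U_\Sigma^{(1)}$ and into $U_{S_p(k)}^{(1)}$ coincide, and you transfer ranks across the finite-index inclusion.

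What your argument buys is generality and explicitness. It only uses $\Sigma\cup\overline{\Sigma}=S_p(k)$, so it needs neither the splitting of $p$-adic primes of $k^+$ in $k$ nor $\Sigma\cap\overline{\Sigma}=\emptyset$. It also spells out the finite-index bookkeeping that the paper leaves implicit. What the paper's route buys is the sharper identification of $\Sigma$-adic Leopoldt's Conjecture for $k$ with $p$-adic Leopoldt's Conjecture for $k^+$. That same comparison, between $\Z_p\otimes_\Z E(k^+)\to U^{(1)}_{S_p(k^+)\setminus\{\qu_+\}}$ and $\Z_p\otimes_\Z E(k)\to U^{(1)}_{\Sigma\setminus\{\qu\}}$, is reused in Proposition~\ref{prop CM-type}.

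A minor point: in your last step, (ii) together with the injectivity of $\Z_p\otimes_\Z E(k^+)\to\Z_p\otimes_\Z E(k)$ (by flatness of $\Z_p$) gives $x=0$, not merely that $x$ is torsion. Either suffices for the rank statement.
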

\begin{proof}
For each $\pe\in S_p(k)$, we put $\pe_+\coloneqq\pe\cap k^+$. Then the correspondence $\pe\mapsto \pe_+$ defines a bijection $\Sigma\to S_p(k^+)$. Since every $p$-adic prime of $k^+$ splits in $k$, there is a natural isomorphism $U_\pe^{(1)}\simeq U_{\pe_+}^{(1)}$ for every $\pe\in\Sigma$. Thus there is a natural isomorphism $U_\Sigma^{(1)}\simeq U_{S_p(k^+)}^{(1)}$. Therefore, $\Sigma$-adic Leopoldt's Conjecture for $k$ is equivalent to $p$-adic Leopoldt's Conjecture for $k^+$, so it is equivalent to $p$-adic Leopoldt's Conjecture for $k$. 
\end{proof}
The following can be found, for example, in \cite[Theorem~2.1]{PS2024}, but we give a proof, because it is not difficult at this point. 
\begin{prop}\label{prop CM unique}
Suppose that $p$-adic Leopoldt's Conjecture is true for $k$. Then there exists a unique $\Sigma$-ramified $\Z_p$-extension over $k$. 
\end{prop}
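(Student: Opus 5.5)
Since $\Sigma$ is a $p$-adic CM-type and every $p$-adic prime of $k^+$ splits in $k$, we have $[k_\pe:\Q_p]=[k^+_{\pe_+}:\Q_p]$ for every $\pe\in\Sigma$, where $\pe_+\coloneqq\pe\cap k^+$. The map $\pe\mapsto\pe_+$ is a bijection $\Sigma\to S_p(k^+)$, as in the proof of Lemma~\ref{lem p and Sigma}. Hence
\[
\sum_{\pe\in\Sigma}[k_\pe:\Q_p]=\sum_{\pe_+\in S_p(k^+)}[k^+_{\pe_+}:\Q_p]=[k^+:\Q]=\frac{[k:\Q]}{2}=r_2(k).
\]
Since $k$ is a CM-field, $r_1(k)=0$, so $\sum_{\pe\in\Sigma}[k_\pe:\Q_p]=r_1(k)+r_2(k)$. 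This is condition~3 of Lemma~\ref{lem two three}.

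Next, by Lemma~\ref{lem p and Sigma}, the assumption that $p$-adic Leopoldt's Conjecture holds for $k$ gives $\Sigma$-adic Leopoldt's Conjecture for $k$, which is condition~1 of Lemma~\ref{lem two three}. That lemma then yields condition~2: $\rank_{\Z_p}X_\Sigma(k)=1$.

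Finally, I will derive uniqueness from the rank. Any $\Sigma$-ramified $\Z_p$-extension is contained in $M_\Sigma(k)$, so it corresponds to a quotient of $X_\Sigma(k)$ isomorphic to $\Z_p$. Write $X_\Sigma(k)\simeq\Z_p\oplus T$ with $T=\Tor_{\Z_p}X_\Sigma(k)$ finite. Any surjection $X_\Sigma(k)\to\Z_p$ kills $T$, and the quotient $X_\Sigma(k)/T\simeq\Z_p$ has no proper nonzero quotient isomorphic to $\Z_p$. Therefore every such surjection has kernel exactly $T$, and the fixed field of $T$ is the unique $\Sigma$-ramified $\Z_p$-extension; it exists because the rank is $1$.

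No step here is a serious obstacle. The point needing the most care is the degree count: it uses the splitting hypothesis to identify $k_\pe=k^+_{\pe_+}$, and it uses that $\Sigma$ contains exactly one prime above each prime of $k^+$.
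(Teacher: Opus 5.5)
Your proof is correct and is the paper's argument: you get $\Sigma$-adic Leopoldt from Lemma~\ref{lem p and Sigma}, show $\sum_{\pe\in\Sigma}[k_\pe:\Q_p]=r_2(k)=r_1(k)+r_2(k)$, and apply Lemma~\ref{lem two three}. The only differences are cosmetic: you count degrees through the bijection $\Sigma\to S_p(k^+)$ where the paper halves the sum over $S_p(k)$, and you spell out why rank $1$ forces uniqueness, which the paper leaves implicit.
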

\begin{proof}
Since $\Sigma$-adic Leopoldt's Conjecture holds by Lemma~\ref{lem p and Sigma} and
\[
\sum_{\pe\in \Sigma}[k_\pe:\Q_p]=\frac{1}{2}\sum_{\pe\in S_p(k)}[k_\pe:\Q_p]=\frac{1}{2}[k:\Q]=r_2(k)=r_1(k)+r_2(k)
\]
holds, the assertion follows from Lemma~\ref{lem two three}. 
\end{proof}
The following can be obtained by using \cite[Lemma~2]{Fujii2017}, but we give a proof here from our perspective. 
\begin{prop}\label{prop CM-type}
Let $k$ be a CM-field, $p$ a prime number that splits completely in $k$, and $\Sigma$ a $p$-adic CM-type. Assume that $p$-adic Leopoldt's Conjecture holds for $k$. Then $(k, p, \Sigma)$ satisfies Condition~\ref{condition basic}. 
\end{prop}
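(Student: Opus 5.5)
We verify the two conditions of Lemma~\ref{lem basic}. Since $p$ splits completely in $k$, every $\pe\in S_p(k)$ has degree $1$, so all degree hypotheses in Condition~\ref{condition basic} hold automatically.

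\emph{Condition~1.} Complete splitting gives $\#S_p(k)=[k:\Q]$, and a CM-type contains exactly half of the $p$-adic primes. Hence
\[
\sum_{\pe\in\Sigma}[k_\pe:\Q_p]=\#\Sigma=\tfrac12[k:\Q]=r_2(k)=r_1(k)+r_2(k),
\]
exactly as in the proof of Proposition~\ref{prop CM unique}.

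\emph{Condition~2.} We must show that $(\Sigma\setminus\{\qu\})$-adic Leopoldt's Conjecture holds for every $\qu\in\Sigma$. This is the main obstacle.

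First we reduce to $k^+$. As in Lemma~\ref{lem p and Sigma}, the map $\pe\mapsto\pe\cap k^+$ is a bijection from $\Sigma$ to $S_p(k^+)$ with $U_\pe^{(1)}\simeq U_{\pe\cap k^+}^{(1)}$. Moreover, $E(k^+)$ has finite index in $E(k)$, namely the Hasse unit index times roots of unity. Roots of unity have trivial image in $\Z_p$-rank, and the kernel's $\Z_p$-rank is what matters, as noted in the proof of Lemma~\ref{lem two three}. So injectivity of $\Z_p\otimes E(k)\to U_{\Sigma\setminus\{\qu\}}^{(1)}$ reduces to showing that
\[
\Z_p\otimes E(k^+)\to\prod_{\pe\in S_p(k^+)\setminus\{\qu_+\}}U_\pe^{(1)}
\]
has kernel of $\Z_p$-rank $0$, where $\qu_+\coloneqq\qu\cap k^+$. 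More precisely, injectivity on the torsion-free part follows from a rank-zero kernel.

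Now use the product formula for $k^+$, which is totally real and in which $p$ splits completely. Each $U_\pe^{(1)}$ is identified with $1+p\Z_p$, and $\log_p$ turns it into $p\Z_p$. For a unit $\varepsilon\in E(k^+)$, the norm satisfies
\[
\prod_{\pe\in S_p(k^+)}\iota_\pe(\varepsilon)=N_{k^+/\Q}(\varepsilon)=\pm1,
\]
so $\sum_{\pe}\log_p\iota_\pe(\varepsilon)=0$. Therefore the projection from the image of $\Z_p\otimes E(k^+)$ in $\prod_{\pe\in S_p(k^+)}\log U_\pe^{(1)}$ to the factors away from $\qu_+$ is injective: the dropped $\qu_+$-coordinate is recovered as minus the sum of the others.

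It follows that the kernel of the map to $U_{\Sigma\setminus\{\qu\}}^{(1)}$ has the same $\Z_p$-rank as the kernel of the full map to $U_{S_p(k^+)}^{(1)}$. That rank is $0$ by $p$-adic Leopoldt's Conjecture for $k^+$, which holds because it holds for $k$.

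\emph{Conclusion.} Both conditions of Lemma~\ref{lem basic} are satisfied, so $(k,p,\Sigma)$ satisfies Condition~\ref{condition basic}.

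The delicate points are bookkeeping. One must check that $\Z_p$-torsion causes no trouble, as in the remark in the proof of Lemma~\ref{lem two three}. One must also make the passage between $E(k)$ and $E(k^+)$ precise up to finite index; it is harmless after tensoring with $\Z_p$ at the level of ranks.
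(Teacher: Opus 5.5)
Your proof is correct, and its skeleton is the same as the paper's. You verify the two conditions of Lemma~\ref{lem basic}, get condition~1 from the count in Proposition~\ref{prop CM unique}, and move condition~2 down to $k^+$ via the bijection $\pe\mapsto\pe\cap k^+$ on $\Sigma$ and the inclusion $E(k^+)\subseteq E(k)$. The paper draws this reduction as a commutative square whose vertical maps are isomorphisms; working up to finite index, as you do, is all that is needed.

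The genuine difference is in how you show that $\Z_p\otimes_\Z E(k^+)\to U^{(1)}_{S_p(k^+)\setminus\{\qu_+\}}$ has kernel of $\Z_p$-rank $0$.
\begin{itemize}
\item The paper argues class-field-theoretically. By Leopoldt for $k^+$, the only $\Z_p$-extension of $k^+$ is the cyclotomic one, which is ramified at $\qu_+$. So $X_{S_p(k^+)\setminus\{\qu_+\}}(k^+)$ has rank $0$, and the exact sequence of \S\ref{section Leopoldt} gives a finite cokernel. Since source and target both have rank $[k^+:\Q]-1$, the kernel has rank $0$.
\item You use the product formula $\prod_{\pe}\iota_\pe(\varepsilon)=N_{k^+/\Q}(\varepsilon)=\pm1$, valid because the $\iota_\pe$ are exactly the embeddings $k^+\hookrightarrow\Q_p$. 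So the logarithmic image of the units lies in the hyperplane $\sum_\pe x_\pe=0$, and dropping the $\qu_+$-coordinate loses nothing. Hence the kernel agrees, up to finite groups, with the full Leopoldt kernel of $k^+$.
\end{itemize}
These are two faces of one fact: the ramification of the cyclotomic $\Z_p$-extension at a degree-one prime is the class-field-theoretic shadow of this norm relation. Your version is more elementary. It needs no class field theory and no identification of the $\Z_p$-extensions of $k^+$, and it invokes Leopoldt only at the last step. The paper's version stays inside its $\Sigma$-ramified framework and runs parallel to the proof of Lemma~\ref{lem basic}.

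One caveat, which the paper's proof shares: your claim that torsion is harmless needs $\Sigma\setminus\{\qu\}\neq\emptyset$. If $k$ is imaginary quadratic and $p=2$, then $\Z_2\otimes_\Z E(k)\supseteq\{\pm1\}$ maps to the zero group $U^{(1)}_\emptyset$. So condition~2 of Lemma~\ref{lem basic} literally fails, even though Condition~\ref{condition basic} holds. That degenerate case has to be checked directly. There $\{\pm1\}\hookrightarrow U^{(1)}_\pe$ with cokernel $\simeq\Z_2$, and $N_\infty/k$ is ramified at $\pe$ because $X(k)$ is finite.
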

\begin{proof}
Let $\qu\in\Sigma$ and put $\qu_+\coloneqq\qu\cap k^+$. Consider a commutative diagram
\[
\begin{tikzpicture}
\node (a) at (0,2) {$\Z_p\otimes_\Z E(k^+)$}; 
\node (b) at (4,2) {$U_{S_p(k^+)\setminus\{\qu_+\}}^{(1)}$}; 
\node (c) at (0,0) {$\Z_p\otimes_\Z E(k)$}; 
\node (d) at (4,0) {$U_{\Sigma\setminus\{\qu\}}^{(1)}$}; 
\draw[->] (a) --(b); \draw[->] (a) --(c); \draw[->] (b) --(d); \draw[->] (c) -- (d); 
\end{tikzpicture}
\]
of $\Z_p$-modules, where the left vertical map is induced by the inclusion $E(k^+)\to E(k)$ and the right vertical map is defined by gathering the inclusions $U_{\pe_+}^{(1)}\to U_\pe^{(1)}$ for $\pe\in \Sigma\setminus\{\qu\}$ and $\pe_+=\pe\cap k^+$. Observe that both vertical maps are isomorphisms since $p$ splits completely in $k$. Since the cyclotomic $\Z_p$-extension over $k^+$ is ramified at $\qu_+$, the $\Z_p$-rank of $X_{S_p(k^+)\setminus\{\qu_+\}}(k^+)$ is $0$. Hence, by the exact sequence
\[
\begin{tikzpicture}
\node (b) at (0,0) {$\Z_p\otimes_\Z E(k^+)$}; 
\node (c) at (3.9,0) {$U_{S_p(k^+)\setminus\{\qu_+\}}^{(1)}$}; 
\node (d) at (8,0) {$X_{S_p(k^+)\setminus\{\qu_+\}}(k^+)$}; 
\node (e) at (11.5,0) {$X(k^+)$}; 
\node (f) at (13.7,0) {$0$};
\draw[->] (b) -- (c); \draw[->] (c) -- (d); \draw[->] (d) -- (e); \draw[->] (e) -- (f);
\end{tikzpicture}
\]
of $\Z_p$-modules, we obtain the injectivity of the top horizontal map in the above commutative diagram, so we obtain the injectivity of the bottom horizontal map. Therefore, we obtain the assertion by Lemma~\ref{lem basic}. 
\end{proof}
In the case where $k$ is an imaginary quadratic field, we may apply Theorem~\ref{thm main WGC 3} to obtain the following theorem. 
\begin{thm}
Let $k$ be an imaginary quadratic field, $p$ a prime number that splits in $k$, $\pe$ a $p$-adic prime of $k$, and $N_\infty/k$ the $\pe$-ramified $\Z_p$-extension. Then the following are equivalent. 
\begin{itemize}
\item[(i)] $X(N_\infty)$ has no non-trivial finite $\Z_p\bbr{\Gal(N_\infty/k)}$-submodule. 

\item[(ii)] $L(N_\infty)=M_\pe(N_\infty)$ holds. 
\end{itemize}
\end{thm}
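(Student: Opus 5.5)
The plan is to deduce the statement from Theorem~\ref{thm main WGC 3} applied with $\Sigma\coloneqq\{\pe\}$, by checking its three hypotheses for an imaginary quadratic field $k$ in which $p$ splits.

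First, the hypotheses of Theorem~\ref{thm main WGC 3} hold for a formal reason. Since $k$ is imaginary quadratic, we have $r_1(k)+r_2(k)=1$. The unit group $E(k)$ is finite, so $\Z_p\otimes_\Z E(k)$ is a finite group, namely the $p$-power roots of unity in $k$. Because $p$ splits, $\pe$ has degree $1$, so $[k_\pe:\Q_p]=1=r_1(k)+r_2(k)$. Next I check $\pe$-adic Leopoldt's Conjecture. The map $\Z_p\otimes_\Z E(k)\to U_\pe^{(1)}$ is injective: a root of unity of $p$-power order in $k$ embeds injectively into $k_\pe^\times$ and lands in $U_\pe^{(1)}$. (Alternatively, cite Brumer's result mentioned in \S\ref{section Leopoldt}, since $k$ is an abelian extension of itself, or note that the torsion of $\Z_p\otimes_\Z E(k)$ always injects, as observed in the proof of Lemma~\ref{lem two three}.) Lemma~\ref{lem two three} then gives $\rank_{\Z_p}X_\pe(k)=1$, so the $\pe$-ramified $\Z_p$-extension $N_\infty/k$ is unique.

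The remaining hypothesis, and the main point to verify, is that $U_\pe^{(1)}/(\Z_p\otimes_\Z E(k))$ is free over $\Z_p$. Here $U_\pe^{(1)}\simeq 1+p\Z_p$ because $k_\pe=\Q_p$.
\begin{itemize}
\item For $p$ odd, $1+p\Z_p\simeq\Z_p$ is torsion-free, so it contains no nontrivial $p$-power roots of unity. Hence the image of $\Z_p\otimes_\Z E(k)$ is trivial and the quotient is $\Z_p$, which is free of rank $1$.
\item For $p=2$, we have $U_\pe^{(1)}=1+2\Z_2=\{\pm1\}\times(1+4\Z_2)$. Since $-1\in E(k)$, its image $-1$ generates the torsion. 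Moreover, $k$ contains no primitive $4$th root of unity: if it did, then $k=\Q(i)$, but $2$ ramifies there and so does not split. So $\Z_2\otimes_\Z E(k)=\{\pm1\}$ maps onto the torsion subgroup, and the quotient $1+4\Z_2\simeq\Z_2$ is free.
\end{itemize}
This case check, especially the exclusion of $\Q(i)$ when $p=2$, is the only place needing care.

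With all hypotheses of Theorem~\ref{thm main WGC 3} verified for $\Sigma=\{\pe\}$, that theorem gives directly the equivalence of (i) $X(N_\infty)_\fin=0$ and (ii) $L(N_\infty)=M_\pe(N_\infty)$, noting that $M_\Sigma(N_\infty)=M_\pe(N_\infty)$ by the paper's notation.
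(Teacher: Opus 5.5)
Your proposal is correct and follows the paper's proof. Both apply Theorem~\ref{thm main WGC 3} with $\Sigma=\{\pe\}$ and reduce to showing $U_\pe^{(1)}/(\Z_p\otimes_\Z E(k))\simeq\Z_p$, treating $p$ odd and $p=2$ separately. You additionally make explicit the $\pe$-adic Leopoldt and rank-one hypotheses, and you justify $\Z_2\otimes_\Z E(k)\simeq\Z/2\Z$ by excluding $\Q(i)$; the paper leaves both points implicit.
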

\begin{proof}
First consider the case where $p$ is odd. Then $\Z_p\otimes_\Z E(k)$ is trivial, so $U_\pe^{(1)}/(\Z_p\otimes_\Z E(k))=U_\pe^{(1)}\simeq \Z_p$ holds. Thus we can apply Theorem~\ref{thm main WGC 3}. Alternatively, we can apply Theorem~\ref{thm main WGC 2} to obtain the assertion. 

Next consider the case where $p=2$. Since $U_\pe^{(1)}\simeq 1+2\Z_2\simeq \Z_2\oplus \Z/2\Z$ and $\Z_2\otimes_\Z E(k)\simeq \Z/2\Z$ hold, we have $U_\pe^{(1)}/(\Z_2\otimes_\Z E(k))\simeq\Z_2$. Hence we can apply Theorem~\ref{thm main WGC 3}. 
\end{proof}
In the case where $k$ is a quartic CM-field, we can choose $\Sigma$ other than $p$-adic CM-types. 
\begin{prop}
Let $k$ be a quartic CM-field, $p$ an odd prime number that splits in $k^+$, and $\pe_+$ a $p$-adic prime of $k^+$. Suppose that $\pe_+$ splits in $k$. Then $(k, p, S_{\pe_+}(k))$ satisfies Condition~\ref{condition basic}. 
\end{prop}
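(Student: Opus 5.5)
We verify the two conditions of Lemma~\ref{lem basic} for $\Sigma\coloneqq S_{\pe_+}(k)=\{\pe,\overline{\pe}\}$, where $\pe$ and $\overline{\pe}$ are the two primes of $k$ above $\pe_+$. Since $p$ splits in $k^+$ and $\pe_+$ splits in $k$, both primes have degree $1$. Hence
\[
\sum_{\qu\in\Sigma}[k_\qu:\Q_p]=2=r_2(k)=r_1(k)+r_2(k),
\]
which is condition (1).

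For condition (2), fix $\qu\in\Sigma$. Then $\Sigma\setminus\{\qu\}$ is a single degree-$1$ prime $\qu'$ (either $\pe$ or $\overline{\pe}$), and we must show that $\Z_p\otimes_\Z E(k)\to U_{\qu'}^{(1)}\simeq\Z_p$ is injective.

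The first step is to reduce to $k^+$. The unit group $E(k^+)$ has index at most $2$ in $E(k)$ modulo roots of unity, and $p$ is odd. The $p$-power roots of unity in $k$ are trivial, since $p$ splits in $k$ and so $k$ is not ramified above $p$ over $\Q$ (if $\zeta_p\in k$ then $p$ would ramify). Therefore $\Z_p\otimes_\Z E(k)=\Z_p\otimes_\Z E(k^+)\simeq\Z_p$, generated by a fundamental unit $\varepsilon$ of the real quadratic field $k^+$.

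Since $\qu'$ lies over $\pe_+$ with trivial local degree, $U_{\qu'}^{(1)}\simeq U_{\pe_+}^{(1)}$ compatibly with the embedding. So the question becomes: is $\Z_p\otimes_\Z E(k^+)\to U_{\pe_+}^{(1)}$ injective? This is $\{\pe_+\}$-adic Leopoldt's Conjecture for the real quadratic field $k^+$, which means $\varepsilon$ has infinite order in $U_{\pe_+}^{(1)}\simeq\Z_p$.

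The second step, and the point where a nontrivial input is needed, is this injectivity. Here I would use Brumer's theorem as quoted in \S\ref{section Leopoldt}: $\Pe$-adic Leopoldt's Conjecture holds for finite abelian extensions of $\Q$, and $k^+$ is abelian over $\Q$. Applying it with $\Pe=(p)$ requires care, since $S_p(k^+)=\{\pe_+,\overline{\pe}_+\}$ consists of two primes, not just $\pe_+$.

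The direct argument is elementary anyway. The image of $\varepsilon$ in $U_{\pe_+}^{(1)}\simeq\Z_p$ (after raising to the power $p-1$) is $\varepsilon^{p-1}$ viewed in $\Z_p$. This element is a unit that is not a root of unity, because $\varepsilon\ne\pm1$ and the embedding $k^+\hookrightarrow\Q_p$ is injective. An element of $1+p\Z_p$ has infinite order unless it equals $1$, and $\varepsilon^{p-1}\ne1$ in $k^+$. Hence the map $\Z_p\to\Z_p$ is nonzero and therefore injective. (This is really the content of Brumer's theorem in rank one, which is trivial here.)

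With both conditions of Lemma~\ref{lem basic} verified for every $\qu\in\Sigma$, the lemma yields Condition~\ref{condition basic} for $(k,p,S_{\pe_+}(k))$.

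The only genuinely delicate point is the vanishing of $p$-power torsion in $E(k)$, which requires $p$ odd together with the splitting hypothesis. This is exactly where the hypotheses of the proposition are used.
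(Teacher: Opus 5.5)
Your proof is correct and takes the same route as the paper: you check the two conditions of Lemma~\ref{lem basic} for $\Sigma=\{\pe,\overline{\pe}\}$, and the key input is that $\Z_p\otimes_\Z E(k)\simeq\Z_p$, so each single-prime Leopoldt statement reduces to a unit of infinite order having nonzero image in $U^{(1)}_{\qu'}\simeq\Z_p$. The paper states this in one line; your passage through $k^+$ and your remarks on Brumer's theorem are harmless but not needed.
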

\begin{proof}
Let $\pe$ be a $\pe_+$-adic prime of $k$. Then we have $S_{\pe_+}(k)=\{\pe, \overline{\pe}\}$. Observe that $\pe$-adic Leopoldt's Conjecture and $\overline{\pe}$-adic Leopoldt's Conjecture hold for $k$, because $\Z_p\otimes_\Z E(k)$ is isomorphic to $\Z_p$. Thus we obtain the assertion by Lemma~\ref{lem basic}. 
\end{proof}

\subsection{Abelian extensions over an imaginary quadratic field}\label{subsection abel imag quad}
Let $K$ be an imaginary quadratic field, $p$ a prime number that splits in $K$, and $\Pe$ a $p$-adic prime of $K$. Let $k/K$ be a finite abelian extension. In this setting, $\Pe$-adic Leopoldt's Conjecture holds for $k$ by Brumer's result \cite{Brumer1967}. 
\begin{prop}\label{prop abel imag quad}
Suppose that $\Pe$ splits completely in $k$. Then $(k, p, S_\Pe(k))$ satisfies Condition~\ref{condition basic}. 
\end{prop}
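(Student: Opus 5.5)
We verify the two conditions of Lemma~\ref{lem basic} for $\Sigma\coloneqq S_\Pe(k)$.

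\emph{Condition 1 (degree count).} Since $\Pe$ splits completely in $k$, every prime in $\Sigma$ has $k_\pe=K_\Pe=\Q_p$, so each has degree $1$. Moreover $\#\Sigma=[k:K]$. As $k$ is a finite extension of an imaginary quadratic field, it is totally imaginary, so $r_1(k)=0$ and $r_2(k)=[k:\Q]/2=[k:K]$. Hence
\[
\sum_{\pe\in\Sigma}[k_\pe:\Q_p]=\#\Sigma=[k:K]=r_1(k)+r_2(k).
\]

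\emph{Condition 2 (Leopoldt for $\Sigma\setminus\{\qu\}$).} Fix $\qu\in\Sigma$. We must show that $\Z_p\otimes_\Z E(k)\to U_{\Sigma\setminus\{\qu\}}^{(1)}$ is injective. This is the main point: Brumer only gives injectivity into $U_\Sigma^{(1)}$, and we must remove one factor. Let $G\coloneqq\Gal(k/K)$. Then $G$ acts simply transitively on $\Sigma$, because $\Pe$ splits completely. Identify $U_\Sigma^{(1)}\simeq\Z_p\bbr{G}\otimes U_\Pe^{(1)}$, with the diagonal unit map $G$-equivariant.

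By the Dirichlet--Herbrand theorem for the abelian extension $k/K$, $\Q_p\otimes E(k)$ is isomorphic, after extending scalars to $\overline{\Q}_p$, to the augmentation-zero part of $\overline{\Q}_p[G]$. That is, it is the sum over the nontrivial characters $\chi$ of $G$, each with multiplicity one, because the archimedean places of $k$ form a single free $G$-orbit. Brumer's theorem says the $\Z_p$-linear map to $\Q_p\otimes U_\Sigma^{(1)}\simeq\Q_p[G]$ is injective. Since both sides are $G$-modules and the map is equivariant, its image is exactly the augmentation-zero part $I\otimes\Q_p$ of $\Q_p[G]$, by counting dimensions.

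Projecting away from the $\qu$-coordinate corresponds, under the identification $\pe=\sigma\qu\leftrightarrow\sigma$, to the map $\Q_p[G]\to\Q_p^{G\setminus\{1\}}$ forgetting the coefficient of the identity. The kernel of this projection is $\Q_p\cdot 1$. Its intersection with $I\otimes\Q_p$ is zero, since the element $1$ has augmentation $1\neq0$. Therefore the composite $\Q_p\otimes E(k)\to\Q_p\otimes U_{\Sigma\setminus\{\qu\}}^{(1)}$ is injective.

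The $\Z_p$-torsion of $\Z_p\otimes E(k)$ consists of $p$-power roots of unity. These inject into $U_{\pe}^{(1)}$ for any single $\pe$. If $\Sigma\setminus\{\qu\}=\emptyset$, then $k=K$, $E(k)$ has $\Z_p$-rank $0$, and the condition is handled directly: there the map has trivial rank kernel, as in the proof of Lemma~\ref{lem two three}. So the kernel of $\Z_p\otimes E(k)\to U_{\Sigma\setminus\{\qu\}}^{(1)}$ is torsion-free of rank $0$, hence trivial. This gives condition~2.

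\emph{Conclusion.} Lemma~\ref{lem basic} then yields that $(k,p,S_\Pe(k))$ satisfies Condition~\ref{condition basic}.

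The delicate step is the representation-theoretic identification of the image of the units as the augmentation ideal. Here one must use that the archimedean places form a free $G$-orbit, so that each nontrivial character appears exactly once, combined with Brumer's injectivity. A subsidiary care is needed when $k=K$, where the relevant map is the degenerate case discussed above.
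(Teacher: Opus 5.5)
Your argument has a gap in the degenerate case $k=K$, where $\Sigma\setminus\{\qu\}=\emptyset$. There, condition~2 of Lemma~\ref{lem basic} asks for injectivity of $\Z_p\otimes_\Z E(K)\to U_\emptyset^{(1)}=0$, i.e.\ for $\Z_p\otimes_\Z E(K)=0$. ``Trivial rank kernel'' is not the same thing. This is exactly the case excluded (``unless $\Sigma$ is empty'') in the proof of Lemma~\ref{lem two three}, because the torsion-injectivity step needs at least one local factor. For odd $p$ the case is harmless: $p$ splitting in $K$ rules out $K=\Q(\sqrt{-3})$ with $p=3$, so $\mu_p\not\subseteq K$. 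But $p=2$ is allowed in this setting, e.g.\ $K=\Q(\sqrt{-7})$, and then $\Z_2\otimes_\Z E(K)=\{\pm1\}\neq 0$. So condition~2 of Lemma~\ref{lem basic} is actually false, your claim that the kernel is ``torsion-free of rank $0$, hence trivial'' is wrong, and the reduction to Lemma~\ref{lem basic} cannot be made. The proposition still holds here, but you must check Condition~\ref{condition basic} directly. The inclusion $\{\pm1\}\hookrightarrow U_\Pe^{(1)}$ gives $\Pe$-adic Leopoldt, and Lemma~\ref{lem two three} gives $\rank_{\Z_2}X_\Pe(K)=1$. The $\Pe$-ramified $\Z_2$-extension of $K$ is ramified at $\Pe$ because $L(K)/K$ is finite.

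For $[k:K]\geq 2$ your argument is correct, but it is a different route from the paper's. The paper never checks $(\Sigma\setminus\{\qu\})$-adic Leopoldt. It uses Brumer for $\Sigma$-adic Leopoldt and Lemma~\ref{lem two three} for rank $1$. It then gets ramification by base change: the unique $\Sigma$-ramified $\Z_p$-extension of $k$ is $K_\infty k$, where $K_\infty/K$ is the $\Pe$-ramified $\Z_p$-extension. $K_\infty/K$ is ramified at $\Pe$ because $L(K)/K$ is finite, so $K_\infty k/k$ is ramified at every prime above $\Pe$. This is shorter and uniform in $p$ and $k$, including $k=K$. Your Herbrand computation gives more explicit information: the $\Q_p$-span of the unit image in $\Q_p\otimes U_\Sigma^{(1)}\simeq\Q_p[G]$ is exactly the augmentation hyperplane, which yields $(\Sigma\setminus\{\qu\})$-adic Leopoldt for every $\qu$ at once. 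One point to tighten is the ``counting dimensions'' step, since dimension alone does not determine the image. What pins it down is either that $\overline{\Q}_p[G]$ is multiplicity-free, or, more simply, that $N_{k/K}(u)$ is a root of unity for $u\in E(k)$. The latter puts the image inside the augmentation hyperplane, and then Brumer's injectivity plus the dimension count gives equality.
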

\begin{proof}
Since $\Pe$-adic Leopoldt's Conjecture holds for $k$, the $\Z_p$-rank of $X_\Pe(k)$ is $1$ by Lemma~\ref{lem two three}. Consider the $\Pe$-ramified $\Z_p$-extension $K_\infty/K$ over the imaginary quadratic field $K$. Then $K_\infty/K$ is (infinitely) ramified at $\Pe$ because of the finiteness of $[L(K):K]$ (or by Proposition~\ref{prop CM-type}). Therefore, it follows that $K_\infty k/k$ is ramified at every $\Pe$-adic prime. Since $K_\infty k/k$ is the unique $\Pe$-ramified $\Z_p$-extension over $k$, this completes the proof. 
\end{proof}

\subsection{Complex cubic fields}
\begin{prop}\label{prop cpx cubic}
Let $k$ be a cubic field that has a complex place, $p$ a prime number that splits completely in $k$, and $\Sigma$ a set consisting of two distinct $p$-adic primes of $k$. Then $(k, p, \Sigma)$ satisfies Condition~\ref{condition basic}. 
\end{prop}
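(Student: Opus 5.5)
We will apply Lemma~\ref{lem basic}. Since $k$ is a cubic field with a complex place, $r_1(k)=r_2(k)=1$, so $r_1(k)+r_2(k)=2$. Because $p$ splits completely in $k$, every $p$-adic prime has $[k_\pe:\Q_p]=1$. With $\#\Sigma=2$ this gives $\sum_{\pe\in\Sigma}[k_\pe:\Q_p]=2=r_1(k)+r_2(k)$, which is the first condition of Lemma~\ref{lem basic}.

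For the second condition we must show that $\{\pe\}$-adic Leopoldt's Conjecture holds for each $\pe\in\Sigma$. Here $E(k)$ has rank $r_1+r_2-1=1$, so $\Z_p\otimes_\Z E(k)\simeq\Z_p\oplus T$, with $T$ the $p$-part of the roots of unity in $k$. Since $k$ has a real embedding, $\mu(k)=\{\pm1\}$. For $p$ odd we get $\Z_p\otimes_\Z E(k)\simeq\Z_p$, generated by $1\otimes\varepsilon$ for a fundamental unit $\varepsilon$. The map to $U_\pe^{(1)}\simeq 1+p\Z_p$ sends $1\otimes\varepsilon$ to the projection of $\varepsilon$ (after raising to the power $p-1$ to land in the principal units). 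The key point is that $\varepsilon$ is not a root of unity, so its image in $k_\pe^\times=\Q_p^\times$ is not a root of unity, and hence $\varepsilon^{p-1}\neq1$ in $1+p\Z_p$. Since $1+p\Z_p$ is torsion-free of rank $1$, a nonzero element generates a copy of $\Z_p$, and the map $\Z_p\to U_\pe^{(1)}$ is injective. This is the same style of argument used for the quartic CM-field case above.

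For $p=2$ the torsion part $\{\pm1\}$ also appears. The map is injective on torsion, as remarked in the proof of Lemma~\ref{lem two three}, since $-1\neq1$ in $1+2\Z_2$. So it suffices to check that the kernel has $\Z_2$-rank $0$, and the nonvanishing argument for $\varepsilon^{2}$ in $1+4\Z_2\simeq\Z_2$ gives this. The only real point to verify is that the kernel has rank zero, and this reduces to $\varepsilon$ having infinite order under the embedding $k\hookrightarrow k_\pe$. That embedding is injective, so this is automatic. I therefore expect no serious obstacle, and Lemma~\ref{lem basic} completes the proof.
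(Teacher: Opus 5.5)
Your proposal is correct and follows the paper's route: you verify the degree condition of Lemma~\ref{lem basic}, then deduce $\pe$-adic Leopoldt's Conjecture for each $\pe\in\Sigma$ from the fact that the unit group has $\Z_p$-rank $1$ and the fundamental unit has nontrivial image in $U_\pe^{(1)}$. Your separate treatment of $p=2$ is a genuine improvement. There $\Z_2\otimes_\Z E(k)\simeq\Z_2\oplus\Z/2\Z$, not $\Z_2$ as the paper's one-line proof asserts, and the injectivity on torsion noted in the proof of Lemma~\ref{lem two three} is exactly what closes that case.
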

\begin{proof}
Observe that $\pe$-adic Leopoldt's Conjecture holds for each $\pe\in\Sigma$, because $\Z_p\otimes_\Z E(k)$ is isomorphic to $\Z_p$. Thus we obtain the assertion by Lemma~\ref{lem basic} .
\end{proof}

\subsection{Fields with a single complex place}
The following partially generalizes Proposition~\ref{prop cpx cubic}. 
\begin{prop}\label{prop single cpx}
Let $k/\Q$ be a finite extension that has a single complex place, $p$ a prime number, and $\pe_0$ a $p$-adic prime of $k$ whose degree is $1$. Put $\Sigma\coloneqq S_p(k)\setminus\{\pe_0\}$. Suppose that $p$-adic Leopoldt's Conjecture is true for $k$. Then the $\Z_p$-rank of  $X_\Sigma(k)$ is $1$ and $\Sigma$-adic Leopoldt's Conjecture holds for $k$. 
\end{prop}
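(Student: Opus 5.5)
By Lemma~\ref{lem two three}, it suffices to verify two of its three conditions. I will check the degree count $\sum_{\pe\in\Sigma}[k_\pe:\Q_p]=r_1(k)+r_2(k)$ and $\Sigma$-adic Leopoldt's Conjecture; the assertion on $\rank_{\Z_p}X_\Sigma(k)$ then follows.

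The degree count is immediate. Since $k$ has a single complex place, $r_2(k)=1$ and $[k:\Q]=r_1(k)+2$. Hence
\[
\sum_{\pe\in\Sigma}[k_\pe:\Q_p]=\sum_{\pe\in S_p(k)}[k_\pe:\Q_p]-[k_{\pe_0}:\Q_p]=[k:\Q]-1=r_1(k)+1=r_1(k)+r_2(k),
\]
using that $\pe_0$ has degree $1$.

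The substantive step is $\Sigma$-adic Leopoldt, i.e.\ injectivity of $\Z_p\otimes_\Z E(k)\to U_\Sigma^{(1)}$. My plan is to compare with the full exact sequence for $S_p(k)$.

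\begin{itemize}
\item By $p$-adic Leopoldt, $\Z_p\otimes_\Z E(k)\to U_{S_p(k)}^{(1)}$ is injective.
\item By Lemma~\ref{lem two three}, $\rank_{\Z_p}X_{S_p(k)}(k)=[k:\Q]-(r_1+r_2)+1=r_2(k)+1=2$.
\item Suppose the kernel $\mathcal{K}$ of the projection-composed map to $U_\Sigma^{(1)}$ had positive $\Z_p$-rank. Since $\mathcal{K}$ injects into $U_{\pe_0}^{(1)}$, which has $\Z_p$-rank $1$, the image of $\mathcal{K}$ would be an open subgroup of $U_{\pe_0}^{(1)}$ (up to torsion).
\item Via the sequence $U_{S_p(k)}^{(1)}/(\Z_p\otimes E(k))\hookrightarrow X_{S_p(k)}(k)$, the inertia group of $\pe_0$ in $M_{S_p(k)}(k)/k$, namely the image of $U_{\pe_0}^{(1)}\times\{1\}$, would then be finite. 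Concretely, elements $(u,1,\dots,1)$ with $u$ in an open subgroup of $U_{\pe_0}^{(1)}$ would come from global units.
\item So every $\Z_p$-extension of $k$ would be unramified at $\pe_0$, i.e.\ would be $\Sigma$-ramified.
\end{itemize}

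To reach a contradiction, I exhibit a $\Z_p$-extension ramified at $\pe_0$: the cyclotomic $\Z_p$-extension $k^\cy_\infty/k$ is ramified at every $p$-adic prime, including $\pe_0$.

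This closes the argument, but I would rather phrase it through ranks, which is cleaner. Under $p$-adic Leopoldt, the $\Z_p$-rank of $X_\Sigma(k)$ equals $\rank_{\Z_p}X_{S_p(k)}(k)$ minus the rank of the inertia image of $\pe_0$. The cyclotomic extension shows that inertia image has rank $1$, so $\rank_{\Z_p}X_\Sigma(k)=2-1=1$. This gives the rank assertion directly, and Lemma~\ref{lem two three} with the degree count then yields $\Sigma$-adic Leopoldt.

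The main obstacle is justifying this rank subtraction carefully. I would use the exact sequence $I_{\pe_0}\to X_{S_p(k)}(k)\to X_\Sigma(k)\to 0$, where $I_{\pe_0}$ is the inertia group of $\pe_0$ in $M_{S_p(k)}(k)/k$. It is a quotient of $U_{\pe_0}^{(1)}\simeq\Z_p\times(\text{finite})$, so its rank is at most $1$, and the cyclotomic ramification at $\pe_0$ forces it to equal $1$.
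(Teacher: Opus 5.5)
Your final rank argument is correct and is essentially the paper's proof. The inertia group of $\pe_0$ has $\Z_p$-rank at most $1$ because $\pe_0$ has degree $1$, and exactly $1$ because $k^\cy_\infty/k$ is ramified at $\pe_0$, so $\rank_{\Z_p}X_\Sigma(k)=2-1=1$; the degree count and Lemma~\ref{lem two three} then give $\Sigma$-adic Leopoldt's Conjecture. The paper takes this inertia group in $\widetilde{k}/k\simeq\Z_p^2$ rather than in $M_{S_p(k)}(k)/k$. The rank $2$ you need comes from the exact sequence of \S\ref{section Leopoldt} under Leopoldt, not literally from Lemma~\ref{lem two three}. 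Your preliminary contradiction argument is not needed.
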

\begin{proof}
Since $p$-adic Leopoldt's Conjecture is true for $k$ and $r_2(k)=1$, we have $\Gal(\widetilde{k}/k)\simeq\Z_p^2$. Let $I_0$ be the inertia group of $\pe_0$ in $\widetilde{k}/k$. It can be shown that $\rank_{\Z_p}I_0\leq 1$ holds by local class field theory because the degree of $\pe_0$ is $1$. On the other hand, we see that $I_0$ is not trivial by the existence of the cyclotomic $\Z_p$-extension. Thus $I_0$ is isomorphic to $\Z_p$. This means that the $\Z_p$-rank of  $X_\Sigma(k)$ is $1$. 

Next observe that
\[
\sum_{\pe\in\Sigma}[k_\pe:\Q_p]=[k:\Q]-1=(r_1(k)+2)-1=r_1(k)+r_2(k)
\]
holds. Hence $\Sigma$-adic Leopoldt's Conjecture holds for $k$ by Lemma~\ref{lem two three}. 
\end{proof}

\section{Theorem~\ref{thm main GGC} and its corollaries}\label{section GGC}

\subsection{Proof of Theorem~\ref{thm main GGC}}
Our proof of Theorem~\ref{thm main GGC} proceeds in the same manner as that of \cite[Theorem~1]{Fujii2017}, so we refer the reader to  \cite{Fujii2017} for details. 

Suppose the assumptions of Theorem~\ref{thm main GGC}. First we show the following lemma. 
\begin{lem}\label{lem free}
$U_\Sigma^{(1)}/(\Z_p\otimes_\Z E(k))$ is a free $\Z_p$-module of rank $1$. 
\end{lem}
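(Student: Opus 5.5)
We need to show that $U_\Sigma^{(1)}/(\Z_p\otimes_\Z E(k))$ is free of rank $1$ under the assumptions of Theorem~\ref{thm main GGC}.

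\emph{Rank.} The rank is $1$ by assumption~\ref{GGC C}: the exact sequence of \S\ref{section Leopoldt} shows that $U_\Sigma^{(1)}/(\Z_p\otimes_\Z E(k))$ and $X_\Sigma(k)$ have the same $\Z_p$-rank.

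\emph{Torsion-freeness.} It remains to show that the torsion submodule vanishes. Fix $\pe\in\Sigma$ (note $\Sigma\neq\emptyset$, since otherwise $X_\Sigma(k)=X(k)$ would be finite, contradicting~\ref{GGC C}). Since $p$ splits completely in $k$ and is odd, each $U_\qu^{(1)}\simeq 1+p\Z_p\simeq\Z_p$. Hence
\[
U_\Sigma^{(1)}=U_\pe^{(1)}\times U_{\Sigma\setminus\{\pe\}}^{(1)}
\]
is torsion-free. Let $\iota\colon\Z_p\otimes_\Z E(k)\to U_\Sigma^{(1)}$ be the diagonal map and $\pi\colon U_\Sigma^{(1)}\to U_{\Sigma\setminus\{\pe\}}^{(1)}$ the projection. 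By assumption~\ref{GGC D}, the composite $\pi\circ\iota$ is surjective. Therefore
\[
U_\Sigma^{(1)}=\Image(\iota)+U_\pe^{(1)},
\]
where $U_\pe^{(1)}$ is viewed as $\Ker\pi$. This gives a surjection
\[
U_\pe^{(1)}\longrightarrow U_\Sigma^{(1)}/\Image(\iota)
\]
whose kernel is $U_\pe^{(1)}\cap\Image(\iota)$.

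\emph{Computing the kernel.} Now $U_\pe^{(1)}\simeq\Z_p$, and the quotient has rank $1$. A surjection from $\Z_p$ onto a module of rank $1$ must have kernel of rank $0$; since $\Z_p$ is torsion-free, that kernel is trivial. So the surjection is an isomorphism, and
\[
U_\Sigma^{(1)}/(\Z_p\otimes_\Z E(k))\simeq\Z_p .
\]
In particular it is free of rank $1$.

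\emph{Main obstacle.} The only delicate point is the second step: turning the triviality in~\ref{GGC D} into the decomposition $U_\Sigma^{(1)}=\Image(\iota)+U_\pe^{(1)}$. This needs the identification of $\Ker\pi$ with $U_\pe^{(1)}$, and the fact that $U_\pe^{(1)}\simeq\Z_p$, which uses that $p$ is odd and $\pe$ has degree $1$. Once this is in place, the rank count from~\ref{GGC C} finishes the argument. Note that we never need assumption~\ref{GGC B} here, although it is consistent with the argument.
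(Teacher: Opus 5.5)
Your proof is correct, and its skeleton matches the paper's: project $U_\Sigma^{(1)}$ onto $U_{\Sigma\setminus\{\pe\}}^{(1)}$, and use~\ref{GGC D} to see that $U_\pe^{(1)}=\Ker\pi$ surjects onto $U_\Sigma^{(1)}/(\Z_p\otimes_\Z E(k))$. The two proofs differ in how they get injectivity.

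The paper applies the snake lemma to the two-row diagram. There, injectivity of $U_\pe^{(1)}\to U_\Sigma^{(1)}/(\Z_p\otimes_\Z E(k))$ comes from $(\Sigma\setminus\{\pe\})$-adic Leopoldt's Conjecture: if $\iota(e)\in U_\pe^{(1)}$, then $\pi\iota(e)=0$, hence $e=0$. The paper imports this from Lemma~\ref{lem basic}. That in turn rests on~\ref{GGC B} and~\ref{GGC C}, and on checking Condition~\ref{condition basic}, including that $N_\infty/k$ is ramified at $\Sigma$ (which~\ref{GGC D} supplies, since it makes $X_{\Sigma\setminus\{\pe\}}(k)\simeq X(k)$ finite).

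You instead kill the kernel $U_\pe^{(1)}\cap\Image(\iota)$ with a rank count from~\ref{GGC C}, together with the torsion-freeness of $U_\pe^{(1)}\simeq\Z_p$.

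Your route is more self-contained. As you correctly observe, it shows the lemma needs only~\ref{GGC C},~\ref{GGC D}, $p$ odd and $p$ split completely; hypothesis~\ref{GGC B} is not used. The paper's route produces the isomorphism $U_\pe^{(1)}\simeq U_\Sigma^{(1)}/(\Z_p\otimes_\Z E(k))$ without any rank bookkeeping, and uses that $\pe$ has degree~$1$ only at the end, to identify $U_\pe^{(1)}$ with $\Z_p$.
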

\begin{proof}
Fix a prime $\pe\in\Sigma$. Since $(\Sigma\setminus\{\pe\})$-adic Leopoldt's Conjecture holds by Lemma~\ref{lem basic}, there is an exact commutative diagram
\[
\begin{tikzpicture}
\node (a) at (0,2) {$0$}; 
\node (b) at (2.5,2) {$\Z_p\otimes_\Z E(k)$}; 
\node (c) at (5.5,2) {$U_\Sigma^{(1)}$}; 
\node (d) at (9,2) {$U_\Sigma^{(1)}/(\Z_p\otimes_\Z E(k))$}; 
\node (e) at (12,2) {$0$}; 
\draw[->] (a) -- (b); \draw[->] (b) -- (c); \draw[->] (c) -- (d); \draw[->] (d) -- (e); 

\node (a1) at (0,0) {$0$}; 
\node (b1) at (2.5,0) {$\Z_p\otimes_\Z E(k)$}; 
\node (c1) at (5.5,0) {$U_{\Sigma\setminus\{\pe\}}^{(1)}$}; 
\node (d1) at (9,0) {$0$}; 
\draw[->] (a1) -- (b1); \draw[->] (b1) -- (c1); \draw[->] (c1) -- (d1); 

\draw[->] (b) -- (b1); \draw[->] (c) -- (c1); 
\end{tikzpicture}
\]
of $\Z_p$-modules by Condition~\ref{GGC D}. Hence there is a natural isomorphism $U_\pe^{(1)}\simeq U_\Sigma^{(1)}/(\Z_p\otimes_\Z E(k))$ by Snake Lemma. Thus we obtain the assertion because $\pe$ is of degree $1$. 
\end{proof}
By Condition~\ref{GGC C}, there is a unique $\Sigma$-ramified $\Z_p$-extension $N_\infty/k$. If $k$ is totally real, then $\Sigma$ must be $S_p(k)$ and $\widetilde{k}=N_\infty=k^\cy_\infty$, so $X(\widetilde{k})=0$ by Conditions~\ref{GGC A}, \ref{GGC C}, Lemma~\ref{lem free}, and Proposition~\ref{prop trivial}. Now suppose that $k$ has at least one complex place. If $k$ is an imaginary quadratic field, then the assertion follows from Minardi's result \cite[Proposition~3.A]{Minardi1986}. Hence we may also suppose that $[k:\Q]$ is greater than $2$. Therefore, we can use the following lemma. 
\begin{lem}[{\cite[Theorem~25]{Maire2002}}]\label{lem Maire}
Let $k/\Q$ be a finite extension that has at least one complex place and satisfies $[k:\Q]\geq 3$. Let $p$ be a prime number, $\pe$ a $p$-adic prime of $k$ whose degree is $1$. Then $X_\pe(k')$ is finite for any finite abelian extension $k'/k$. 
\end{lem}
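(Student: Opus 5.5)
The idea is to reduce the finiteness of $X_\pe(k')$ to a rank statement about units, in the same way Lemma~\ref{lem two three} was obtained, and then to verify that rank statement one character of $G\coloneqq\Gal(k'/k)$ at a time using Brumer's $p$-adic analogue of Baker's theorem \cite{Brumer1967}.

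\textbf{Reduction.} Put $\Sigma'\coloneqq S_\pe(k')$. The class field theoretic exact sequence of \S\ref{section Leopoldt}, applied to $k'$, shows that $X_\pe(k')$ is finite if and only if the image of $\Z_p\otimes_\Z E(k')\to U_{\Sigma'}^{(1)}$ has full $\Z_p$-rank. Since $\pe$ has degree $1$, we have $\prod_{\Pe\mid\pe}k'_\Pe\simeq k_\pe\otimes_k k'=\Q_p\otimes_\Q\!\cdots$ of $\Q_p$-dimension $[k':k]$. By the normal basis theorem it is isomorphic to $\Q_p[G]$ as a $G$-module. Applying the $p$-adic logarithm, it therefore suffices to show that
\[
\lambda\colon \overline{\Q}_p\otimes_\Z E(k')\to \overline{\Q}_p\otimes_{\Q_p}\prod_{\Pe\mid\pe}k'_\Pe\simeq\overline{\Q}_p[G]
\]
is surjective. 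Because $G$ is abelian, this splits into the $\chi$-isotypic parts $e_\chi\lambda$, one for each character $\chi$ of $G$. Each target $e_\chi\overline{\Q}_p[G]$ is one-dimensional. So we must produce, for every $\chi$, a unit whose $\chi$-component under $\lambda$ is nonzero.

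\textbf{Trivial character.} The $\chi=1$ component of $\lambda$ is essentially $\log_p$ of norms to $k$, composed with the embedding $k\hookrightarrow k_\pe=\Q_p$. Because $k$ has a complex place and $[k:\Q]\geq 3$, the unit rank $r_1(k)+r_2(k)-1$ is at least $1$. Any non-torsion $\eta\in E(k)$ has $\log_p\eta\neq 0$ in $\Q_p$, since the kernel of $\log_p$ on $\Z_p^\times$ is torsion. This is the only place where $[k:\Q]\geq3$ is used: for imaginary quadratic $k$ it fails, as expected.

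\textbf{Nontrivial characters; the main step.} Let $v_0$ be a complex place of $k$. Its decomposition group in $G$ is trivial, so by the Herbrand--Artin description
\[
\Q\otimes E(k')\oplus\Q\simeq\bigoplus_{v\mid\infty}\mathrm{Ind}_{D_v}^G\Q,
\]
the module $\Q\otimes E(k')$ contains a free $\Q[G]$-summand, generated by a Minkowski-type unit $\varepsilon$. Hence the conjugates $\{\sigma\varepsilon\}_{\sigma\in G}$ are multiplicatively independent. Fix $\Pe\mid\pe$ with $k'\hookrightarrow k'_\Pe\hookrightarrow\overline{\Q}_p$. The $\chi$-component of $\lambda(\varepsilon)$ is then, up to a nonzero constant, $\sum_{\sigma\in G}\chi(\sigma)^{-1}\log_p(\sigma\varepsilon)$. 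Brumer's theorem says that $p$-adic logarithms of algebraic numbers that are $\Q$-linearly independent are $\overline{\Q}$-linearly independent. The numbers $\log_p(\sigma\varepsilon)$ are $\Q$-independent by multiplicative independence of the $\sigma\varepsilon$, and the coefficients $\chi(\sigma)^{-1}$ are algebraic and not all zero, so this sum is nonzero.

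I expect the main obstacle to be the bookkeeping in this last step. One must check that the $\chi$-projection really has the displayed form with algebraic coefficients, which uses the identification of the target with $\overline{\Q}_p[G]$ via the fixed prime $\Pe$ together with $\pe$ having degree $1$. One must also justify rigorously that $\varepsilon$ can be chosen so that its conjugates are multiplicatively independent. Once this is done, $\lambda$ is surjective on every isotypic component, so the image of the units has full rank and $X_\pe(k')$ is finite.
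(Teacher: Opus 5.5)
The paper gives no argument for this lemma; it imports it from \cite[Theorem~25]{Maire2002}. Your proposal is a self-contained proof by a different route, and it works.

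\begin{itemize}
\item The reduction to surjectivity of $\lambda$ is right. Use $\Q_p\otimes_{\Z_p}U_\Pe^{(1)}\simeq k'_\Pe$ via $\log_p$, and $k_\pe\otimes_k k'\simeq\Q_p[G]$ by the normal basis theorem. Your line ``$\Q_p\otimes_\Q\cdots$'' should read $\Q_p\otimes_k k'$.
\item The $\chi$-component really is a nonzero multiple of $\sum_{\sigma\in G}\chi(\sigma)^{-1}\log_p\iota(\sigma\varepsilon)$. The $k$-embeddings $k'\to\overline{\Q}_p$ extending $k\to k_\pe=\Q_p$ are exactly $\iota\circ\sigma$ for $\sigma\in G$. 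This identifies the target with functions on $G$ under the regular action.
\item Brumer's theorem then applies as you say, because $\log_p\iota(u)=0$ forces $u\in E(k')$ to be a root of unity.
\end{itemize}

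Compared with the bare citation, your route makes the mechanism visible: the complex place $v_0$ supplies the non-trivial characters, and a second archimedean place supplies the trivial one. The price is relying on Herbrand's description of $\Q\otimes_\Z E(k')$ and on Brumer's $p$-adic Baker theorem.

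One step needs repair, and it is exactly the bookkeeping you were worried about.

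\begin{itemize}
\item Triviality of $D_{v_0}$ gives a free summand $\Q[G]$ of $\Q\otimes_\Z E(k')\oplus\Q$, not of $\Q\otimes_\Z E(k')$ itself. To cancel the extra $\Q$ you need a second archimedean place $v_1\neq v_0$ of $k$. The summand $\mathrm{Ind}_{D_{v_1}}^G\Q$ contains the trivial representation, so semisimplicity of $\Q[G]$ gives $\Q\otimes_\Z E(k')\simeq\Q[G]\oplus W$.
\item Such a $v_1$ exists precisely because $r_1(k)+r_2(k)\geq 2$. Given the complex place, this is equivalent to $[k:\Q]\geq 3$. For imaginary quadratic $k$ one has $\Q\otimes_\Z E(k')\simeq\Q[G]/\Q$, which has no free summand.
\item So your claim that the trivial character is the only place where $[k:\Q]\geq 3$ is used is inaccurate. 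Once the free summand is available, your separate trivial-character step is redundant, since $\sum_{\sigma}\log_p\iota(\sigma\varepsilon)=\log_p\iota(N_{k'/k}\varepsilon)\neq 0$.
\item The other point you flagged is easy. After clearing denominators, a generator of the free summand can be taken of the form $1\otimes\varepsilon$ with $\varepsilon\in E(k')$. Freeness of $\Q[G]\cdot(1\otimes\varepsilon)$ is literally the multiplicative independence of $\{\sigma\varepsilon\}_{\sigma\in G}$ modulo torsion.
\end{itemize}
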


Since $p$ splits completely in $k$, we have $\#\Sigma=r_1(k)+r_2(k)$ by Lemma~\ref{lem two three}. So we have $\#(S_p(k)\setminus\Sigma)=r_2(k)$. Let $\qu_1, \ldots, \qu_{r_2(k)}$ be the primes in $S_p(k)\setminus\Sigma$. We put
\[
S_i\coloneqq \Sigma\cup\{\qu_1, \ldots, \qu_{i-1}\}
\]
for each $i$ with $1\leq i\leq r_2(k)+1$. Since $S_i$-adic Leopoldt's Conjecture holds by Lemma~\ref{lem Leopoldt strong}, there is a unique $S_i$-ramified $\Z_p^i$-extension $K^{(i)}/k$.  Then we obtain a tower
\[
N_\infty=K^{(1)}\subseteq K^{(2)}\subseteq \cdots \subseteq K^{(r_2(k))}\subseteq K^{(r_2(k)+1)}=\widetilde{k}
\]
of multiple $\Z_p$-extensions over $k$. For each $i$ with $1\leq i\leq r_2(k)+1$, we put $L^{(i)}\coloneqq L(K^{(i)})$, $X^{(i)}\coloneqq X(K^{(i)})$ and $\Lambda^{(i)}\coloneqq\Z_p\bbr{\Gal(K^{(i)}/k)}$. For each $i$ with $2\leq i\leq r_2(k)+1$, we put $L_i\coloneqq L^{(i)}\cap (K^{(i-1)})^\ab$. 
\begin{lem}\label{lem tot ram K1}
$K^{(1)}/k$ is totally ramified at $\Sigma$. 
\end{lem}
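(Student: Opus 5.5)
The plan is to show that for each $\pe\in\Sigma$ the inertia group $I_\pe\subseteq\Gal(M_\Sigma(k)/k)=X_\Sigma(k)$ surjects onto $\Gamma=\Gal(N_\infty/k)$. Here $N_\infty=K^{(1)}$. Total ramification at $\pe$ is exactly this surjectivity.

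We first identify $X_\Sigma(k)$ with the inertia. By Condition~\ref{GGC A}, $X(k)=0$, so the exact sequence of \S\ref{section Leopoldt} gives
\[
X_\Sigma(k)\simeq U_\Sigma^{(1)}/(\Z_p\otimes_\Z E(k)).
\]
Lemma~\ref{lem free} shows this module is free of rank $1$, so $X_\Sigma(k)\simeq\Z_p$. Hence $M_\Sigma(k)=N_\infty$ and $\Gamma=X_\Sigma(k)$.

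The key step is to compute the image of $I_\pe$ under the Artin map. The inertia group $I_\pe$ is the image of $U_\pe^{(1)}$ under the composite $U_\pe^{(1)}\to U_\Sigma^{(1)}\to X_\Sigma(k)$. We therefore need the composite
\[
U_\pe^{(1)}\to U_\Sigma^{(1)}/(\Z_p\otimes_\Z E(k))
\]
to be surjective. This is precisely what the Snake Lemma argument in the proof of Lemma~\ref{lem free} gives. There, Condition~\ref{GGC D} says that $\Z_p\otimes_\Z E(k)\to U_{\Sigma\setminus\{\pe\}}^{(1)}$ is an isomorphism. The Snake Lemma then identifies the kernel of the projection $U_\Sigma^{(1)}\to U_{\Sigma\setminus\{\pe\}}^{(1)}$, namely $U_\pe^{(1)}$, with the cokernel $U_\Sigma^{(1)}/(\Z_p\otimes_\Z E(k))$. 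The induced map is the one coming from the inclusion $U_\pe^{(1)}\hookrightarrow U_\Sigma^{(1)}$, so it is an isomorphism. This gives $I_\pe=X_\Sigma(k)=\Gamma$.

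Since $\pe$ was an arbitrary element of $\Sigma$, the inertia group of every $\pe\in\Sigma$ in $N_\infty/k$ is all of $\Gamma$. So $N_\infty/k$ is totally ramified at $\Sigma$.

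The only point needing care is the compatibility in the key step. One must check that the Snake Lemma isomorphism is induced by the inclusion of the $\pe$-component and the projection to the cokernel. It must also be compatible with the local Artin map, so that the image of $U_\pe^{(1)}$ really is the inertia group. This is standard class field theory: the idelic description of $X_\Sigma(k)$ sends the local units at $\pe$ onto the inertia at $\pe$. Given that, the argument is formal.
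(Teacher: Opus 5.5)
Your argument is correct, and at heart it is the paper's argument seen from the other side. The paper uses Conditions~\ref{GGC A} and \ref{GGC D} together with the exact sequence $U^{(1)}_{\Sigma\setminus\{\pe\}}/(\Z_p\otimes_\Z E(k))\to X_{\Sigma\setminus\{\pe\}}(k)\to X(k)\to 0$ to get $X_{\Sigma\setminus\{\pe\}}(k)=0$. The inertia field of $\pe$ in $K^{(1)}/k$ is a $\Sigma$-ramified abelian pro-$p$-extension unramified at $\pe$, so it must be $k$. Since $X_{\Sigma\setminus\{\pe\}}(k)$ is exactly $X_\Sigma(k)$ modulo the inertia group at $\pe$, this is the same fact as your $I_\pe=X_\Sigma(k)$. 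The paper's phrasing needs no compatibility check with the local Artin map.

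Two remarks on your version. First, Condition~\ref{GGC D} only says that $\Z_p\otimes_\Z E(k)\to U^{(1)}_{\Sigma\setminus\{\pe\}}$ is surjective. Injectivity is $(\Sigma\setminus\{\pe\})$-adic Leopoldt, which the paper obtains from Lemma~\ref{lem basic}. That lemma's hypothesis (Condition~\ref{condition basic}) already includes ramification of $N_\infty/k$ at $\Sigma$, so leaning on the isomorphism from Lemma~\ref{lem free} to prove the present lemma is logically awkward.

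Second, you need neither that isomorphism nor the identification $M_\Sigma(k)=N_\infty$. Surjectivity of $U^{(1)}_\pe\to U^{(1)}_\Sigma/(\Z_p\otimes_\Z E(k))$ follows from Condition~\ref{GGC D} alone: correct the $\Sigma\setminus\{\pe\}$-components of any $u\in U^{(1)}_\Sigma$ by a global unit. Then $I_\pe=X_\Sigma(k)$ maps onto every quotient of $X_\Sigma(k)$, in particular onto $\Gal(K^{(1)}/k)$.
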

\begin{proof}
Let $\pe\in\Sigma$. By Conditions~\ref{GGC A} and \ref{GGC D}, we have $X_{\Sigma\setminus\{\pe\}}(k)=0$. This means that there is no non-trivial $\Sigma$-ramified abelian pro-$p$-extension that is unramified at $\pe$. Thus $K^{(1)}/k$ is totally ramified at $\pe$. 
\end{proof}
\begin{lem}
$X^{(1)}$ is trivial. 
\end{lem}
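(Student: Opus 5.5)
We apply Proposition~\ref{prop trivial} to the $\Sigma$-ramified $\Z_p$-extension $N_\infty=K^{(1)}$ over $k$. That proposition needs two hypotheses. First, $N_\infty/k$ must be totally ramified at some $p$-adic prime. Second, $U_\Sigma^{(1)}/(\Z_p\otimes_\Z E(k))$ must be free as a $\Z_p$-module. Once both are in place, it gives the equivalence between the triviality of $X(N_\infty)=X^{(1)}$ and $p\nmid$ the class number of $k$. The latter is exactly Condition~\ref{GGC A}.

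For the first hypothesis we use Lemma~\ref{lem tot ram K1}: $K^{(1)}/k$ is totally ramified at every prime of $\Sigma$. We need $\Sigma$ to be non-empty, and this follows from Condition~\ref{GGC C}. If $\Sigma$ were empty, then $X_\Sigma(k)=X(k)$ would be finite, contradicting $\rank_{\Z_p}X_\Sigma(k)=1$. So $K^{(1)}/k$ is totally ramified at some $p$-adic prime.

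For the second hypothesis we use Lemma~\ref{lem free}, which gives more than we need: $U_\Sigma^{(1)}/(\Z_p\otimes_\Z E(k))$ is free of rank $1$. Hence Proposition~\ref{prop trivial} yields a natural isomorphism $X^{(1)}_{\Gal(K^{(1)}/k)}\simeq X(k)$. The right-hand side is trivial by Condition~\ref{GGC A}, so $X^{(1)}_{\Gal(K^{(1)}/k)}=0$, and Nakayama's Lemma gives $X^{(1)}=0$.

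As a cross-check, the argument in the proof of Theorem~\ref{thm triviality} applies directly, since $p$ is odd and splits completely, so every prime of $\Sigma$ has degree $1$. The freeness of rank $1$ gives $X_\Sigma(k)\simeq\Z_p$, hence $M_\Sigma(k)=N_\infty$. Lemma~\ref{lem ur} then gives $X(N_\infty)_\Gamma=\Gal(M_\Sigma(k)/N_\infty)=0$. There is no real obstacle here; the only points to verify are that $\Sigma\neq\emptyset$ and that the earlier lemmas apply under the standing assumptions.
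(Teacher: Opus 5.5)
Your proposal is correct and follows the paper's proof, which likewise just combines Condition~\ref{GGC A}, Lemma~\ref{lem tot ram K1}, Lemma~\ref{lem free}, and Proposition~\ref{prop trivial}. Your check that $\Sigma\neq\emptyset$ is a detail the paper leaves implicit. The cross-check via Lemma~\ref{lem ur} mirrors the alternative argument in the proof of Theorem~\ref{thm triviality}; note that getting $X_\Sigma(k)\simeq\Z_p$ there also uses $X(k)=0$.
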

\begin{proof}
The assertion follows from the assumptions, Lemmas~\ref{lem tot ram K1}, \ref{lem free}, and Proposition~\ref{prop trivial}. 
\end{proof}
Next we show that $X^{(2)}$ is pseudo-null as a $\Lambda^{(2)}$-module. We use the following lemma. 
\begin{lem}[{\cite[Chapter~I, Lemma~4]{Perrin1984}}]\label{lem pn lift}
Let $p$ be a prime number and $d$ a positive integer. Let $G$ be a pro-$p$-group isomorphic to $\Z_p^d$ and $H$ a closed subgroup of $G$ such that $G/H$ is isomorphic to $\Z_p^{d-1}$. Let $M$ be a finitely generated $\Z_p\bbr{G}$-module. Suppose that $M_H$ is pseudo-null as a $\Z_p\bbr{G/H}$-module. Then $M$ is pseudo-null as a $\Z_p\bbr{G}$-module. 
\end{lem}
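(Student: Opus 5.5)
The plan is to translate pseudo-nullity into a statement about supports in $\operatorname{Spec}$ of a regular local ring and then use Krull's principal ideal theorem.

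Choose a topological generator $h$ of $H\simeq\Z_p$. Then $\Lambda\coloneqq\Z_p\bbr{G}$ is a regular local ring of Krull dimension $d+1$, isomorphic to $\Z_p\bbr{T_1,\ldots,T_d}$. Put $T\coloneqq h-1$. We have $\Lambda/T\Lambda\simeq\Z_p\bbr{G/H}$, which is regular local of dimension $d$, and $M_H=M/TM=M\otimes_\Lambda\Lambda/T\Lambda$.

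For a finitely generated module $N$ over such a ring $R$, pseudo-nullity (the height of $\operatorname{Ann}_R N$ is at least $2$) is equivalent to $N_P=0$ for every prime $P$ of height $\le 1$. In other words, every prime in $\operatorname{Supp}(N)=V(\operatorname{Ann}_R N)$ has height at least $2$, or equivalently $\dim R/P\le\dim R-2$, since these rings are catenary local domains. Because $M$ is finitely generated, we have
\[
\operatorname{Supp}_\Lambda(M/TM)=\operatorname{Supp}_\Lambda(M)\cap V(T),
\]
and the primes of $\Lambda/T\Lambda$ are exactly the primes of $\Lambda$ containing $T$.

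Suppose, for contradiction, that $M$ is not pseudo-null. Then there is a prime $P\in\operatorname{Supp}(M)$ of height at most $1$, so $\dim\Lambda/P\ge d$. Consider the local domain $\Lambda/P$ and the image $\bar T$ of $T$ in it, which lies in the maximal ideal. By Krull's principal ideal theorem, $\Lambda/(P+T\Lambda)$ is nonzero and has dimension at least $\dim\Lambda/P-1\ge d-1$. Hence some prime $Q\supseteq P+T\Lambda$ satisfies $\dim\Lambda/Q\ge d-1$. This $Q$ lies in $\operatorname{Supp}(M)\cap V(T)=\operatorname{Supp}(M/TM)$. Viewed as a prime of $\Lambda/T\Lambda$, its quotient has dimension at least $d-1=\dim(\Lambda/T\Lambda)-1$, so it has height at most $1$ in $\Lambda/T\Lambda$. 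This contradicts the pseudo-nullity of $M_H$ over $\Z_p\bbr{G/H}$.

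The main point needing care is the dimension bookkeeping: that height and coheight add up to the dimension in these regular, hence catenary, local domains, and the correct use of Krull's theorem in $\Lambda/P$. If $\bar T=0$ in $\Lambda/P$ the bound holds trivially with $Q$ any prime of $\Lambda/P$ of maximal coheight. The degenerate case $d=1$ also deserves a separate word. There $\Z_p\bbr{G/H}=\Z_p$, whose only pseudo-null finitely generated module is $0$, so the hypothesis gives $M/TM=0$ and hence $M=0$ by Nakayama's Lemma. The general argument above also covers this case, since no prime of $\Z_p$ has height at least $2$.
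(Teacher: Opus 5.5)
The paper gives no proof of this lemma; it quotes it from Perrin-Riou and applies it only with $d\ge 2$. So there is no internal argument to compare yours against. Your proof is correct, and it is the standard dimension-theoretic one. It amounts to the inequality $\dim(M/TM)\ge\dim M-1$ for $T=h-1$ in the maximal ideal of $\Lambda$. This is combined with the fact that pseudo-nullity over $\Lambda$ (resp.\ $\Lambda/T\Lambda$) means $\dim M\le d-1$ (resp.\ $\dim M/TM\le d-2$). Compared with the citation, it makes the paper self-contained. It also shows that all that matters is that $\Lambda$ and $\Lambda/T\Lambda$ are regular local rings of dimensions $d+1$ and $d$.

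A few points are worth stating explicitly:
\begin{itemize}
\item $H\simeq\Z_p$ because $0\to H\to G\to G/H\to 0$ splits, $G/H$ being $\Z_p$-free.
\item $M_H=M/TM$ because $(h-1)M$ is closed for finitely generated $M$ and $h$ topologically generates $H$.
\item The inequality $\dim A/xA\ge\dim A-1$, for $x$ in the maximal ideal of a Noetherian local ring $A$, is a consequence of the dimension theorem (systems of parameters). It needs no catenarity.
\item Catenarity of $\Lambda$ is used only to pass from $\operatorname{ht}P\le 1$ to $\dim\Lambda/P\ge d$. The final step at $\bar Q$ uses only $\operatorname{ht}\bar Q+\dim\bar\Lambda/\bar Q\le\dim\bar\Lambda$, which holds in any Noetherian local ring.
\end{itemize}

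Your treatment of $d=1$ is right under the paper's height definition, where pseudo-null over $\Z_p$ means zero. Note that the statement would be false if pseudo-null over $\Z_p$ were read as finite. For example, $M=\Z_p\bbr{G}/(h-1-p)$ has $M_H\simeq\Z/p\Z$ but $M\simeq\Z_p$.
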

Since $\Gal(L_2/K^{(2)})\simeq X^{(2)}_{\Gal(K^{(2)}/K^{(1)})}$ holds, it suffices to show that $\Gal(L_2/K^{(2)})$ is pseudo-null as a $\Lambda^{(1)}$-module by Lemma~\ref{lem pn lift}. By Condition~\ref{GGC E}, the prime $\qu_1$ splits finitely in $K^{(1)}/k$. Take $n\geq 0$ so that $N_n$ is the decomposition field of $\qu_1$ in $K^{(1)}/k$ (recall that $K^{(1)}=N_\infty$). For each $\Qu_1\in S_{\qu_1}(K^{(1)})$, we let $I_{\Qu_1}$ denote the inertia group of $\Qu_1$ in $L_2/K^{(1)}$. Since $X^{(1)}$ is trivial, there is an equality
\[
\Gal(L_2/K^{(1)})=\sum_{\Qu_1\in S_{\qu_1}(K^{(1)})}I_{\Qu_1}
\]
of $\Lambda^{(1)}$-modules. Note that $\Gal(K^{(1)}/N_n)$ acts naturally on each $I_{\Qu_1}$. Since $L_2/K^{(2)}$ is unramified, we have $I_{\Qu_1}\cap\Gal(L_2/K^{(2)})=0$, so $I_{\Qu_1}$ injects in $\Gal(K^{(2)}/K^{(1)})$. Hence the action of $\Gal(K^{(1)}/N_n)$ on $I_{\Qu_1}$ is trivial. Therefore, we conclude that $\Gal(K^{(1)}/N_n)$ acts trivially on $\Gal(L_2/K^{(2)})$. Thus $L_2/N_n$ is an abelian extension, so $\Gal(L_2/N_n)$ is finitely generated as a $\Z_p$-module. 
\begin{prop}\label{prop rank 2}
The $\Z_p$-rank of $\Gal(L_2/N_n)$ is $2$. 
\end{prop}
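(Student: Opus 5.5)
The lower bound should be easy and the upper bound is where the work lies. For the lower bound, $L_2\supseteq K^{(2)}\supseteq N_n$ and $\Gal(K^{(2)}/N_n)$ is open in $\Gal(K^{(2)}/k)\simeq\Z_p^2$. Since $\Gal(L_2/N_n)$ surjects onto $\Gal(K^{(2)}/N_n)$, its $\Z_p$-rank is at least $2$. For the upper bound I would decompose $\Gal(L_2/N_n)$ using inertia groups and control each piece by ramification.

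Fix $\pe\in\Sigma$ and let $\Pe$ be the unique prime of $N_n$ (equivalently of $K^{(1)}$) above $\pe$; it is unique because $K^{(1)}/k$ is totally ramified at $\pe$ by Lemma~\ref{lem tot ram K1}. Let $I_\Pe\subseteq\Gal(L_2/N_n)$ be its inertia group. By the equality $\Gal(L_2/K^{(1)})=\sum_{\Qu_1}I_{\Qu_1}$ established above, $L_2/K^{(1)}$ is unramified at $\Pe$. Hence $I_\Pe\cap\Gal(L_2/K^{(1)})=0$, and $I_\Pe$ maps isomorphically onto $\Gal(K^{(1)}/N_n)\simeq\Z_p$. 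It follows that $\Gal(L_2/N_n)=I_\Pe\oplus\Gal(L_2/K^{(1)})$ as $\Z_p$-modules, so it suffices to show that the fixed field $F$ of $I_\Pe$ satisfies $\rank_{\Z_p}\Gal(F/N_n)\leq 1$.

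The extension $F/N_n$ is abelian and unramified outside $S'\coloneqq(\Sigma\setminus\{\pe\})\cup\{\qu_1\}$. So $\Gal(F/N_n)$ is a quotient of $X_{S'}(N_n)$, and its rank is at most $\rank_{\Z_p}X_{S'}(N_n)$. I would compute that rank from the class field theory sequence of \S\ref{section Leopoldt} applied over $N_n$, using that all primes are of degree $1$ over $\Q_p$. The primes of $\Sigma\setminus\{\pe\}$ are totally ramified in $N_n$, giving local rank $p^n(r_1(k)+r_2(k)-1)$, and $\qu_1$ splits completely in $N_n$, giving local rank $p^n$. Hence
\[
\rank_{\Z_p}U^{(1)}_{S'(N_n)}=p^n(r_1(k)+r_2(k)),\qquad \rank_{\Z_p}(\Z_p\otimes_\Z E(N_n))=p^n(r_1(k)+r_2(k))-1 .
\]
Thus $\rank_{\Z_p}X_{S'}(N_n)=1+\delta'$, where $\delta'$ is the rank of the kernel of $\Z_p\otimes_\Z E(N_n)\to U^{(1)}_{S'(N_n)}$. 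The proposition then reduces to showing $\delta'=0$, that is, $S'$-adic Leopoldt's Conjecture for $N_n$.

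This Leopoldt statement over $N_n$ is the step I expect to be the main obstacle. I would try to deduce it from Lemma~\ref{lem Maire}, applied to the finite abelian extension $N_n/k$ and the degree-one prime $\qu_1$: it gives that $X_{\qu_1}(N_n)$ is finite, so the image of the global units in $U^{(1)}_{S_{\qu_1}(N_n)}$ has full rank $p^n$. I would combine this with $(\Sigma\setminus\{\pe\})$-adic Leopoldt for $k$ (from Condition~\ref{GGC D} and Lemma~\ref{lem basic}), lifted along the totally ramified extension $N_n/k$, for instance via a $\Gal(N_n/k)$-equivariant rank count as in Proposition~\ref{prop WLC}. If that fails, I would bypass $N_n$-Leopoldt and instead bound $\Gal(F/N_n)$ directly. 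The idea is that the inertia groups in $F/N_n$ at the primes over $\Sigma\setminus\{\pe\}$ each have rank at most $1$. By Lemma~\ref{lem Maire}, their quotient is unramified outside $S_{\qu_1}(N_n)$, hence finite. It would then remain to show, using Condition~\ref{GGC D} and the surjection onto $\Gal(K^{(1)}/N_n)$, that these inertia groups together contribute rank at most $1$.
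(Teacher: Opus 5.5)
Your reduction is sound up to the point where you bound $\Gal(F/N_n)$ by $X_{S'}(N_n)$. The lower bound, the splitting $\Gal(L_2/N_n)=I_\Pe\oplus\Gal(L_2/K^{(1)})$, and the count $\rank_{\Z_p}X_{S'}(N_n)=1+\delta'$ are all correct. (The unramifiedness of $L_2/K^{(1)}$ at $\Pe$ comes from the argument of Lemma~\ref{lem ur} applied to $K^{(2)}/k$, not from the displayed equality itself.) But passing from $F$ to $M_{S'}(N_n)$ throws away the information that makes the proposition true, and the claim $\delta'=0$ does not follow from your tools. Decompose over the characters $\chi$ of $G\coloneqq\Gal(N_n/k)$. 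For $\chi\neq 1$, the $\chi$-parts of $\overline{\Q}_p\otimes_\Z E(N_n)$ and of $\overline{\Q}_p\otimes_{\Z_p}U^{(1)}_{S'(N_n)}$ both have dimension $r_1(k)+r_2(k)$, since each prime over $\Sigma\setminus\{\pe\}$ and the set $S_{\qu_1}(N_n)$ each contribute one regular representation. So $\delta'=0$ amounts to the non-vanishing of a square $\chi$-regulator at $S'$ for every $\chi\neq 1$. Your tools do not reach these determinants:
Lemma~\ref{lem Maire} only says the $\chi$-units surject onto the single $\qu_1$-coordinate. $(\Sigma\setminus\{\pe\})$-adic Leopoldt for $k$ only concerns $\chi=1$. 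A rank count as in Proposition~\ref{prop WLC} only bounds such defects; it never forces them to vanish. The hypotheses give no control of these determinants for an arbitrary fixed $\pe$, and the proposition does not need it. The field $M_{S'}(N_n)$ may be ramified at primes over $\Sigma\setminus\{\pe\}$ through the non-trivial isotypic parts of $U^{(1)}_{\Pe'}$, which has $\Z_p$-rank $p^n$, and such ramification never occurs inside $L_2$.

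Your fallback is closer, but it stops at the crux. Each inertia group over $\Sigma\setminus\{\pe\}$ has rank at most $1$ (true, since $F\subseteq L_2\subseteq L(K^{(2)})$) and maps onto $\Gal(K^{(1)}/N_n)$. That alone only gives $\rank_{\Z_p}\Gal(F/N_n)\leq r_1(k)+r_2(k)-1$. Condition~\ref{GGC D} is a statement over $k$, so it cannot be used until you know the relevant part of $\Gal(F/N_n)$ descends to $k$. The missing idea, which is the heart of the paper's proof, is that these inertia groups are fixed by $G$. The inertia group of $\pe'\in\Sigma$ in $L_2/k$ injects into the abelian group $\Gal(K^{(2)}/k)$, because $L_2/K^{(2)}$ is unramified. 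It surjects onto $G$, because $N_n/k$ is totally ramified at $\pe'$. Hence conjugation by $G$ is trivial on its intersection with $\Gal(L_2/N_n)$. The paper encodes this as $L_2\subseteq\mathcal{M}$. In the character decomposition of $\Gal(\mathcal{M}/N_n)$, the non-trivial characters then see only the $\qu_1$-units and vanish by Lemma~\ref{lem Maire}, while the trivial character contributes $\rank_{\Z_p}X_{S_2}(k)=2$. In your framework the argument closes as follows. The span $J$ of those inertia groups has finite index in $\Gal(F/N_n)$ and trivial $G$-action ($F/k$ is Galois since $\Pe$ is the only prime over $\pe$). So $\Gal(F/N_n)_G$ has the same rank. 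The corresponding subfield $F'$ has $\Gal(F'/k)$ a central extension of the cyclic group $G$, hence abelian, so $F'\subseteq M_{S_2}(k)$. Since $F'/N_n$ is unramified at $\Pe$, $F'/k$ has finite inertia at $\pe$, which forces rank at most $\rank_{\Z_p}X_{S_2}(k)-1=1$.
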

\begin{proof}
One may prove the assertion by mimicking the proof of \cite[Proposition~1]{Fujii2017}, but here we give a simpler description due to Kataoka's comment. 

Let $\mathcal{M}/N_n$ be the maximal abelian pro-$p$-extension such that $\Gal(N_n/k)$ acts trivially on the inertia group of every $\pe_n\in S_\Sigma(N_n)$. Then we have $L_2\subseteq \mathcal{M}$ so it suffices to show that the $\Z_p$-rank of $\Gal(\mathcal{M}/N_n)$ is $2$. By the definition of $\mathcal{M}$, there is an exact sequence
\[
\begin{tikzpicture}
\node (a) at (0,0) {$\Z_p\otimes_\Z E(N_n)$}; 
\node (b) at (6,0) {$\displaystyle \prod_{\pe_n\in S_\Sigma(N_n)}(U_{\pe_n}^{(1)})_{\Gal(N_n/k)}\times \prod_{\qu_{1,n}\in S_{\qu_1}(N_n)}U_{\qu_{1,n}}^{(1)}$}; 
\node (c) at (12,0) {$\Gal(\mathcal{M}/N_n)$}; 
\node (d) at (14.5,0) {$0$}; 

\draw[->] (a) -- (b); \draw[->] (b) -- (c); \draw[->] (c) -- (d); 
\end{tikzpicture}
\]
of $\Z_p\br{\Gal(N_n/k)}$-modules. Since we only have to consider the $\Z_p$-ranks, we may tensor $\Q_p$ and regard the modules as representations of $\Gal(N_n/k)$. Then we can decompose $\Q_p\otimes_{\Z_p}\Gal(\mathcal{M}/N_n)=V_0\oplus V_1$, where $V_0$ corresponds to the trivial character and $V_1$ corresponds to the non-trivial characters. Here we note that $\Q_p\br{\Gal(N_n/k)}$ is semi-simple. Then, by decomposing the above exact sequence, we obtain the following exact sequences
\[
\begin{tikzpicture}
\node (a) at (0,0) {$\Q_p\otimes_\Z E(k)$}; 
\node (b) at (5,0) {$\displaystyle \left(\prod_{\pe\in\Sigma}\Q_p\otimes_{\Z_p}U_{\pe}^{(1)}\right)\times (\Q_p\otimes_{\Z_p}U_{\qu_1}^{(1)})$}; 
\node (c) at (9.5,0) {$V_0$}; 
\node (d) at (11.2,0) {$0$}; 

\draw[->] (a) -- (b); \draw[->] (b) -- (c); \draw[->] (c) -- (d); 
\end{tikzpicture}
\]
and
\[
\begin{tikzpicture}
\node (a) at (0,0) {$\Q_p\otimes_\Z E(N_n)$}; 
\node (b) at (4.5,0) {$\displaystyle \prod_{\qu_{1,n}\in S_{\qu_1}(N_n)}\Q_p\otimes_{\Z_p}U_{\qu_{1,n}}^{(1)}$}; 
\node (c) at (8.3,0) {$V_1$}; 
\node (d) at (10.5,0) {$0$}; 

\draw[->] (a) -- (b); \draw[->] (b) -- (c); \draw[->] (c) -- (d); 
\end{tikzpicture}
\]
of $\Q_p\br{\Gal(N_n/k)}$-modules. By the first sequence, we see that $\dim_{\Q_p}V_0=\rank_{\Z_p}X_{S_2}(k)=2$. By the second sequence and Lemma~\ref{lem Maire}, we see that $V_1$ is trivial. Therefore, we conclude that the $\Z_p$-rank of $\Gal(\mathcal{M}/N_n)$ is $2$, as desired. 
\end{proof}
It follows from Proposition~\ref{prop rank 2} that $\Gal(L_2/K^{(2)})$ is finite. This means that $\Gal(L_2/K^{(2)})$ is pseudo-null as a $\Lambda^{(1)}$-module. Hence $X^{(2)}$ is pseudo-null as a $\Lambda^{(2)}$-module by Lemma~\ref{lem pn lift}. 
\begin{thm}\label{thm pn step up}
For each $i$ with $2\leq i\leq r_2(k)$, if $X^{(i)}$ is pseudo-null as a $\Lambda^{(i)}$-module, then $X^{(i+1)}$ is pseudo-null as a $\Lambda^{(i+1)}$-module. 
\end{thm}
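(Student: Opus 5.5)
By Lemma~\ref{lem pn lift}, applied to $G=\Gal(K^{(i+1)}/k)$ and $H=\Gal(K^{(i+1)}/K^{(i)})$, it suffices to show that
\[
X^{(i+1)}_{\Gal(K^{(i+1)}/K^{(i)})}\simeq\Gal(L_{i+1}/K^{(i+1)})
\]
is pseudo-null as a $\Lambda^{(i)}$-module. This uses that $K^{(i+1)}/K^{(i)}$ is a $\Z_p$-extension and that $L_{i+1}=L^{(i+1)}\cap(K^{(i)})^\ab$, exactly as for $i=1$. Since $\Gal(L_{i+1}/K^{(i+1)})$ is a $\Lambda^{(i)}$-submodule of $\Gal(L_{i+1}/K^{(i)})$, I plan to prove the stronger statement that the whole group $\Gal(L_{i+1}/K^{(i)})$ is pseudo-null over $\Lambda^{(i)}$. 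For this I use the exact sequence
\[
0\to \mathcal{I}\to \Gal(L_{i+1}/K^{(i)})\to \Gal(L_{i+1}\cap L^{(i)}/K^{(i)})\to 0,
\]
where $\mathcal{I}$ is the closed subgroup generated by the inertia groups. The right-hand term is a quotient of $X^{(i)}$, hence pseudo-null by the induction hypothesis. Since pseudo-null modules are closed under extensions, it remains to show that $\mathcal{I}$ is pseudo-null.

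First I would determine which primes contribute to $\mathcal{I}$. The extension $L_{i+1}/K^{(i)}$ is unramified outside $S_{i+1}$, and $K^{(i+1)}/K^{(i)}$ absorbs all ramification except at primes above $\qu_i$: every prime of $S_i$ is already in the ramification set of $K^{(i)}$, and the definition of $L_{i+1}$ forces $L_{i+1}/K^{(i+1)}$ to be unramified. To make this precise I would use the maximality of $K^{(i)}$ as the $S_i$-ramified $\Z_p^{i}$-extension, together with the fact that $S_i$-adic Leopoldt's Conjecture holds. The expected outcome is
\[
\mathcal{I}=\sum_{\Qu_i\in S_{\qu_i}(K^{(i)})}I_{\Qu_i}.
\]
Each $I_{\Qu_i}$ meets $\Gal(L_{i+1}/K^{(i+1)})$ trivially, so it injects into $\Gal(K^{(i+1)}/K^{(i)})\simeq\Z_p$. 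Hence $I_{\Qu_i}$ is a cyclic $\Lambda^{(i)}$-module on which the decomposition group $D_{\qu_i}\subseteq\Gal(K^{(i)}/k)$ acts trivially, and so it is a quotient of $\Z_p\bbr{\Gal(K^{(i)}/k)/D_{\qu_i}}$.

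The key step, and the main obstacle, is to show that $\qu_i$ splits finitely in $K^{(i)}/k$ and that $\rank_{\Z_p}D_{\qu_i}\geq 2$. Both follow once $D_{\qu_i}$ has full rank $i$, and finite splitting alone is enough for $i\geq 2$. I would produce two independent $\Z_p$-extensions inside $K^{(i)}$ in which $\qu_i$ splits finitely.
\begin{itemize}
\item The first is $K^{(1)}=N_\infty$, where $\qu_i$ splits finitely by Condition~\ref{GGC E}.
\item The second is a $((\Sigma\setminus\{\pe\})\cup\{\qu_1\})$-ramified $\Z_p$-extension; it is contained in $K^{(i)}$ because $i\geq 2$. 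Here $\qu_i$ splits finitely by Condition~\ref{GGC F}, applied with $\qu=\qu_1$ and $\qu'=\qu_i$, which are distinct since $i\geq 2$.
\end{itemize}
These two extensions are distinct: by Lemma~\ref{lem tot ram K1} the first is totally ramified at $\pe$, while the second is unramified at $\pe$. More generally, varying $\pe\in\Sigma$ and $\qu_j$ with $j<i$ should yield enough such extensions to span $\Gal(K^{(i)}/k)$ up to finite index. In any case rank at least $2$ already suffices.

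With this in hand, I conclude as follows. The quotient $\Z_p\bbr{\Gal(K^{(i)}/k)/D_{\qu_i}}$ is pseudo-null over $\Lambda^{(i)}$ whenever $\rank D_{\qu_i}\geq 2$, because its annihilator contains $\gamma_1-1$ and $\gamma_2-1$ for independent $\gamma_1,\gamma_2\in D_{\qu_i}$, and these generate an ideal of height $2$. The sum defining $\mathcal{I}$ is finite because $\qu_i$ splits finitely, so $\mathcal{I}$ is a finite sum of pseudo-null modules and hence pseudo-null. Combined with the reduction above, this gives the pseudo-nullity of $X^{(i+1)}$.
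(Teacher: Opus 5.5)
There is a genuine gap in your key step. The reduction itself matches the paper: you use Lemma~\ref{lem pn lift}, the exact sequence with $\mathcal{I}=\sum_{\Qu_i}I_{\Qu_i}$, the injectivity of each $I_{\Qu_i}$ into $\Gal(K^{(i+1)}/K^{(i)})$, and you aim at pseudo-nullity of $\Gal(L_{i+1}/K^{(i)})$. The problem is the claim $\rank_{\Z_p}D_{\qu_i}\geq 2$, which is false.

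Since $\qu_i\notin S_i$, the extension $K^{(i)}/k$ is unramified at $\qu_i$. So $D_{\qu_i}$ is topologically generated by a Frobenius $g$ and is procyclic; by Condition~\ref{GGC E} it is $\simeq\Z_p$. For $i\geq 2$ it therefore has infinite index in $\Gal(K^{(i)}/k)\simeq\Z_p^i$. Thus $\qu_i$ always splits into infinitely many primes of $K^{(i)}$, and neither finite splitting nor rank $\geq 2$ can hold. This is why the paper speaks of an \emph{infinite} sum and of "a topological generator" $g$.

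Your two bullet points are true, but they do not give rank $2$. A procyclic subgroup can map onto open subgroups of two independent $\Z_p$-quotients, as the diagonal in $\Z_p^2$ does. What you actually obtain is only a surjection $\Lambda^{(i)}/(g-1)\to\mathcal{I}$. Since $\Lambda^{(i)}/(g-1)$ has a height-one annihilator, this shows $\mathcal{I}$ is torsion, but nothing yet forces it to be pseudo-null.

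The missing idea is an extra arithmetic input that removes the non-pseudo-null part. The paper lets $L'_{i+1}$ be the fixed field of the maximal pseudo-null submodule of $\Gal(L_{i+1}/K^{(i)})$. Since $X^{(i)}$ is pseudo-null and $g-1$ kills $\mathcal{I}$, $g$ acts trivially on $\Gal(L'_{i+1}/K^{(i)})$. Hence $L'_{i+1}$ is abelian over the decomposition field $E=(K^{(i)})^{\tgen{g}}$.

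Conditions~\ref{GGC E} and \ref{GGC F} are used through Lemma~\ref{lem ramified}, not for finite splitting in $K^{(i)}$. They show that the $\Z_p^{i-1}$-extension $E'\subseteq E$ is ramified at every prime of $S_i$, so $K^{(i)}/E$, and hence $L'_{i+1}/E$, is $\qu_i$-ramified. Maire's finiteness result (Lemma~\ref{lem Maire}), which your proposal never invokes, then shows that $X_{\qu_i}(E)$ is finitely generated and torsion over $\Z_p\bbr{\Gal(E/F)}$. Combined with annihilation by $g-1$, this makes $\Gal(L'_{i+1}/K^{(i)})$ pseudo-null, hence trivial by the choice of $L'_{i+1}$. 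Only at that point does pseudo-nullity of $\Gal(L_{i+1}/K^{(i)})$, and a posteriori of $\mathcal{I}$, follow.
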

To show this theorem, one may trace the proof of \cite[Theorem~3]{Fujii2017}, but there is a little difference (see Remark~\ref{rem type}). So we carry out the proof . 

For each $\Qu_i\in S_{\qu_i}(K^{(i)})$, we let $I_{\Qu_i}$ denote the inertia group of $\Qu_i$ in $L_{i+1}/K^{(i)}$. Then there is an exact sequence
\[
\begin{tikzpicture}
\node (a) at (0,0) {$0$}; 
\node (b) at (2.5,0) {$\sum_{\Qu_i}I_{\Qu_i}$}; 
\node (c) at (6,0) {$\Gal(L_{i+1}/K^{(i)})$}; 
\node (d) at (9.5,0) {$X^{(i)}$}; 
\node (e) at (11.3,0) {$0$}; 

\draw[->] (a) -- (b); \draw[->] (b) -- (c); \draw[->] (c) -- (d); \draw[->] (d) -- (e); 
\end{tikzpicture}
\]
of $\Lambda^{(i)}$-modules. Here we note that the infinite sum $\sum_{\Qu_i}I_{\Qu_i}$ denotes the closure of the group generated by $\{I_{\Qu_i}\mid \Qu_i\in S_{\qu_i}(K^{(i)})\}$. The decomposition group of $\qu_i$ in $K^{(i)}/k$ is non-trivial since $K^{(1)}\subseteq K^{(i)}$. Let $g$ be a topological generator of the decomposition group. Since $I_{\Qu_i}\cap \Gal(L_{i+1}/K^{(i+1)})=0$ holds, we see that $I_{\Qu_i}$ is isomorphic to $\Z_p$ as a $\Z_p$-module. Hence there is a surjection $\Lambda^{(i)}/(g-1)\to \sum_{\Qu_i}I_{\Qu_i}$ of $\Lambda^{(i)}$-modules. Thus $\sum_{\Qu_i}I_{\Qu_i}$ is finitely generated and torsion as a $\Lambda^{(i)}$-module. Hence $\Gal(L_{i+1}/K^{(i)})$ is also finitely generated and torsion as a $\Lambda^{(i)}$-module. 

Let $L'_{i+1}$ be the fixed field of the maximal pseudo-null $\Lambda^{(i)}$-submodule of $\Gal(L_{i+1}/K^{(i)})$. Now we suppose that $X^{(i)}$ is pseudo-null as a $\Lambda^{(i)}$-module. Then we see that the injection $\sum_{\Qu_i}I_{\Qu_i}\to \Gal(L_{i+1}/K^{(i)})$ is a pseudo-isomorphism by the above exact sequence, so $\Gal(L'_{i+1}/K^{(i)})$ can be embedded into $\sum_{\Qu_i}I_{\Qu_i}$. Hence $g$ acts trivially on $\Gal(L'_{i+1}/K^{(i)})$. If we let $E\coloneqq (K^{(i)})^{\tgen{g}}$ be the decomposition field of $\qu_i$ in $K^{(i)}/k$, then $L'_{i+1}/E$ is an abelian extension. The following lemma corresponds to \cite[Lemma~10]{Fujii2017}. 
\begin{lem}\label{lem ramified}
Let $E'$ be the subfield of $E$ such that $E'/k$ is a $\Z_p^{i-1}$-extension. Then $E'/k$ is ramified at every prime in $S_i$. 
\end{lem}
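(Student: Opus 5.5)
We argue by contradiction via inertia groups in $G\coloneqq\Gal(K^{(i)}/k)\simeq\Z_p^i$. Write $D\coloneqq\tgen{g}$ for the decomposition group of $\qu_i$ in $K^{(i)}/k$. Since $\qu_i\notin S_i$, the prime $\qu_i$ is unramified in $K^{(i)}/k$, and $D$ is procyclic and non-trivial. By definition $\Gal(K^{(i)}/E')=:H$ is the rank-one subgroup of $G$ containing $D$ with $G/H\simeq\Z_p^{i-1}$; in particular $D$ is open in $H$. Fix $\pe\in S_i$ and suppose that $E'/k$ is unramified at $\pe$. Then the inertia group $I_\pe$ of $\pe$ in $K^{(i)}/k$ is contained in $H$.

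\textbf{Step 1: $I_\pe$ is non-trivial.} We compute the $\Z_p$-rank of $X_{S_i\setminus\{\pe\}}(k)$ from the exact sequence of \S\ref{section Leopoldt}. Since $p$ splits completely and $\#\Sigma=r_1(k)+r_2(k)$, we have $\#(S_i\setminus\{\pe\})=r_1(k)+r_2(k)+i-2$. Moreover, $(S_i\setminus\{\pe\})$-adic Leopoldt's Conjecture holds by Lemma~\ref{lem Leopoldt strong}: if $\pe\in\Sigma$, the set contains $\Sigma\setminus\{\pe\}$, which satisfies it by Condition~\ref{GGC D}; if $\pe=\qu_j$, the set contains $\Sigma$, which satisfies it by Condition~\ref{GGC B}. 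Hence the rank is $(r_1(k)+r_2(k)+i-2)-(r_1(k)+r_2(k)-1)=i-1<i$. So $K^{(i)}$ cannot be unramified at $\pe$, i.e. $I_\pe\neq0$. Since $H$ has rank one, $I_\pe$ and $D$ are then commensurable: $I_\pe\cap D$ is open in both.

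\textbf{Step 2: translate the commensurability.} Let $K_\pe$ be the fixed field of $I_\pe$. By Step~1 it is the unique $(S_i\setminus\{\pe\})$-ramified $\Z_p^{i-1}$-extension of $k$, up to a finite part. By commensurability, the image of $D$ in $\Gal(K_\pe/k)$ is finite, so $\qu_i$ splits finitely in $K_\pe/k$. By class field theory, the idelic class of a uniformizer $\pi_{\qu_i}$ of $k_{\qu_i}\simeq\Q_p$ then has finite order in $X_{S_i\setminus\{\pe\}}(k)$. Using Condition~\ref{GGC A} to clear the class group, this gives an element $\alpha\in k^\times$ such that $\alpha$ is a $\{\qu_i\}$-unit with $v_{\qu_i}(\alpha)=h>0$, and $\alpha$ lies, $p$-adically, in the closure of the image of global units in $U^{(1)}_{S_i\setminus\{\pe\}}$.

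\textbf{Step 3: contradiction.} Divide $\alpha$ by a suitable element of the $p$-adic closure of units; the tuple of $(S_i\setminus\{\pe\})$-components then becomes trivial. Now compare $\Z_p$-ranks in the analogous exact sequence for the $\{\qu_i\}$-unit group together with the set $S_{i+1}\setminus\{\pe\}$. The extra relation produced by $\alpha$ should force the $\Z_p$-rank of $X_{S_{i+1}\setminus\{\pe\}}(k)$ to drop below the value $i$ predicted by Leopoldt's Conjecture for that set. For $\pe\in\Sigma$ the latter follows from Condition~\ref{GGC D}; alternatively, one may use Lemma~\ref{lem Maire} on the $\qu_i$-adic side. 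Equivalently, the relation should contradict the fact that $\qu_i$ has an infinite decomposition group inside $\widetilde{k}/k$, where the local Galois group of $\Q_p$ has rank $2$.

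\textbf{Main obstacle.} The hardest step is Step~3: converting the finite splitting of $\qu_i$ in $K_\pe$ into a violation of a Leopoldt-type rank statement. I would first try the direct rank count with $\{\qu_i\}$-units. If that fails to yield a contradiction, I would fall back on mimicking \cite[Lemma~10]{Fujii2017}, applying Condition~\ref{GGC F} to the pair of primes $\qu_i$ and $\pe$ when $\pe\in\Sigma$. When $\pe=\qu_j$ with $j<i$, I would use induction along the tower.
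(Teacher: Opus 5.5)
Your proposal has a genuine gap, and it begins in Step~2. There you infer from the finiteness of the image of $D$ in $\Gal(K_\pe/k)$ that $\qu_i$ \emph{splits finitely} in $K_\pe/k$. This is backwards. The number of primes above $\qu_i$ is the index of its decomposition group, so a finite decomposition group in an infinite group means that $\qu_i$ has infinitely many primes above it.

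You also do not need commensurability for this. Since $E'\subseteq E$ and $E=(K^{(i)})^{\tgen{g}}$ is the decomposition field of $\qu_i$, the prime $\qu_i$ splits completely in the $\Z_p^{i-1}$-extension $E'$. Your class-field-theoretic reformulation (finite-order Frobenius in $X_{S_i\setminus\{\pe\}}(k)$) is the correct reading of this fact. However, labelling it ``splits finitely'' makes Conditions~\ref{GGC E} and \ref{GGC F} look consistent with your situation. That is why you go looking for a contradiction elsewhere.

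Step~3 cannot supply that contradiction. The element $\alpha$ has positive $\qu_i$-valuation, so it is not a unit. It therefore gives no relation in $U^{(1)}_{S_{i+1}\setminus\{\pe\}}$ modulo $\Z_p\otimes_\Z E(k)$.

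By your own Step~1 argument, $(S_{i+1}\setminus\{\pe\})$-adic Leopoldt's Conjecture holds. Hence $\rank_{\Z_p}X_{S_{i+1}\setminus\{\pe\}}(k)$ is exactly $i$, and nothing can force it lower.

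What $\alpha$ really encodes is a Gross--Kuz'min-type phenomenon: finite-order Frobenius of $\qu_i$ in an $S$-ramified extension unramified at $\qu_i$. Leopoldt-type rank counts (Conditions~\ref{GGC A}--\ref{GGC D}) do not exclude this. An infinite decomposition group of $\qu_i$ in $\widetilde{k}/k$ does not exclude it either; already $D\subseteq H$ is infinite. The splitting hypotheses \ref{GGC E} and \ref{GGC F} are exactly what rules it out, so any proof must use them.

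The missing step is short, and Step~1 already contains its key input. If $E'$ is unramified at $\pe$, then $E'\subseteq M_{S_i\setminus\{\pe\}}(k)$. Since $\rank_{\Z_p}X_{S_i\setminus\{\pe\}}(k)=i-1$, $E'$ contains every $\Z_p$-extension lying in $M_{S_i\setminus\{\pe\}}(k)$.

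If $\pe=\qu_j$ with $j<i$, then $\Sigma\subseteq S_i\setminus\{\pe\}$, so $K^{(1)}\subseteq E'$. By Condition~\ref{GGC E}, $\qu_i$ splits finitely in $K^{(1)}$, which contradicts its complete splitting in $E'$. No induction along the tower is needed.

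If $\pe\in\Sigma$, then since $i\geq 2$, $E'$ contains the unique $((\Sigma\setminus\{\pe\})\cup\{\qu_1\})$-ramified $\Z_p$-extension $K$. Apply Condition~\ref{GGC F} to $\pe$, $\qu=\qu_1$ and $\qu'=\qu_i$. It says $\qu_i$ splits finitely in $K\subseteq E'\subseteq E$, again a contradiction. Note that Condition~\ref{GGC F} pairs $\qu_1$ with $\qu_i$, both outside $\Sigma$; it does not pair $\qu_i$ with $\pe$ as in your fallback.
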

\begin{proof}
To derive a contradiction, suppose that $E'/k$ is unramified at some prime $\pe\in S_i$. 

First we consider the case where $\pe\in \Sigma$. By Lemma~\ref{lem basic}, we see that $(\Sigma\setminus\{\pe\})$-adic Leopoldt's Conjecture holds. Then we see that $E'$ is the unique $((\Sigma\setminus\{\pe\})\cup\{\qu_1, \ldots, \qu_{i-1}\})$-ramified $\Z_p^{i-1}$-extension over $k$. Consider the $((\Sigma\setminus\{\pe\})\cup\{\qu_1\})$-ramified $\Z_p$-extension $K/k$. By Condition~\ref{GGC F}, the prime $\qu_i$ splits finitely in $K$. However, since $K$ is contained in $E$, this is a contradiction. 

Next we consider the case where $\pe\in S_i\setminus\Sigma$. Then we see that $E'$ is the unique $S_i\setminus\{\pe\}$-ramified $\Z_p^{i-1}$-extension over $k$. By Condition~\ref{GGC E}, the prime $\qu_i$ splits finitely in $K^{(1)}$. However, since $K^{(1)}$ is contained in $E$, this is a contradiction. 
\end{proof}
\begin{rem}\label{rem type}
In the proof of \cite[Lemma~10]{Fujii2017}, Fujii divided the argument into two cases, according to whether $S_i\setminus\{\pe\}$ is ``type~1'' or ``type~2''. The concept of ``type'' in \cite{Fujii2017} is specific to CM-fields. Our contribution here is to use the validity of $(\Sigma\setminus\{\pe\})$-adic Leopoldt's Conjecture, what we checked in Lemma~\ref{lem basic}, instead of the concept of ``type''. See \S\ref{subsection earlier} for the relation between these approaches. 
\end{rem}
The remaining part of the proof of Theorem~\ref{thm pn step up} proceeds in the same way as in \cite{Fujii2017}, so we only show the outline. 

By Lemma~\ref{lem ramified}, the extension $K^{(i)}/E$ is $\qu_i$-ramified. Hence $L'_{i+1}/E$ is a $\qu_i$-ramified abelian pro-$p$-extension. Let $F$ be an intermediate field of $E/k$ such that $E/F$ is a $\Z_p^{i-1}$-extension. Note that $F/k$ is a finite extension. 
\begin{prop}
$\Gal(L'_{i+1}/E)$ is finitely generated and torsion as a $\Z_p\bbr{\Gal(E/F)}$-module. 
\end{prop}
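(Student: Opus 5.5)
We want: $\Gal(L'_{i+1}/E)$ is finitely generated and torsion over $\Z_p\bbr{\Gal(E/F)}$. Set $H\coloneqq\Gal(E/F)\simeq\Z_p^{i-1}$. Since $L'_{i+1}/E$ is abelian, pro-$p$, and ramified only at primes above $\qu_i$, it is contained in $M_{\qu_i}(E)$. So $\Gal(L'_{i+1}/E)$ is a quotient of $X_{\qu_i}(E)$, and it suffices to show that $X_{\qu_i}(E)$ is a finitely generated torsion $\Z_p\bbr{H}$-module.

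For finite generation we use Nakayama's Lemma: it is enough to show that $X_{\qu_i}(E)_H$ is finitely generated over $\Z_p$. Lemma~\ref{lem five-term} does not apply directly, because $E/F$ is a $\Z_p^{i-1}$-extension and not a $\Z_p$-extension. Instead we use the five-term exact sequence in group homology for $1\to X_{\qu_i}(E)\to\Gal(M_{\qu_i}(E)/F)\to H\to 1$. This gives an exact sequence
\[
\Homo_2(H,\Z_p)\to X_{\qu_i}(E)_H\to\Gal(M_{\qu_i}(E)\cap F^\ab/F)\to H\to 0 .
\]
Here $\Homo_2(H,\Z_p)\simeq\wedge^2\Z_p^{i-1}$ is finitely generated. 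Since $E/F$ is $\qu_i$-ramified by Lemma~\ref{lem ramified}, the field $M_{\qu_i}(E)\cap F^\ab$ lies in $M_{\qu_i}(F)$, so the third term is a quotient of $X_{\qu_i}(F)$, which is finitely generated over $\Z_p$ by class field theory. Hence $X_{\qu_i}(E)_H$ is finitely generated, and so is $X_{\qu_i}(E)$ over $\Z_p\bbr{H}$.

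For torsionness we compare ranks. Write $E=\bigcup_m F_m$ with $F_m/F$ finite and $\Gal(F_m/F)\simeq(\Z/p^m)^{i-1}$. The map $X_{\qu_i}(E)_{\Gal(E/F_m)}\to X_{\qu_i}(F_m)$ has finitely generated kernel and cokernel, with $\Z_p$-rank bounded by $O(m^2)$ via the same homology sequence. By Lemma~\ref{lem Maire}, $X_{\qu_i}(F_m)$ is finite: $\qu_i$ has degree $1$ over $k$, and we first pass to a finite subextension over which the relevant primes still have degree one. So $\rank_{\Z_p}X_{\qu_i}(E)_{\Gal(E/F_m)}$ grows at most polynomially of degree $<(i-1)$ in $p^m$. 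A non-torsion module would contain a free rank-one part, which contributes $p^{m(i-1)}$ to this rank. Therefore $X_{\qu_i}(E)$ is torsion, and so is its quotient $\Gal(L'_{i+1}/E)$.

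The main obstacle is applying Lemma~\ref{lem Maire} to $F_m$. That lemma requires $F_m/k$ to be abelian, which holds since $F_m\subseteq\widetilde{k}$, and requires the prime to have degree $1$. Here we use that $\qu_i$ is of degree $1$ over $k$. Lemma~\ref{lem Maire} concerns $X_{\qu_i}(k')$, where ramification is allowed at all primes above $\qu_i$, with $k'=F_m$ a finite abelian extension of $k$. That is exactly what we need, with base field $k$, which satisfies $[k:\Q]\geq3$ and has a complex place. If this direct appeal fails, a fallback is to follow Fujii's argument in \cite{Fujii2017} via a characteristic-element computation.
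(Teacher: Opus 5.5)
There is a genuine gap in your torsion step, and it comes from misreading Lemma~\ref{lem ramified}. That lemma says $E'/k$, and hence $E/F$, is (infinitely) ramified at every prime of $S_i$, and $\qu_i\notin S_i$. What the paper deduces from it is that $K^{(i)}/E$ is $\qu_i$-ramified, not $E/F$. Your claim cannot hold: $\Gal(M_{\qu_i}(E)\cap F^{\ab}/F)$ surjects onto $\Gal(E/F)\simeq\Z_p^{i-1}$, whereas $X_{\qu_i}(F)$ is finite by Lemma~\ref{lem Maire}.

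For finite generation the error is harmless. That third term is a quotient of $X_{S_p(F)}(F)$, which is finitely generated over $\Z_p$.

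For torsion it is not harmless. Apply the five-term sequence to $E/F_m$ and put $\mathcal{A}_m\coloneqq M_{\qu_i}(E)\cap F_m^{\ab}$. The sequence identifies $X_{\qu_i}(E)_{\Gal(E/F_m)}$, modulo the image of $\Homo_2(\Gal(E/F_m),\Z_p)$, with $\Gal(\mathcal{A}_m/E)$, and $\mathcal{A}_m$ is ramified also at the primes above $S_i$. Comparing with the finite group $X_{\qu_i}(F_m)$ discards exactly the inertia subgroups at those primes. Their total rank is a priori bounded only by the number of primes of $F_m$ above $S_i$, which in general grows with $m$ (up to order $p^{m(i-2)}$). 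So your ``$O(m^2)$'' bound has no justification, and it is precisely the content of the torsion claim.

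Two smaller points. A free submodule of $M$ does not by itself bound $\rank_{\Z_p}M_{\Gal(E/F_m)}$ from below, since coinvariants are not left exact. You can instead sum fibre dimensions over the $p^{m(i-1)}$ characters of $\Gal(F_m/F)$; by upper semicontinuity each is at least the $\Z_p\bbr{\Gal(E/F)}$-rank of $M$. Also, Lemma~\ref{lem Maire} is not an obstacle: it applies directly to $F_m$, which is finite abelian over $k$.

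The paper avoids the problem by isolating the ramification. Using \cite[Lemma~5]{Fujii2017} (which needs $p$ to split completely in $k$), it chooses a $\Z_p$-extension $F_\infty/F$ inside $E$ with $E/F_\infty$ unramified. All ramification at $S_i$ then lies in the one-variable extension $F_\infty/F$, where only finitely many primes ramify. The classical Iwasawa argument, together with finiteness of $X_{\qu_i}(F_n)$, shows $X_{\qu_i}(F_\infty)$ is finitely generated torsion over $\Z_p\bbr{\Gal(F_\infty/F)}$. Because $E/F_\infty$ is unramified, the five-term sequence now really controls $X_{\qu_i}(E)_{\Gal(E/F_\infty)}$ by $X_{\qu_i}(F_\infty)$ and an $\Homo_2$ term. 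Nakayama's Lemma and Cayley--Hamilton then lift the conclusion to $\Z_p\bbr{\Gal(E/F)}$.

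Alternatively, your growth strategy can be repaired by actually counting primes:
\begin{enumerate}
\item The inertia group in $\Gal(\mathcal{A}_m/F_m)$ of a prime $\Pe$ over $S_i$ injects into the inertia group of $\Pe$ in $E/F_m$.
\item That inertia group has $\Z_p$-rank at most $1$, because $E/k$ is abelian, the prime below $\Pe$ in $k$ has degree one, and $p$ is odd.
\item When it is non-trivial, the ramification index of $\Pe$ over $F$ is at least $p^{m-c}$ for a constant $c$. Hence only $O(p^{m(i-2)})$ primes contribute.
\end{enumerate}
This gives $\rank_{\Z_p}X_{\qu_i}(E)_{\Gal(E/F_m)}=O(p^{m(i-2)})$. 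Together with the fibre-dimension lower bound above, this forces torsion without needing $F_\infty$.
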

\begin{proof}
Since $L'_{i+1}$ is contained in $M_{\qu_i}(E)$, it suffices to show that $X_{\qu_i}(E)$ is finitely generated and torsion as a $\Z_p\bbr{\Gal(E/F)}$-module. 

Since $p$ splits completely in $k$, by \cite[Lemma~5]{Fujii2017}, we can take a $\Z_p$-extension $F_\infty/F$ contained in $E$ such that $E/F_\infty$ is unramified. Since $X_{\qu_i}(F_n)$ is finite for every $n\geq 0$ by Lemma~\ref{lem Maire}, we can show that $X_{\qu_i}(F_\infty)$ is finitely generated and torsion as a $\Z_p\bbr{\Gal(F_\infty/F)}$-module by a similar method as in the proof of Iwasawa's class number formula. Since $E/F_\infty$ is unramified, there is an exact sequence
\[
\begin{tikzpicture}
\node (a) at (0,0) {$\Homo_2(\Gal(E/F_\infty), \Z_p)$}; 
\node (b) at (4.2,0) {$X_{\qu_i}(E)_{\Gal(E/F_\infty)}$}; 
\node (c) at (8,0) {$\Gal(E/F_\infty)$}; 

\draw[->] (a) -- (b); \draw[->] (b) -- (c); 
\end{tikzpicture}
\]
of $\Z_p\bbr{\Gal(F_\infty/F)}$-modules, which is a part of the five-term exact sequence in group homology. Hence $X_{\qu_i}(E)_{\Gal(E/F_\infty)}$ is finitely generated and torsion as a $\Z_p\bbr{\Gal(F_\infty/F)}$-module, so $X_{\qu_i}(E)$ is finitely generated as a $\Z_p\bbr{\Gal(E/F)}$-module by Nakayama's Lemma and torsion as a $\Z_p\bbr{\Gal(E/F)}$-module by Cayley--Hamilton Theorem. 
\end{proof}
Since $\Gal(L'_{i+1}/E)$ is finitely generated and torsion as a $\Z_p\bbr{\Gal(E/F)}$-module and $g$ acts trivially on $\Gal(L'_{i+1}/K^{(i)})$, it follows that $\Gal(L'_{i+1}/K^{(i)})$ is pseudo-null as a $\Z_p\bbr{\Gal(K^{(i)}/F)}$-module. Since $\Gal(K^{(i)}/F)$ is a finite index subgroup of $\Gal(K^{(i)}/k)$, the module $\Gal(L'_{i+1}/K^{(i)})$ is pseudo-null as a $\Z_p\bbr{\Gal(K^{(i)}/k)}$-module as well. Therefore, by the definition of $L'_{i+1}$, we see that $\Gal(L'_{i+1}/K^{(i)})$ is trivial, so $\Gal(L_{i+1}/K^{(i)})$ is pseudo-null as a $\Z_p\bbr{\Gal(K^{(i)}/k)}$-module. Then $\Gal(L_{i+1}/K^{(i+1)})$ is also pseudo-null as a $\Lambda^{(i)}$-module. Here we remark that there is a minor error in the argument in \cite{Fujii2017} that corresponds to the above argument. A module of the form $\Gal(L'_{i+1}/K^{(i+1)})$ appears in \cite{Fujii2017}, but such a module does not exist because $L'_{i+1}=K^{(i)}$. Since there is an isomorphism
\[
X^{(i+1)}_{\Gal(K^{(i+1)}/K^{(i)})}\simeq \Gal(L_{i+1}/K^{(i+1)})
\]
of $\Lambda^{(i)}$-modules, we conclude that $X^{(i+1)}$ is pseudo-null as a $\Lambda^{(i+1)}$-module by Lemma~\ref{lem pn lift}, as desired. This completes the proof of Theorem~\ref{thm pn step up}. 

We showed that $X^{(2)}$ is pseudo-null as a $\Lambda^{(2)}$-module. Therefore, we conclude that $X^{(r_2(k)+1)}$ is pseudo-null as a $\Lambda^{(r_2(k)+1)}$-module by using Theorem~\ref{thm pn step up} repeatedly. This completes the proof of Theorem~\ref{thm main GGC}.

\subsection{Proof of Corollaries~\ref{cor GGC abel imag quad} and \ref{cor GGC single complex}}
\begin{proof}[Proof of Corollary~\ref{cor GGC abel imag quad}]
It is sufficient to show the conditions in Theorem~\ref{thm main GGC}. Conditions~\ref{GGC A} and \ref{GGC F} are assumed in Corollary~\ref{cor GGC abel imag quad}. Conditions~\ref{GGC B} and \ref{GGC C} hold as we mentioned in \S\ref{subsection abel imag quad}. Note that $X_\Sigma(k)$ is isomorphic to $\Z_p$ by Condition~\ref{b}, so $M_\Sigma(k)/k$ is the $\Sigma$-ramified $\Z_p$-extension over $k$. By Condition~\ref{c}, the unique $\Pe$-ramified $\Z_p$-extension $N_\infty/K$ is totally ramified at $\Pe$, so $M_\Sigma(k)/k$ is totally ramified at $S_\Pe(k)$ because $M_\Sigma(k)=N_\infty k$ and $p$ splits completely in $k$. Hence Condition~\ref{GGC D} is satisfied. Since $\overline{\Pe}$ splits finitely in $N_\infty/K$ by \cite[Lemma~3.1]{Minardi1986}, it follows that Condition~\ref{GGC E} holds. This completes the proof. 
\end{proof}
\begin{proof}[Proof of Corollary~\ref{cor GGC single complex}]
It is sufficient to show the conditions in Theorem~\ref{thm main GGC}. Conditions~\ref{GGC A}, \ref{GGC D} and \ref{GGC E} are assumed in Corollary~\ref{cor GGC single complex}. Conditions~\ref{GGC B} and \ref{GGC C} hold by Proposition~\ref{prop single cpx}. Condition~\ref{GGC F} trivially holds because there is no pair of different primes in $S_p(k)\setminus\Sigma$. This completes the proof. 

Here we note that the proof essentially ends at Proposition~\ref{prop rank 2} because $K^{(2)}$ coincides with $\widetilde{k}$ in this case. 
\end{proof}

\subsection{Relation with Theorem~\ref{thm Fujii GGC}}\label{subsection earlier}
Here we show that Theorem~\ref{thm main GGC} recovers Theorem~\ref{thm Fujii GGC}. 

Now suppose the assumptions in Theorem~\ref{thm Fujii GGC}. Let $\Sigma$ be a $p$-adic CM-type  of $k$. It is sufficient to show the conditions in Theorem~\ref{thm main GGC}. We may also assume that $k$ is not an imaginary quadratic field by Minardi's result \cite[Proposition~3.A]{Minardi1986}. 

Condition~\ref{GGC A} is assumed in Theorem~\ref{thm Fujii GGC}. Note that $p$-adic Leopoldt's Conjecture is true for $k$, as we mentioned after the statement of Theorem~\ref{thm Fujii GGC}. Thus $\Sigma$-adic Leopoldt's Conjecture holds for $k$ by Lemma~\ref{lem p and Sigma}. This shows Condition~\ref{GGC B}. Condition~\ref{GGC C} is shown in Proposition~\ref{prop CM unique} and Condition~\ref{GGC D} follows from \cite[Lemma~9]{Fujii2017}. 

To show Conditions~\ref{GGC E} and \ref{GGC F}, we use the following lemma. 
\begin{lem}[{\cite[Lemma~3]{Fujii2017}}]\label{lem type split}
Let $S$ be a $p$-adic CM-type or $S=\{\pe,\overline{\pe}\}$ for some $p$-adic prime $\pe$. Let $\mathfrak{l}$ be a prime of $k$ that is not contained in $S$ and splits in $k/k^+$. If $S=\{\pe, \overline{\pe}\}$, suppose that $k$ is not an imaginary quadratic field. Then $\mathfrak{l}$ splits finitely in $M_S(k)/k$. 
\end{lem}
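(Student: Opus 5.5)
The plan is to read off the splitting of $\mathfrak{l}$ from the Frobenius via class field theory. Since $\mathfrak{l}\notin S$, the prime $\mathfrak{l}$ is unramified in $M_S(k)/k$. Its decomposition group is therefore the closed subgroup $\tgen{\mathrm{Frob}_{\mathfrak{l}}}\subseteq X_S(k)$. Saying that $\mathfrak{l}$ splits finitely means that this subgroup has finite index. Choose $h\geq 1$ and $\alpha\in k^\times$ with $\mathfrak{l}^h=(\alpha)$. By the exact sequence of \S\ref{section Leopoldt}, $h\cdot\mathrm{Frob}_{\mathfrak{l}}$ is the image of the $S$-adic components of $\alpha$, which are units because $\mathfrak{l}\notin S$, in $U_S^{(1)}/(\Z_p\otimes_\Z E(k))$ (after raising to a prime-to-$p$ power to land in principal units). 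So it suffices to show that this class generates a finite-index subgroup of the torsion-free quotient of $\Q_p\otimes_{\Z_p}U_S^{(1)}/E(k)$.

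The method is to build explicit $\Q_p$-linear functionals on $\Q_p\otimes_{\Z_p}U_S^{(1)}$ that vanish on $E(k)$, and to evaluate them on $\alpha$.

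\emph{CM-type case.} Take $f(x)=\log_p\prod_{\pe\in S}N_{k_\pe/\Q_p}(x_\pe)$. Up to finite index and roots of unity, every unit of $k$ lies in $E(k^+)$, and for $\varepsilon\in E(k^+)$ we get $f(\varepsilon)=\log_p N_{k^+/\Q}(\varepsilon)=0$. By Lemma~\ref{lem p and Sigma} and Proposition~\ref{prop CM unique}, the quotient has rank $1$, so $f$ detects it. Fix an embedding $\overline{\Q}\to\overline{\Q}_p$. Then $f(\alpha)=\log_p\beta$, where $\beta=\prod_{\varphi\in\Phi}\varphi(\alpha)$ is a global algebraic number and $\Phi$ is the set of embeddings of $k$ corresponding to $S$. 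Since $\log_p\beta=0$ only if $\beta$ is a root of unity, it suffices to show that $(\beta)=\prod_{\varphi\in\Phi}\varphi(\mathfrak{l})^h$ is a nontrivial ideal. This holds because the ideal is integral and nonzero.

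\emph{Case $S=\{\pe,\overline{\pe}\}$.} Here I use $f(x,y)=\log_pN_{k_\pe/\Q_p}(x)-\log_pN_{k_{\overline{\pe}}/\Q_p}(y)$. Complex conjugation identifies $k_\pe$ with $k_{\overline{\pe}}$, and units of $k^+$ are conjugation-invariant, so $f$ kills $E(k)$ up to torsion. Then $f(\alpha)=\log_p\beta$ with $\beta=\prod_{\varphi\in\Phi_\pe}\varphi(\alpha/\overline{\alpha})$, whose ideal is $\prod_{\varphi\in\Phi_\pe}\varphi(\mathfrak{l}\overline{\mathfrak{l}}^{-1})^h$. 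This is exactly where the hypothesis that $\mathfrak{l}$ splits in $k/k^+$ enters: it gives $\mathfrak{l}\neq\overline{\mathfrak{l}}$. I would still have to check that the factors in the product do not cancel, say by comparing valuations at a fixed prime of the Galois closure above $\mathfrak{l}$.

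I expect the main obstacle to be the second case, for two reasons. First, the non-cancellation check above is needed to get $\beta$ not a root of unity. Second, $X_{\{\pe,\overline{\pe}\}}(k)$ need not have rank $1$, so one functional only shows that the decomposition group is infinite, not that it is open. To obtain openness, I would run the same construction for every character of the semisimple algebra $\Q_p[\Gal(k_\pe/\Q_p)\text{-structure}]$ acting on $\Q_p\otimes U_\pe^{(1)}\times U_{\overline{\pe}}^{(1)}$, each twisted by its conjugate. Alternatively, I would use the assumption that $k$ is not imaginary quadratic, together with Lemma~\ref{lem Maire}, to pin down the rank. I would then show that the image of $\alpha$ spans the whole quotient, which I expect to need Leopoldt-type injectivity in the form given by Lemma~\ref{lem Leopoldt strong}.
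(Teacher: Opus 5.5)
The paper quotes this lemma from Fujii without proof, so I am judging your argument on its own merits. Your reduction is correct. The decomposition group is $\tgen{\mathrm{Frob}_{\mathfrak{l}}}$, and $h\,\mathrm{Frob}_{\mathfrak{l}}$ is the class of the $S$-components of $\alpha$ in $U_S^{(1)}/(\Z_p\otimes_\Z E(k))$. So once $\rank_{\Z_p}X_S(k)=1$, finite splitting is equivalent to that class being non-torsion. Your CM-type case is also complete. Note that it never uses that $\mathfrak{l}$ splits in $k/k^+$. It does use $p$-adic Leopoldt for $k$ (via Lemma~\ref{lem p and Sigma}), which holds where the paper applies the lemma.

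The gap is the case $S=\{\pe,\overline{\pe}\}$, which you leave unfinished. Moreover, your proposed fix for the rank issue cannot work. The image of the single element $\alpha$ spans at most a line. So if $\rank_{\Z_p}X_S(k)\geq 2$, no family of character-twisted functionals can make $\tgen{\mathrm{Frob}_{\mathfrak{l}}}$ open; the statement would then simply be false.

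The missing input is the standing hypothesis of Fujii's setting, and of Theorem~\ref{thm Fujii GGC} where the paper applies the lemma: $p$ splits completely in $k$. With it, both of your obstacles disappear.

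\emph{Rank.} We have $k_\pe=k_{\overline{\pe}}=\Q_p$, and $\Phi_\pe=\{\varphi_\pe\}$ is a single embedding with $\varphi_{\overline{\pe}}=\varphi_\pe\circ c$. Hence $U_S^{(1)}$ has $\Z_p$-rank $2$. The image of $E(k^+)$, which has finite index in $E(k)$ modulo torsion, lies in the line $\{(x,x)\}$ killed by your $f$. This image is non-zero because $k^+\neq\Q$ supplies a non-torsion unit $\varepsilon$, and then $\log_p\varphi_\pe(\varepsilon)\neq 0$. So $\rank_{\Z_p}X_S(k)=1$ exactly. This is precisely where ``$k$ is not imaginary quadratic'' is used, and no Leopoldt-type input is needed. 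Your alternative via Lemma~\ref{lem Maire} would also give this, since $\pe$ has degree $1$.

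\emph{Non-cancellation.} Your functional evaluates to $f(\alpha)=\log_p\varphi_\pe(\alpha/\overline{\alpha})$, so there is nothing to cancel. Since $\mathfrak{l}\neq\overline{\mathfrak{l}}$, we have $(\alpha/\overline{\alpha})=(\mathfrak{l}\overline{\mathfrak{l}}^{-1})^h\neq(1)$ already in $k$. Thus $\alpha/\overline{\alpha}$ is not a unit, hence not a root of unity, and $f(\alpha)\neq 0$.

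Without complete splitting, for instance if $[k_\pe:\Q_p]=d\geq 2$, the same unit computation gives $\rank_{\Z_p}X_S(k)\geq d$. So your general worry was legitimate, but it shows the hypothesis is necessary rather than something to be worked around.
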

Condition~\ref{GGC E} follows directly from this lemma. 

To show Condition~\ref{GGC F}, let $\pe_0, \pe_1, \pe_2\in\Sigma$ such that $\pe_1\neq \pe_2$ and put $S=(\Sigma\setminus\{\pe_0\})\cup\{\overline{\pe_1}\}$. It is sufficient to show that $\overline{\pe_2}$ splits finitely in $M_S(k)/k$. First consider the case where $\pe_0=\pe_1$. Then we see that $S$ is a $p$-adic CM-type, so the assertion follows from Lemma~\ref{lem type split}. Next suppose that $\pe_0\neq \pe_1$. Then, if we put $S'\coloneqq\{\pe_1, \overline{\pe_1}\}$, we see that $S'$ is a subset of $S$. The $\Z_p$-rank of $X_{S'}(k)$ is $1$ by \cite[Lemma~2~(2)]{Fujii2017}. Since the $\Z_p$-rank of $X_S(k)$ is also $1$, it follows that $M_S(k)/M_{S'}(k)$ is a finite extension. Therefore, since $\pe_2$ splits finitely in $M_{S'}(k)/k$ by Lemma~\ref{lem type split}, we conclude that $\pe_2$ splits finitely in $M_S(k)$ as well. This completes the proof.

\subsection{Applications of Theorem~\ref{thm main GGC} and Corollary~\ref{cor GGC abel imag quad}}\label{subsection applications}

\subsubsection{An application to non-abelian Iwasawa theory}\label{subsubsection non-abelian}
As mentioned in \cite[\S1]{Fujii2017}, there is an application of Conjecture~\ref{conjecture GGC} to non-abelian Iwasawa theory in the sense of Ozaki \cite{Ozaki2007}. 

Let $k/\Q$ be a finite extension and $p$ a prime number. We let $\mathcal{G}(k^\cy_\infty)$ denote the Galois group of the maximal unramified pro-$p$-extension of $k^\cy_\infty$. The following is conjectured by Ozaki (see \cite[p.531, Conjecture]{Fujii2022}). 
\begin{conjecture}[Non-freeness Conjecture]\label{conjecture non-free}
Let $k/\Q$ be a finite extension and $p$ a prime number. Then $\mathcal{G}(k^\cy_\infty)$ is not a non-abelian free pro-$p$-group. 
\end{conjecture}
Fujii showed that Conjecture~\ref{conjecture GGC} implies Conjecture~\ref{conjecture non-free} in the case where $p$ splits completely in $k$. 
\begin{thm}[{\cite[Theorem~1.2]{Fujii2011}}]\label{thm GGC non-free}
Let $k/\Q$ be a finite extension and $p$ a prime number that splits completely in $k$. Suppose that Conjecture~\ref{conjecture GGC} is true for $k$ and $p$. Then Conjecture~\ref{conjecture non-free} is also true for $k$ and $p$. 
\end{thm}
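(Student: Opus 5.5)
The plan is to argue by contradiction. Suppose Conjecture~\ref{conjecture GGC} holds for $k$ and $p$, but $\mathcal{G}\coloneqq\mathcal{G}(k^\cy_\infty)$ is a free pro-$p$-group of rank $d\geq 2$. We will play the structure of a free pro-$p$-group off against pseudo-nullity of $X(\widetilde{k})$. Throughout, write $\Gamma^\cy\coloneqq\Gal(k^\cy_\infty/k)$. Since $\Gal(\widetilde{k}/k)$ is abelian, $\Gal(\widetilde{k}/k^\cy_\infty)\simeq\Z_p^{r}$ with $r\geq r_2(k)$.

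\emph{Step 1: $\widetilde{k}/k^\cy_\infty$ is unramified, so $\widetilde{k}$ lies in the maximal unramified pro-$p$-extension of $k^\cy_\infty$.} Outside $p$ there is no ramification in $\widetilde{k}/k$. Let $\pe\in S_p(k)$. Since $p$ splits completely, $k_\pe=\Q_p$, and the inertia group of $\pe$ in any abelian pro-$p$-extension of $k$ is a quotient of $U_\pe^{(1)}=1+p\Z_p$ (for $p$ odd; for $p=2$ one works modulo the torsion $\pm1$). Locally the cyclotomic $\Z_p$-extension of $\Q_p$ is totally ramified. Hence the inertia group $I_\pe\subseteq\Gal(\widetilde{k}/k)$ maps injectively, up to finite error, into $\Gamma^\cy$, so $I_\pe\cap\Gal(\widetilde{k}/k^\cy_\infty)$ is finite, hence trivial because the ambient group is torsion-free. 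Thus $\widetilde{k}/k^\cy_\infty$ is unramified, and restriction gives a surjection $\mathcal{G}\to\Gal(\widetilde{k}/k^\cy_\infty)\simeq\Z_p^r$. If $r=0$ (which happens when $k$ is totally real, under Leopoldt's Conjecture), Conjecture~\ref{conjecture GGC} says that $X(k^\cy_\infty)=\mathcal{G}^\ab$ is finite. This contradicts $\mathcal{G}^\ab\simeq\Z_p^d$, so we may assume $r\geq 1$.

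\emph{Step 2: use the free group.} Let $H$ be the kernel of $\mathcal{G}\to\Z_p^r$. Then $H$ is the Galois group of the maximal unramified pro-$p$-extension of $\widetilde{k}$, so $H^\ab=X(\widetilde{k})$. Put $R'\coloneqq\Z_p\bbr{\Gal(\widetilde{k}/k^\cy_\infty)}$. For a free pro-$p$-group, the Crowell--Lyndon (Fox calculus) exact sequence
\[
0\to H^\ab\to (R')^{d}\to R'\to\Z_p\to 0
\]
gives two facts. First, $X(\widetilde{k})$ is finitely generated over $R'$. Second, its $R'$-rank is $d-1\geq 1$.

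\emph{Step 3: derive the contradiction from pseudo-nullity.} Let $R\coloneqq\Z_p\bbr{\Gal(\widetilde{k}/k)}$, which contains $R'$ with $\dim R=\dim R'+1$. By Conjecture~\ref{conjecture GGC}, $M\coloneqq X(\widetilde{k})$ is annihilated by an ideal $I\subseteq R$ of height at least $2$, so $\dim_R M\leq\dim R-2=\dim R'-1$. Since $M$ is finitely generated over $R'$, its Krull dimension as an $R'$-module equals its dimension as an $R$-module. Hence $\dim_{R'}M<\dim R'$, so $M$ is $R'$-torsion. This contradicts the positive $R'$-rank found in Step~2.

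The main obstacle is Step~3, specifically the comparison of Krull dimensions over $R$ and over $R'$. That comparison needs care: one either passes to the finite ring extension $R'\to R'/\mathrm{Ann}_{R'}M\to\mathrm{End}_{R'}(M)$ and uses that $R/I$ acts through a ring finite over $R'/(I\cap R')$, or computes supports directly. A secondary technical point is Step~1 when $p=2$, where the torsion in local units must be handled.
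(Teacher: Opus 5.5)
The paper does not prove this statement. It is quoted from \cite[Theorem~1.2]{Fujii2011}, so there is no internal proof to compare with. Judged on its own, your argument is a correct and self-contained proof.

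Step~1 is the observation \cite[Proposition~1]{Ozaki1997} that the paper itself uses in Lemma~\ref{lem single complex non-trivial}, namely $\widetilde{k}\subseteq L(k^\cy_\infty)$. It is the only place where the hypothesis that $p$ splits completely enters. Your worry about $p=2$ there dissolves: $\Gal(\widetilde{k}/k)$ is torsion-free, so $-1\in\Z_2^\times$ maps trivially and $I_\pe$ is a quotient of $\Z_2$.

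Step~2 is the Crowell sequence. You get it by applying $\Z_p\bbr{G}\hat\otimes_{\Z_p\bbr{\mathcal{G}}}-$ to the augmentation sequence of $\mathcal{G}$ and using Shapiro's lemma to identify the $\mathrm{Tor}_1$ term with $H^{\ab}$.

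The dimension comparison in Step~3 is right as you sketch it:
\begin{itemize}
\item The image of $R$ in $\mathrm{End}_{R'}(M)$ is a finitely generated $R'$-module.
\item It contains $R'/\mathrm{Ann}_{R'}M$, where $\mathrm{Ann}_{R'}M=R'\cap\mathrm{Ann}_RM$, and is therefore integral over it.
\item Hence $\dim R'/\mathrm{Ann}_{R'}M=\dim R/\mathrm{Ann}_RM\le\dim R-2=\dim R'-1$.
\end{itemize}

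A few points should be made explicit.
\begin{itemize}
\item You tacitly assume $d<\infty$, which the sequence $0\to H^{\ab}\to(R')^d\to R'\to\Z_p\to0$ needs. This takes one line. If $\mathcal{G}$ is free pro-$p$, then $X(k^\cy_\infty)=\mathcal{G}^{\ab}$ is $\Z_p$-torsion-free. It is also a finitely generated torsion $\Z_p\bbr{\Gamma^\cy}$-module, so it has $\mu=0$ and hence finite $\Z_p$-rank.
\item State that the $R'$-module structure on $H^{\ab}$ coming from conjugation in $\mathcal{G}$ agrees with the restriction of the $R$-structure on $X(\widetilde{k})$. The lifts of elements of $\Gal(\widetilde{k}/k^\cy_\infty)$ to $\Gal(\Omega/k^\cy_\infty)$ restrict to lifts in $\Gal(L(\widetilde{k})/k)$.
\item The case split $r=0$ is unnecessary. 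With $R'=\Z_p$ and $I_G=0$ the same count applies.
\item The ending can be shortened. Since $M\subseteq(R')^d$ is $R'$-torsion-free, showing that $M$ is $R'$-torsion already forces $M=0$, which contradicts $\rank_{R'}M=d-1\ge1$.
\end{itemize}
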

We obtain the following by combining Theorems~\ref{thm GGC non-free} and \ref{thm main GGC}. 
\begin{cor}
Suppose the assumptions of Theorem~\ref{thm main GGC}. Then Conjecture~\ref{conjecture non-free} is true for $k$ and $p$. 
\end{cor}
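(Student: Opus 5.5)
This is a direct combination of two earlier results. Theorem~\ref{thm main GGC} gives Conjecture~\ref{conjecture GGC} for $k$ and $p$, and Theorem~\ref{thm GGC non-free} turns that into Conjecture~\ref{conjecture non-free}.

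The assumptions of Theorem~\ref{thm main GGC} contain everything needed. First, $p$ is an odd prime that splits completely in $k$. Second, conditions \ref{GGC A}--\ref{GGC F} hold. With these, Theorem~\ref{thm main GGC} says that $X(\widetilde{k})$ is pseudo-null as a $\Z_p\bbr{\Gal(\widetilde{k}/k)}$-module, which is Conjecture~\ref{conjecture GGC} for $k$ and $p$.

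Theorem~\ref{thm GGC non-free} requires only two hypotheses: that $p$ splits completely in $k$, and that Conjecture~\ref{conjecture GGC} holds for $k$ and $p$. The first is part of the assumptions of Theorem~\ref{thm main GGC}, and the second was just established. Applying Theorem~\ref{thm GGC non-free} then shows that $\mathcal{G}(k^\cy_\infty)$ is not a non-abelian free pro-$p$-group.

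There is no real obstacle here. The only thing to check is that the hypotheses match: Theorem~\ref{thm GGC non-free} asks for nothing beyond complete splitting of $p$ and the generalized conjecture. In particular it does not require $p$ to be odd or ask for any Leopoldt-type hypothesis, so both inputs are indeed supplied by the assumptions of Theorem~\ref{thm main GGC}.
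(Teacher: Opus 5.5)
Your proposal is correct and is the paper's own argument: the paper obtains this corollary by taking the conclusion of Theorem~\ref{thm main GGC} (Conjecture~\ref{conjecture GGC} for $k$ and $p$) and the complete splitting of $p$ in $k$, and feeding both into Theorem~\ref{thm GGC non-free}. Your check that Theorem~\ref{thm GGC non-free} needs no further hypotheses is also accurate as that result is stated in the paper.
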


\subsubsection{An application to Iwasawa invariants of $\Z_p$-extensions}\label{subsubsection Iwasawa invariants}
Here we consider an application of Corollary~\ref{cor GGC abel imag quad} to the study of the behavior of Iwasawa invariants of $\Z_p$-extensions over a fixed finite extension $k/\Q$ and a prime number $p$. 

First we introduce a conjecture. Let $k$ be a finite extension and $p$ a prime number. We let $X'(k^\cy_\infty)$ denote the Galois group of the maximal unramified abelian pro-$p$-extension of $k^\cy_\infty$ in which every $p$-adic prime of $k^\cy_\infty$ splits completely. 
\begin{conjecture}[Generalized Gross Conjecture]\label{conjecture Gross}
Let $k/\Q$ be a finite extension and $p$ a prime number. Then $X'(k^\cy_\infty)_{\Gal(k^\cy_\infty/k)}$ is finite. 
\end{conjecture}
This conjecture is also known as Gross--Kuz'min Conjecture. For the above formulation, see \cite[Theorem~5]{Jaulent2017} and its references. 

Conjecture~\ref{conjecture Gross} is classically known to hold for abelian number fields (\cite{Greenberg1973Inv}) and recently proved for abelian extensions over imaginary quadratic fields  by Maksoud. 
\begin{thm}[{\cite[Corollary~1.7]{Maksoud2023}}]\label{thm Maksoud}
Let $k$ be a finite abelian extension over an imaginary quadratic field and $p$ a prime number. Then  Conjecture~\ref{conjecture Gross} is true for $k$ and $p$. 
\end{thm}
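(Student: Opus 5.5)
Since this is \cite[Corollary~1.7]{Maksoud2023}, the plan only sketches the structure of an argument along the lines of Greenberg's proof for abelian fields \cite{Greenberg1973Inv}. The overall approach is to turn Conjecture~\ref{conjecture Gross} into a nonvanishing statement for a $p$-adic regulator, split that regulator into characters, and handle each character by transcendence or Iwasawa-theoretic input.

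\textbf{Reduction to a regulator.} Let $K$ be the imaginary quadratic field and $k/K$ the finite abelian extension, and put $G\coloneqq\Gal(k/K)$. By the reformulation in \cite{Jaulent2017}, finiteness of $X'(k^\cy_\infty)_{\Gal(k^\cy_\infty/k)}$ is equivalent to the following injectivity statement. Consider the $\Z_p$-module of $p$-units $\Z_p\otimes_\Z E'(k)$ modulo $\Z_p\otimes_\Z E(k)$. The map to $\prod_{\pe\in S_p(k)}\Q_p$ given by $x\mapsto(\log_p N_{k_\pe/\Q_p}x)_\pe$ must be injective on this quotient. Equivalently, a Gross-type regulator matrix of $p$-adic logarithms of local norms of $p$-units must be nonsingular.

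\textbf{Character decomposition.} Both sides are $\Z_p[G]$-modules, so after extending scalars to $\overline{\Q}_p$ we may decompose along the characters $\chi$ of $G$. Each $\chi$-component of $p$-units modulo units has dimension equal to the number of $p$-adic primes of $K$ at which $\chi$ is trivial on the decomposition group. This number is at most $2$, and it is $2$ only when $p$ splits in $K$.

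\textbf{Rank-one components.} Here the regulator is a single linear combination of $p$-adic logarithms of algebraic numbers. An explicit $\chi$-eigen $p$-unit can be written down using elliptic units or Stickelberger-type elements, which stand in for the Gauss sums used over $\Q$. Nonvanishing of its logarithm then follows from Brumer's $p$-adic Baker theorem \cite{Brumer1967}, exactly as in Greenberg's argument.

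\textbf{Rank-two components (main obstacle).} When $p=\Pe\overline{\Pe}$ splits and $\chi$ is trivial on both decomposition groups, the $\chi$-regulator is a $2\times 2$ determinant of $p$-adic logarithms. Baker--Brumer linear independence does not show such a determinant is nonzero. I would instead identify this determinant, up to a nonzero factor, with the leading term at a trivial zero of a Katz two-variable $p$-adic $L$-function restricted to the cyclotomic line. I would then use the two-variable main conjecture for $K$ (Rubin) to match that leading term with the characteristic ideal of the $\chi$-part of $X'$. Nonvanishing would be extracted by comparing the restrictions to the $\Pe$- and $\overline{\Pe}$-ramified $\Z_p$-lines. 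On each of these lines the relevant derivative reduces to a single logarithm of an elliptic unit, which is nonzero by Brumer's theorem. This last comparison step is where I expect the real difficulty, and it is the step I would attack first.
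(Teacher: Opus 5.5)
The paper gives no argument for this statement; it simply quotes \cite[Corollary~1.7]{Maksoud2023}. So your sketch has to stand on its own, and it fails at the first step, because the reduction you state is false. Put $\lambda(x)\coloneqq(\log_p N_{k_\pe/\Q_p}x)_{\pe\in S_p(k)}$ on $\Z_p\otimes_\Z E'(k)$. There are two problems with your formulation.

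First, global units are in general not in $\Ker\lambda$. So $\lambda$ does not even factor through $p$-units modulo units.

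Second, by the product formula, $\sum_{\pe}\log_p N_{k_\pe/\Q_p}(x)=\log_p N_{k/\Q}(x)=\log_p(\pm p^m)=0$ for every $p$-unit $x$. So $\lambda$ takes values in the sum-zero hyperplane and can never be injective on a module of rank $\#S_p(k)$.

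The failure is visible already for $k=K$, $p=\Pe\overline{\Pe}$, $\Pe^h=(\alpha)$. The rows of your matrix are $(-b,b)$ and $(b,-b)$, with $b=\log_p\overline{\alpha}$ computed in $K_\Pe$. The determinant vanishes, although the conjecture holds for $K$ (since $b\neq0$). You have transplanted the minus-part formulation for CM fields, where units drop out; here they do not. The correct statement (Jaulent's logarithmic units) is $\rank_{\Z_p}\Ker\lambda=r_1(k)+r_2(k)$, i.e.\ $\lambda$ has image of rank $\#S_p(k)-1$. Componentwise, each $\lambda^\chi$ must be surjective onto the $\chi$-part of the sum-zero hyperplane.

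This changes the decisive case. Since the archimedean place of $K$ splits completely in $k$, $E(k)\otimes_\Z\Q\simeq\Q[G]/\Q$, so every $\chi\neq1$ also sees a unit line. Take $\chi\neq1$ trivial on both decomposition groups. The $\chi$-part of the source is then $3$-dimensional, spanned by $e_\chi\varepsilon$, $e_\chi\alpha$, $e_\chi\beta$: a suitable unit, and generators of powers of primes above $\Pe$ and $\overline{\Pe}$. What has to be shown is that the resulting $3\times2$ matrix of $\overline{\Q}$-linear combinations of $p$-adic logarithms has rank $2$.

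This is not a four-exponentials problem. It is exactly the shape handled by a $p$-adic six exponentials theorem in its strong form: rank $\leq1$ would make either the two columns or the three rows $\overline{\Q}$-linearly dependent. Both are ruled out by Brumer's theorem, which turns $\overline{\Q}$-relations among $p$-adic logarithms into multiplicative relations modulo $p^{\Q}$ and roots of unity. One combines this with a comparison of valuations above $\Pe$ and above $\overline{\Pe}$, using $\chi\neq1$. The other components need only Brumer applied to the $\chi$-projection of a generator of a power of a prime above $p$; no elliptic units or Stickelberger elements are needed.

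Your Katz--Rubin plan targets the wrong determinant, and it also has no mechanism for the step you flag as the real difficulty:
\begin{itemize}
\item The main conjecture only identifies characteristic ideals.
\item Nonvanishing of the leading term at the trivial zero is essentially the $\chi$-part of the statement you want, since Gross--Stark-type formulas express it through an $\mathcal{L}$-invariant.
\item Nonvanishing of single logarithms on the $\Pe$- and $\overline{\Pe}$-ramified lines cannot force a determinant $ad-bc$ to be nonzero.
\end{itemize}
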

Next we introduce Kataoka's result in \cite{Kataoka2017JNT}. Let $k/\Q$ be a finite extension and $p$ a prime number. We let $\mathcal{E}(k)$ denote the set of all $\Z_p$-extensions of $k$. Greenberg introduced a profinite topology on $\mathcal{E}(k)$ in \cite{Greenberg1973}. If we let $\mathcal{E}_{\mathrm{ns}}(k)$ denote the subset of $\mathcal{E}(k)$ that consists of all $\Z_p$-extensions over $k$ in which every $p$-adic prime of $k$ does not split. For a $\Z_p$-extension $k_\infty/k$, we let $\mu(k_\infty/k)$ and $\lambda(k_\infty/k)$ denote the Iwasawa $\mu$- and $\lambda$-invariants of $k_\infty/k$, respectively. With these notation, Kataoka showed the following. 
\begin{thm}[{\cite[Theorem~5.3]{Kataoka2017}}]\label{thm Kataoka conseq}
Let $k/\Q$ be a finite extension and $p$ a prime number that splits completely in $k$. Suppose that $p$-adic Leopoldt's Conjecture is true for $k$. Suppose also that Conjectures~\ref{conjecture GGC} and \ref{conjecture Gross} are true for $k$ and $p$. Then the set
\[
\{k_\infty\in\mathcal{E}_{\mathrm{ns}}(k)\mid \mu(k_\infty/k)=0, \lambda(k_\infty/k)=r_2(k)\}
\]
contains an open dense subset of $\mathcal{E}_{\mathrm{ns}}(k)$. 
\end{thm}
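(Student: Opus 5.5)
The plan is to compare $X(k_\infty)$ with the coinvariants of $X(\widetilde{k})$, where $k_\infty\in\mathcal{E}_{\mathrm{ns}}(k)$. Put $d\coloneqq r_2(k)+1$; by $p$-adic Leopoldt's Conjecture, $\Gal(\widetilde{k}/k)\simeq\Z_p^{d}$. Let $H\coloneqq\Gal(\widetilde{k}/k_\infty)\simeq\Z_p^{d-1}$ and $\Gamma\coloneqq\Gal(k_\infty/k)$. Since $p$ splits completely in $k$, there are $[k:\Q]=r_1(k)+2r_2(k)$ primes $\pe\in S_p(k)$. For each $\pe$ the local principal units have $\Z_p$-rank $1$, so the inertia group $I_\pe\subseteq\Gal(\widetilde{k}/k)$ is $\simeq\Z_p$; it is non-trivial because of the cyclotomic $\Z_p$-extension. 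Since $k_\infty\in\mathcal{E}_{\mathrm{ns}}(k)$, each $\pe$ has only finitely many primes above it in $k_\infty$.

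\emph{Step~1: an exact sequence.} As in Lemma~\ref{lem five-term}, but keeping track of ramification in $\widetilde{k}/k_\infty$, I would write down
\[
X(\widetilde{k})_H\longrightarrow X(k_\infty)\longrightarrow \Gal\bigl((L(k_\infty)\cap\widetilde{k})/k_\infty\bigr)\longrightarrow 0 .
\]
I would then refine it by means of the closed subgroup of $\Gal(L(\widetilde{k})/k_\infty)^{\ab}$ generated by the inertia groups of the primes above $S_p(k)$. The result should be a four-term sequence of finitely generated $\Z_p\bbr{\Gamma}$-modules in which:
\begin{itemize}
\item $X(\widetilde{k})_H$ sits on one side;
\item on the other side sits a module built from the inertia subgroups, namely a quotient of $\bigoplus_{\Pe\mid p}\Z_p\bbr{\Gamma}\otimes_{\Z_p\bbr{D_\Pe}}I_\Pe$, where $D_\Pe$ is the (open) decomposition group;
\item the error terms involve the $H$-coinvariants of a module whose finiteness is exactly Conjecture~\ref{conjecture Gross} for $k$.
\end{itemize}
The Gross--Kuz'min input is what controls the decomposition (splitting) part of the $p$-adic primes, so that it contributes only finitely to the $\Z_p$-rank.

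\emph{Step~2: pseudo-nullity gives generic finiteness.} By Conjecture~\ref{conjecture GGC}, $X(\widetilde{k})$ is pseudo-null over $\Z_p\bbr{\Gal(\widetilde{k}/k)}$, so its annihilator has height $\geq 2$. Pick two coprime annihilating elements $f,g$. The set of $H$ (equivalently of $k_\infty\in\mathcal{E}(k)$) for which the images of $f,g$ in $\Z_p\bbr{\Gamma}$ remain coprime and $\mu$-free is open and dense in Greenberg's topology. This follows by a Weierstrass-preparation argument on the specializations, which are finitely many conditions, each avoiding a nowhere dense set of directions. For such $H$ the module $X(\widetilde{k})_H$ is finite. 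Hence, on that open dense set, $\mu(k_\infty/k)$ and $\lambda(k_\infty/k)$ are computed entirely from the inertia part and the term $\Gal((L(k_\infty)\cap\widetilde{k})/k_\infty)$. The latter is finite as long as $k_\infty$ avoids a closed nowhere dense set, since $L(k)\cap\widetilde{k}$ is finite over $k$.

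\emph{Step~3: rank count, the expected main obstacle.} It then remains to show that the inertia contribution is $\Z_p$-free of rank exactly $r_2(k)$, with $\mu=0$, for $k_\infty$ in a possibly smaller open dense set. Each $\pe$ contributes a copy of $\Z_p$, since there are finitely many primes above it and each inertia group is $\simeq\Z_p$. The relations among these contributions come from $H$ itself, of rank $d-1=r_2(k)$, and from global units, which by Leopoldt's Conjecture have rank $r_1(k)+r_2(k)-1$. I expect the count to come out as
\[
[k:\Q]-(r_1(k)+r_2(k)-1)-1=r_2(k),
\]
where the last $-1$ accounts for $\Gamma$ itself. Making this precise requires that, for generic $H$, each $I_\pe$ meets $H$ in the generic way: $I_\pe\not\subseteq H$ and the images of the $I_\pe$ in $\Gamma$ are independent of the unit relations. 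This is again an open dense condition. Gross's conjecture is used precisely here, to ensure that the unit relations coming from decomposition groups do not drop the rank.

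This bookkeeping is where I expect the difficulty. It requires the correct five-term sequence for the ramified extension $\widetilde{k}/k_\infty$, and a check that the excluded sets of $\Z_p$-extensions are closed with empty interior. I would first do the computation rationally, after tensoring with $\Q_p$, and verify the answer in the case of an imaginary quadratic field, where it should give $\lambda=1$ generically. Only then would I handle the torsion and $\mu$ issues.
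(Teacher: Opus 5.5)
Your Step~2 rests on two false claims, and Step~3 is built on the first.

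\textbf{First claim.} $\Gal((L(k_\infty)\cap\widetilde{k})/k_\infty)$ is not generically finite; generically it is all of $H$. Since $p$ splits completely, each inertia group $I_\pe\subseteq\Gal(\widetilde{k}/k)$ is $\simeq\Z_p$. So as soon as $I_\pe\not\subseteq H$ (your own genericity condition), the map $I_\pe\to\Gamma$ is injective and $I_\pe\cap H=0$. Hence, off a closed nowhere dense set, $\widetilde{k}/k_\infty$ is unramified and $\widetilde{k}\subseteq L(k_\infty)$. This is \cite[Proposition~1]{Ozaki1997}, used in Lemma~\ref{lem single complex non-trivial}, with the same local argument as Lemma~\ref{lem ur}. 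The finiteness of $L(k)\cap\widetilde{k}$ over $k$ says nothing about $L(k_\infty)\cap\widetilde{k}$.

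So the term you discard is $H\simeq\Z_p^{r_2(k)}$, and it is exactly the source of $\lambda\geq r_2(k)$. Your ``inertia module'' of Step~3 is zero: the inertia groups of $p$-adic primes of $k_\infty$ in $\widetilde{k}/k_\infty$ are $I_\pe\cap H=0$, hence so are they in any extension of $k_\infty$ that contains $\widetilde{k}$ and is unramified over it. Your count $[k:\Q]-(r_1(k)+r_2(k)-1)-1$ is just $\rank_{\Z_p}H$ under the wrong name.

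The correct sequence for such $k_\infty$ comes from the five-term sequence of $1\to X(\widetilde{k})\to\Gal(L(\widetilde{k})/k_\infty)\to H\to 1$. Here $L(k_\infty)$ is the maximal abelian subextension of $L(\widetilde{k})/k_\infty$, and one gets
\[
\wedge^2H\simeq H_2(H,\Z_p)\longrightarrow X(\widetilde{k})_H\longrightarrow X(k_\infty)\longrightarrow H\longrightarrow 0 .
\]
Unlike in Lemma~\ref{lem five-term}, $H_2(H,\Z_p)\neq 0$ once $r_2(k)\geq 2$. So the statement amounts to the finiteness of $\mathrm{coker}(\wedge^2H\to X(\widetilde{k})_H)$ for $k_\infty$ in an open dense subset of $\mathcal{E}_{\mathrm{ns}}(k)$.

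\textbf{Second claim.} Pseudo-nullity of $X(\widetilde{k})$ does not make $X(\widetilde{k})_H$ finite for generic $H$, and the coprime-specialization argument fails. Take $M=\Z_p$ with trivial action; it is pseudo-null over $\Z_p\bbr{\Z_p^{d}}$ for $d\geq 2$. Its annihilator contains the coprime elements $\gamma_1-1$ and $\gamma_2-1$. Yet their images in $\Z_p\bbr{\Gamma}$ lie in the augmentation ideal for \emph{every} $H$, and $M_H\simeq\Z_p$ for all $H$.

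Nothing in the Iwasawa algebra excludes such components from $X(\widetilde{k})$. Whether they die in $X(k_\infty)$, for instance by being absorbed by the commutator term $\wedge^2H$ that your sequence lacks, is an arithmetic question. It involves how the $p$-adic primes decompose in $L(\widetilde{k})/k$, and this is where an input like the Gross--Kuz'min conjecture is genuinely needed. Your proposal never pins this down. Conjecture~\ref{conjecture Gross} concerns $X'(k^\cy_\infty)$ in the single cyclotomic direction, and you do not explain how it controls anything for a generic $H$.

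The paper does not reprove any of this. It applies \cite[Theorem~5.3]{Kataoka2017JNT}, whose conclusion is phrased through invariants $s(k)$ and $s'(k)$. It then only checks that $s'(k)=0$ follows from Conjecture~\ref{conjecture Gross}, and that $s(k)=r_2(k)$ when $p$ splits completely \cite[Example~4.4.1]{Kataoka2017JNT}.
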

\begin{proof}
With the notation in \cite[Theorem~5.3]{Kataoka2017JNT}, the vanishing of $s'(k)$ follows from the validity of Conjecture~\ref{conjecture Gross} and $s(k)=r_2(k)$ holds by \cite[Example~4.4.1]{Kataoka2017JNT}. See \cite[Remark~2.2.1]{Kataoka2017JNT} as well. 
\end{proof}
By combining the above results with Corollary~\ref{cor GGC abel imag quad}, we obtain the following. 
\begin{cor}
Suppose the assumptions of Corollary~\ref{cor GGC abel imag  quad}. Then the set
\[
\{k_\infty\in\mathcal{E}_{\mathrm{ns}}(k)\mid \mu(k_\infty/k)=0, \lambda(k_\infty/k)=[k:\Q]/2\}
\]
contains an open dense subset of $\mathcal{E}_{\mathrm{ns}}(k)$ and, in particular, there exists a $\Z_p$-extension $k_\infty/k$ such that $\mu(k_\infty/k)=0$ and $\lambda(k_\infty/k)=[k:\Q]/2$. 
\end{cor}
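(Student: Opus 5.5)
The plan is to check the hypotheses of Theorem~\ref{thm Kataoka conseq} for $k$ and $p$, and then to identify $r_2(k)$ with $[k:\Q]/2$. Throughout we assume the hypotheses of Corollary~\ref{cor GGC abel imag quad}.

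First, $p$ splits completely in $k$; this is one of the hypotheses of Corollary~\ref{cor GGC abel imag quad}. Second, $p$-adic Leopoldt's Conjecture holds for $k$. Indeed, $k$ is abelian over the imaginary quadratic field $K$, so $\Pe$-adic Leopoldt's Conjecture holds for $k$ by Brumer's result \cite{Brumer1967}. Since $S_\Pe(k)\subseteq S_p(k)$, Lemma~\ref{lem Leopoldt strong} then gives $p$-adic Leopoldt's Conjecture for $k$. Third, Conjecture~\ref{conjecture GGC} holds for $k$ and $p$ by Corollary~\ref{cor GGC abel imag quad} itself. Fourth, Conjecture~\ref{conjecture Gross} holds for $k$ and $p$ by Maksoud's Theorem~\ref{thm Maksoud}, again because $k/K$ is finite abelian with $K$ imaginary quadratic.

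With all four hypotheses in place, Theorem~\ref{thm Kataoka conseq} shows that the set of $k_\infty\in\mathcal{E}_{\mathrm{ns}}(k)$ with $\mu(k_\infty/k)=0$ and $\lambda(k_\infty/k)=r_2(k)$ contains an open dense subset of $\mathcal{E}_{\mathrm{ns}}(k)$. It remains to see that $r_2(k)=[k:\Q]/2$. The field $k$ contains the imaginary quadratic field $K$, so it has no real embedding. Hence $r_1(k)=0$ and $2r_2(k)=[k:\Q]$. This proves the first assertion.

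For the second assertion, it suffices that $\mathcal{E}_{\mathrm{ns}}(k)$ is non-empty, because an open dense subset of a non-empty space is non-empty. The one step needing a little thought is exhibiting such an element, and the natural candidate is $N_\infty k$. The paper shows in the proof of Corollary~\ref{cor GGC abel imag quad} that the $\Pe$-ramified $\Z_p$-extension $N_\infty/K$ is totally ramified at $\Pe$, and that $\overline{\Pe}$ splits finitely in $N_\infty/K$ by \cite[Lemma~3.1]{Minardi1986}.

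We still have to check that no $p$-adic prime of $k$ splits completely in $N_\infty k/k$. Since $p$ splits completely in $k$, the completion of $k$ at each $p$-adic prime coincides with the completion of $K$ at the prime below. Therefore each $p$-adic prime of $k$ has the same decomposition behaviour in $N_\infty k/k$ as the prime below it in $N_\infty/K$. A prime above $\Pe$ is ramified, so it is not split completely. A prime above $\overline{\Pe}$ splits only finitely, so it is not split completely either.

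Alternatively, one can take $k^\cy_\infty$. Each $p$-adic prime of $k$ has local degree $1$, and the cyclotomic $\Z_p$-extension of $\Q_p$ is totally ramified. Hence $k^\cy_\infty/k$ is totally ramified at every $p$-adic prime, so $k^\cy_\infty\in\mathcal{E}_{\mathrm{ns}}(k)$.

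Either way $\mathcal{E}_{\mathrm{ns}}(k)\neq\emptyset$. Any element of the open dense subset then gives a $\Z_p$-extension $k_\infty/k$ with $\mu(k_\infty/k)=0$ and $\lambda(k_\infty/k)=[k:\Q]/2$.
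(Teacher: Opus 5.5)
Your proposal is correct and is essentially the paper's argument. The paper obtains this corollary by combining Corollary~\ref{cor GGC abel imag quad} (Greenberg's Generalized Conjecture) and Theorem~\ref{thm Maksoud} (Gross--Kuz'min) with Theorem~\ref{thm Kataoka conseq}, leaving implicit exactly the points you verify: Leopoldt's Conjecture via Brumer and Lemma~\ref{lem Leopoldt strong}, $r_2(k)=[k:\Q]/2$ because $k\supseteq K$ is totally imaginary, and $\mathcal{E}_{\mathrm{ns}}(k)\neq\emptyset$. For the last point your cyclotomic witness $k^\cy_\infty$ is the safer choice, since it is totally ramified, hence undecomposed, at every $p$-adic prime and so lies in $\mathcal{E}_{\mathrm{ns}}(k)$ under any reading of ``does not split''.
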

In the setting of Theorem~\ref{thm Fujii GGC},  if we suppose further that $k/\Q$ is abelian, we can deduce a similar result (see \cite[Theorem~1.3]{Kataoka2017JNT} as well). 

For a fixed prime number $p$ and an integer $\ell\geq 0$, the existence of a $\Z_p$-extension $k_\infty/k$ such that $\lambda(k_\infty/k)=\ell$ is not known in general. This problem is studied in \cite{FOO2006} and a partial result is obtained there (the cases where $p$ is $2$, $3$ or $5$ are solved affirmatively \cite[Theorem~1]{FOO2006}). By the above observation, to solve the problem, it suffices to construct a finite extension $k/\Q$ that satisfies the following. 
\begin{enumerate}
\item $p$ splits completely in $k$. 

\item $k$ is an abelian extension over $\Q$ or an imaginary quadratic field $K$. 

\item  $[k:\Q]=2\ell$ holds. 

\item Conjecture~\ref{conjecture GGC} is true for $k$ and $p$. 
\end{enumerate}
It is an interesting attempt to find such $k$ by using Theorem~\ref{thm Fujii GGC} or Corollary~\ref{cor GGC abel imag quad}.

\subsection{Examples of Greenberg's Generalized Conjecture}\label{subsection example GGC}

\subsubsection{On the non-triviality of the unramified Iwasawa module}
In this section, we show some concrete examples of Theorem~\ref{thm main GGC} such that $X(\widetilde{k})$ is not trivial. We use the following lemmas to check the non-triviality of $X(\widetilde{k})$. 
\begin{lem}[{\cite[Lemma~3.9]{Fujii2022}}]\label{lem imaginary non-trivial}
Let $k/\Q$ be a totally imaginary finite extension and $p$ a prime number that splits completely in $k$. Suppose that $p$-adic Leopoldt's Conjecture is true for $k$. If $[k:\Q]\geq 8$ holds, then $X(\widetilde{k})$ is not trivial. 
\end{lem}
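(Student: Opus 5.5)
We argue by contradiction: assume $X(\widetilde{k})=0$ and produce a multiple $\Z_p$-extension whose unramified Iwasawa module is forced to have positive rank. Put $d\coloneqq[k:\Q]$. Since $k$ is totally imaginary, $r_2(k)=d/2$. Since $p$ splits completely in $k$, there are $d$ primes in $S_p(k)$, each of degree $1$. By $p$-adic Leopoldt's Conjecture, $\Gal(\widetilde{k}/k)\simeq\Z_p^{r_2(k)+1}$. The key step is to choose a set $\Sigma\subseteq S_p(k)$ with $\#\Sigma=r_1(k)+r_2(k)=d/2$ for which $\Sigma$-adic Leopoldt's Conjecture holds. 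Such a choice should be possible by a rank count on the injective map $\Z_p\otimes_\Z E(k)\to U_{S_p(k)}^{(1)}\simeq\Z_p^d$ (the image has rank $d/2-1$, so some coordinate projection onto $d/2$ factors remains injective). By Lemma~\ref{lem two three}, the $\Z_p$-rank of $X_\Sigma(k)$ is then $1$.

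Next, assuming $X(\widetilde{k})=0$, we descend along $\widetilde{k}/K$ for suitable intermediate multiple $\Z_p$-extensions $K$. Lemma~\ref{lem five-term} and the analogous group homology sequence then give \[\rank_{\Z_p}\Gal(L(K)\cap\widetilde{k}/K)\leq \rank_{\Z_p}\Homo_2(\Gal(\widetilde{k}/K),\Z_p)=\binom{r}{2},\] where $r$ is the rank of $\Gal(\widetilde{k}/K)$. Thus $X(K)$ is controlled by a finite-rank $\wedge^2$ term.

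Concretely, take $K=N_\infty$, the $\Sigma$-ramified $\Z_p$-extension. Then $\Gal(\widetilde{k}/N_\infty)\simeq\Z_p^{d/2}$ and $X(N_\infty)_{\Gal(\widetilde{k}/N_\infty)}$-type estimates give $\rank_{\Z_p}X(N_\infty)\le\binom{d/2}{2}$. On the other hand, we obtain a lower bound from the inertia groups at the $d/2$ primes outside $\Sigma$. Following the computation in Proposition~\ref{prop rank 2} in each layer $N_n$, the $\Z_p$-rank of $\Gal(L(N_n)\cap\widetilde{k}\,/\,N_n)$ grows, because the number of $p$-adic primes of $N_n$ above $S_p(k)\setminus\Sigma$ grows with $n$ unless they split finitely. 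Comparing the growth (linear in $p^n$, or via the non-$\Gal(N_n/k)$-invariant characters $V_1$) with the fixed bound $\binom{d/2}{2}$ yields a contradiction.

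The main obstacle is making the lower bound precise. One must show that the unit contribution $\Q_p\otimes E(N_n)$ cannot kill the local units at the non-$\Sigma$ primes in every non-trivial character component. I would first try a pure rank count in the $\chi$-parts. For each nontrivial character $\chi$ of $\Gal(N_n/k)$, the units contribute dimension $d/2$, while the local principal units at primes above $S_p(k)\setminus\Sigma$ contribute dimension $(d/2)\cdot[\text{number of decomposition-coset copies}]$. The hypothesis $d\ge 8$, i.e. $r_2(k)\ge4$, is where the $\binom{r}{2}$ versus linear comparison should first become strict. Since the threshold is exactly $8$, I expect the actual argument to need this quadratic-versus-linear inequality, e.g. $\binom{r_2+1}{2}$ against $2r_2+\ldots$. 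I would calibrate the bookkeeping so that the inequality fails precisely for $r_2\le3$.
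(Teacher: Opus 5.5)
The paper gives no proof of this lemma; it is quoted from Fujii. Your proposal has a genuine gap: the decisive lower bound is never obtained, and the mechanism you suggest for it cannot work.

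Along $N_n$ nothing grows. First, $\Gal(L(N_n)\cap\widetilde{k}/N_n)$ is finite. Second, in the $\chi$-part count for a non-trivial character $\chi$ of $\Gal(N_n/k)$, the units contribute multiplicity $r_2(k)=d/2$. The principal units above the $d/2$ primes of $S_p(k)\setminus\Sigma$ also contribute exactly $d/2$. This is because a degree-one prime $\qu$ always gives $\Q_p\otimes_{\Z_p}\prod_{\Qu\mid\qu}U_{\Qu}^{(1)}\simeq\Q_p[\Gal(N_n/k)]$, however it splits. So there is no excess to exploit. In fact Lemma~\ref{lem Maire} forces these non-trivial character parts to vanish, which is exactly how Proposition~\ref{prop rank 2} uses them.

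Your upper bound is also misattributed. In a five-term sequence, the $\Homo_2$ of the quotient group maps \emph{into} the coinvariants of the kernel's abelianization. So it produces lower bounds, not the upper bound $\binom{r}{2}$ you assert. Under $X(\widetilde{k})=0$ one simply has $L(K)\subseteq\widetilde{k}$, hence rank at most $r$. Tuning the bookkeeping until the inequality first fails at $r_2=4$ is not an argument, and the choice of $\Sigma$ in your first step plays no role.

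The missing idea is to count ranks in $X_{S_p(k)}(\widetilde{k})_G$, where $G=\Gal(\widetilde{k}/k)\simeq\Z_p^{r_2+1}$. The quadratic term should be the lower bound and the linear term the upper bound.

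For the lower bound, let $G_{S_p}(k)$ be the Galois group of the maximal pro-$p$ extension of $k$ unramified outside $p$. Leopoldt gives $\Homo_2(G_{S_p}(k),\Z_p)=0$ (cf.\ Lemma~\ref{lem Maire equiv}). The five-term sequence for $G_{S_p}(k)\to G$ then yields an injection $\Z_p^{\binom{r_2+1}{2}}\simeq\Homo_2(G,\Z_p)\hookrightarrow X_{S_p(k)}(\widetilde{k})_G$.

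For the upper bound, suppose $X(\widetilde{k})=0$. Then $X_{S_p(k)}(\widetilde{k})$ is topologically generated by the $G$-translates of the decomposition groups in $M_{S_p(k)}(\widetilde{k})/\widetilde{k}$ at one chosen prime above each $\pe\in S_p(k)$. Since $\pe$ has degree one, such a decomposition group is a $D_\pe$-equivariant quotient of $\mathcal{N}^{\ab}$. Here $\mathcal{N}$ is the kernel of $G_{\Q_p}(p)\to D_\pe$, where $G_{\Q_p}(p)$ is the maximal pro-$p$ quotient of the absolute Galois group of $\Q_p$, and $D_\pe\simeq\Z_p^{e}$ with $e\in\{1,2\}$ is the decomposition group in $\widetilde{k}/k$. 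The local five-term sequence $\Homo_2(D_\pe,\Z_p)\to(\mathcal{N}^{\ab})_{D_\pe}\to G_{\Q_p}(p)^{\ab}\to D_\pe\to 0$, with $G_{\Q_p}(p)^{\ab}$ of rank $2$, gives $\rank_{\Z_p}(\mathcal{N}^{\ab})_{D_\pe}\le\binom{e}{2}+2-e=1$. The $G$-span of a $D_\pe$-stable subgroup has $G$-coinvariants that are a quotient of its $D_\pe$-coinvariants. Hence each prime contributes rank at most $1$, and $\rank_{\Z_p}X_{S_p(k)}(\widetilde{k})_G\le\#S_p(k)=2r_2$.

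Combining the two bounds gives $\binom{r_2+1}{2}\le 2r_2$, i.e.\ $r_2\le 3$. This contradicts $[k:\Q]=2r_2\ge 8$.
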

\begin{lem}\label{lem single complex non-trivial}
Let $k/\Q$ be a finite extension, $p$ a prime number that splits completely in $k$. Suppose that $p$-adic Leopoldt's Conjecture is true for $k$. If there exists a $\Z_p$-extension $k_\infty/k$ that is ramified at every $p$-adic prime and satisfies $\rank_p X(k_\infty)\geq r_2(k)+1$, then $X(\widetilde{k})$ is not trivial. 
\end{lem}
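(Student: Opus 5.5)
We argue by contraposition: assuming $X(\widetilde{k})=0$, we will show that $\rank_p X(k_\infty)\leq r_2(k)$ for every $\Z_p$-extension $k_\infty/k$ ramified at every $p$-adic prime. Since $p$-adic Leopoldt's Conjecture holds for $k$, we have $\Gal(\widetilde{k}/k)\simeq\Z_p^{r_2(k)+1}$. Put $H\coloneqq\Gal(\widetilde{k}/k_\infty)\simeq\Z_p^{r_2(k)}$. The plan is to apply Lemma~\ref{lem five-term} with $F=k_\infty$, going up the tower $k_\infty\subseteq\cdots\subseteq\widetilde{k}$ one $\Z_p$ at a time. Alternatively, we may use the group-homology version of that lemma for the $\Z_p^{r_2(k)}$-extension $\widetilde{k}/k_\infty$ directly, with $\Sigma=\emptyset$.

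First we identify $X(k_\infty)$ through $\widetilde{k}$. The field $L(k_\infty)\widetilde{k}$ is an unramified abelian pro-$p$-extension of $\widetilde{k}$, so it lies in $L(\widetilde{k})=\widetilde{k}$. Hence $L(k_\infty)\subseteq\widetilde{k}$, and $X(k_\infty)$ is a quotient of $H$: it equals $H/\mathcal{I}$, where $\mathcal{I}$ is the closed subgroup generated by the inertia groups in $\widetilde{k}/k_\infty$ of all primes of $k_\infty$. Only primes above $p$ ramify in $\widetilde{k}/k$.

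It follows immediately that $\rank_p X(k_\infty)\leq\rank_p H=r_2(k)$, because a quotient of $\Z_p^{r_2(k)}$ has $p$-rank at most $r_2(k)$. This contradicts the hypothesis $\rank_pX(k_\infty)\geq r_2(k)+1$, so $X(\widetilde{k})$ is non-trivial.

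The only step needing care is the identification $L(k_\infty)\subseteq\widetilde{k}$. It uses the facts that $L(k_\infty)/k$ is Galois, so that $L(k_\infty)\widetilde{k}/\widetilde{k}$ is abelian and unramified. The ramification assumption at every $p$-adic prime and the splitting of $p$ are not really needed for this argument beyond guaranteeing the setting. I expect no genuine obstacle; if anything is subtle, it is confirming that $L(k_\infty)\widetilde{k}/\widetilde{k}$ is pro-$p$ abelian unramified, which is standard.
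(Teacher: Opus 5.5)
Your argument is correct, and it takes a different and more economical route than the paper's.

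\textbf{The paper's route.} The paper first uses Ozaki's Proposition~1 to get $\widetilde{k}\subseteq L(k_\infty)$. This is exactly where the hypotheses that $p$ splits completely in $k$ and that $k_\infty/k$ is ramified at every $p$-adic prime are used. It then reads off from $0\to\Gal(L(k_\infty)/\widetilde{k})\to X(k_\infty)\to\Gal(\widetilde{k}/k_\infty)\to 0$ that the kernel is non-zero, by the rank count. Finally it uses the surjection $X(\widetilde{k})\to\Gal(L(k_\infty)/\widetilde{k})$.

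\textbf{Your route.} You pass to the compositum instead. The inclusion $L(k_\infty)\widetilde{k}\subseteq L(\widetilde{k})$ holds for \emph{every} $\Z_p$-extension $k_\infty\subseteq\widetilde{k}$. So $X(\widetilde{k})=0$ forces $L(k_\infty)\subseteq\widetilde{k}$, and $X(k_\infty)$ becomes a quotient of $\Gal(\widetilde{k}/k_\infty)$. That group is $\Z_p^{r_2(k)}$ by Leopoldt's Conjecture: it is a direct summand of $\Gal(\widetilde{k}/k)\simeq\Z_p^{r_2(k)+1}$, because the quotient $\Gal(k_\infty/k)\simeq\Z_p$ is free. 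So the only hypothesis you really use is Leopoldt's Conjecture. As you noticed, you have proved the lemma without the splitting and ramification assumptions, for an arbitrary $\Z_p$-extension $k_\infty/k$.

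\textbf{What each route buys.} The paper's route gives the containment $\widetilde{k}\subseteq L(k_\infty)$ itself. Hence $X(k_\infty)$ surjects onto $\Z_p^{r_2(k)}$, so $\lambda(k_\infty/k)\geq r_2(k)$. This is extra structural information, but it is not needed for the statement. Both routes give the same quantitative bound: a quotient of $X(\widetilde{k})$ has $p$-rank at least $\rank_pX(k_\infty)-r_2(k)$. In your case this is $\Gal(L(k_\infty)\widetilde{k}/\widetilde{k})\simeq\Gal(L(k_\infty)/L(k_\infty)\cap\widetilde{k})$, via subadditivity of $p$-ranks.

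\textbf{Two cosmetic points.} First, the opening plan via Lemma~\ref{lem five-term} and group homology is never used and can be deleted. Second, the fact you need for the compositum step is not that $L(k_\infty)/k$ is Galois. What you need is $k_\infty\subseteq\widetilde{k}$, so that restriction embeds $\Gal(L(k_\infty)\widetilde{k}/\widetilde{k})$ into $\Gal(L(k_\infty)/k_\infty)$ compatibly with inertia groups. That gives abelian, pro-$p$ and unramified all at once.
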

\begin{proof}
By \cite[Proposition~1]{Ozaki1997}, we have $\widetilde{k}\subseteq L(k_\infty)$. Hence there is an exact sequence
\[
\begin{tikzpicture}
\node (a) at (0,0) {$0$}; 
\node (b) at (2.5,0) {$\Gal(L(k_\infty)/\widetilde{k})$}; 
\node (c) at (6,0) {$X(k_\infty)$}; 
\node (d) at (9,0) {$\Gal(\widetilde{k}/k_\infty)$}; 
\node (e) at (11,0) {$0$}; 
\draw[->] (a) -- (b); \draw[->] (b) -- (c); \draw[->] (c) -- (d); \draw[->] (d) -- (e); 
\end{tikzpicture}
\]
of $\Z_p$-modules. Since $\rank_pX(k_\infty)\geq r_2(k)+1$ and $\Gal(\widetilde{k}/k_\infty)\simeq\Z_p^{r_2(k)}$ hold, the map $X(k_\infty)\to \Gal(\widetilde{k}/k_\infty)$ is not injective, so $\Gal(L(k_\infty)/\widetilde{k})$ is not trivial. Hence $X(\widetilde{k})$ is not trivial as well, because there is a natural surjection $X(\widetilde{k})\to \Gal(L(k_\infty)/\widetilde{k})$. 
\end{proof}
Let $k/\Q$ be a finite extension, $p$ a prime number, and $k_\infty/k$ a $\Z_p$-extension that is totally ramified at a $p$-adic prime. Then there is a natural surjection $X(k_\infty)\to X(k_n)$ for every $n\geq 0$. So we obtain $\rank_pX(k_\infty)\geq \rank_pX(k_n)$. By using this observation, we may check the inequality in Lemma~\ref{lem single complex non-trivial}. In addition, by using Fukuda's theorem \cite[Theorem~1~(2)]{Fukuda1994}, we may determine the $p$-rank of $X(k_\infty)$. For example, if the $p$-ranks of $X(k_1)$ and $X(k_2)$ coincide, then the $p$-rank of $X(k_\infty)$ also coincides with them. 

For each $n\geq 1$, we let $C_n$ denote the cyclic group of order $n$, $D_n$ the dihedral group of order $2n$, and $S_n$ the symmetric group of degree $n$. All number fields $k$ listed below can be found in LMFDB database. The following examples are checked by using PARI/GP, under the assumption of Generalized Riemann Hypothesis.

\subsubsection{Abelian extensions over imaginary quadratic fields}
Here we pick up some examples that satisfy the assumptions of Corollary~\ref{cor GGC abel imag quad}. All number fields $k$ listed below are not CM-fields so Theorem~\ref{thm  Fujii GGC} does not apply to them. See \cite[Theorem~2.7]{Fujii2022} for an infinite family of imaginary abelian fields $k$ that satisfy the assumptions of Theorem~\ref{thm Fujii GGC} and $X(\widetilde{k})$ is not trivial. It is noteworthy that every imaginary abelian field constructed in \cite{Fujii2022} contains an imaginary quadratic field whose class number is not divisible by the prime number $p$ under consideration. 
\begin{example}
Let $k\coloneqq\Q(\sqrt{7-54\sqrt{-2}})$ and $K\coloneqq\Q(\sqrt{-2})$. Then $k/\Q$ is not a Galois extension, the Galois group of the Galois closure of $k/\Q$ is isomorphic to $D_4$, and $k/K$ is an abelian extension such that $\Gal(k/K)\simeq C_2$. It can be checked that all conditions in Corollary~\ref{cor GGC abel imag quad} are satisfied for $p=3$ and a $3$-adic prime $\Pe$ of $K$. Therefore, Conjecture~\ref{conjecture GGC} is true for $k$ and $p=3$. It can also be checked that the $3$-ranks of $X(k^\cy_1)$ and $X(k^\cy_2)$ are $3$, so the $3$-rank of $X(k^\cy_\infty)$ is $3$. Hence $X(\widetilde{k})$ is not trivial by Lemma~\ref{lem single complex non-trivial}. 
\end{example}
\begin{rem}
Takahashi \cite[Theorem~1.2]{Takahashi2021} showed Conjecture~\ref{conjecture GGC} for quadratic extensions over imaginary quadratic fields under some assumptions. However, Takahashi's result and Corollary~\ref{cor GGC abel imag quad} (specialized to quartic fields) do not cover each other, since it is assumed in \cite[Theorem~1.2]{Takahashi2021} that at least one $p$-adic prime of $K$ does not split in $k$ (in our notation). 
\end{rem}
\begin{example}
Let $k$ be the field defined by $x^6 + 100x^4 + 3085x^2 + 58482$ over $\Q$ and $K\coloneqq\Q(\sqrt{-2})$. Then $k/\Q$ is a Galois extension such that $\Gal(k/\Q)\simeq S_3$ and $k/K$ is an abelian extension such that $\Gal(k/K)\simeq C_3$. It can be checked that all conditions in Corollary~\ref{cor GGC abel imag quad} are satisfied for $p=3$ and a $3$-adic prime $\Pe$ of $K$. Therefore, Conjecture~\ref{conjecture GGC} is true for $k$ and $p=3$. It can be checked that the $3$-rank of $X(k^\cy_1)$ is $3$, but it is not enough to show the non-triviality of $X(\widetilde{k})$. Although it is too heavy to compute the $3$-rank of $X(k^\cy_2)$, we can check the non-triviality of $X(\widetilde{k})$ as follows. Let $K^\an_\infty/K$ be the anti-cyclotomic $\Z_3$-extension over $K$. It can be checked that the first layer $K^\an_1$ is defined by $x^6 - 6x^4 + 9x^2 + 8$ over $\Q$. Now we consider a $\Z_3$-extension $K^\an_\infty k/k$ instead of $k^\cy_\infty/k$. Since $3$ does not divide the class number of $K$, the extension $K^\an_\infty/K$ is totally ramified at every $3$-adic prime. So $K^\an_\infty k/k$ is also totally ramified at every $3$-adic prime because $3$ splits completely in $k$. The first layer of $K^\an_\infty k/k$ is $K^\an_1 k$ and it can be checked that the $3$-rank of $X(K^\an_1k)$ is $4$. Hence we obtain $\rank_3X(K^\an_\infty k)\geq 4$. Therefore, we conclude that $X(\widetilde{k})$ is not trivial by Lemma~\ref{lem single complex non-trivial}. 
\end{example}
\begin{rem}
Itoh \cite[Theorem~1.1]{Itoh2023} showed Conjecture~\ref{conjecture GGC} for imaginary $S_3$-extensions over $\Q$ under some assumptions. However, Itoh's result and Corollary~\ref{cor GGC abel imag quad} (specialized to $S_3$-extensions) do not cover each other, since it is assumed in \cite[Theorem~1.1]{Itoh2023} that $p$ does not split in $K$ (in our notation). 
\end{rem}
\begin{example}
Let $k$ be the field defined by $x^8 - 2x^7 - 15x^6 - 6x^5 + 216x^4 - 306x^3 + 203x^2 + 638x + 243$ over $\Q$ and put $K\coloneqq\Q(\sqrt{-2})$. Then $k/\Q$ is not a Galois extension, the Galois group of the  Galois closure of $k/\Q$ is isomorphic to $D_4\times C_2$, and $k/K$ is an abelian extension such that $\Gal(k/K)\simeq C_2\times C_2$. It can be checked that all conditions in Corollary~\ref{cor GGC abel imag quad} are satisfied for $p=3$ and a $3$-adic prime $\Pe$ of $K$. Therefore, Conjecture~\ref{conjecture GGC} is true for $k$ and $p=3$. We also see that $X(\widetilde{k})$ is not trivial by Lemma~\ref{lem imaginary non-trivial}. 
\end{example}
\begin{example}
Let $k$ be the field defined by $x^8 - 4x^7 + 8x^6 - 6x^5 + 15x^4 - 26x^3 + 2x^2 - 14x + 49$ over $\Q$ and put $K\coloneqq \Q(\sqrt{-1})$. Then $k/\Q$ is a Galois extension such that $\Gal(k/\Q)\simeq D_4$ and $k/K$ is an abelian extension such that $\Gal(k/K)\simeq C_2\times C_2$. It can be checked that all conditions in Corollary~\ref{cor GGC abel imag quad} are satisfied for $p=5$ and a $5$-adic prime $\Pe$ of $K$. Therefore, Conjecture~\ref{conjecture GGC} is true for $k$ and $p=5$. We also see that $X(\widetilde{k})$ is not trivial by Lemma~\ref{lem imaginary non-trivial}. 
\end{example}
\begin{example}
Let $k$ be the field defined by $x^8 - 6x^6 + 39x^4 + 66x^2 + 25$ over $\Q$ and put $K\coloneqq \Q(\sqrt{-1})$. Then $k/\Q$ is not a Galois extension, the Galois group of the  Galois closure of $k/\Q$ is isomorphic to $C_2^2\wr C_2$, and $k/K$ is an abelian extension such that $\Gal(k/K)\simeq C_2\times C_2$. It can be checked that all conditions in Corollary~\ref{cor GGC abel imag quad} are satisfied for $p=5$ and a $5$-adic prime $\Pe$ of $K$. Therefore, Conjecture~\ref{conjecture GGC} is true for $k$ and $p=5$. We also see that $X(\widetilde{k})$ is not trivial by Lemma~\ref{lem imaginary non-trivial}. 
\end{example}

\subsubsection{Fields with a single complex place}
Here we pick up two examples that satisfy the assumptions of Corollary~\ref{cor GGC single complex}. See \cite[\S4]{Kataoka2017} for examples of complex cubic fields. 
\begin{example}
Let $k$ be the field defined by $x^4 - 2x^3 - 23x^2 - 30x - 54$ over $\Q$. Then $k$ is a quartic field that has a single complex place. The Galois group of the Galois closure of $k/\Q$ is isomorphic to $S_4$. It can be checked that all conditions of Corollary~\ref{cor GGC single complex} are satisfied for $p=3$ and a $3$-adic prime $\pe_0$ of $k$. It can also be checked that the $3$-ranks of $X(k^\cy_1)$ and $X(k^\cy_2)$ are $2$, so the $3$-rank of $X(k^\cy_\infty)$ is $2$. Hence $X(\widetilde{k})$ is not trivial by Lemma~\ref{lem single complex non-trivial}. 
\end{example}
\begin{example}
Let $k$ be the field defined by $x^5 - x^4 - 10x^3 - 11x^2 + 93x - 81$ over $\Q$. Then $k$ is a quintic field that has a single complex place. The Galois group of the Galois closure of $k/\Q$ is isomorphic to $S_5$. It can be checked that all conditions of Corollary~\ref{cor GGC single complex} are satisfied for $p=3$ and a $3$-adic prime $\pe_0$ of $k$. It can also be checked that the $3$-ranks of $X(k^\cy_1)$ and $X(k^\cy_2)$ are $2$, so the $3$-rank of $X(k^\cy_\infty)$ is $2$. Hence $X(\widetilde{k})$ is not trivial by Lemma~\ref{lem single complex non-trivial}. 
\end{example}

\section*{Acknowledgements}
I am grateful to my supervisor Takenori Kataoka for carefully reading the manuscript and providing valuable comments.

\printbibliography
\end{document}